\documentclass[12pt,a4paper]{amsart}
\usepackage{amsthm,amsfonts,amsmath,amssymb,latexsym}
\usepackage{epsfig,graphics}
\usepackage[dvipsnames]{xcolor}
\usepackage{cancel}
\usepackage[all]{xy}
\usepackage{hyperref}
\usepackage{stmaryrd}
\usepackage{verbatim}
\usepackage{bm}
\usepackage{mathabx}
\usepackage{enumitem}
\usepackage{pgf,amsmath,tikz,type1cm,fix-cm}
\usetikzlibrary{positioning}
 \usepackage[normalem]{ulem}

\setlist[itemize,1]{leftmargin=\dimexpr 26pt-.1in}

\newtheorem{PARA}{}[section]
\newtheorem{theorem}[PARA]{Theorem}
\newtheorem{corollary}[PARA]{Corollary}
\newtheorem{lemma}[PARA]{Lemma}
\newtheorem{proposition}[PARA]{Proposition}
\newtheorem{definition}[PARA]{Definition}
\newtheorem{definition-proposition}[PARA]{Definition-Proposition}
\newtheorem{conjecture}[PARA]{Conjecture}
\newtheorem{question}[PARA]{Question}
\theoremstyle{definition}
\newtheorem{remark}[PARA]{Remark}
\newtheorem{remarks}[PARA]{Remarks}
\theoremstyle{theorem}
\newtheorem{example}[PARA]{Example}

\newcommand{\para}{\begin{PARA}\rm}
\newcommand{\arap}{\end{PARA}\rm}
\newcommand{\dfn}{\begin{definition}\rm}
\newcommand{\nfd}{\end{definition}\rm}
\newcommand{\rmk}{\begin{remark}\rm}
\newcommand{\kmr}{\end{remark}\rm}
\newcommand{\xmpl}{\begin{example}\rm}
\newcommand{\lpmx}{\end{example}\rm}
\newcommand{\mrm}[1]{{\mathrm{#1}}}
\newcommand{\brat}[1]{{\langle#1\rangle}}
\newcommand{\const}{{\mrm{const}}}

\newcommand{\cL}{\mathcal{L}}

\newcommand{\cP}{\mathcal{P}}

\newcommand{\cS}{\mathcal{S}}

\newcommand{\al}{{\alpha}}

\newcommand{\C}{{\mathbb{C}}}
\newcommand{\Ca}{{\mathbb{O}}}

\newcommand{\F}{{\mathbb{F}}}
\renewcommand{\H}{{\mathbb{H}}}
\newcommand{\bK}{{\mathbb{K}}}

\newcommand{\N}{{\mathbb{N}}}
\newcommand{\Q}{{\mathbb{Q}}}
\newcommand{\R}{{\mathbb{R}}}

\newcommand{\Z}{{\mathbb{Z}}}
\newcommand{\Cr}{\mathrm{ Cr}\, }  
\newcommand{\ccr}{\mathrm{ cr}\, }  
\newcommand{\im}{\mathrm{im}\,}        
\newcommand{\Gr}{\mathrm{ Gr}\, }  

\newcommand{\ev}{\mathrm{ev}}

\newcommand{\Hom}{\mathrm{Hom}}

\newcommand{\Sp}{\mathrm{Sp}}
\newcommand{\eps}{{\varepsilon}}

\def\NABLA#1{{\mathop{\nabla\kern-.5ex\lower1ex\hbox{$#1$}}}}
\def\Nabla#1{\nabla\kern-.5ex{}_{#1}}
\def\Tabla#1{\Tilde\nabla\kern-.5ex{}_{#1}}
\renewcommand{\Tilde}{\widetilde}

\newcommand{\p}{{\partial}}

\newcommand{\la}{\langle}
\newcommand{\ra}{\rangle}
\newcommand{\wh}{\widehat}
\newcommand{\ol}{\overline}

\newcommand{\K}{\mathbb{K}}

\newcommand{\kai}{\color{blue}}
\newcommand{\alex}{\color{magenta}}

\newcommand{\boldmu}{{\boldsymbol{\mu}}}

\newcommand{\boldeps}{{\boldsymbol{\eps}}}

\newcommand{\boldp}{{\boldsymbol{p}}}

\begin{document}

\title[Resonances and string point invertibility for CROSS]{Resonances and string point invertibility for compact rank one symmetric spaces}
\author{K.~Cieliebak}
\address{Universit\"at Augsburg \newline Universit\"atsstrasse 14, D-86159 Augsburg, Germany}
\email{kai.cieliebak@math.uni-augsburg.de}
\author{A.~Oancea}
\address{Institut de Recherche Math\'ematique Avanc\'ee, IRMA \newline Universit\'e de Strasbourg \newline 7 Rue Descartes, 67084 Strasbourg Cedex, France}
\email{oancea@unistra.fr}
\author{E.~Shelukhin}
\address{D\'epartement de math\'ematiques et de statistique\newline Universit\'e de Montr\'eal \newline Pavillon Andr\'e-Aisenstadt\newline C.P. 6128 Succ.  Centre-ville \newline Montr\'eal (Qu\'ebec) H3C 3J7, Canada}
\email{shelukhin@dms.umontreal.ca}
\date{\today}


\begin{abstract} 
{We calculate the Batalin--Vilkovisky (BV) algebra\break structure of Rabinowitz loop homology for compact rank one symmetric spaces. As a consequence, we prove that such a space satisfies a natural homological condition called string point invertibility if and only if its Euler characteristic is equal to the characteristic of the coefficient field for loop homology. This implies certain cases of Viterbo's conjecture on a uniform bound on the spectral norm of exact Lagrangian submanifolds in cotangent disk bundles. Furthermore, we prove that whenever a compact rank one symmetric space is string point invertible, the critical levels of its loop homology classes with respect to an arbitrary Riemannian metric satisfy a resonance condition with respect to degrees and a density condition for closed geodesics.
This generalizes results of Hingston and Rademacher for spheres to a broader class of compact rank one symmetric spaces. }  	
\end{abstract}

\maketitle


\setcounter{tocdepth}{1}
{\footnotesize{\tableofcontents}}


\section{Introduction}\label{sec:introduction}

This article is motivated by three phenomena, two in Finsler geometry and one in symplectic geometry. 
The first two are the {\em resonance theorem} and the {\em density theorem} for Finsler metrics on spheres by Hingston and Rademacher~\cite{Hingston-Rademacher}.
The third one is a conjecture by Viterbo~\cite{Viterbo23} on the spectral norm of Lagrangians in cotangent bundles and its recent proof, over certain manifolds, by the third named author~\cite{Shelukhin19}. 

All three phenomena rely on string topology. More precisely, for an $n$-dimensional closed manifold $M$ consider its free loop space $\Lambda=C(S^1,M)$, where $S^1=\R/\Z$. Its degree shifted homology 
$\H_*\Lambda=H_{*+n}\Lambda$ carries the structure of a Batalin--Vilkovisky (BV) algebra defined by the loop product and the BV operator arising from the reparametrization circle action~\cite{CS}. Similarly, the degree shifted cohomology $H^{1-n-*}\Lambda$ is a BV algebra with the cohomology product and the dual BV operator~\cite{Goresky-Hingston}.\footnote{
The loop product and cohomology product are also known as the Chas--Sullivan and Goresky--Hingston product, respectively. }
The proofs in~\cite{Hingston-Rademacher} are based on the computation of these BV algebras for spheres, while in~\cite{Shelukhin19} Viterbo's conjecture is proved if $\H_*\Lambda$ is {\em string point invertible}, i.e., the fundamental class of $M$ is obtained from the point class by repeated loop brackets with other classes (see~\S\ref{sec:string_point_invertibility}).

In this article, we pursue a unified approach to these phenomena for {\em compact rank one symmetric spaces (CROSS)}, i.e., the manifolds $S^n$, $\R P^n$, $\C P^d$, $\H P^d$, and $\Ca P^2$. 
Our main tool is {\em Rabinowitz loop homology} $\wh{H}_n\Lambda$, introduced in~\cite{CHO-PD} as the Rabinowitz Floer homology of the unit sphere cotangent bundle $S^*M$. Its degree shifted version $\wh{\H}_*\Lambda=\wh{H}_{*+n}\Lambda$ admits for $n\geq 3$ a splitting
$$
  \wh{\H}_*\Lambda = \ol{\H}_*\Lambda \oplus \ol{H}^{1-n-*}\Lambda
$$
into reduced loop homology $\ol{\H}^*\Lambda=\H_*(\Lambda)/\chi(M)[\mathrm{pt}]$ and reduced loop cohomology. It carries a BV algebra structure having both summands as subalgebras, and a Poincar\'e duality isomorphism exchanging the two factors and preserving the BV algebra structure. 

Now we explore the fact that each CROSS $M$ with its homogeneous metric is an {\em SC-manifold}, i.e., there exists $L>0$ such that all geodesics of length $L$ are simple and closed. Hence, the Reeb flow on $S^*M$ for this metric is $L$-periodic. In this situation, Uebele's theorem~\cite{Uebele-products} provides a powerful tool for computing the ring structure on $\wh{\H}_*\Lambda$, and therefore on $\ol{\H}_*\Lambda$ and $\ol H^{1-n-*}\Lambda$ via the splitting above. Using this, we obtain the following results. 

In~\S\ref{sec:BV} we prove some foundational statements about the BV algebra structure of Rabinowitz Floer homology. 

In~\S\ref{sec:SC-manifolds} we recall the Bott-Samelson theorem about manifolds all of whose geodesics are closed (SC-manfolds), we formulate a conjectural generalization of this theorem, and we give evidence for this conjecture. 

In~\S\ref{sec:BV-CHCa} we compute the BV algebra structure on Rabinowitz loop homology $\wh{\H}_*\Lambda$ for $\C P^d$, $\H P^d$ and $\Ca P^2$ for each coefficient field $\K$ (Theorems~\ref{thm:Rabinowitz_BV_projective_spaces_char_divides}, \ref{thm:Rabinowitz_BV_projective_spaces_char_does_not_divide}, \ref{thm:Rabinowitz_BV_S2_char_does_not_divide} and \ref{thm:Rabinowitz_BV_S2_char_2}). 
Here we rely on previous computations of the loop homology BV algebra with integer coefficients~\cite{Cadek-Moravec, Chataur-Le_Borgne, Hepworth, Yang}. To compute the BV operator on $\wh{\H}_*\Lambda$ we use the fact that it vanishes in local homology at action level $0$.
As a consequence, we obtain the loop cohomology BV algebra with arbitrary field coefficients.
Here and throughout this paper the case $\C P^1=S^2$, which is the one sphere not covered in~\cite{Hingston-Rademacher}, always requires separate treatment. 

Based on these computations, we then address the three phenomena above. Since the case of spheres is covered in~\cite{Hingston-Rademacher}, we first consider the $\C P^d$, $\H P^d$, and $\Ca P^2$. 

In~\S\ref{sec:string_point_invertibility} we characterize string point invertibility:

{\bf Theorem A} {\sl (cf.~Theorem~\ref{thm:string-point}). 
Let $M$ be one of the CROSS $\C P^d$, $\H P^d$, or $\Ca P^2$ (set $d=2$ in this last case). Then $M$ is string point invertible over a field of characteristic $p$ if and only $p$ equals the Euler characteristic $\chi(M)=d+1$ (in particular, $d+1$ must be prime). }

Note that this theorem implies some instances of Viterbo's conjecture, which are however covered by more general results in~\cite{Shelukhin22,Viterbo2022-inverse, Guillermou2022}. 

In~\S\ref{sec: res} we extend the resonance theorem of~\cite{Hingston-Rademacher} from spheres to other CROSS:

{\bf Theorem B} {\sl (cf.~Theorems~\ref{thm: resonance} and~\ref{thm: resonance CP1}). 
Let $M$ be one of the CROSS $\C P^d$, $\H P^d$, or $\Ca P^2$ (where we set $d=2$), 
such that $d+1$ is prime, and let $\K$ be a coefficient field of characteristic ${\rm char}(\K) = d+1$.
Then, for every Finsler metric $g$
on $M$, there exist positive constants $\lambda,C$ depending only on $g$ and $\K$ 
such that for all homogeneous elements $X \in H_*(\Lambda;\K) \setminus \{0\}$ and $x\in H^*(\Lambda;\K)\setminus \{0\}$ we have
$$
  |\lambda \cdot \deg(X) - \Cr(X)| \leq C \quad\text{and}\quad
  |\lambda \cdot \deg(x) - \ccr(x)| \leq C.
$$ }

Here $\Cr(X)$ and $\ccr(x)$ are the critical values of $X$ and $x$ with respect to $g$ (see~\S\ref{sec: res}). In fact, we prove this result more generally for arbitrary Reeb flows on the unit cotangent bundles of $\C P^d$, $\H P^d$ or $\Ca P^2$ (Theorems~\ref{thm: resonance-Liouville} and~\ref{thm: resonance CP1-Liouville}).

In~\S\ref{sec:density} we extend the density theorem of~\cite{Hingston-Rademacher} from spheres to other CROSS.
For a closed Reeb orbit $\gamma$ we consider its length $\ell(\gamma)$, its index $i(\gamma)$, its {\em average index} 
$\hat{i}(\gamma) := \lim_{k \to \infty} i(\gamma^k)/k$,
and its {\em mean frequency}
$\alpha(\gamma) := \hat{i}(\gamma)/\ell(\gamma)$.
We set $i=2,4,8$ for $M=\C P^d$, $\H P^d$ and $\Ca P^2$, respectively.
The following is a special case of Theorem~\ref{thm: dense} for Reeb flows on unit cotangent bundles. 

{\bf Theorem C. } {\sl  
Let $M$, $g$, $\bK$, $\lambda$ be as in Theorem B and define the {\em global mean frequency} $\alpha = \lambda^{-1}$. For $\eps > 0$ denote by $\cS_\eps$ the set of geometrically distinct simple closed geodesics $\gamma$ with mean frequency
$\alpha(\gamma) \in (\alpha-\eps, \alpha+\eps)$. Then
\[\sum_{\gamma\in\cS_\eps} \frac{1}{\hat{i}(\gamma)} \geq \frac{1}{n+i-2}.\] }

In~\S\ref{sec:finite_quotients_of_spheres} we prove that resonance and density theorems pass to finite quotients with suitable local coefficient systems (Theorem~\ref{thm: resonance-Liouville-quotient}).
Combining this with the results in~\cite{Hingston-Rademacher}, we obtain resonance and density theorems for finite quotients of spheres (Corollary~\ref{cor:density-quotients-spheres}). For real projective spaces $\R P^n$ these complement for $n$ odd the results of Stegemeyer~\cite{Stegemeyer} for $n$ even.

The two appendices contain the proof of an important lemma that characterizes critical levels, and the proof of the fact that the Kronecker pairing between loop cohomology and loop homology is determined by the Frobenius pairing on Rabinowitz loop homology $\wh H_*\Lambda$.

{\bf Acknowledgements. }
We wish to express our gratitude to Nancy Hingston, whose geometric insights and far reaching vision led to this paper. 
We also thank Mihai Damian, Hans-Bert Rademacher, Maximilian Stegemeyer, and Nathalie Wahl for fruitful discussions. 
K.C.~was partially supported by the Deutsche Forschungsgemeinschaft (DFG, German Research
Foundation) grant 517480394.
A.O.~was partially supported by ANR grant COSY 21-CE40-000 and by the University of Strasbourg Institute for Advanced Study (USIAS).
E.S.~was partially supported by an NSERC Discovery grant, the Courtois chair in fundamental research, a Fonds de Recherche du Qu\'ebec Nature et Technologies teams grant, and a Sloan research fellowship. 

\section{BV structures} \label{sec:BV}

We recall in this section following~\cite{CHO-PD} the BV algebra structure on Rabinowitz loop homology $\wh{\H}_*\Lambda$ and the formulation of Poincar\'e duality in this setting.

The $S^1$-action $r:S^1\times \Lambda\to\Lambda$ given by repara\-me\-trization at the source manifests itself algebraically in the form of the degree $+1$ \emph{Batalin-Vilkovisky, or BV, operator} 
$$
\Delta:\H_*\Lambda\to \H_{*+1}\Lambda,\qquad \Delta(A)=r_*([S^1]\times A).
$$
It was already observed in~\cite{CS} that, in the orientable case, the homology $\H_*\Lambda$ endowed with the Chas-Sullivan product and the operator $\Delta$ is a BV algebra. According to~\cite{Getzler-BV}, this means that $\Delta$ is a degree $1$ operator satisfying $\Delta^2=0$ and whose defect from being a derivation is a \emph{Gerstenhaber bracket}, i.e., the equation 
\begin{equation}\label{eq:Gerstenhaber-bracket-intro}
\{A,B\}=(-1)^{|A|}\Delta(AB) - (-1)^{|A|}(\Delta A)B - A \Delta B
\end{equation}
defines a degree $1$ Lie bracket which is a derivation in each variable with respect to the product. This means that  
\begin{equation} \label{eq:Gerst-0}
\{A,\{B,C\}\}=\{\{A,B\},C\} + (-1)^{(|A|+1)(|B|+1)}\{B,\{A,C\}\},
\end{equation}
\begin{equation} \label{eq:Gerst-1}
\{AB,C\}=A\{B,C\} + (-1)^{(|C|+1)|B|}\{A,C\}B,
\end{equation}
\begin{equation} \label{eq:Gerst-2}
\{A,BC\}=\{A,B\}C + (-1)^{|B|(|A|+1)}B\{A,C\}.
\end{equation}
Note that the degree $1$ skew-symmetry condition writes $\{A,B\}=-(-1)^{(|A|+1)(|B|+1)}\{B,A\}$. It was proved in~\cite{Getzler-BV} that the operation $\{\cdot,\cdot\}$ is a Gerstenhaber bracket iff the {\em 7-term relation} holds for the BV-operator:
\begin{align}\label{eq:7-term} 
\Delta(ABC) = & \, \Delta (AB) C + (-1)^{|A|}A\Delta(BC) + (-1)^{(|A|+1)|B|}B\Delta(AC) \cr
& - (\Delta A)BC - (-1)^{|A|}A (\Delta B) C - (-1)^{|A|+|B|}AB(\Delta C). 
\end{align}
Abouzaid extended in~\cite[\S10, \S11]{Abouzaid-cotangent} this result to the case of nonorientable manifolds and to certain local systems of coefficients. The outcome is a \emph{twisted BV structure}~\cite[\S10]{Abouzaid-cotangent}, meaning that 
in addition to the $\Z$-grading $|\cdot|$ one is also given a $\Z/2$-grading $w$ and the following hold: the product is \emph{twisted graded commutative}, i.e.,  $AB=(-1)^{|A||B| + w(A)w(B)}BA$, and the \emph{twisted 7-term relation} for the BV-operator $\Delta$ reads
\begin{align*}
\Delta& (ABC) \\
& = \Delta (AB) C + (-1)^{|A|}A\Delta(BC) + (-1)^{(|A|+1)|B| + w(A)w(B)}B\Delta(AC) \\
& \qquad - (\Delta A)BC - (-1)^{|A|}A (\Delta B) C - (-1)^{|A|+|B|}AB(\Delta C).
\end{align*}
The twisted 7-term relation is equivalent to the fact that the bilinear operation defined by~\eqref{eq:Gerstenhaber-bracket-intro} is a \emph{twisted Gerstenhaber bracket}, satisfying suitable derivation identities, see~\cite[Exercise~10.5.7]{Abouzaid-cotangent}.

The relevant notion of local system is the following. 

\begin{definition}[Abouzaid~\cite{Abouzaid-cotangent}]
A local system on $\Lambda$ is \emph{$S^1$-equivariant} if there exists an isomorphism between the two local systems on $S^1\times \Lambda$ obtained by pulling back via the projection on the second factor, respectively via the reparametrization action $r$. (The isomorphism is considered to be part of the data defining an $S^1$-equivariant local system.)
\end{definition} 

\begin{definition} A \emph{BV local system} on $\Lambda$ is an $S^1$-equivariant local system of the form $\nu\otimes \mu$, where $\mu=\ev_0^*|M|^{-1}$ and $\nu$ is a local system which is compatible with products in the sense of~\cite[Appendix A]{CHO-MorseFloerGH}.
\end{definition}

From a symplectic perspective the BV operator on symplectic homology is induced by the action on Hamiltonian Floer homology of the fundamental class of the moduli space of genus zero Riemann surfaces with two punctures, asymptotic markers at the punctures, and one marked point aligned with the first asymptotic marker (Seidel~\cite[\S8]{Seidel07}). This moduli space is canonically identified with the circle $S^1$. The outcome for Rabinowitz loop homology is the following result, whose proof differs only superficially from Abouzaid's construction in~\cite[\S10]{Abouzaid-cotangent} and is therefore omitted. See also~\cite{Latschev-Oancea} for a definition and study of BV bialgebras in this context.

\begin{theorem} Rabinowitz loop homology $\wh{\H}_*\Lambda$ admits a twisted BV algebra structure with coefficients in any BV local system, consisting of the primary pair-of-pants product and the canonical BV operator. 
\qed
\end{theorem}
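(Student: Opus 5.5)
The plan is to transport Abouzaid's construction~\cite[\S10, \S11]{Abouzaid-cotangent} from symplectic homology to Rabinowitz Floer homology of $S^*M$, realized as $\wh{\H}_*\Lambda$ following~\cite{CHO-PD}. We write $\wh{\H}_*\Lambda$ as a limit (direct limit in the upper action window, inverse limit in the lower one) of Floer homologies $HF^{(a,b)}_*(H)$ of admissible Hamiltonians on $T^*M$ that are $V$-shaped near the zero section. Both operations are then defined chain-level on each $HF^{(a,b)}_*(H)$ with coefficients twisted by the BV local system $\nu\otimes\mu$. The product is the primary pair-of-pants product, defined by counting solutions on the pair-of-pants with the usual two-input, one-output Floer data. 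The BV operator is defined by the $S^1$-family of cylinders parametrized by the moduli space of two-punctured spheres with asymptotic markers and one marked point aligned with the first marker, as in Seidel~\cite[\S8]{Seidel07}. The first thing to check is that both operations respect action filtrations, via the usual energy estimate, and commute with the continuation maps. This makes them pass to the limits defining $\wh{\H}_*\Lambda$. For the product this is exactly the primary product of~\cite{CHO-PD}.

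Next come the BV relations. $\Delta^2=0$ follows, as in Seidel and Abouzaid, from a two-parameter family over $S^1\times S^1$. This family factors through a $2$-chain in a one-dimensional space of rotated markers, so its count is a chain homotopy between $\Delta\circ\Delta$ and zero. The (twisted) 7-term relation, equivalently the twisted Gerstenhaber derivation identities, comes from the moduli space of spheres with three inputs and one output. A relation among $1$-cycles in $\overline{\mathcal M}_{0,4}$ with framing (Getzler's relation) is realized by a $2$-dimensional family of Floer data. Its codimension-one boundary strata give the seven terms, and its interior gives the chain homotopy. Twisted graded commutativity follows from the standard swap of inputs. The signs $(-1)^{w(A)w(B)}$ come from the action of the swap on $\mu=\ev_0^*|M|^{-1}$ and on the orientation lines, exactly as computed by Abouzaid. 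The $S^1$-equivariance isomorphism of the local system is what allows one to parallel transport coefficients around the $S^1$-families defining $\Delta$. Compatibility of $\nu$ with products, in the sense of~\cite[Appendix A]{CHO-MorseFloerGH}, is what allows us to twist the pair-of-pants counts.

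The step I expect to be the main obstacle is the passage to the limit. Rabinowitz Floer homology involves both direct and inverse limits, and the identities are only established chain-homotopically at each finite level $(a,b)$. The homotopies must therefore be compatible with continuation maps in both directions, so that the identities survive the $\varprojlim\varinjlim$. In particular, any $\varprojlim^1$-type issue must be avoided. I would handle this by working throughout with field coefficients, where the relevant filtered groups are finite dimensional in each degree for a Morse--Bott or generic metric, so Mittag-Leffler holds. I would also choose all families of Floer data to be compatible with the chosen cofinal sequence of Hamiltonians, so that the homotopies are natural with respect to continuation. Since the only new features relative to~\cite{Abouzaid-cotangent} are the action windows, which are preserved by the energy estimates, and this limit bookkeeping, the rest of the argument transfers verbatim.
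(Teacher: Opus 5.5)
Your proposal follows the same route as the paper, which omits the proof because it differs only superficially from Abouzaid's construction in \cite[\S10, \S11]{Abouzaid-cotangent}, with the BV operator induced by Seidel's $S^1$-moduli space of two-punctured spheres with asymptotic markers; your checks on action filtrations and on the passage to $\varinjlim_b\varprojlim_a$ are exactly those superficial differences. For the limit step it already suffices that the relations hold in homology on each finite action window and that the operations commute with the truncation and continuation maps, so no compatibility of chain homotopies is needed; one small slip is that the Hamiltonians computing the Rabinowitz Floer homology of $S^*M$ are V-shaped around the unit cosphere bundle $S^*M$, not near the zero section.
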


We will also use the dual BV structures on the cohomology groups
$$
\H^*\Lambda=H^{*+n}\Lambda,\qquad \H^*M=H^{*+n}M,\qquad
\wh{\H}^*\Lambda=\wh{H}^{*+n}\Lambda.
$$

\begin{theorem} Rabinowitz loop cohomology $\wh{\H}^{1-2n-*}\Lambda$ admits a twisted BV algebra structure with coefficients in any BV local system. This BV algebra structure is given by the secondary pair-of-pants product and the canonical BV operator. 
\end{theorem}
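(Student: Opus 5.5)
The quickest route is to deduce the statement from the preceding Theorem via Poincaré duality for Rabinowitz loop homology, established in~\cite{CHO-PD}. That result gives a canonical isomorphism
$$
PD:\wh{H}_*\Lambda \stackrel{\cong}{\longrightarrow} \wh{H}^{1-*}\Lambda,
$$
and hence, after the degree shifts, $\wh{\H}_*\Lambda\cong\wh{\H}^{1-2n-*}\Lambda$. Under this isomorphism the primary pair-of-pants product on homology corresponds to the secondary pair-of-pants product on cohomology. This is exactly how~\cite{CHO-PD} identifies the Goresky--Hingston product with the Chas--Sullivan product on the reduced summands. In the presence of a BV local system $\nu\otimes\mu$ one twists by the appropriate dual local system. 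Checking that $PD$ still exists and remains multiplicative is where Abouzaid's framework~\cite[\S10--11]{Abouzaid-cotangent} is needed: the orientation line $\mu=\ev_0^*|M|^{-1}$ is self-dual up to the shift by $n$, and compatibility of $\nu$ with products makes the dual system compatible as well.

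The plan therefore has three steps.
\begin{enumerate}
\item Recall that Rabinowitz Floer cohomology is computed by the dual complex of a Floer complex for the Hamiltonian with the time-reversed (inverted) action on $T^*M$. Equivalently, $PD$ is induced at chain level by $x(t)\mapsto x(-t)$ combined with the symplectic involution $(q,p)\mapsto(q,-p)$, which reverses the action functional.
\item Show that $PD$ carries the primary product to the secondary product. Both are defined by counting the same pair-of-pants moduli spaces, read with inputs and outputs exchanged; the secondary product uses the continuation-type cone structure that the time reversal converts into the primary one. I will cite the corresponding statement of~\cite{CHO-PD} rather than reprove it, noting only the twisted signs coming from the $\Z/2$-grading $w$.
\item Show that $PD$ intertwines the canonical BV operators up to sign. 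The BV operator is given by the fundamental class of the circle moduli space of marker rotations (Seidel). Time reversal reverses the orientation of this $S^1$ and conjugates the circle action by $t\mapsto -t$, so $PD\circ\Delta=\pm\Delta^\vee\circ PD$, where $\Delta^\vee$ denotes the canonical BV operator on cohomology, i.e.\ the dual of the chain-level rotation operator.
\end{enumerate}

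Once these hold, the twisted 7-term relation and $\Delta^2=0$ transfer verbatim from the preceding Theorem, giving the twisted BV structure on $\wh{\H}^{1-2n-*}\Lambda$. A global sign change of $\Delta$ does not affect these relations.

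The main obstacle is step~(3) together with signs. I must make sure the rotation parameter in the secondary-product moduli spaces is aligned with the marker convention, so that the $S^1$-equivariance isomorphism of the local system is transported correctly. I would handle this by working with an explicit chain-level model: check the $S^1$-equivariance data on $S^1\times\Lambda$ under the inversion $t\mapsto -t$, and track the orientation of the $S^1$ fibre.

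An alternative approach, if the duality route has gaps, is to redo Abouzaid's argument directly for the secondary product. That would mean building a family of Floer data over the compactified moduli space of genus zero surfaces with three punctures and one rotating marker, and reading off the 7-term relation from the boundary of its one-dimensional parametrized version, exactly as in the preceding Theorem. The duality argument is shorter, so I would attempt it first.
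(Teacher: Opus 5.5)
Your main route differs from the paper's. The paper does not use Poincar\'e duality for this statement. It notes that the BV operator and the \emph{primary} pair-of-pants coproduct on $\wh{\H}_*\Lambda$ satisfy the twisted co-BV relations at chain level, via Abouzaid's parametrized moduli spaces \cite{Abouzaid-cotangent}. The parameter spaces governing the same relations for the \emph{secondary} coproduct are the previous ones crossed with an interval, whose endpoints contribute no nonvanishing terms. It then dualizes. Compatibility of $PD$ with $\Delta$ comes afterwards, as Theorem~\ref{thm:PD}. Your fallback is nearer the paper, but what makes it work is exactly this ``times an interval'' structure of the secondary parameter spaces.

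Deducing the result instead from the homology BV structure, multiplicativity of $PD$ from \cite{CHO-PD}, and $PD\circ\Delta=\pm\Delta^\vee\circ PD$ is not circular, and it yields Theorem~\ref{thm:PD} along the way. However, it relies on \cite{CHO-PD} for multiplicativity of $PD$ with an arbitrary BV local system, which you assert rather than check. The paper's argument handles local systems and twisted signs inside the same moduli spaces. Also, \cite{CHO-PD} does not identify the Goresky--Hingston product with the Chas--Sullivan product: $PD$ exchanges the two summands of the splittings, matching each product with itself.

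The genuine problem is step~(1), on which step~(3) rests. The map $\sigma(q,p)=(q,-p)$ is anti-symplectic, $\sigma^*\lambda_{\rm st}=-\lambda_{\rm st}$. So for fiberwise even $H$ the composite $x\mapsto\sigma\circ x(-\,\cdot)$ \emph{preserves} $A_H$: it induces an action-preserving involution (loop reversal), not a duality. Time reversal alone does reverse action, but $A_H(x(-\,\cdot))=-A_{-H}(x)$. For a $V$-shaped $H$ it therefore lands in the cochain complex of an inverted-$V$-shaped Hamiltonian. It is a self-duality of a single functional only in the Lagrange-multiplier model, via $(v,\eta)\mapsto(v(-\,\cdot),-\eta)$.

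The products of step~(2) and the $PD$ of \cite{CHO-PD} live in the $V$-shaped model. There $PD=\vec\boldp$ is the adjoint of the Frobenius pairing, realized by a chain map whose cone is a Floer complex, not by a pointwise operation on loops. So conjugating the circle action by $t\mapsto -t$ says nothing about the map whose multiplicativity you cite, unless you add a comparison of models compatible with both products and BV operators. Replace step~(3) by either of these:
\begin{itemize}
\item the cone argument from the proof of Theorem~\ref{thm:PD}, which only uses chain-level BV operators and hence is not circular;
\item the BV Frobenius relation $\boldp(\Delta\otimes 1)=\boldp(1\otimes\Delta)$ of \cite[Theorem~5.2]{Latschev-Oancea}, which for $PD=\vec\boldp$ is exactly the intertwining you need.
\end{itemize}
Either way, track which transpose of $\Delta$ comes out. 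A Koszul sign $(-1)^{|A|}$ can appear, and unlike the global sign you allow for, a degree-dependent sign does not in general preserve the $7$-term relation. It must therefore be built into the definition of the cohomological operator.
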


\begin{proof}
  The BV operator together with the primary pair-of-pants coproduct defines a twisted co-BV structure at chain level in homology. The proof relies on the study of the boundary degenerations of certain parametrized Floer moduli problems~\cite{Abouzaid-cotangent}. The parameter spaces that govern the twisted co-BV relations for the secondary pair-of-pants coproduct are obtained from the previous ones by crossing with an interval. Since the boundary of the interval does not introduce any non-vanishing terms in the resulting chain level relations, it follows that the twisted co-BV relations for the secondary pair-of-pants coproduct hold as well.
  Equivalently, we obtain a twisted BV structure on cohomology with respect to the secondary pair-of-pants product. 
\end{proof}

\begin{theorem}\label{thm:PD}
For any BV local system the Poincar\'e duality isomorphism 
$$
PD:\wh{\H}_*\Lambda\stackrel\simeq \longrightarrow \wh{\H}^{1-2n-*}\Lambda
$$
is an isomorphism of twisted BV algebras.
\end{theorem}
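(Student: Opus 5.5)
The proof splits into two parts: compatibility of $PD$ with the products, and compatibility with the BV operators. Once both hold, the bracket, defined from product and $\Delta$ by~\eqref{eq:Gerstenhaber-bracket-intro}, is automatically preserved. For the products I would take as input the Poincar\'e duality theorem of~\cite{CHO-PD}. It says that the isomorphism $PD:\wh{\H}_*\Lambda\to\wh{\H}^{1-2n-*}\Lambda$ intertwines the primary pair-of-pants product with the secondary pair-of-pants product on cohomology, after the degree shift recorded in the statement. At chain level, $PD$ is induced by the symplectic involution of the Rabinowitz Floer setup: the time reversal $t\mapsto -t$ on loops, combined with $H\mapsto -H$. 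It identifies Rabinowitz Floer chains $\wh{FC}_*$ with the dual complex $\wh{FC}^{-*}$, and the degree shift $1-2n$ comes from the index conventions. The first step is to extend this statement to an arbitrary BV local system $\nu\otimes\mu$. For this I would check that the orientation lines and the local system $\nu$ (compatible with products in the sense of~\cite[Appendix A]{CHO-MorseFloerGH}) are carried to their duals by the time reversal. The factor $\mu=\ev_0^*|M|^{-1}$ is exactly what absorbs the orientation twist in the nonorientable case, in the same way as in~\cite{Abouzaid-cotangent}.

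For the BV operators I would argue at the level of moduli spaces. The canonical BV operator on $\wh{\H}_*\Lambda$ counts Floer cylinders with an $S^1$-family of asymptotic markers, parametrized by the circle moduli space recalled above after Seidel. The dual BV operator on $\wh{\H}^*\Lambda$ counts the same parametrized cylinders read from output to input. Applying the time-reversal involution to a cylinder $u(s,t)$ gives $u(-s,-t)$. This exchanges inputs and outputs, maps the $S^1$-family of markers to itself with reversed orientation, and therefore matches the moduli spaces defining $\Delta$ with those defining the dual operator. The result is $PD\circ\Delta=\pm\Delta^\vee\circ PD$ at chain level. The sign must be computed; I expect a sign depending on the degree that can be normalized away, or reversing the $S^1$-orientation if necessary. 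Either way the resulting operator still satisfies the twisted 7-term relation, so it gives the same twisted BV structure.

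The main obstacle is the sign and orientation bookkeeping. This includes the orientation of the marker circle under reversal, the twisting $w$, and the degree shift $1-2n$, which changes the parity of degrees. These must all be compatible with the twisted graded commutativity signs and the twisted 7-term relation on both sides. I would handle this by first verifying the signs on the product in the form~\eqref{eq:Gerst-1}–\eqref{eq:Gerst-2}, where they are fixed by~\cite{CHO-PD}. I would then check the sign of $\Delta$ on local homology at a single nondegenerate orbit, for example at action level $0$ where $\Delta$ vanishes, or at a simple orbit. Since the sign is locally constant in the moduli problem, this determines it globally. With product and $\Delta$ both compatible, the Gerstenhaber brackets, defined by~\eqref{eq:Gerstenhaber-bracket-intro}, are preserved as well, and so $PD$ is an isomorphism of twisted BV algebras.
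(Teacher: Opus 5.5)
Your proposal has a gap in the chain-level description of $PD$, and this is where the content of the theorem lies. In the Hamiltonian model of Rabinowitz loop homology used here (V-shaped Hamiltonians, as in \cite{CHO-PD}), time reversal combined with $H\mapsto -H$ does not identify $\wh{FC}_*$ with its own dual complex. It turns a V-shaped Hamiltonian into one of the opposite shape, so the degree shift in $\wh{\H}_*\Lambda\cong\wh{\H}^{1-2n-*}\Lambda$ is not just a matter of index conventions.

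In the paper's proof, $PD$ is realized at chain level by a genuine chain map $f$ between the complexes computing the two sides. This $f$ is a quasi-isomorphism, not an identification of complexes, and its cone is itself naturally a Floer complex (necessarily acyclic). Your involution argument on the $S^1$-parametrized moduli spaces can at best match the BV operator on a time-reversed complex with the dual operator. It gives no reason why $f$ intertwines the BV operators on homology, and that is the step that must be proved.

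The missing idea is to use that $C=\mathrm{Cone}(f)$ is a Floer complex. It therefore carries its own chain-level BV operator $\Delta_C$ with $d_C\Delta_C+\Delta_Cd_C=0$ (with appropriate signs), as for any Hamiltonian Floer complex. In block form with respect to the two factors,
\[
d_C=\begin{pmatrix} d_A' & 0\\ f & d_B\end{pmatrix},\qquad
\Delta_C=\begin{pmatrix} \Delta_A' & 0\\ h & \Delta_B\end{pmatrix},
\]
where the primes denote the cone signs. The operator $\Delta_C$ is triangular because it respects the same action filtration as $d_C$. The lower-left entry of the relation reads $\Delta_Bf+f\Delta_A'=-(d_Bh+hd_A')$. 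Hence $f$ intertwines the BV operators of the two factors on homology, with $h$ as the chain homotopy.

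Your sign strategy also fails as stated. On the action-zero block $\Delta$ vanishes (Proposition~\ref{prop:BV level-0}), so checking there fixes no sign. Normalizing away a sign, or reversing the orientation of the marker circle, changes either the operator or the map. The statement, however, concerns the canonical BV operators and the given $PD$.
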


\begin{proof}
The Poincar\'e duality isomorphism is given at chain level by a chain map $f$ whose cone is naturally a Floer complex. The compatibility between the Floer differential and the BV operator on the cone implies that the map $f$ commutes with the BV operators on the factors. This compatibility between the Floer differential and the BV operator is a general feature of Hamiltonian Floer homology. 
\end{proof}

Recall from~\cite[\S2.1]{CHO-PD} 
the definition of the action filtered homology groups $\wh{\H}_*^{(a,b)}\Lambda$. The BV operator decreases the action, and therefore induces BV operators on all $\wh{\H}_*^{(a,b)}\Lambda$. In applications, a very useful feature of the BV structure is the following. 

\begin{proposition} \label{prop:BV level-0}
The induced BV operator on the $0$-level homology $\wh{\H}_*^{=0}\Lambda$ vanishes (where $\wh{\H}_*^{=0}\Lambda=\wh{\H}_*^{(-\eps,\eps)}\Lambda$ for $\eps>0$ small enough).  
\end{proposition}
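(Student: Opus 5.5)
The plan is to compute the $0$-level homology by a model in which the $S^1$-action is visibly trivial. For $\eps>0$ small, the generators of the Rabinowitz Floer complex with action in $(-\eps,\eps)$ all come from the constant orbits. In a standard setup (a Hamiltonian that is $C^2$-small near the zero section, or a Morse--Bott model for the Reeb flow on $S^*M$), these correspond to critical points of a Morse function on $M$ or on $S^*M$. For the Rabinowitz complex one gets two copies, which together give $\wh{\H}_*^{=0}\Lambda\cong H_{*+n}(S^*M)$ up to the usual shift; the grading bookkeeping does not matter for the argument. The BV operator counts Floer cylinders with a marked point moving once around the circle, parametrized by $S^1$. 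I will compute it in a model where the whole action window $(-\eps,\eps)$ is realized by $S^1$-independent data.

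\textbf{Step 1 (autonomous model).} Choose the Hamiltonian and almost complex structure so that, inside the window $(-\eps,\eps)$, all Floer data are $S^1$-independent. Concretely, take $H$ autonomous and $C^2$-small on the relevant region. Then all $1$-periodic orbits with action in $(-\eps,\eps)$ are constants, and by the standard Floer argument (Salamon--Zehnder) all Floer cylinders between them are $t$-independent gradient flow lines, provided $J$ is also $t$-independent. By invariance of filtered Rabinowitz Floer homology under small deformations (continuation maps preserve the window up to shrinking $\eps$), it suffices to compute $\Delta$ on $\wh{\H}^{(-\eps,\eps)}_*$ in this model.

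\textbf{Step 2 (the BV operator via the $S^1$-family).} In the autonomous setting, the BV operator at chain level counts solutions of the parametrized problem. For a parameter $\theta\in S^1$, one considers cylinders whose asymptotic marker at the output is rotated by $\theta$. Equivalently, the BV operator is the chain map $x\mapsto\sum_x\#\mathcal M(x,y)$, where $\mathcal M(x,y)$ is the union over $\theta\in S^1$ of continuation cylinders from the data $(H,J)$ to the rotated data $(H(\cdot+\theta),J(\cdot+\theta))$. Since the data are $t$-independent, the rotation acts trivially, so the parametrized moduli space is a product $S^1\times\mathcal M_0(x,y)$. Here $\mathcal M_0(x,y)$ is the moduli space of continuation cylinders for the constant homotopy, and the product structure comes from the free $S^1$-action rotating the marker. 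In rigid degree (index difference $1$ counting the parameter), $\mathcal M_0$ would have dimension $-1$ and is therefore empty. Hence $\Delta=0$ at chain level in the window. Equivalently, the constant orbits are fixed points of the $S^1$-action, and $r_*([S^1]\times A)$ is the image under a map factoring through the projection $S^1\times \Lambda^{=0}\to\Lambda^{=0}$, which kills $[S^1]\times A$ for dimensional reasons.

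\textbf{Step 3 (transversality — main obstacle).} The difficulty is that $S^1$-independent data cannot be made generic when nonconstant orbits are present. In the window $(-\eps,\eps)$, however, only constants appear, and $C^2$-small Morse data are regular by Salamon--Zehnder. The remaining subtlety is the Rabinowitz side. There the $0$-level part is glued from the homology and cohomology ends (the "V-shaped" Hamiltonian near the level $S^*M$), so one must check two things: that the window $(-\eps,\eps)$ sees only the Morse--Bott family over $S^*M$ at the bottom of the V, and that this family can be perturbed autonomously (by a Morse function on $S^*M$) without breaking transversality. I would handle this with the Morse--Bott cascade model, in which the BV operator on constant (Morse--Bott) orbits is given by the $S^1$-action on the orbit space. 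That action is trivial here because the orbits are constant loops. One must also check that the local system $\nu\otimes\mu$ does not interfere: its $S^1$-equivariance isomorphism restricted to constant loops is the identity by the definition of a BV local system, so no twist survives.
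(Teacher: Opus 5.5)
Your proposal follows essentially the same route as the paper. You perturb near the zero level to an autonomous $C^2$-small Morse Hamiltonian, use time-independent almost complex structures for the $S^1$-family defining $\Delta$, and invoke Salamon--Zehnder to force all relevant trajectories to be $t$-independent, so that the rigid BV moduli spaces are empty. The one point the paper makes explicit and you only assume (``on the relevant region'') is that, for a small enough perturbation, every trajectory entering the level-$0$ BV operator stays in a collar of $S^*M$ where the Hamiltonian is $C^2$-small; this is what makes the Salamon--Zehnder argument applicable to the V-shaped Hamiltonian.
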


\begin{proof}
  Consider a Morse perturbation of the Hamiltonian defining $\wh{\H}_*\Lambda$ near its zero level. When the size of the perturbation is small enough, all Floer trajectories involved in the definition of the induced BV operator at level $0$ are contained in a collar neighbourhood of the unit cotangent bundle, where the Hamiltonian is a $C^2$-small Morse function. One can apply the argument of Salamon-Zehnder~\cite{SZ92} to an $S^1$-family of almost complex structures which is time independent
  in order to show that these Floer trajectories are actually independent of time, and therefore the corresponding moduli spaces are empty. The induced BV operator is thus zero. 
\end{proof}

Recall from~\cite[Theorem 1.5]{CHO-PD} that, for orientable manifolds of dimension $n\ge 3$,  Rabinowitz loop homology and cohomology admit splittings
$$
  \wh{\H}_*\Lambda = \ol{\H}_*\Lambda \oplus \ol{\H}^{1-2n-*}\Lambda,\qquad
  \wh{\H}^{1-2n-*}\Lambda = \ol{\H}^{1-2n-*}\Lambda \oplus \ol{\H}_*\Lambda
$$
such that the Poincar\'e duality isomorphism 
has a simple form, exchanging the two factors up to a contribution coming from a secondary continuation map, which vanishes if $H_1M=0$. 
Here $\ol{\H}_*\Lambda$ and $\ol{\H}^*\Lambda$ denote the {\em reduced loop (co)homology} defined in~\cite{CHO-reducedSH},  which differ from $\H_*\Lambda$ and $\H^*\Lambda$ by a contribution coming from a multiple $\chi(M)[\mathrm{pt}]$ of the point class.

\begin{theorem}
In the above splittings the reduced loop (co)homology groups $\ol{\H}_*\Lambda$ and $\ol{\H}^{1-2n-*}\Lambda$ are BV subalgebras.  
\end{theorem}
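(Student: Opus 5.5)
The claim has two parts: each summand is closed under the pair-of-pants product, and each summand is preserved by the BV operator $\Delta$. Once both hold, the BV relations (the twisted 7-term relation, and hence the twisted Gerstenhaber bracket) restrict automatically from $\wh{\H}_*\Lambda$. So I will treat the product and $\Delta$ separately, using the action filtration that underlies the splitting of~\cite[Theorem~1.5]{CHO-PD}.

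Recall how the splitting arises. The long exact sequence of the pair formed by the negative action part and the whole complex yields, for $n\ge 3$, a short exact sequence
$$0\to \ol{\H}^{1-2n-*}\Lambda\to\wh{\H}_*\Lambda\to \ol{\H}_*\Lambda\to 0,$$
and this sequence splits. The subcomplex of action $\le 0$ (more precisely $<\eps$) computes reduced cohomology, and the quotient by it computes reduced homology. The splitting comes from degree considerations: the relevant connecting maps vanish, and the complement can be chosen canonically as the image of the positive-action part.

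\emph{Step 1: the cohomology summand.} The negative-action part $\wh{\H}^{(-\infty,\eps)}_*\Lambda$ is a subalgebra for the primary pair-of-pants product, because the product is action nonincreasing ($\mathcal{A}(xy)\le \mathcal A(x)+\mathcal A(y)$ up to small errors). On this part the product restricts to the secondary, Goresky--Hingston type product. $\Delta$ also decreases action, so it preserves this part. Hence $\ol{\H}^{1-2n-*}\Lambda$ is a BV subalgebra. The one point needing care is the level-$0$ contributions, which are $H_*M$-type classes. Here I will use Proposition~\ref{prop:BV level-0}: $\Delta$ vanishes at level $0$, so no unwanted terms arise.

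\emph{Step 2: the homology summand.} I will apply Poincar\'e duality. By Theorem~\ref{thm:PD}, $PD$ is an isomorphism of twisted BV algebras. By~\cite[Theorem~1.5]{CHO-PD} it exchanges the two summands, up to a secondary continuation term. I therefore transport the statement of Step 1 through $PD$, using the analogous filtration argument on $\wh{\H}^{1-2n-*}\Lambda$: its $\ol{\H}^{1-2n-*}\Lambda$ summand is likewise a BV subalgebra by action monotonicity of the secondary product and of $\Delta$. Its preimage under $PD$ is then $\ol{\H}_*\Lambda$, which is therefore a BV subalgebra of $\wh{\H}_*\Lambda$.

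\emph{Main obstacle.} The difficulty is that the complement is not canonically a subcomplex, and $PD$ exchanges the factors only up to the secondary continuation correction. If the correction is nonzero (when $H_1M\neq0$), Step 2 may fail as written. I would handle this by defining the splitting map $\ol{\H}_*\Lambda\to\wh{\H}_*\Lambda$ as $PD^{-1}$ composed with the inclusion of the cohomology summand on the dual side. By construction this map is multiplicative and commutes with $\Delta$. I would then check that it gives a section of the projection. That check should follow from the triangular form of $PD$, namely identity plus a term that maps into the cohomology summand. The remaining verification is that the degree vanishing used to split the sequence (the reason for the hypothesis $n\ge3$) still holds after the correction.
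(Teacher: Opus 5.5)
Your handling of $\Delta$ on the cohomology summand is essentially the paper's argument: action monotonicity plus Proposition~\ref{prop:BV level-0} show that $\Delta$ of a class at level $\le 0$ lands strictly below level $0$, hence in the image of the negative-action part. The rest of the proposal has two genuine gaps. The first is that the homology summand is never actually reached. Poincar\'e duality ``exchanges the factors'' only in the sense of their order in the two splittings. In degree $k$, both $\wh{\H}_k\Lambda=\ol{\H}_k\Lambda\oplus\ol{\H}^{1-2n-k}\Lambda$ and $\wh{\H}^{1-2n-k}\Lambda=\ol{\H}^{1-2n-k}\Lambda\oplus\ol{\H}_k\Lambda$ are built from the same two groups. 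Up to the secondary continuation term, $PD$ sends $\ol{\H}_k\Lambda$ to $\ol{\H}_k\Lambda$ and $\ol{\H}^{1-2n-k}\Lambda$ to $\ol{\H}^{1-2n-k}\Lambda$ (compare the proof of Proposition~\ref{prop:Kronecker-Frobenius}). So the $PD$-preimage of the $\ol{\H}^{1-2n-*}\Lambda$ summand of $\wh{\H}^{1-2n-*}\Lambda$ is the $\ol{\H}^{1-2n-*}\Lambda$ summand of $\wh{\H}_*\Lambda$, and Step~2 merely repeats Step~1. Your repair via $PD^{-1}$ is circular: that section is multiplicative only if the corresponding summand on the dual side is already known to be a subalgebra. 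A sublevel-set argument cannot help either, because the positive-action part is a quotient complex, not a subcomplex. The missing idea is that $\ol{\H}_*\Lambda\subset\wh{\H}_*\Lambda$ is the image of the canonical map $\H_*\Lambda\to\wh{\H}_*\Lambda$, whose kernel is spanned by $\chi(M)[\mathrm{pt}]$. This map intertwines the loop product and BV operator with those of $\wh{\H}_*\Lambda$, so its image is automatically a BV subalgebra. This is what the paper relies on: products via \cite{CHO-reducedSH}, and $\Delta$ via the BV operator on $\H_*\Lambda$. Relatedly, your exact sequence is written backwards: $\ol{\H}_*\Lambda$ is the subobject and $\ol{\H}^{1-2n-*}\Lambda$ the quotient.

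The second gap is that Step~1 identifies the cohomology summand with the image of $\wh{\H}^{(-\infty,\eps)}_*\Lambda$, which is wrong. That image contains the level-$0$ block $\wh{\H}^{=0}_*\Lambda\cong H^{-*}(S^*M)$. This block includes the constant-loop homology classes of $M$, for example the unit $E=[M]\in\ol{\H}_*\Lambda$. Action subadditivity therefore only shows that a product of two classes in $\ol{\H}^{1-2n-*}\Lambda$ lies in $\ol{\H}^{1-2n-*}\Lambda$ plus level-$0$ homology classes of $M$. The vanishing of $\Delta$ at level $0$, which you invoke to rule out ``unwanted terms'', says nothing about products. You need an additional argument. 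One option is a degree count: $\ol{\H}^{1-2n-*}\Lambda$ is concentrated in shifted degrees $\le 1-n$, so such products have degree $\le 2-2n<-n$ when $n\ge 3$. In degrees $<-n$ the summand $\ol{\H}_*\Lambda$ is zero, so the product lies in the cohomology summand. Alternatively, cite \cite{CHO-reducedSH}, as the paper does.
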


\begin{proof}
It is proved in~\cite{CHO-reducedSH} that the summands $\ol{\H}_*\Lambda$ and $\ol{\H}^{1-2n-*}\Lambda$ are stable under products. That thay are also stable under the BV operators follows from the existence of the BV operators on $\H_*\Lambda$ and $\H^{1-2n-*}\Lambda$ and Proposition~\ref{prop:BV level-0}.
\end{proof}

\begin{remark}
The dimensional assumption $n\ge 3$ is imposed in~\cite[Theorem 1.5]{CHO-PD} in order for the extended Goresky-Hingston product on $\ol{\H}^{1-2n-*}\Lambda$ to be coassociative~\cite[Theorem~6.4, Remark~6.5]{CHO-reducedSH}, and for the splitting to be a coalgebra map. The existence of the splitting, and its compatibility with the Goresky-Hingston product on $\H^{1-2n-*}(\Lambda,\Lambda_0)$, hold regardless of the dimension~\cite[Corollary~7.7]{CHO-reducedSH}. We will use this fact in order to compute the Rabinowitz loop homology of the 2-sphere.
\end{remark}

\section{Homology of manifolds all of whose geodesics are closed}  \label{sec:SC-manifolds}

We focus now on the case of manifolds $M$ that admit a metric with the following property: there exists $L>0$ such that all geodesics of length $L$ are simple and closed. In this case we call $M$ an \emph{SC-manifold}~\cite{Besse}, or a \emph{Zoll manifold}.\footnote{The initials ``SC'' stand for Simple Closed.} 
In this section we recall important results due to Bott-Samelson and Besse, we infer the level algebra structure for Rabinowitz loop homology, and we propose a conjecture that generalizes the Bott-Samelson theorem. 

\subsection{The Bott-Samelson theorem} \label{sec:Bott-Samelson} The literature on SC-manifolds is vast, and we refer to the book~\cite{Besse} by Arthur Besse for a comprehensive treatment. The fundamental examples are the compact rank one symmetric spaces (CROSS) $S^n$, $\R P^n$, $\C P^d$, $\H P^d$, and $\Ca P^2$,
endowed with their standard homogeneous metric. The following theorem due to Bott-Samelson is foundational for the field and shows that there is no homological distinction between SC-manifolds and CROSS (Bott~{\cite{Bott54}}, Samelson~{\cite{Samelson63}}, Besse~{\cite[Theorem~7.23 and Historical note 7.74]{Besse}}).

\begin{theorem}[Bott-Samelson] \label{thm:Bott-Samelson}
Any SC-manifold has the integral cohomology ring of a CROSS (which is uniquely determined and is called its \emph{model}).  
\end{theorem}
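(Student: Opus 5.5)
The plan is to follow the classical Morse-theoretic route of Bott and Samelson, via the based loop space. Fix a point $x\in M$ and consider the space $\Omega_{x,y}M$ of paths from $x$ to a point $y$ with the energy functional. The SC hypothesis says all geodesics starting at $x$ return to $x$ at length $L$. Hence for $y$ close to $x$, generically, the geodesics from $x$ to $y$ form a discrete set, and they come in a controlled pattern.

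\textbf{Step 1: energy functional for $y=x$.} The critical manifolds of the energy are the iterated families of closed geodesic loops at $x$. For the $k$-fold family the critical manifold is the unit sphere $S^{n-1}\subset T_xM$. It is nondegenerate in the sense of Bott, with index $\lambda_k = k\lambda + (k-1)(n-1)$. Here $\lambda$ is the index of the simple family, which is computed from the conjugate points along a closed geodesic: these occur with a fixed multiplicity pattern, by the Morse index theorem together with the fact that the loops close up. In this setting $\lambda+1$ is the degree $i\in\{1,2,4,8,n-1\}$ of the model.

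\textbf{Step 2: perfectness.} We show the resulting Morse--Bott function is perfect, by a lacunarity argument over $\Z/2$ when the indices are spread out enough. This holds when $\lambda\ge 1$, i.e.\ the simply connected case. The $\R P^n$ case instead has $\lambda=0$ and is handled via the universal cover being an SC-sphere. The outcome is that $H_*(\Omega M)$ has Poincar\'e series $\sum_k t^{\lambda_k}(1+t^{n-1})$, or its analogue, which agrees with that of $\Omega$ of a CROSS. By the Bott--Samelson trick of using completing manifolds, the argument can even be run integrally: one constructs explicit cycles $\Gamma_k\to\Omega M$ (families of broken geodesic loops) whose fundamental classes carry the top cell of each critical manifold.

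\textbf{Step 3: from $\Omega M$ to $H^*(M)$.} Use the path-loop fibration and the Serre spectral sequence. The based loop homology being that of $\Omega S^{m}$-type with generators in degrees $\lambda+... $ forces $H^*(M;\Z)$ to be a truncated polynomial ring $\Z[u]/(u^{d+1})$ with $|u|=\lambda+1$. Alternatively, if $\pi_1 M\ne 0$, the universal cover is an SC-sphere and $M=\R P^n$ cohomologically. Ring structure, not just groups, comes from multiplicativity of the spectral sequence: the transgression of the bottom loop class gives $u$, and the top class forces $u^d$ to generate $H^n$. Finally, Adams' Hopf invariant one theorem restricts $\lambda+1\in\{1,2,4,8\}$ when $d\ge2$ (and $d=2$ for degree $8$ by Adem relations). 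This identifies the model uniquely, since the degrees and truncation height determine it.

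\textbf{Main obstacle.} The hardest step is the integral perfectness in Step 2, together with the index computation in Step 1. These require knowing that conjugate points along every geodesic from $x$ occur with uniform multiplicities, so that $\lambda$ is independent of direction and the tangent/normal bundle orientation issues do not introduce torsion. I would first try to establish this uniformity by showing the first return to $x$ is at length $L$ for all directions, making the index a continuous integer-valued function on $S^{n-1}$ and hence constant. I would then construct the completing manifolds for the integral statement.
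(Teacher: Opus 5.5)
Your overall route (Morse--Bott theory on $\Omega_xM$ with critical spheres $S^{n-1}$ of index $\lambda_k=k\lambda+(k-1)(n-1)$, then the path fibration, then Adams) is the classical Bott--Samelson argument that the paper cites from Besse. However, Step~2 has a real gap. The lacunary principle does not hold for all $\lambda\ge1$. The Morse--Bott degrees are $0$, $\lambda_k$, $\lambda_k+n-1$, and neighbouring ones differ alternately by $\lambda$ and $n-1$. For $\lambda=1$, i.e.\ for every SC-manifold modelled on $\C P^d$ (including $S^2$), these degrees are $0,1,n,n+1,2n,2n+1,\dots$. So nothing prevents the cell in degree $kn+1$ (bottom of the $(k+1)$-st sphere) from cancelling the cell in degree $kn$ (top of the $k$-th sphere). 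Your fallback to completing manifolds is only asserted: Bott--Samelson cycles are built from the isometry group of a symmetric space, and you give no construction for an arbitrary SC metric. So in the $\C P^d$ case Step~3 has no valid input.

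The missing idea is that perfectness is not needed; upper bounds suffice. For a field $\F$, the Morse--Bott inequalities give $\dim H^q(\Omega_xM;\F)\le1$ for $q\in\{0,\lambda_k,\lambda_k+n-1\}$ and $0$ otherwise. For $\lambda\ge1$ the constant loop is the only index-zero critical point, so $\Omega_xM$ is connected and $M$ is simply connected. Since $\lambda+n-1\ge n$, the only rows $q<n$ of the Serre spectral sequence of $\Omega M\to PM\to M$ that can be nonzero are $q=0$ and $q=\lambda$. As $E_\infty$ is trivial, every $H^p(M;\F)$ with $0<p\le n$ must be hit. The only possible differential is $d_{\lambda+1}$ from $H^{p-\lambda-1}(M;\F)\otimes H^\lambda(\Omega M;\F)$, which by multiplicativity is cup product with the transgression $u$. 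Applied to $p=n$, this forces $H^\lambda(\Omega M;\F)\cong\F$, and then $H^*(M;\F)=\F[u]/(u^{d+1})$ with $|u|=\lambda+1$, for every field. Hence $H^*(M;\Z)$ is torsion free, and a generator $v$ of $H^{\lambda+1}(M;\Z)$ has powers $v^j$ that are nonzero modulo every prime. This gives $H^*(M;\Z)=\Z[v]/(v^{d+1})$, and perfectness follows a posteriori.

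Two smaller points. First, $\lambda+1$ ranges over $\{1,2,4,8,n\}$, not $n-1$. Second, for $\lambda=0$, knowing that the universal cover is an SC-sphere is not enough, since lens spaces are also sphere quotients. You must first read off $|\pi_1M|=2$ from the two index-zero critical sets (for $n\ge3$). Then check that the lifted geodesic loops have index $n-1$, and finally compute the cohomology ring of a free involution on a cohomology sphere.
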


The existence of Zoll metrics on $S^2$ shows that the homogeneous metrics are far from being the only ones with the SC-property. 
However, there are no known examples of SC-metrics whose underlying manifold is not a CROSS, hence the question: 

\begin{question}[{\cite[Question~7.72]{Besse}}] Is every SC-manifold diffeomorphic to a CROSS?
\end{question}

\begin{remark}
The literature seems to focus on Blaschke manifolds, which are special cases of SC-manifolds. The Blaschke conjecture states that any Blaschke manifold is isometric to a CROSS, see~\cite[Chapter~5 and Appendix~D]{Besse}. As such, it is a statement that belongs to the realm of Riemannian geometry. In contrast, the above question by Arthur Besse belongs to the realm of differential topology. 
\end{remark}

Going back to the Bott-Samelson theorem, it is important to note that it actually proves much more using much less. The hypothesis of the theorem is that there exists a point $p$ in $M$ and a positive real $L>0$ such that all the geodesics starting at $p$ come back to $p$ at time $L$ and not before. There is no periodicity assumption for these geodesics, they could come back to $p$ at an angle. The conclusion of the theorem is that, denoting $\lambda\ge 0$ the index of one of the geodesics of length $L$ starting at $p$, all the geodesics starting at $p$ have the same index, and with $n=\dim M$ the index can take the following values~\cite[Theorem~7.23]{Besse}: 
\begin{enumerate}
\item $\lambda=0$, in which case $M$ has the homotopy type of $\R P^n$. If we require in addition that all geodesics of length $L$ starting from $p$ are periodic, then $M$ is diffeomorphic to $\R P^n$.   
\item $\lambda=1$, in which case $M$ has the homotopy type of $\C P^d$, $n=2d$.
\item $\lambda=3$, in which case $M$ has the integral cohomology ring of $\H P^d$, $n=4d$.
\item $\lambda=7$, in which case $M$ has the integral cohomology ring of $\Ca P^2$, $n=16$.
\item $\lambda=n-1$, $n\ge 2$,
in which case $M$ has the homotopy type of $S^n$. 
\end{enumerate}

In particular, if $\lambda>0$ the manifold $M$ is simply-connected (this is actually a preliminary step in the proof of the Bott-Samelson theorem). The proof presented in~\cite[7.29]{Besse} shows that the cohomology ring of $M$ is the same as the one of its model CROSS with arbitrary coefficients --- in particular, it has only one generator with arbitrary coefficients if the model CROSS is $S^n,\C P^d,\H P^d,\Ca P^2$, and with $\Z/2$-coefficients if the model CROSS is $\R P^n$.

The case $\lambda=0$, i.e. $M=\R P^n$, is the only non-simply connected case. The manifold $\R P^n$ is orientable if and only if $n$ is odd, if and only if its Euler characteristic is zero. In our discussion of real projective spaces we use $\Z/2$-coefficients, so the issue of orientability will be irrelevant.

Let $\K$ be a field of coefficients, assumed to be of characteristic $2$ if $\lambda=0$.
We have $H^*(M)=\K[h]/h^{d+1}$, where 
$d=n/i$, $n=\dim\, M$, $\deg(h)=i$, and 
$$
i=
\left\{\begin{array}{ll} 
n & \mbox{for } S^n,\\
1 & \mbox{for }\R P^n,\\
2 & \mbox{for }\C P^d,\\
4 & \mbox{for }\H P^d,\\
8 & \mbox{for }\Ca P^2.
\end{array}\right. 
$$
The relation between $\lambda$ and $i$ is 
$$
\lambda=i-1.
$$
Borrowing from the projective terminology, we call the generator $h$ the \emph{hyperplane class}. We denote $1=h^0$ the unit and $\omega=h^d$ the \emph{volume class}. (For $S^n$ we have $h=\omega$.) Let $PD_M:H^*(M)\to H_{n-*}(M)$ be the Poincar\'e duality isomorphism and set $H=PD_M(h)$, so that $\deg(H)=n-i$ and
$$
H_*(M)={\rm span}\{H^d=[\mathrm{pt}],H^{d-1},\dots, H, H^0=[M]=E\}.
$$
Here the powers $H^j$ are understood with respect to the intersection product on homology, with unit the fundamental class $E=[M]$. The class $H$ is represented by a projective hyperplane in the projective case, and by a point in the sphere case.

\subsection{Cohomology of the the unit cosphere bundle}  \label{sec:unit_sphere_bundle}

As a direct consequence of the Bott-Samelson theorem we obtain:

\begin{theorem}\label{thm:Bott-Samelson-S*M} 
The unit cosphere bundle of any SC-manifold has the integral cohomology ring of the unit cosphere bundle of its model CROSS.
\end{theorem}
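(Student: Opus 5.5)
The plan is to compute $H^*(S^*M;\Z)$ from the Gysin sequence of the sphere bundle $\pi:S^*M\to M$, with fibre $S^{n-1}$. By Theorem~\ref{thm:Bott-Samelson}, $H^*(M;\Z)$ is isomorphic as a ring to $H^*(M_0;\Z)$, where $M_0$ is the model CROSS. The Gysin sequence reads
$$
\cdots\to H^{k-n}(M)\xrightarrow{\ \cup e\ } H^k(M)\xrightarrow{\ \pi^*\ } H^k(S^*M)\to H^{k-n+1}(M)\xrightarrow{\ \cup e\ }\cdots.
$$
In the orientable case $e$ is the Euler class of $T^*M$. In the nonorientable case $M\simeq\R P^{2m}$, and we use twisted coefficients, or simply work over $\Z/2$ and treat it separately. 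Since $e$ lies in top degree, $e=\chi(M)\,[\omega]$ with $[\omega]$ a generator of $H^n(M)$. Moreover $\chi(M)=\chi(M_0)$, because it is determined by the cohomology. So the Gysin sequence of $S^*M$ coincides term by term, and map by map, with that of $S^*M_0$ under the Bott--Samelson isomorphism. This already identifies the additive structure, including torsion, e.g.\ the $\Z/(d+1)$ in degree $n$ for projective spaces.

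The next step is the ring structure. Let $\theta\in H^{n-1}(S^*M)$ be a class whose Gysin image in $H^0(M)$ is a generator. Such a class exists in the cases where $\chi(M)=0$ or where the relevant kernel allows a lift; in the remaining cases, when $\chi\ne0$, the class lives in degree $2n-1$, namely a lift of $[\omega]\in H^n(M)$ under the pushforward. The cohomology is generated as a ring by $\pi^*h$ together with such a class $\theta$. The products are governed by three facts. First, $\pi^*$ is a ring map, so the relations among the $\pi^*h^j$ come from $H^*(M)$ modulo the ideal generated by $e$. Second, the pushforward $\pi_!$ is $H^*(M)$-linear (projection formula), so $\pi^*h^j\cup\theta$ is determined up to the image of $\pi^*$. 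Third, products $\theta\cup\theta$ land in degrees above $2n-1$ or are forced by degree.

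This data is identical for $M$ and $M_0$. Choosing $\theta$ and $\theta_0$ compatibly gives a ring isomorphism $H^*(S^*M)\cong H^*(S^*M_0)$, sending $\pi^*h\mapsto\pi^*h_0$ and $\theta\mapsto\theta_0$.

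The main obstacle is making this rigorous. Specifically, we must check that a ring isomorphism of the bases which matches Euler classes extends to the total spaces. The cleanest route I would try is a Leray--Hirsch / Serre spectral sequence argument. Its $E_2$ page is $H^*(M)\otimes H^*(S^{n-1})$, and its only differential is $d_n(\sigma)=e$, so the whole spectral sequence is determined by the ring $H^*(M)$ and $\chi$. In most cases the extension problems are settled by degree, since the nonzero groups sit in at most two rows. In the few low-degree coincidences one resolves them by hand: $S^n$, where $n-1$ may equal degrees of base classes, and $\R P^n$ with $\Z/2$ coefficients. For $\R P^n$ the lift $\theta$ may square nontrivially, but again only in a way dictated by $h$ and $w(TM)$. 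The Stiefel--Whitney classes are in turn determined by the cohomology ring via Wu's formula (Steenrod squares are determined by the cup product on $\R P^n$).
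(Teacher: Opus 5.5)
For simply connected models, the argument you end up with is exactly the paper's proof: the multiplicative Serre spectral sequence with only the rows $q=0$ and $q=n-1$, the single differential $d_n(\sigma)=e=\chi(M)\,\omega$, and all extensions excluded by degree. Your intermediate description of the ring generators is wrong, though. When $\chi(M)\neq 0$ and $d\ge 2$, the row $q=n-1$ survives only in columns $i,2i,\dots,n$. So the second generator has degree $n+i-1$ (a lift of $h\otimes\sigma$), and the class in degree $2n-1$ is $\pi^*h^{d-1}$ times it. A class of degree $2n-1$ together with $\pi^*h$ does not generate $H^*(S^*M)$.

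The genuine gap is the model $\R P^n$. The statement is about \emph{integral} cohomology rings, and your treatment of this case retreats to $\Z/2$-coefficients (Stiefel--Whitney classes, Wu's formula). That proves nothing about the integral ring. Over $\Z$, your claim that everything is settled by degree, so that the data for $M$ and $M_0$ agree, fails in this case:
\begin{itemize}
\item For $n$ even, the fibre-orientation local system is twisted, so $E_2^{p,n-1}=H^p(M;\Z^w)$, and genuine extension problems occur. Already $S^*\R P^2\cong L(4,1)$ has $H^2\cong\Z/4$, although $E_\infty^{2,0}\cong E_\infty^{1,1}\cong\Z/2$.
\item For $n$ odd, the square of the lifted fibre class $\tilde\sigma\in H^{n-1}(S^*M;\Z)$ lies in $H^{2n-2}(S^*M;\Z)\cong E_\infty^{n-1,n-1}\cong H^{n-1}(M;\Z)\cong\Z/2$. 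No degree count rules this out.
\end{itemize}
Knowing only the ring $H^*(M;\Z)$, you have not shown that these agree for $M$ and $\R P^n$.

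The missing idea is that no computation is needed. As recalled in \S\ref{sec:Bott-Samelson}, when $\lambda=0$ and all geodesics of length $L$ from $p$ are periodic, $M$ is diffeomorphic to $\R P^n$; periodicity is automatic for an SC-manifold. Hence $S^*M$ is diffeomorphic to $S^*\R P^n$. This is how the paper handles the only non-simply connected case.
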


\begin{proof}
The ring structure on the cohomology of the unit cosphere bundle of a simply connected smooth manifold
is determined by the ring structure on the cohomology of that manifold: this follows from the multiplicativity of the cohomological Leray-Serre spectral sequence, and the fact that there are no extensions involved if the manifold is simply connected. This proves the theorem for SC-manifolds whose model CROSS is simply connected because the Bott-Samelson theorem determines their homotopy type, and in particular the structure of their cohomology ring. 

The only non-simply connected SC-manifolds are the ones that have $\R P^n$ as their model CROSS. In that case the Bott-Samelson theorem determines their diffeomorphism type, hence the diffeomorphism type of their unit cosphere bundles, and the theorem follows as well.
\end{proof}

The cohomology $H^{-*}(S^*M)$ is relevant in our case because it is the action zero part of Rabinowitz loop homology $\wh{\H}_*\Lambda$. See also the discussion around Conjecture~\ref{conjecture:RFH-SC} below.

We will need in our applications the following explicit computations of the cohomology of unit cosphere bundles $S^*M$. 

(a) The case $\lambda>0$, or $\lambda=0$ and $n$ odd, with $\K$ a field of characteristic $p$, with  $p$ prime dividing $\chi(M)$ if $\lambda>0$ and $p=2$ if $\lambda=0$.
The Leray-Serre spectral sequence for the unit cosphere bundle $S^*M$ in cohomology with coefficients in $\K$ collapses at the second page. We obtain an isomorphism of algebras
$$
H^*(S^*M)\simeq H^*(M)\otimes H^*(S^{n-1}), 
$$
or 
$$
H^{-*}(S^*M)\simeq \H_{*}M\otimes H^{-*}(S^{n-1})
$$
after regrading and applying Poincar\'e duality.
Consider $H\in \H_{-i}M$ the dual of the hyperplane class and $\alpha\in H^{-(1-n)}(S^{n-1})$ the generator determined by the orientation.
We identify $H$ with $H\otimes 1\in \H_{-i}M\otimes H^0(S^{n-1})$ and $\alpha$ with $E\otimes\alpha=[M]\otimes \alpha\in \H_0M\otimes H^{-(1-n)}(S^{n-1})$.
Denote by $|x|=\deg(x)-n$ the shifted degree for generators. Then
$$
H^{-*}(S^*M)\simeq \K[H,\alpha]/\la H^{d+1},\alpha^2\ra,\qquad |H|=-i,\ |\alpha|=1-n.
$$ 
The table below summarizes these facts. The leftmost column indicates the shifted degrees (recall $id=n$).

{\tiny
$$
\xymatrix
@C=3pt
@R=3pt
{
0       &&&&      E         & &  & & & & \\
    &&&&             & &       &     & &        & \\
-i  &&&&  H            & \\
       &&&&  \vdots &  & \\ 
-i(d-1)      &&&&  H^{d-1} & &  & \\
1-id      & & & \alpha &  & &  & \\
-id=-n      &  &&   &  H^d       & & &        & \\
1-i(d+1) &  & & H \alpha  &  \\
&  & & \vdots   &   \\
1 - i(2d-1) & & & H^{d-1} \alpha & \\
& & & & \\
1 - 2id = 1-2n & & & H^d \alpha & 
}
$$
}

(b) The case $\lambda=0$ with $n$ even, with $\K$ a field of coefficients of characteristic $2$. 

The Euler characteristic is equal to $1$ and $\R P^n$ is $\K$-orientable. The only nontrivial differential on the second page $d_2:E_2^{0,n-1}\to E_2^{n,0}$ is an isomorphism and the spectral sequence collapses at the third page. Since $E_2^{*,*}\simeq H^*(M)\otimes H^*(S^{n-1})$, the cohomology $H^*(S^*M)$ can be expressed in an obvious way as a quotient vector space. As an algebra, we obtain  
$$
H^*(S^*M)\simeq \K[h,x]/\la h^n,x^2\ra, \qquad \deg(h)=1, \ \deg(x)=n,
$$
with $h\otimes 1$ the class of the generator of $E_2^{1,0}=H^1(M)\otimes H^0(S^{n-1})$ and $x=h\otimes\alpha$ the class of the generator of $E_2^{1,n-1}=H^1(M)\otimes H^{n-1}(S^{n-1})$. After regrading and applying Poincar\'e duality we get
$$
H^{-*}(S^*M)\simeq \K[H,\xi] / \la H^n,\xi^2\ra,\qquad |H|=-1,\ |\xi|=-n.
$$
The figure below summarizes these facts, with shifted degrees indicated in the leftmost column.

{\tiny
$$
\xymatrix
@C=3pt
@R=3pt
{
0       &&&&      E         & &  & & & & \\
    &&&&             & &       &     & &        & \\
-1  &&&&  H            & \\
       &&&&  \vdots &  & \\
1-n      &&&  &  H^{n-1} & &  & \\
-n      &  && \xi   &        & & &        & \\
-n-1 &  & & H \xi &  \\
&  & & \vdots   &   \\
1 - 2n & & & H^{n-1} \xi & 
}
$$
}


\subsection{Level algebra structure on Rabinowitz loop homology}  \label{sec:RFH-level}
We now discuss the level algebra structure on Rabinowitz loop homology $\wh{\H}_*\Lambda$ for SC-manifolds. By \emph{level algebra structure} we mean the following. We endow the manifold $M$ with its SC-metric and normalize it so that $L=1$. With an appropriate choice of the Hamiltonians used in the definition of Rabinowitz loop homology $\wh{H}_*\Lambda$, the critical levels of all the nonzero homology classes are elements of $\Z$. We denote 
$$
\wh{H}_*^{=k}\Lambda = \wh{H}_*^{(k-\eps,k+\eps)}\Lambda,
$$
for $\eps>0$ small, and 
$$
\Gr \wh{H}_*\Lambda = \bigoplus_{k\in\Z}\wh{H}_*^{=k}\Lambda.
$$
Recall from~\cite{CHO-PD} that the {\em critical level} 
\begin{equation}\label{eq:Cr}
\Cr(X) = \inf\{a\in\R\mid X\in\im(\wh{\H}_*^{(-\infty,a)}\Lambda\to\wh{\H}_*\Lambda)\}
\end{equation}
of $X\in\wh{\H}_*\Lambda$ satisfies the inequality 
$$
\Cr(X\cdot Y)\le \Cr(X)+\Cr(Y)
$$ 
Therefore we obtain an induced algebra structure on $\Gr \wh{H}_*\Lambda$, which we call the \emph{level algebra structure} on $\wh{H}_*\Lambda$. In view of the inequality 
$$
\Cr(\Delta(X))\le \Cr(X),
$$
we also obtain an induced BV algebra structure on $\Gr \wh{H}_*\Lambda$, which we call the \emph{level BV algebra structure}.

\begin{proposition} \label{prop:zero-level-block}
The \emph{zero-level block} $\wh{\H}_*^{=0}\Lambda$ is isomorphic as a BV algebra to $H^{-*}(S^*M)$ with trivial BV-operator.
\end{proposition}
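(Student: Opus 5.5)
The plan is to treat the two halves of the statement separately: first identify the zero-level block with $H^{-*}(S^*M)$ as an algebra, and then show that the induced BV operator on this block vanishes.

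\emph{Algebra structure.} We choose the Hamiltonians defining $\wh{\H}_*\Lambda$ so that, near the unit cosphere bundle $S^*M$, they are autonomous and $C^2$-small, with a Morse–Bott critical manifold equal to $S^*M$ at action level $0$. Since $L=1$ and all nonconstant Reeb orbits have action in $\Z\setminus\{0\}$, the action window $(-\eps,\eps)$ only sees the constant orbits, i.e.\ the points of $S^*M$. A Morse perturbation by a small Morse function $f$ on $S^*M$ then reduces the filtered complex in this window to the Morse complex of $f$. The Salamon–Zehnder argument~\cite{SZ92}, already invoked in the proof of Proposition~\ref{prop:BV level-0}, shows that the relevant Floer trajectories are $t$-independent gradient lines. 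This identifies $\wh{\H}_*^{=0}\Lambda\simeq H^{-*}(S^*M)$ as graded vector spaces, with the degree shift arranged so that the grading matches the tables in \S\ref{sec:unit_sphere_bundle}; this is the standard identification of the zero-level part of Rabinowitz Floer homology~\cite{CHO-PD}.

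For products, the primary pair-of-pants product restricted to the zero-level block counts pairs of pants with all three ends at constant orbits. In the $C^2$-small regime these degenerate to Morse flow trees on $S^*M$, i.e.\ to the Morse-theoretic cup product. Hence the level product on $\wh{\H}_*^{=0}\Lambda$ agrees with the cup product on $H^*(S^*M)$ after regrading. This is the standard PSS/Morse-product comparison at level $0$, and the regrading is consistent with the unit $E$ sitting in degree $0$ and $H^d\alpha$ in degree $1-2n$.

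\emph{BV operator.} The vanishing of the BV operator on this block is exactly Proposition~\ref{prop:BV level-0}.

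\emph{Compatibility of structures and main obstacle.} We must also check that the level BV structure on $\Gr\wh{\H}_*\Lambda$, restricted to $k=0$, coincides with the structure induced on the filtered group $\wh{\H}_*^{(-\eps,\eps)}\Lambda$. For this we use the fact that products and $\Delta$ do not increase action, so that they descend to the associated graded compatibly with the window homology. The main obstacle I expect is making the degeneration of the pair-of-pants product to the cup product rigorous, which requires uniform energy and compactness control in the small-perturbation limit, together with signs. I would handle this by citing the corresponding statement in~\cite{CHO-PD} (or Uebele~\cite{Uebele-products}), where the zero-level part of Rabinowitz Floer homology is identified with $H^{-*}(S^*M)$ as a ring, rather than redoing the analysis.
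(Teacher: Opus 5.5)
Your proposal is correct and is essentially the paper's proof: the vanishing of the BV operator is Proposition~\ref{prop:BV level-0}, and the identification of the zero-level ring with $H^{-*}(S^*M)$ is cited from the literature rather than reproved, with the paper's reference being \cite[Proposition~2.17]{CO}. Your Morse--Bott reduction and flow-tree sketch are reasonable heuristics for why that cited statement holds, but the argument does not need them.
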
 

\begin{proof}
The vanishing of the BV operator was proved in Proposition~\ref{prop:BV level-0}. That the ring structure is given by that of $H^{-*}(S^*M)$ is the content of~\cite[Proposition~2.17]{CO}.
\end{proof}

Consider now an SC-manifold that has dimension $\ge 3$. Denote $y\in\wh{H}_*\Lambda$ the \emph{homological Uebele class}~\cite{Uebele-products}. 
The degree of $y$ is 
$$
\deg(y)=2n-1+\lambda,
$$
where $2n-1$ is the dimension of the Bott manifold of simple closed geodesics and $\lambda$ their common index. Its shifted degree is 
$$
|y|=\deg(y) - n = n+i-2.
$$
As a consequence, $\deg(y^k)=n+k(n+i-2)$ for all $k\in\Z$. 

The following is a reformulation of Uebele's theorem~\cite[Theorem~1.2]{Uebele-products} for the case in point. We work with coefficients in a field $\K$ of characteristic $2$ in the case $\lambda=0$ and $n$ even, and with arbitrary field coefficients otherwise. This is to ensure $\K$-orientability of the SC-manifold $M$.

\begin{theorem}[Level algebra structure, Uebele~{\cite[Theorem~1.2]{Uebele-products}}] \label{thm:level_algebra} Let $M$ be an SC-manifold of dimension $\ge 3$.
  

 (i) As a $\K$-vector space we have 
$$
\wh{\H}_*\Lambda \simeq \Gr \wh{\H}_*\Lambda.
$$

(ii) The $\K$-algebra $\Gr \wh{\H}_*\Lambda$ is isomorphic to 
$$
\K[y,y^{-1}]\otimes_\K H^{-*}(S^*M). 
$$
More precisely: 
\begin{itemize}
\item $\wh{\H}_*\Lambda$ and $\Gr \wh{\H}_*\Lambda$ are free $\K[y,y^{-1}]$-modules with respect to multiplication by the Uebele class $y$.
\item Any basis of the zero-level block $\wh{\H}_*^{=0}\Lambda$ provides a basis for this module and 
$$
\wh{\H}_*^{=k}\Lambda = y^k \wh{\H}_*^{=0}\Lambda. 
$$
\item The multiplication in $\Gr \wh{\H}_*\Lambda$ is given by 
$$
(y^k\alpha)\cdot (y^\ell\beta)=y^{k+\ell}(\alpha\beta)
$$
for $\alpha,\beta\in H^{-*}(S^*M)$. 
\end{itemize}
\end{theorem}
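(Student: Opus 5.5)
The proof rests on a Morse--Bott model for Rabinowitz Floer homology of the $1$-periodic Reeb flow on $S^*M$. With the SC-metric normalized to $L=1$, the closed Reeb orbits of period $k\in\Z\setminus\{0\}$ form Morse--Bott families, each diffeomorphic to $S^*M$. Together with the constant orbits at level $0$ they yield a spectral sequence, the action filtration spectral sequence, converging to $\wh{\H}_*\Lambda$. Its $E^1$-page is $\bigoplus_k \wh{\H}^{=k}_*\Lambda$, and each block is a degree-shifted copy of $H^{-*}(S^*M)$. Because of the local system issues we work with $\K$-orientable $M$ as in the statement. The Bott--Samelson theorem (Theorem~\ref{thm:Bott-Samelson}) implies that all orbits in the $k$-th family have the same index. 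Concretely, the $k$-th block is shifted by $k(2n-1+\lambda)-\dots$, and after the shift by $n$ the offset is exactly $k|y|$ with $|y|=n+i-2$. By Theorem~\ref{thm:Bott-Samelson-S*M} the blocks have the cohomology of the model CROSS.

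The first step is to prove (i). We show that the spectral sequence degenerates at $E^1$, so that $\wh{\H}_*\Lambda\cong\Gr\wh{\H}_*\Lambda$ as $\K$-vector spaces; over a field this means there are no extension problems. I expect this degeneration to be the main obstacle. I would try a lacunarity and degree argument first. The degrees of block $k$ lie in the window $[k|y|+1-2n,\,k|y|]$, and a differential has degree $-1$ and strictly lowers the action, so it runs from block $k$ to some block $k'<k$. Since $|y|\ge n-1$ this gap is not automatically large, so pure degree reasons may fail, in particular for $\R P^n$ where $\lambda=0$. The fallback is Uebele's perfectness argument. The Uebele class $y$ is a cycle at level $1$, represented by the fundamental class of the Bott manifold of simple orbits. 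Multiplication by $y$ is a filtered chain map that shifts the action by $1$ and induces an isomorphism of $E^1$-pages from block $k$ to block $k+1$. The same holds for $y^{-1}$ at level $-1$, and $y\cdot y^{-1}=1$ holds at level $0$.

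The second step handles the module structure and the basis. Since multiplication by $y$ commutes with the differentials of the spectral sequence and is an isomorphism between consecutive blocks, every differential out of or into block $k$ is a translate of one involving block $0$. Any nonzero differential would therefore appear in every block. For SC-manifolds of dimension at least $3$ we know the groups independently: on the nonnegative action range they form loop homology, which the Ziller/Bott computations for the model CROSS make Morse--Bott perfect with the expected total rank. This rules out such differentials. Alternatively one can use the splitting $\wh{\H}_*\Lambda=\ol{\H}_*\Lambda\oplus\ol{\H}^{1-2n-*}\Lambda$ for the negative range. Once degeneration holds, $\wh{\H}^{=k}_*\Lambda=y^k\wh{\H}^{=0}_*\Lambda$, and lifting a basis of the zero-level block gives a free $\K[y,y^{-1}]$-basis of both $\Gr\wh{\H}_*\Lambda$ and $\wh{\H}_*\Lambda$, by a filtration argument block by block.

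The third step identifies the multiplication. The product satisfies $\Cr(XY)\le\Cr(X)+\Cr(Y)$, so it induces the level product, and by Proposition~\ref{prop:zero-level-block} the level-$0$ product is the ring $H^{-*}(S^*M)$. For $\alpha,\beta$ at level $0$ we use associativity and commutativity of the product together with the fact that the degree of $y$ is even up to the twisted signs: $|y|=n+i-2$, which is even when needed. This gives $(y^k\alpha)(y^\ell\beta)=y^{k+\ell}\alpha\beta$ in $\Gr$, which yields the claimed isomorphism $\Gr\wh{\H}_*\Lambda\cong\K[y,y^{-1}]\otimes_\K H^{-*}(S^*M)$.
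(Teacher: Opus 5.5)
Your proposal has a genuine gap: the step everything else rests on is asserted, not proved. You claim that multiplication by the level-$1$ class $y$ induces an isomorphism of $E^1$-pages from block $k$ to block $k+1$ for every $k\in\Z$, and that there is a class $y^{-1}$ at level $-1$ with $y\cdot y^{-1}=1$.

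Subadditivity $\Cr(XY)\le\Cr(X)+\Cr(Y)$ only tells you that multiplication by $y$ shifts the filtration by one. It says nothing about what the induced map $\wh{\H}^{=k}_*\Lambda\to\wh{\H}^{=k+1}_*\Lambda$ is. To identify it you must compute the pair-of-pants product on the local Floer homology of the Morse--Bott families, including the passages $-1\to 0\to 1$ through the level-$0$ block $H^{-*}(S^*M)$. That computation \emph{is} the content of Uebele's theorem. Once it is granted, translation invariance of the differentials, freeness over $\K[y,y^{-1}]$, and the formula $(y^k\alpha)(y^\ell\beta)=y^{k+\ell}\alpha\beta$ are the easy part. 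Calling it ``Uebele's perfectness argument'' and using it as an intermediate lemma means you are citing the theorem itself, so you should cite it directly.

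Your degeneration step has further problems. The nonnegative-action part of $\wh{\H}_*\Lambda$ is not loop homology: its level-$0$ block is $H^{-*}(S^*M)$, not $\H_*M$. A rank count via Ziller's perfectness also needs $H_*\Lambda$ to be known. That holds for the CROSS and for SC-manifolds homotopy equivalent to them. But Bott--Samelson gives only the cohomology ring for the $\H P^d$ and $\Ca P^2$ models, whereas the statement is for arbitrary SC-manifolds.

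The paper does not reprove any of this. It applies Uebele's Theorem~1.2 directly, and the only thing to check is its hypothesis that contractible closed geodesics of the SC-metric have Morse index $>3-n$. This is exactly where $\dim M\ge 3$ enters, and your proposal never uses that assumption in an essential way.
\begin{itemize}
\item If $\lambda>0$, every closed geodesic has index at least $\lambda\ge 1>3-n$.
\item If $\lambda=0$ (the $\R P^n$ model), the simple geodesics have index $0$, which fails the bound when $n=3$. However, they are noncontractible. The contractible closed geodesics are even iterates, of index at least $n-1>3-n$.
\end{itemize}
So the short and correct route is to cite Uebele's theorem and verify this index-positivity condition.
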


\begin{proof}
The assumption in Uebele's theorem is that the underlying manifold admits an SC-metric for which the Morse indices of contractible closed geodesics are $>3-n$. We need to check that this condition holds as soon as $n\ge 3$. The minimal Morse index for an SC-metric is $\lambda$. If $\lambda>0$ then the condition obviously holds. If $\lambda=0$, the minimal Morse index of a contractible closed geodesic is $n-1$, the same as for the sphere of the same dimension, and the condition holds again. 
\end{proof}
 
\begin{remark} Theorem~\ref{thm:level_algebra} also holds in dimension $2$. 
Although one cannot apply Uebele's theorem anymore,
this can be proved as a consequence of, on the one hand, structural results for loop homology and loop cohomology due to Goresky-Hingston, and, on the other hand, the fact that Rabinowitz loop homology splits as a direct sum of reduced loop homology and reduced loop cohomology. See Theorems~\ref{thm:Rabinowitz_BV_S2_char_does_not_divide} and~\ref{thm:Rabinowitz_BV_S2_char_2} below. 
\end{remark}

\subsection{A conjecture}
We would like to propose the following conjecture in relation with the Bott-Samelson theorem. 
\begin{conjecture} \label{conjecture:RFH-SC}
For any field of coefficients, an SC-manifold has the same Rabinowitz loop homology BV algebra as its model CROSS. 
\end{conjecture} 

By the discussion in~\S\ref{sec:Bott-Samelson}, the conjecture obviously holds for $\R P^n$ because there is only one diffeomorphism type of SC-manifold with this model CROSS. Also by~\S\ref{sec:Bott-Samelson}, we expect the conjecture to hold for complex projective spaces and spheres as a consequence of the conjectural homotopy invariance of the BV algebra structure on Rabinowitz loop homology of simply connected manifolds.
In the case of quaternionic projective spaces and in the case of the Cayley plane the conjecture supports the idea that any SC-manifold is homotopy equivalent to its model CROSS. 

Conjecture~\ref{conjecture:RFH-SC} is also supported by the following result. 

\begin{proposition}
For any field of coefficients, an SC-manifold
has the same level Rabinowitz loop homology algebra structure as its model CROSS. 
\end{proposition}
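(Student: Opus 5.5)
The statement follows by combining Theorem~\ref{thm:level_algebra} with Theorem~\ref{thm:Bott-Samelson-S*M}. The plan is to show that the level algebra $\Gr\wh{\H}_*\Lambda$ depends only on two pieces of data, both of which coincide for an SC-manifold and its model CROSS: the ring $H^{-*}(S^*M)$, and the degree of the Uebele class $y$.

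The first step is to apply Theorem~\ref{thm:level_algebra} to both $M$ and its model CROSS $M_0$, each with its SC-metric normalized so that $L=1$. This assumes $n\ge 3$; for $n=2$ we rely on the remark following that theorem. We obtain algebra isomorphisms
$$
\Gr\wh{\H}_*\Lambda M\simeq \K[y,y^{-1}]\otimes_\K H^{-*}(S^*M),\qquad \Gr\wh{\H}_*\Lambda M_0\simeq \K[y_0,y_0^{-1}]\otimes_\K H^{-*}(S^*M_0).
$$
In each case the product is $(y^k\alpha)(y^\ell\beta)=y^{k+\ell}\alpha\beta$, with $y$ central. The zero-level factor is identified with $H^{-*}(S^*M)$ as an algebra by Proposition~\ref{prop:zero-level-block}.

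The second step is to match the degrees of the Uebele classes, namely $|y|=n+i-2$ with $i=\lambda+1$. By the Bott--Samelson discussion in \S\ref{sec:Bott-Samelson}, the common index $\lambda$ of the closed geodesics of $M$ equals that of $M_0$, and $n$ is the same. Hence $|y|=|y_0|$. The level grading by $k$ also matches, since $\wh{\H}^{=k}_*\Lambda=y^k\wh{\H}^{=0}_*\Lambda$ in both cases.

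The third step, which I expect to be the main point, is to produce a degree-preserving ring isomorphism $H^{-*}(S^*M)\simeq H^{-*}(S^*M_0)$ with coefficients in an arbitrary field $\K$. Theorem~\ref{thm:Bott-Samelson-S*M} only gives this for integral cohomology rings.
\begin{itemize}
\item In the non-simply connected case ($\lambda=0$), $M$ is diffeomorphic to $\R P^n$, so $S^*M\cong S^*M_0$ and the isomorphism holds over every field.
\item In the remaining cases, I would rerun the argument from the proof of Theorem~\ref{thm:Bott-Samelson-S*M} directly with $\K$ coefficients. By \S\ref{sec:Bott-Samelson}, $H^*(M;\K)\simeq H^*(M_0;\K)$ as rings for every field. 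The multiplicative Leray--Serre spectral sequence of $S^{n-1}\to S^*M\to M$ is determined by this ring together with the Euler class, i.e.\ the transgression $d_n(\alpha)=\chi(M)\,\omega$. Since $\chi(M)=\chi(M_0)$, the two spectral sequences are isomorphic, and $M$ is simply connected so the local systems are trivial.
\item This yields the explicit descriptions (a) and (b) of \S\ref{sec:unit_sphere_bundle} for both manifolds. Multiplicative extensions are resolved identically: for instance $\alpha^2=0$ holds for degree reasons, since $2(1-n)<1-2n$.
\item When $\chi(M)$ is invertible in $\K$, the ring $H^*(S^*M)$ is still determined by the same presentation, computed uniformly from the spectral sequence data, so again $M$ and $M_0$ agree.
\end{itemize}

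Finally, we send $y\mapsto y_0$ and use the ring isomorphism from the third step. This gives an isomorphism of level algebras compatible with both the $\Z$-grading and the level grading, which proves the proposition.
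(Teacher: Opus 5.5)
Your proposal is correct and follows the paper's route. Like the paper, you combine Theorem~\ref{thm:Bott-Samelson-S*M} (which you rightly make explicit over an arbitrary field via the Leray--Serre spectral sequence) with Uebele's level algebra Theorem~\ref{thm:level_algebra}, and you treat dimension $2$ separately. One slip needs fixing: the inequality $2(1-n)<1-2n$ is false, so $\alpha^2=0$ does not follow from the degree lying outside the range. For simply connected models with $n\ge 3$ it holds because $H^{2n-2}(S^*M;\K)=0$, since the only candidate $E_\infty$-terms, $H^{n-1}(M)\otimes\alpha$ and $H^{2n-2}(M)$, both vanish. For $n=2$ in characteristic $2$ the claim $\alpha^2=0$ is actually false, because $S^*S^2\cong\R P^3$; this is harmless only because a simply connected SC surface is diffeomorphic to $S^2$.
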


\begin{proof}
In dimension $\ge 3$, this is a direct consequence of Theorems~\ref{thm:Bott-Samelson-S*M} and~\ref{thm:level_algebra}: the algebra structure at energy zero is that of the cohomology of $S^*M$, and this determines the algebra structure on $\Gr \wh{\H}_*\Lambda$ through the isomorphism 
$\Gr \wh{\H}_*\Lambda\simeq 
\K[y,y^{-1}]\otimes_\K H^{-*}(S^*M)$. 

In dimension $2$ there are two cases to consider. If the model CROSS is $\R P^2$, the SC-manifold is diffeomorphic to it and there is nothing to prove. If the model CROSS is $S^2$, this follows from Theorem~\ref{thm:Bott-Samelson-S*M} and the proofs of Theorems~\ref{thm:Rabinowitz_BV_S2_char_does_not_divide} and~\ref{thm:Rabinowitz_BV_S2_char_2}.
\end{proof}

\section{Loop BV algebras for $\C P^d$, $\H P^d$, $\Ca P^2$}  \label{sec:BV-CHCa}

In this section we compute the loop homology BV algebra, Rabinowitz loop homology BV algebra, and loop cohomology BV algebra for these CROSS, with arbitrary field coefficients. We rely on the computation of the loop homology BV algebra with $\Z$-coefficients by Menichi~\cite{Menichi} for $\C P^1=S^2$ and $\H P^1=S^4$, by Hepworth~\cite{Hepworth} and Chataur-Le Borgne~\cite{Chataur-Le_Borgne} for $\C P^d$, and by \v{C}adek-Moravec~\cite{Cadek-Moravec} for $\H P^d$ and $\Ca P^2$.  

Ultimately, the reason to discuss Rabinowitz loop homology is to infer the BV structure on loop cohomology from Uebele's theorem and the BV structure in loop homology. 

\subsection{Loop homology BV algebra with field coefficients}


Let $M$ be one of the CROSS $\C P^d$ or $\H P^d$ with $d\ge 1$, or $\Ca P^2$. We set by convention $i=2,4,8$ for $\C P^d$, $\H P^d$, and $\Ca P^2$ respectively, and also $d=2$ for $\Ca P^2$. The dimension of $M$ is $n=id$.  

\subsubsection{Integral coefficients} The structure of the loop 
homology BV algebra with integral coefficients has been 
computed by Menichi, Hepworth, Chataur-Le Borgne, \v{C}adek-Moravec.

\begin{theorem}[Menichi~{\cite[Theorems~17 and~25]{Menichi}}, Hepworth~{\cite[Theorem~A]{Hepworth}}, Chataur-Le Borgne~{\cite[Theorem~0.1]{Chataur-Le_Borgne}}, \v{C}adek-Moravec~{\cite[Theorem~1.1]{Cadek-Moravec}}] 
\label{thm:loop_BV_CROSS_over_Z}
Let $M$ be one of the CROSS $\C P^d$, $\H P^d$, or $\Ca P^2$. We have an isomorphism of BV algebras 
$$
\H_*(\Lambda;\Z)\cong \Z[a,x,y]/\la a^{d+1},x^2,xa^d, (d+1) ya^d\ra,
$$
where $|a|=-i$, $|x|=-1$, $|y|=i(d+1)-2$, and the BV operator acts by
$$
\Delta(y^k a^\ell)=0
$$
for $k\ge 0$ and $0\le \ell\le d$, 
\begin{equation} \label{eq:DeltaykaxellHPdOP2}
\Delta(y^k x a^\ell) = (-\ell + k + (k+1)d)y^k a^\ell
\end{equation}
for $k\ge 0$ and $0\le \ell\le d$ in the case of $\H P^d$ and $\Ca P^2$, respectively for $k\ge 0$ and $1\le \ell\le d$ in the case of $\C P^d$, and
\begin{equation}
  \Delta(y^k x)=(k+(k+1)d)y^k + \frac{(d+1)d}{2} y^{k+1}a^d
\end{equation}
for $\C P^d$. (Note that the term $\frac{(d+1)d}{2} y^{k+1}a^d$ vanishes if $d$ is even.) \qed
\end{theorem}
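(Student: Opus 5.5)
This statement is a compilation of known computations, marked \qed in the paper, so the plan is mainly to assemble the cited results and check that their conventions agree with ours. The main steps are as follows.

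\emph{Ring structure.} Since $M$ is simply connected with $H^*(M;\Z)=\Z[h]/h^{d+1}$, I would invoke the Cohen--Jones isomorphism of $\H_*(\Lambda;\Z)$ with Hochschild cohomology $HH^*(C^*M,C^*M)$. Because $M$ is formal, this can be computed from the truncated polynomial algebra $\Z[h]/h^{d+1}$. The standard small periodic resolution of a truncated polynomial ring gives generators
\begin{itemize}
\item $a$, dual to $h$, of degree $-i$;
\item $x$, of degree $-1$, the odd class coming from the fibre direction;
\item $y$, of degree $i(d+1)-2$, the periodicity generator coming from $h^{d+1}=0$.
\end{itemize}
The relations $a^{d+1}=0$, $x^2=0$, $xa^d=0$ and $(d+1)ya^d=0$ arise from the differential, which is multiplication by $(d+1)h^d$. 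This is exactly where the Euler characteristic $\chi(M)=d+1$ enters. Alternatively, for $\C P^d$ one can run the Cohen--Jones--Yan spectral sequence for the fibration $\Omega M\to\Lambda\to M$; it collapses, and multiplicative extensions are settled by comparing with the Hochschild model. For $\C P^d$ this is Hepworth's and Chataur--Le Borgne's computation; for $\H P^d$ and $\Ca P^2$ it is \v{C}adek--Moravec's.

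\emph{BV operator.} I would use the compatibility of Connes' operator $B$ with $\Delta$ under the Cohen--Jones isomorphism, or more geometrically the circle action on $S^1$-equivariant models. First, $\Delta(y^ka^\ell)=0$ follows by degree and parity: these classes are represented by $S^1$-invariant cycles, namely critical manifolds of iterated great circles, on which the rotation acts trivially on homology. Next, on $y^kxa^\ell$ the operator $\Delta$ is determined by a single coefficient. That coefficient is computed via the BV identity~\eqref{eq:Gerstenhaber-bracket-intro} together with the brackets $\{x,a\}$, $\{x,y\}$ and $\{x,x\}$, which are themselves read off from the Hochschild model; this produces the factor $-\ell+k+(k+1)d$.

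\emph{Main obstacle.} I expect the extra term $\frac{(d+1)d}{2}y^{k+1}a^d$ for $\C P^d$ to be the hardest part. It reflects a genuine non-formality of the circle action: Hepworth detects it through the $S^1$-action on the unit tangent bundle and the Lie group structure of $SU(d+1)$. For $\H P^d$ and $\Ca P^2$ there is no such term because $y^{k+1}a^d$ has the wrong parity relative to $y^kx$ there. Finally, I would reconcile sign and grading conventions across the four references. As a check, I would verify that $\Delta^2=0$ and the 7-term relation~\eqref{eq:7-term} hold for the stated formulas.
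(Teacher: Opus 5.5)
Your top-level plan is the paper's. The statement is imported from the cited works without proof, and the only work the paper itself does is the reconciliation in the remark immediately after it. That reconciliation, however, is not just a matter of sign and grading conventions, and your plan misses its one substantive point.

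\textbf{The needed change of generator.} For $\C P^d$ with $d$ odd, the formulas as stated agree with Hepworth's only after replacing the generator $y$ by $y+\frac{d+1}{2}y^2a^d$. For Chataur--Le~Borgne one must also flip the sign of $x$. The substitution $y\mapsto y+\frac{d+1}{2}y^2a^d$ is a ring automorphism of $\Z[a,x,y]/\langle a^{d+1},x^2,xa^d,(d+1)ya^d\rangle$. It fixes $y^kx$, because $xa^d=0$. However, it changes the coefficient of the torsion class $y^{k+1}a^d$ in $\Delta(y^kx)$ by $k\frac{d+1}{2}$ modulo $d+1$. So the $k$-independent coefficient $\frac{(d+1)d}{2}$ in the statement holds only for a particular choice of $y$, and that choice has to be made explicitly. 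Your proposed checks cannot detect this: both normalizations satisfy $\Delta^2=0$ and the 7-term relation, since they are BV structures related by this automorphism.

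\textbf{Errors in your supporting heuristics.}
\begin{itemize}
\item For $\C P^d$, $\Delta(y^ka^\ell)=0$ is not forced by degree. The degree $|y^ka^\ell|+1$ contains the free class $y^kxa^{\ell-1}$ if $\ell\ge 1$, and $y^{k+1}xa^{d-1}$ if $\ell=0$. The degree argument works only for $\H P^d$ and $\Ca P^2$.
\item Rotation invariance covers constant loops and full critical manifolds. It does not cover $y^ka^\ell$ with $k,\ell\ge 1$: their natural Morse representatives lie over the part of the critical manifold $SM$ above a linear subspace of $M$, which the geodesic flow does not preserve.
\item The absence of a torsion term for $\H P^d$ and $\Ca P^2$ is a matter of degree, not parity. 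Both degrees are even, and $|y^{k+1}a^d|-|\Delta(y^kx)|=i-2\neq 0$.
\item Connes' operator on the formal model $\Z[h]/h^{d+1}$ is not a reliable source for $\Delta$ beyond the rational (free) part. Menichi showed that for $S^2=\C P^1$ over $\F_2$ the loop homology BV algebra is not isomorphic to the Hochschild BV algebra of $H^*(S^2;\F_2)$, even though the underlying Gerstenhaber algebras agree.
\end{itemize}
In fact, for $d=1$ the mod~$2$ reduction of the torsion term $\frac{(d+1)d}{2}y^{k+1}a^d$ is exactly the summand $au^{m+1}$ in the formula for $\Delta(au^m)$, $m$ odd, in Theorem~\ref{thm:Menichi_CP1}. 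That term therefore has to be imported from \cite{Hepworth,Chataur-Le_Borgne,Menichi}; it cannot be rederived from the formal model.
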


\begin{remark} These formulas directly specialize to those in~\cite{Menichi} for $\C P^1$ and $\H P^1$, and to those in~\cite{Cadek-Moravec} for $\H P^d$ and $\Ca P^2$. They also specialize directly to those in~\cite{Hepworth} in the case $d$ even. In the case $d$ odd they are equivalent to those in~\cite{Hepworth} up to replacing the generator $y$ by $y+\frac{d+1}2y^2a^d$. The formulas in~\cite{Hepworth} and~\cite{Chataur-Le_Borgne} are equivalent up to changing the sign of the degree $-1$ generator. 
\end{remark}

\begin{remark}[Geometric generators] \label{rmk:geometric-generators} Goresky-Hingston~\cite{Goresky-Hingston} have given a presentation of the ring structure using generators that are described geometrically from Morse theory. When the CROSS are endowed with the homogeneous metric, all the geodesics are closed and the energy functional on their free loop space is perfect~\cite{Ziller1977}. The critical manifolds other than the constant loops are diffeomorphic to the unit sphere bundle $SM$, since each geodesic is uniquely determined by its speed and starting point. Suitable cycles in $SM$ give rise to cycles in the homology of the free loop space via their descending manifolds.  Specifically, we consider the following homology classes. 
\begin{itemize}
\item $H$ is the class of a projective hyperplane, represented by constant loops.
  It has shifted degree $|H|=-i$. 
\item $\Theta$ is the class represented by the descending manifold of the entire unit sphere bundle, seen as the first critical Morse-Bott level with nonzero energy for the homogeneous metric.
  It has shifted degree $|\Theta|=id-1+\lambda=n+i-2$. 
\item  $X$ is the class represented by the descending manifold of a section of this unit sphere bundle restricted to a projective hyperplane.
  It has shifted degree $|X|=-1$.
\end{itemize}
Then $\H_*(\Lambda;\Z)\cong \Z[H,X,\Theta]/\la H^{d+1},X^2,XH^d, (d+1) \Theta H^d\ra$. 

In the case of $\H P^d$ and $\Ca P^2$ the loop homology is, in each degree, either free of rank 1, or $(d+1)$-torsion isomorphic to $\Z/(d+1)\Z$. The generators $H,X,\Theta$ live in torsion-free degrees, just like the generators $a,x,y$ from Theorem~\ref{thm:loop_BV_CROSS_over_Z}, and therefore coincide with the latter up to sign. As a consequence, Theorem~\ref{thm:loop_BV_CROSS_over_Z} gives the action of the BV operator on the geometric generators up to sign. 

In the case of $\C P^d$ the homology is free of rank $1$ in all degrees other than $k(n+i-2)=|y^k|=|\Theta^k|$, $k\ge 1$, and in these special degrees it has a free part generated by $y^k$, and a $(d+1)$-torsion part generated by $y^{k+1}a^d$. As a consequence, the geometric generators $H,X$, which live in torsion-free degrees, coincide up to sign with $a,x$, whereas the geometric generator $\Theta$ coincides up to sign with $y+\beta y^2a^d$ for some $\beta\in\Z/(d+1)\Z$. We do not know how to calculate the constant $\beta$. As a consequence, we do not know how to express the BV operator in terms of the geometric generators (specifically, we cannot calculate it on classes $\Theta X$).  
\end{remark}

\begin{remark}[Non-uniqueness of the generators for $\C P^d$]
Let $a,x,y$ be the generators from Theorem~\ref{thm:loop_BV_CROSS_over_Z} for $\C P^d$. The map $\Psi:\H_*(\Lambda;\Z)\to \H_*(\Lambda;\Z)$, $\Psi(a)=a$, $\Psi(x)=x$, $\Psi(y)=y+y^2a^d$ is an algebra automorphism: we have $\Psi^s(y)=y+sy^2a^d$ for any $s\in\Z$ and $\Psi^{d+1}=\mathrm{Id}$. The generator $y$ can therefore be traded for any other element of the form $y^{(s)}=\Psi^s(y)=y+sy^2a^d$ for $s\in\Z/(d+1)\Z$. Note that $\Delta(x)=-d+\frac{d+1}2y^{(s)}a^d$ for any $s$, but the coefficients in the formula for $\Delta((y^{(s)})^kx)$ depend on $s$ for $k\ge 1$.   
\end{remark}

\begin{remark}[Coefficients in a field $\K$ whose characteristic divides $d+1$] \label{rmk:geometric-generators-U}
In this case there is an additional geometric generator described as follows: 
\begin{itemize}
\item $U$ is the class represented by the descending manifold of a singular section of the unit sphere bundle. Such a section, and also the corresponding descending manifold, define cycles with $\K$-coefficients. The shifted degree of $U$ is $|U|=i-1$. 
\end{itemize} 
By restricting the section to a hyperplane we find the relation $X=UA$. 
\end{remark}

\vfill \pagebreak 

\begin{center}
TABLE FOR $\H_*(\Lambda;\Z)$ IN THE CASE OF $\H P^d$ AND $\Ca P^2$ \\
$(*)$ stands for torsion, $\times$ represents a homology group that is zero with $\Z$-coefficients, but nonzero with torsion coefficients, see below
\end{center}
{\tiny
$$
\xymatrix
@C=3pt
@R=1pt
{
\vdots                &&                          & &           &                            & &                  &                   & \cdots \\
                         &&                          & &           &                             & &                  &   y^2          &          \\
                         &&                          & &           &                             & &                  &                   & \ar@{.>}[ul] \\
                         &&                          & &           &                             & &                  &                   & \\
                         &&                          & &           &                             & &                  & y^2 a         & \\
                         &&                          & &           &                             & &                  &                    &  \ar@{.>}[ul] \\
                         &&                          & &           &                             & &                  & \vdots          & \\
                         &&                          & &           &                             & &                  & y^2 a^{d-1} &  \\
                         &&                          & &           &                             & &                  &                     &      \ar@{.>}[ul]   \\
                         &&                          & &           &                             & & \times       &                      &        \\
(d+1)i-2+i-2       &&                          & &           &                             & &                  & y^2 a^d (*)  & \\
(d+1)i-2             &&                          & &           & y                          & &                  &                      &        \\
(d+1)i-3             &&                          & &           &                             & & yx \ar@{.>}[ull] &    \\
                         &&                          & &           &                             & &  &    \\
                         &&                          & &           & ya                        & &  & \\
                         &&                          & &           &                             & & yxa \ar@{.>}[ull] & \\
                         &&                          & &           & \vdots                   & & \vdots &  \\
2i-2                   &&                          & &           & ya^{d-1}                & &  & \\
2i-3                   &&                          & &           &                               & & yxa^{d-1}\ar@{.>}[ull] & \\
                         &&                          & & \times &                                & & & \\
i-2                     &&                          & &            & ya^d (*)                  & & & \\
0                       &&      1                  & &            &                              & & & \\
-1                      &&                          & &  x \ar@{.>}[ull]   & & & & \\
                         &&                          & &                            & & & & \\
-i                       &&      a                  & &                              & & & & \\
-i-1                    &&                          & &  xa \ar@{.>}[ull]    & & & & \\
                         &&  \vdots               & &  \vdots                   & & & & \\
-i(d-1)                &&  a^{d-1}            & &  &                             & & & \\
-i(d-1)-1            &&                           & & xa^{d-1} \ar@{.>}[ull] & & & & \\
                         &&                          & & &                          & & & & \\
 -id                    &&  a^d                   & & &                          & & & & 
}
$$
}

\vfill \pagebreak 

\begin{center}
TABLE FOR $\H_*(\Lambda;\Z)$ IN THE CASE OF $\C P^d$ \\
$(*)$ stands for torsion, $\times$ represents a homology group that is zero with $\Z$-coefficients, but nonzero with torsion coefficients, see below
\end{center}

{\tiny
$$
\xymatrix
@C=3pt
@R=1pt
{
\vdots                &&                          & & &                            		& &                  & \vdots \\
                         &&                          & & &                             		& &                  &   y^2 \\
                         &&                          & & &                             		& &                  &    \\
                         &&                          & & &                             		& &                  & y^2a   \\
                         &&                          & & &                             		& &                  & \vdots   \\
                         &&                          & & &                             		& &                  & y^2a^{d-1}   \\
                         &&                          & & &                             		& & \times       &                              \\
2d                     &&                          & & & y                          		& &                  & y^2a^d (*)\\
                         &&                          & & &                             		& & yx \ar@{.>}[ull] \ar@{.>}[ur] &    \\
                         &&                          & & & y a                       		& &  & \\
                         &&                          & & &  \vdots                      		& & \vdots & \\                         
                         &&                          & & & 		                  		& & yx a^{d-2} \ar@{.>}[ull] &  \\
2                        &&                          & & & y a^{d-1}             		& &  & \\
1                        &&                          & & \times &                 		& & yx a^{d-1}\ar@{.>}[ull] & \\
0                       &&      1                   & &  & y a^d (*)              		& & & \\
-1                      &&                           & &  x \ar@{.>}[ull] \ar@{.>}[ur] &  & & & \\
-2                      &&      a                  & &  &          				& & & \\
                        &&       \vdots           & &  \vdots      &          			& & & \\
                         &&                          & &  x a^{d-2} \ar@{.>}[ull]&          & & & \\
-2(d-1)                &&  a^{d-1}             & & &                          			& & & \\
                         &&                           & &  xa^{d-1}\ar@{.>}[ull] & 	& & & \\
 -2d                    &&  a^d                   & & &                          			& & & \\
}
$$
}

We now compute the loop homology BV algebra with field coefficients.

For any field $\K$ we have an inclusion $\H_*(\Lambda;\Z)\otimes_\Z \K\hookrightarrow \H_*(\Lambda;\K)$, which is a morphism of BV algebras~\cite[Proposition~9]{Menichi}. Here the term $\H_*(\Lambda;\Z)\otimes_\Z \K$ inherits its BV-structure from $\H_*(\Lambda;\Z)$. 

\subsubsection{Coefficients in a field $\K$ whose characteristic does not divide the Euler characteristic $\chi(M)$} 
Let $\K$ be a field whose characteristic does not divide $d+1$ (in particular, characteristic $0$ is allowed). Then $\mathrm{Tor}^\Z(\H_*(\Lambda;\Z),\K)=0$ and, by the universal coefficient theorem, the inclusion $\H_*(\Lambda;\Z)\otimes_\Z \K\hookrightarrow \H_*(\Lambda;\K)$ is an isomorphism. We therefore obtain from Theorem~\ref{thm:loop_BV_CROSS_over_Z}:

\begin{corollary} 
\label{cor:loop_BV_CROSS_over_K_char_does_not_divide_d+1}
Let $M$ be one of the CROSS $\C P^d$, $\H P^d$, or $\Ca P^2$, and let $\K$ be a field whose characteristic does not divide $d+1$. We have an isomorphism of BV algebras 
$$
\H_*(\Lambda;\K)\cong \K[a,x,y]/\la a^{d+1},x^2,xa^d, ya^d\ra,
$$
where $|a|=-i$, $|x|=-1$, $|y|=i(d+1)-2$, and the BV operator acts by
$$
\Delta(y^k a^\ell)=0,\qquad \Delta(y^k x a^\ell) = (-\ell + k + (k+1)d)y^k a^\ell,
$$
for $k\ge 0$ and $0\le \ell\le d$. \qed
\end{corollary}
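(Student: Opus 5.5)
The corollary follows by base change from Theorem~\ref{thm:loop_BV_CROSS_over_Z}, using the remarks just before it. The plan has three steps.

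\textbf{Step 1: the comparison map is an isomorphism of BV algebras.} The inclusion $\H_*(\Lambda;\Z)\otimes_\Z\K\hookrightarrow\H_*(\Lambda;\K)$ is a morphism of BV algebras by \cite[Proposition~9]{Menichi}. I will show it is bijective using the universal coefficient theorem
$$
0\to H_*(\Lambda;\Z)\otimes\K\to H_*(\Lambda;\K)\to \mathrm{Tor}^\Z(H_{*-1}(\Lambda;\Z),\K)\to 0 .
$$
By Theorem~\ref{thm:loop_BV_CROSS_over_Z} (compare the tables), $\H_*(\Lambda;\Z)$ is finitely generated in each degree. Its torsion is generated by the classes $y^ka^d$, $k\ge1$, each killed by $d+1$. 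Hence the torsion is a direct sum of cyclic groups of order dividing $d+1$. If $p=\mathrm{char}\,\K$ does not divide $d+1$, then $d+1$ is invertible in $\K$. Therefore $\mathrm{Tor}^\Z(\Z/m,\K)=\K/m\K=0$ for every $m\mid d+1$, and the Tor term vanishes in all degrees. So the comparison map is an isomorphism, and $\H_*(\Lambda;\K)\cong\H_*(\Lambda;\Z)\otimes_\Z\K$ as BV algebras.

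\textbf{Step 2: the algebra presentation.} Tensoring a presentation with $\K$ gives
$$
\Z[a,x,y]/\la a^{d+1},x^2,xa^d,(d+1)ya^d\ra\otimes_\Z\K\cong \K[a,x,y]/\la a^{d+1},x^2,xa^d,(d+1)ya^d\ra .
$$
Since $d+1$ is a unit in $\K$, the ideal generated by $(d+1)ya^d$ equals the ideal generated by $ya^d$. This gives the stated ring.

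\textbf{Step 3: the BV operator.} The BV operator is obtained by extending the integral formulas $\K$-linearly. For $\H P^d$ and $\Ca P^2$ the formulas transfer verbatim. For $\C P^d$ two points need checking.
\begin{itemize}
\item The extra term $\frac{(d+1)d}{2}y^{k+1}a^d$ in $\Delta(y^kx)$ vanishes, because $y^{k+1}a^d=0$ in the new ring. What remains is $(k+(k+1)d)y^k$, which is the $\ell=0$ case of the uniform formula.
\item The formula for $1\le\ell\le d$ is unchanged. At $\ell=d$ both sides vanish, since $xa^d=0$ and $ya^d=0$ (and also $a^d$ appears times $y^k$ only for $k=0$, which is consistent).
\end{itemize}
This yields the uniform formula for all $0\le\ell\le d$.

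The only step with content is the vanishing of the Tor term, which amounts to knowing that all integral torsion is $(d+1)$-torsion. This is read off directly from the integral presentation. The rest is bookkeeping.
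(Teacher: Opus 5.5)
Your proof is correct and is the paper's own argument. The universal coefficient theorem, with $\mathrm{Tor}^\Z(\H_*(\Lambda;\Z),\K)=0$ because all integral torsion is $(d+1)$-torsion, together with Menichi's result that the coefficient-change inclusion is a BV morphism, transfers Theorem~\ref{thm:loop_BV_CROSS_over_Z} to $\K$. Your Steps 2 and 3 make explicit the bookkeeping the paper leaves implicit, namely $(d+1)ya^d$ becoming $ya^d$ and the extra $\C P^d$ term $\frac{(d+1)d}{2}y^{k+1}a^d$ vanishing.
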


\subsubsection{Coefficients in a field $\K$ whose characteristic divides the Euler characteristic $\chi(M)$}
The case $\C P^1=S^2$ is special and was computed by Menichi with $\Z/2$-coefficients. 

\begin{theorem}[{Menichi~\cite[Theorem~24]{Menichi}}] 
\label{thm:Menichi_CP1}
Let $M=\C P^1=S^2$ and let $\K$ be a field of characteristic $2$. We have an isomorphism of BV algebras 
$$
\H_*(\Lambda;\K)=\K[a,u]/\langle a^2\rangle,
$$
where $|a|=-2$, $|u|=1$, and the BV operator acts by 
$$
\Delta (u^k)=0
$$
for $k\ge 0$, and 
$$
\Delta (au^k)=\left\{\begin{array}{rl} 0, & k \mbox{ even}, \\ u^{k-1}+au^{k+1}, & k \mbox{ odd}. \end{array} \right.
$$\qed
\end{theorem}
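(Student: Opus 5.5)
The plan is to compute the ring structure first and then determine the BV operator from the integral computation in Theorem~\ref{thm:loop_BV_CROSS_over_Z}, together with the BV identities and degree arguments.

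\textbf{Ring structure.} For $d=1$, Theorem~\ref{thm:loop_BV_CROSS_over_Z} gives
$$
\H_*(\Lambda;\Z)=\Z[a,x,y]/\la a^2,x^2,xa,2ya\ra.
$$
By the universal coefficient theorem, $\H_*(\Lambda;\K)$ contains $\H_*(\Lambda;\Z)\otimes\K$. In addition there is one new class in each degree $|y^k|-1$ ($k\ge1$) coming from $\mathrm{Tor}(\Z/2,\K)$; these are the $\times$ entries in the table.

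\begin{itemize}
\item I would take $u$ to be the new class in degree $|y|-1=1$. Geometrically it is the class $U$ of Remark~\ref{rmk:geometric-generators-U}, a descending manifold of a singular section of $SS^2$, which exists only mod~$2$.
\item I would then show $u^2=y$ and $au=x$ up to the allowed changes of generators. The relation $X=UA$ of Remark~\ref{rmk:geometric-generators-U} gives the second identity.
\item To get $u^2=y$, I would use a Morse-theoretic or spectral-sequence argument. For instance, the Cohen--Jones--Yan spectral sequence $H^{-*}(S^2)\otimes H_*(\Omega S^2)$ with $\K$-coefficients collapses. Since $H_*(\Omega S^2;\K)=\K[w]$ with $|w|=1$, this identifies $u$ with the polynomial generator and gives $u^k\neq0$ for all $k$.
\item A dimension count in each degree (one class from $u^k$ and one from $au^k$) then yields $\K[a,u]/\la a^2\ra$ as an algebra.
\end{itemize}

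\textbf{BV operator.} The operator $\Delta$ commutes with the inclusion $\H_*(\Lambda;\Z)\otimes\K\hookrightarrow\H_*(\Lambda;\K)$. For $d=1$, the integral formula for $\Delta(y^kx)$ reduces mod~$2$ to
$$
\Delta(y^kx)=(2k+1)y^k+y^{k+1}a=y^k+ay^{k+1}.
$$
With $y=u^2$ and $x=au$, this reads $\Delta(au^{2k+1})=u^{2k}+au^{2k+2}$, which is the odd case of the statement. Likewise $\Delta(y^ka^\ell)=0$ gives $\Delta(u^{2k})=0$ and $\Delta(au^{2k})=0$.

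The remaining cases are the odd powers $u^{2k+1}$. Here I would use that, in characteristic $2$, the Gerstenhaber bracket satisfies $\{u,u\}=2\Delta(u)u$-type identities, so the BV relation gives $\Delta(u^2)=2u\Delta(u)+\{u,u\}=\{u,u\}$. I would combine this with a degree argument: $\Delta(u)$ has degree $2$, so
$$
\Delta(u)=\beta u^2+\gamma au^4 .
$$
To pin this down I would use the relation $a\Delta(u)=\Delta(au)-(\Delta a)u-\{a,u\}$ together with the 7-term relation~\eqref{eq:7-term} applied to $a,u,u$, and expect this to force $\beta=\gamma=0$. Then $\Delta(u^{2k+1})=\Delta(u\cdot u^{2k})$ follows from the bracket being a derivation, by induction on $k$.

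\textbf{Main obstacle.} The hard step is the value of $\Delta(u)$, since $u$ is pure mod-$2$ torsion and invisible to the integral computation. If the algebraic constraints do not determine it, I would compute $\Delta(u)$ geometrically. Following Menichi, one would pass to the Hochschild cohomology $HH^*(\K[h]/h^2,\K[h]/h^2)$, using formality of $S^2$ over $\F_2$. There the BV operator is the dual of Connes' operator transported by the Frobenius structure, and an explicit computation on the normalized cochain complex of the truncated polynomial algebra shows $\Delta(u)=0$.
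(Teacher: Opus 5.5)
The paper gives no proof of this statement; it is quoted from Menichi, so your argument has to stand on its own. The ring computation is fine: the Cohen--Jones--Yan spectral sequence collapses, and $H_*(\Omega S^2;\K)=\K[w]$. The BV part, however, has one repairable gap and one essential gap.

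\emph{The normalization $y=u^2$.} The spectral sequence only shows that the reduction $\bar y$ of the integral generator satisfies $\bar y=u^2+c\,au^4$, with $c$ undetermined. ``Allowed changes of generators'' cannot remove $c$, for two reasons.
In characteristic $2$ one has $(u+\gamma au^3)^2=u^2$, so $u^2$ does not depend on the choice of $u$.
On the integral side, replacing $y$ by $y+y^2a$ changes the formula for $\Delta(y^kx)$ when $k$ is odd.
So reducing the integral formula only gives $\Delta(au^{2k+1})=u^{2k}+(1+kc)\,au^{2k+2}$.

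This is repairable. Use only $\Delta(a)=0$, $\Delta(u^2)=0$, and $\Delta(au)=1+au^2$; the last is the case $k=0$, which does not see $c$ because $\bar y a=u^2a$. Since the bracket is a derivation and $\{u,u\}=\Delta(u^2)=0$, you get $\Delta(u^k)=ku^{k-1}\Delta(u)$ and $\Delta(au^k)=k(u^{k-1}+au^{k+1})$ whatever $\Delta(u)$ is. Comparing with the integral formula then forces $c=0$ a posteriori.

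\emph{The value of $\Delta(u)$.} This is the actual content of the theorem, and nothing in your proposal determines it. Write $\Delta(u)=\delta_1u^2+\delta_2au^4$.
In characteristic $2$, the 7-term relation for $(a,u,u)$ reduces to $0=0$.
Your identity for $a\Delta(u)$ merely defines $\{a,u\}$.
In fact every pair $(\delta_1,\delta_2)$ gives a BV operator: the difference between two such operators is a derivation. Every pair is also compatible with the integral computation and with the Bockstein, since $\partial\Delta(u)=\pm\Delta(ya)=0$ automatically.

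Moreover, $u\mapsto u+au^3$ shifts $\Delta(u)$ by $u^2+au^4$. So what has to be proved is $\delta_1=\delta_2$, not your expected ``$\beta=\gamma=0$''. The latter is false for your geometric choice $u=U=\tilde u$: by \eqref{eq:tildeu-u} and \eqref{eq:Deltatildeu}, $\Delta U=\Theta\neq 0$, and Menichi's generator is $U+U^3a$.

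Your fallback does not supply the missing input either. Formality of $S^2$ identifies the Gerstenhaber structures, but the BV operator depends on the duality (cyclic) structure. If you transport Connes' operator through the Frobenius form of $\F_2[h]/h^2$, the normalized Hochschild complexes give $\Delta(at^k)=kt^{k-1}$, in particular $\Delta(at)=1$. This contradicts the formula $\Delta(au)=1+au^2$ that you yourself derived from the integral theorem, because $1$ and $au^2$ are fixed by every algebra automorphism. That mismatch is exactly Menichi's observation that $\H_*(LS^2)$ and $HH^*(H^*S^2,H^*S^2)$ are not isomorphic as BV algebras. Determining $\Delta$ on $\H_1(\Lambda;\K)$ needs a genuinely geometric computation of the circle action on $\Lambda S^2$, as in Menichi's paper.
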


We now compute the remaining cases. 
\begin{theorem}
\label{thm:loop_BV_CROSS_over_K_char_divides_d+1}
Let $M$ be one of the CROSS $\C P^d$ with $d\ge 2$, $\H P^d$ with $d\ge 1$, or $\Ca P^2$, and let $\K$ be a field whose characteristic divides $d+1$. 
We have an isomorphism of BV algebras 
$$
\H_*(\Lambda;\K)\cong \K[a,u,y]/\la a^{d+1},u^2\ra,
$$
where $|a|=-i$, $|u|=i-1$, $|y|=i(d+1)-2$, and the BV operator acts by
$$
\Delta(y^k a^\ell)=0, \qquad \Delta(y^ku)=0,
$$
for $k\ge 0$ and $0\le \ell\le d$, 
\begin{equation}
\Delta(y^k u a^\ell) = \ell y^k a^{\ell-1}
\end{equation}
for $k\ge 0$ and $1\le \ell\le d$ in the case of $\H P^d$ and $\Ca P^2$, respectively for $k\ge 0$ and $2\le \ell\le d$ in the case of $\C P^d$, and
\begin{equation}
  \Delta(y^k ua) = y^k + \frac{(d+1)d}{2} y^{k+1}a^d = 
  \left\{\begin{array}{cl} y^k+\frac{d+1}2y^{k+1}a^d, & d \mbox{ odd,}\\
  y^k,& d \mbox{ even}
  \end{array}\right.
\end{equation}
for $\C P^d$.
\end{theorem}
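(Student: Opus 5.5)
The plan is to compute $\H_*(\Lambda;\K)$ additively from the integral answer in Theorem~\ref{thm:loop_BV_CROSS_over_Z} via the universal coefficient theorem. Then we identify the new generator $u$ (the geometric class $U$ of Remark~\ref{rmk:geometric-generators-U}), determine the ring structure, and finally compute $\Delta$ from the $7$-term relation together with the known values on the image of $\H_*(\Lambda;\Z)\otimes\K$.

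Let $p=\mathrm{char}\,\K$, with $p\mid d+1$. Integrally, the only torsion is the $\Z/(d+1)$ generated by $y^{k}a^d$, $k\ge1$ (shifted degree $k(n+i-2)-n$). Hence $\H_*(\Lambda;\Z)\otimes\K$ is $\K[a,x,y]/\la a^{d+1},x^2,xa^d\ra$, now with $ya^d\neq0$. By the universal coefficient theorem each torsion group contributes an extra class of one degree higher, coming from $\mathrm{Tor}$; these are the entries marked $\times$ in the tables, in degrees $|y^k|-n+1=|y^k|+i-1-id$. Comparing Poincaré series, $\K[a,u,y]/\la a^{d+1},u^2\ra$ with $|u|=i-1$ has exactly one class $y^ku a^{d}$ per missing slot. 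So the plan is to show that $U$, a $\K$-cycle by Remark~\ref{rmk:geometric-generators-U}, satisfies the following:
\begin{itemize}
\item $UA=X$;
\item $U^2=0$ for degree reasons, or because $2(i-1)$ is not a degree realized by the stated basis;
\item $y^kUa^d$ is nonzero and spans the $\mathrm{Tor}$ class.
\end{itemize}
Nonvanishing of $y^kUa^d$ can be checked by the Bockstein: the Bockstein of $Ua^d$ should be (a unit times) $ya^d$, and the same holds after multiplying by $y^k$. Alternatively, one can use the Morse--Bott (perfect) filtration for the homogeneous metric, where $y^kU a^d$ is the descending manifold of a section over a point in the $(k+1)$-st critical level. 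A monomial basis count then gives the algebra isomorphism, with $x=ua$ eliminated.

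For $\Delta$, note that $\Delta(y^ka^\ell)=0$ is inherited directly from the integral result. On the classes $y^kxa^{\ell-1}=y^kua^\ell$, formula \eqref{eq:DeltaykaxellHPdOP2} with $\ell-1$ in place of $\ell$ gives coefficient $-(\ell-1)+k+(k+1)d\equiv -\ell+1+(k+1)(d+1)-1\equiv -\ell \pmod p$. Correcting the sign convention for $x$ (Remark on sign changes), this reads $\ell y^ka^{\ell-1}$. For $\C P^d$ with $\ell=1$, the extra term $\frac{(d+1)d}2 y^{k+1}a^d$ persists and gives the stated formula.

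It remains to find $\Delta(y^ku)$ and $\Delta(y^kua^d)$, which do not lie in the integral image. For $\Delta(u)$, the degree is $i$. The candidates are multiples of $y a^{d}$ times elements of degree $i-(i(d+1)-2-id)=2$, etc.; in most cases the degree group is zero, and otherwise we argue by the $S^1$-equivariance/geometric description: $U$ is a descending manifold of a section, hence $S^1$-invariant up to boundaries, forcing $\Delta U=0$. Then the 7-term relation, using $\Delta y=0$ and $\Delta(y^k)=0$, gives $\Delta(y^ku)=\pm\{y^k,u\}$, which should be shown to vanish. The vanishing of $\{y,u\}$ and $\{y,a\}$ follows by degree considerations or from the derivation property applied to $\{y,ua\}$, which is known. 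With $\Delta$ determined on generators and brackets $\{a,u\}=\pm1$-type values deduced from $\Delta(ua)$, the BV relations determine $\Delta$ on all monomials, including $y^kua^d$, giving $d\,y^ka^{d-1}$.

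I expect the main obstacle to be pinning down $\Delta(u)$ and $\{y,u\}$, since $u$ is invisible integrally. I would first try pure degree counting in each case ($i=2,4,8$). Where that fails, I would use the Bockstein compatibility of $\Delta$ (which commutes with the Bockstein) to transport the integral formula to $u$.
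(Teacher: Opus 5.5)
Your additive count via the universal coefficient theorem is fine, and so is your mod-$p$ reduction of the integral formulas, up to the sign change $u\mapsto -u$. However, you have misidentified the new classes, so the ring structure is not actually established.

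The $\mathrm{Tor}$ contributions sit in degrees $|y^{k+1}a^d|+1=k(n+i-2)+i-1=|y^ku|$. So the classes missing from $\H_*(\Lambda;\Z)\otimes\K$ are the $y^ku$, $k\ge 0$, not the $y^kua^d$. The latter equal $y^kxa^{d-1}$, which are reductions of nonzero integral classes. Consistently, your Morse--Bott picture of $y^kUa^d$ as the descending manifold over a single point of $STM$ is an integral cycle, and $\Delta(y^kua^d)$ already comes from the reduced integral formula with $\ell=d$.

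Your Bockstein check also has mismatched degrees: $|ua^d|-1=i-2-n\neq i-2=|ya^d|$. It is $u$, of degree $|ya^d|+1$, that is Bockstein-related to $ya^d$. What the monomial count really needs is that $y^ku\neq 0$ and is independent of the integral image, and this is never addressed. The paper obtains it from Ziller's perfectness together with the multiplicative Cohen--Jones--Yan spectral sequence $E^2=\H_*(M)\otimes H_*(\Omega M)$.

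Also, $u^2=0$ is not a matter of degrees, since $|ya^{d-1}|=2i-2$ in every case. Graded commutativity gives $2u^2=0$, which settles odd characteristic. In characteristic $2$, however, this is a genuine extension problem. For $\H P^1=S^4$, the umbrella map $i_!$ to $H_*(\Omega S^4;\K)=\K[w]$, which the paper uses for $\C P^1$, gives $i_!(u)^2=w^2\neq 0$, so there $u^2\neq 0$.

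The main missing idea concerns $\Delta(u)$. For $\H P^d$ and $\Ca P^2$, degree counting does give $\Delta(u)=\Delta(yu)=0$. For $\C P^d$ it fails, because $\H_2(\Lambda;\K)=\K\,ya^{d-1}$. There $\Delta(u)=0$ is a normalization, not a property of a given lift:
\begin{itemize}
\item $u$ is determined only up to adding multiples of $yua^d$, which also has degree $1$;
\item $(u+\gamma yua^d)a=ua$ and $(u+\gamma yua^d)^2=u^2$;
\item $\Delta(yua^d)=d\,ya^{d-1}=-ya^{d-1}$, hence $\Delta(u+\gamma yua^d)=\Delta(u)-\gamma\,ya^{d-1}$.
\end{itemize}
The paper writes $\Delta(u)=\beta\,ya^{d-1}$ and replaces $u$ by the corrected class.

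Your claim that the geometric $U$ is ``$S^1$-invariant up to boundaries'' is unsupported. The circle action sweeps the section along the geodesic flow on $STM$, and the resulting $(2d+1)$-cycle has no reason to be null-homologous; it could be a multiple of the fibres over a line, whose descending manifold gives $ya^{d-1}$. In the analogous case $\C P^1$, the paper shows that the geometric class satisfies $\Delta\tilde u=\Theta\neq 0$. The Bockstein cannot decide the question either, since both the ambiguity $yua^d$ and the target $ya^{d-1}$ are reductions of integral classes.

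Once $\Delta(u)=0$ has been arranged, your bracket argument works and is essentially the paper's $7$-term computation. From $\{y,a\}=0$ and $\{y,ua\}=\Delta(yua)-y\Delta(ua)=0$ you get $\{y,u\}a=0$. Moreover, $\{y,u\}$ lies in the line $\K\,y^2a^{d-1}$, on which multiplication by $a$ is injective. Hence $\{y,u\}=0$ and $\Delta(y^ku)=y^k\Delta(u)$. Without the renormalization you would instead get $\Delta(y^ku)=\beta\,y^{k+1}a^{d-1}$, contradicting the statement.
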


\begin{proof}
Let $a',x',y'$ be the generators from Theorem~\ref{thm:loop_BV_CROSS_over_Z}, and let $\bar a, \bar x, \bar y$ be their images in $\H_*(\Lambda;\Z)\otimes_\Z\K\hookrightarrow \H_*(\Lambda;\K)$. Note that $\H_*(\Lambda;\Z)\otimes_\Z\K=\K[\bar a, \bar x,\bar y]/\langle {\bar a}^{d+1}, {\bar x}^2, \bar x{\bar a}^{d}\rangle$ (the table of multiplication can be visualized from the one of $\H_*(\Lambda;\Z)$, with the only difference that $\bar y{\bar a}^d$ is not torsion anymore).

Additively, the homology $\H_*(\Lambda;\K)$ differs from $\H_*(\Lambda;\Z)\otimes_\Z\K$ by one extra generator in each degree $k(i-1)$, $k\ge 1$. This is a consequence of Ziller's classical argument that the energy functional for $\Lambda$ is perfect for any choice of coefficients~\cite{Ziller1977}. (In the formulation of~\cite[Definition~6.1]{Oancea-Morse}, this is implied by the fact that all the critical Morse-Bott manifolds possess strong completing manifolds. See also~\cite[Definition~3.1]{Hingston-Oancea}.) The Morse-Bott critical manifolds that correspond to nonconstant geodesics are diffeomorphic to $STM$, the unit sphere tangent bundle of $M$, which has Euler class $d+1$. As a consequence, if one uses coefficients in a field $\K$ of characteristic dividing $d+1$, $H_*(STM;\K)$ is the same as for a trivial bundle and features one extra generator compared to $H_*(STM;\Z)\otimes_\Z\K$.  

The multiplicative structure on $\H_*(\Lambda;\K)$ can be computed using the Cohen-Jones-Yan spectral sequence~\cite{Cohen-Jones-Yan}. This is the Leray-Serre spectral sequence $E^2_{*,*}=\H_*(M;H_*(\Omega M))\Rightarrow \H_*(\Lambda)$ for the loop-loop fibration, and is multiplicative with respect to the intersection product on $\H_*(M)$, the Pontryagin product on $H_*(\Omega M)$, and the loop product on $\H_*(\Lambda)$. We have $H_*(\Omega M;\K)\simeq H_*(S^{i-1};\K)\otimes_\K H_*(\Omega S^{id+i-1};\K)\simeq \K[u,y]/\langle u^2\rangle$, $|u|=i-1$, $|y|=id+i-2$, see~\cite[Proof of Theorem~3]{Cohen-Jones-Yan} and~\cite[Lemmas~2.1 and~2.2]{Cadek-Moravec}. (The Hopf fibration $S^{i-1}\to S^{id+i-1}\to M$ determines the homotopy cofiber sequence $S^{id+i-1}\to M\to \Sigma S^{i-1}$, and further $\Omega S^{id+i-1}\to \Omega M\to \Omega \Sigma S^{i-1}\simeq S^{i-1}$.) Taking into account the additive structure of $\H_*(\Lambda;\K)$ given by Ziller's theorem, one infers that the Cohen-Jones-Yan spectral sequence collapses at the second page and $\H_*(\Lambda;\K)\simeq \H_*(M)\otimes_\K \K[u,y]/\langle u^2\rangle = \K[a,u,y]/\langle a^{d+1},u^2\rangle$.

We claim that we can choose the generators $a$, $u$, $y$ such that $a=\bar a$, $y=\bar y$, and $u\bar a=\bar x$. 
For $a$, this holds because the homology is $1$-dimensional in degree $-id=|a|=|\bar a|$, therefore $a=\bar a$ up to multiplication by a nonzero scalar. For $u$, this holds because the homology is $1$-dimensional in degree $-1=|\bar x|=|u\bar a|$, and we can achieve $\bar x=u\bar a$ by multiplying $u$ with a nonzero scalar. We now discuss $y$: in the case of $\H P^d$ or $\Ca P^2$ the homology is $1$-dimensional in degree $|y|=|\bar y|$, hence $y=\bar y$ up to multiplication by a nonzero scalar; in the case of $\C P^d$ the homology is $2$-dimensional in degree $2d=|y|=|\bar y|$, generated by $\bar y$ and ${\bar y}^2a^d$, and also by $y$ and $y^2a^d$. The generator $y$ cannot be a multiple of ${\bar y}^2a^d$, otherwise $y^2a^d=0$. Therefore $y=\alpha \bar y+\beta {\bar y}^2a^d$ with $\alpha\in\K^*$, $\beta\in\K$, and we can replace $y$ by $\bar y$ without altering the relations. 

Since the map $\H_*(\Lambda;\Z)\otimes_\Z\K\hookrightarrow \H_*(\Lambda;\K)$ is a morphism of BV algebras~\cite[Proposition~9]{Menichi}, we directly infer by reduction of coefficients from Theorem~\ref{thm:loop_BV_CROSS_over_Z}, with $a=\bar a$, $y=\bar y$, and $\bar x=u\bar a$, that
$$
\Delta(y^ka^\ell)=0
$$
for $k\ge 0$ and $0\le \ell\le d$, 
$$
\Delta(y^kua^\ell)=-\ell y^k a^{\ell-1}
$$
for $k\ge 0$ and $1\le \ell \le d$ in the case of $\H P^d$ and $\Ca P^2$, respectively for $k\ge 0$ and $2\le \ell\le d$ in the case of $\C P^d$ (this follows from~\eqref{eq:DeltaykaxellHPdOP2} using that $d=-1$ in $\K$), and 
$$
\Delta(y^kua)=-y^k+ \frac{(d+1)d}{2}y^{k+1}a^d
$$
for $\C P^d$. 

We are now left to compute $\Delta (y^ku)$ for $k\ge 0$. By the 7-term relation and knowing that $u^2=0$, it is enough to compute $\Delta(u)$ and $\Delta (yu)$. 

{\it The case of $\H P^d$ with $d\ge 1$, and $\Ca P^2$}. The homology $\H_*(\Lambda;\K)$ vanishes in degrees $|\Delta(u)|=i$ and $|\Delta(yu)|$, therefore $\Delta(u)=0$ and $\Delta(yu)=0$. 

{\it The case of $\C P^d$ with $d\ge 2$}. We claim that it is possible to choose the generator $u$ so that $\Delta(u)=0$. Indeed, the homology $\H_*(\Lambda;\K)$ is $1$-dimensional in degree $2=|\Delta(u)|$, generated by $ya^{d-1}$, so that $\Delta(u)=\beta ya^{d-1}$ for some $\beta \in\K$. Since $\Delta(yua^d)=-dya^{d-1}=ya^{d-1}$, up to replacing $u$ by $u-\beta yua^d$ we find that $\Delta(u)=0$. (This argument fails for $\C P^1$, since the homology is $2$-dimensional in degree $|\Delta(u)|$.)

To compute $\Delta(yu)$ we expand $\Delta(yau)$ using the 7-term relation~\eqref{eq:7-term}:
\begin{align*}
-y+\frac{(d+1)d}{2}y^2a^d= \Delta(yau) & = \cancel{\Delta(ya)}u + y\Delta(au)+ a\Delta(yu) \\
& \quad -\cancel{\Delta(y)}au -y\cancel{\Delta(a)}u-ya\cancel{\Delta(u)}\\
& = y(-1+\frac{(d+1)d}{2} ya^d) + a\Delta(yu). 
\end{align*}
This implies $a\Delta(yu)=0$.
On the other hand, because $\H_*(\Lambda;\K)$ is $1$-dimensional in degree $|\Delta(yu)|$ and generated by $y^2a^{d-1}$, we must have $\Delta(yu)=\beta y^2a^{d-1}$ for some $\beta\in\K$. We then find $\beta y^2a^d=0$, which implies $\beta=0$ since $y^2a^d\neq 0$, and therefore $\Delta(yu)=0$. 

The relations $\Delta(u)=0$ and $\Delta(yu)=0$, together with $\Delta(y)=0$ and $\Delta(y^2)=0$, imply by induction using the 7-term relation~\eqref{eq:7-term} that $\Delta(y^ku)=0$ for all $k\ge 0$.
\end{proof}

\vfill\pagebreak

\begin{center}
TABLE FOR $\H_*(\Lambda;\K)$, $\mathrm{char}(\K)\mid d+1$ IN THE CASE OF $\H P^d$ AND $\Ca P^2$ 
\end{center}

{\tiny
$$
\xymatrix
@C=3pt
@R=1pt
{
\vdots                &&                          & &           &                            & &                  &                   & \cdots \\
                         &&                          & &           &                             & &                  &   y^2          &          \\
                         &&                          & &           &                             & &                  &                   & \ar@{.>}[ul] \\
                         &&                          & &           &                             & &                  &                   & \\
                         &&                          & &           &                             & &                  & y^2 a         & \\
                         &&                          & &           &                             & &                  &                    &  \ar@{.>}[ul] \\
                         &&                          & &           &                             & &                  & \vdots          & \\
                         &&                          & &           &                             & &                  & y^2 a^{d-1} &  \\
                         &&                          & &           &                             & &                  &                     &      \ar@{.>}[ul]   \\
                         &&                          & &           &                             & & yu             &                      &        \\
(d+1)i-2+i-2       &&                          & &           &                             & &                  & y^2 a^d        & \\
(d+1)i-2             &&                          & &           & y                          & &                  &                      &        \\
(d+1)i-3             &&                          & &           &                             & & yua \ar@{.>}[ull] &    \\
                         &&                          & &           &                             & &  &    \\
                         &&                          & &           & ya                        & &  & \\
                         &&                          & &           &                             & & yua^2 \ar@{.>}[ull] & \\
                         &&                          & &           & \vdots                   & & \vdots &  \\
2i-2                   &&                          & &           & ya^{d-1}                & &  & \\
2i-3                   &&                          & &           &                               & & yua^d\ar@{.>}[ull] & \\
                         &&                          & & u	     &                                & & & \\
i-2                     &&                          & &            & ya^d                      & & & \\
0                       &&      1                  & &            &                              & & & \\
-1                      &&                          & &  ua \ar@{.>}[ull]   & & & & \\
                         &&                          & &                            & & & & \\
-i                       &&      a                  & &                              & & & & \\
-i-1                    &&                          & &  ua^2 \ar@{.>}[ull]    & & & & \\
                         &&  \vdots               & &  \vdots                   & & & & \\
-i(d-1)                &&  a^{d-1}            & &  &                             & & & \\
-i(d-1)-1            &&                           & & ua^d \ar@{.>}[ull] & & & & \\
                         &&                          & & &                          & & & & \\
 -id                    &&  a^d                   & & &                          & & & & 
}
$$
}

\vfill \pagebreak

\begin{center}
TABLE FOR $\H_*(\Lambda;\K)$, $\mathrm{char}(\K)\mid d+1$ IN THE CASE OF $\C P^d$ 
\end{center}

{\tiny
$$
\xymatrix
@C=3pt
@R=1pt
{
\vdots                &&                          & & &                            		& &                  & \vdots \\
                         &&                          & & &                             		& &                  &   y^2 \\
                         &&                          & & &                             		& &                  &    \\
                         &&                          & & &                             		& &                  & y^2a   \\
                         &&                          & & &                             		& &                  & \vdots   \\
                         &&                          & & &                             		& &                  & y^2a^{d-1}   \\
                         &&                          & & &                             		& & yu	       &                              \\
2d                     &&                          & & & y                          		& &                  & y^2a^d        \\
                         &&                          & & &                             		& & yua \ar@{.>}[ull] \ar@{.>}[ur] &    \\
                         &&                          & & & y a                       		& &  & \\
                         &&                          & & &  \vdots                      		& & \vdots & \\                         
                         &&                          & & & 		                  		& & yu a^{d-1} \ar@{.>}[ull] &  \\
2                        &&                          & & & y a^{d-1}             		& &  & \\
1                        &&                          & & u  &                 			& & yu a^d\ar@{.>}[ull] & \\
0                       &&      1                   & &  & y a^d                 		& & & \\
-1                      &&                           & &  ua \ar@{.>}[ull] \ar@{.>}[ur] &  & & & \\
-2                      &&      a                  & &  &          				& & & \\
                        &&       \vdots           & &  \vdots      &          			& & & \\
                         &&                          & &  u a^{d-1} \ar@{.>}[ull]&          & & & \\
-2(d-1)                &&  a^{d-1}             & & &                          			& & & \\
                         &&                           & &  ua^d\ar@{.>}[ull] & 	& & & \\
 -2d                    &&  a^d                   & & &                          			& & & \\
}
$$
}

\subsection{Rabinowitz loop homology BV algebra}\label{sec:Rab-BV}

\subsubsection{BV structure of Rabinowitz loop homology for the simply connected CROSS that are projective spaces, except $\C P^1$} 

\begin{theorem} \label{thm:Rabinowitz_BV_projective_spaces_char_divides}
  Let $M$ be one of the CROSS $\C P^d$ with $d\ge 2$, $\H P^d$ with $d\ge 1$, or $\Ca P^2$ (we set $d=2$ in this last case), and let $\K$ be a field whose characteristic divides $d+1$. 
We have an isomorphism of BV algebras 
$$
\wh{\H}_*(\Lambda;\K)\cong \K[a,u,y,y^{-1}]/\la a^{d+1},u^2\ra,
$$
where $|a|=-i$, $|u|=i-1$, $|y|=i(d+1)-2$, and the BV operator acts by
$$
\Delta(y^k a^\ell)=0, \qquad \Delta(y^ku)=0,
$$
for $k\in\Z$ and $0\le \ell\le d$, 
\begin{equation}
\Delta(y^k u a^\ell) = \ell y^k a^{\ell-1}
\end{equation}
for $k\in\Z$ and $1\le \ell\le d$ in the case of $\H P^d$ and $\Ca P^2$, respectively for $k\in\Z$ and $2\le \ell\le d$ in the case of $\C P^d$, and
\begin{equation}
  \Delta(y^k ua) = y^k + \frac{(d+1)d}{2} y^{k+1}a^d = 
  \left\{\begin{array}{cl} y^k+\frac{d+1}2y^{k+1}a^d, & d \mbox{ odd,}\\
  y^k,& d \mbox{ even}
  \end{array}\right.
\end{equation}
for $\C P^d$ and $k\in\Z$.
%
%
%
%
%
\end{theorem}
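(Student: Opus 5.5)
The plan is to combine three inputs established earlier: the splitting $\wh{\H}_*\Lambda=\ol{\H}_*\Lambda\oplus\ol{\H}^{1-2n-*}\Lambda$ into BV subalgebras, Uebele's level algebra structure (Theorem~\ref{thm:level_algebra}), and the loop homology BV algebra with $\K$-coefficients (Theorem~\ref{thm:loop_BV_CROSS_over_K_char_divides_d+1}). Since $\mathrm{char}\,\K$ divides $\chi(M)=d+1$, the term $\chi(M)[\mathrm{pt}]$ vanishes. Hence $\ol{\H}_*\Lambda=\H_*\Lambda=\K[a,u,y]/\la a^{d+1},u^2\ra$, and this sits as a BV subalgebra of $\wh{\H}_*\Lambda$.

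\textbf{Step 1: ring structure.} The image of $y$ in $\wh{\H}_*\Lambda$ has the same degree $n+i-2$ as the Uebele class and lives at level $1$. I would check that its level-$1$ component is a nonzero multiple of the Uebele class times the unit $E\in\wh{\H}^{=0}_*\Lambda$. Degree bookkeeping in the zero-level block $H^{-*}(S^*M)=\K[H,\alpha]/\la H^{d+1},\alpha^2\ra$ helps here: in that degree only $y\cdot E$ occurs, apart from possible contributions from other levels, which only arise for $\C P^d$.

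By Theorem~\ref{thm:level_algebra}, multiplication by the Uebele class is invertible on $\wh{\H}_*\Lambda$. So is multiplication by $y$: its associated graded is an isomorphism, and the filtration is complete and bounded in each degree. Hence $y$ is invertible in $\wh{\H}_*\Lambda$, and we get a ring map
$$\K[a,u,y,y^{-1}]/\la a^{d+1},u^2\ra\to\wh{\H}_*\Lambda.$$
To see that it is an isomorphism, compare associated graded pieces. In the zero-level block, $a\mapsto H$ and $y^{-1}u\mapsto$ a class whose leading term is $\alpha$; the degrees agree, since $|u|-|y|=i-1-(n+i-2)=1-n=|\alpha|$. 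The products $a^\ell$ and $y^{-1}ua^\ell$ then span level $0$ with the right relations, so Theorem~\ref{thm:level_algebra}(ii) shows the map is bijective level by level.

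\textbf{Step 2: BV operator.} The formulas for $k\ge0$ hold because $\H_*\Lambda$ is a BV subalgebra and the values of $\Delta$ are given by Theorem~\ref{thm:loop_BV_CROSS_over_K_char_divides_d+1}. For negative powers, first note $\Delta(y)=0$. Applying the BV relation to $y\cdot y^{-1}=1$, and using that the bracket $\{y,\cdot\}$ is a derivation, gives $\Delta(y^{-1})=-y^{-2}\Delta(y)$ up to sign plus bracket terms. Concretely, use the Gerstenhaber bracket~\eqref{eq:Gerstenhaber-bracket-intro}: $0=\{y,1\}=\{y,y\cdot y^{-1}\}=\{y,y\}y^{-1}\pm y\{y,y^{-1}\}$, and $\{y,y\}=0$ from $\Delta(y)=\Delta(y^2)=0$. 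Hence $\{y,y^{-1}\}=0$, and then $\Delta(y^{-1})=0$.

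More generally, I would show that $\{y,\cdot\}\equiv0$. It vanishes on the generators $a,u,y$, since $\Delta(ya)=\Delta(yu)=\Delta(y)=0$ and $\Delta(a)=\Delta(u)=0$. It is a derivation, so it vanishes on the algebra generated by $a,u,y$, and by the above also on $y^{-1}$. Then $\Delta(y^kz)=y^k\Delta(z)$ for all $k\in\Z$, because $\Delta(y^kz)=\pm\{y^k,z\}+\Delta(y^k)z\pm y^k\Delta z$ and $\{y^k,\cdot\}=0$. This yields all stated formulas for $k\in\Z$ from the case $k=0$.

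The main obstacle is the identification in Step 1 of $y$ with (a unit multiple of) the Uebele class modulo higher filtration. For $\C P^d$, the degree $2d$ also contains $y^2a^d$, so the leading level must be controlled via critical values. I would use that $y$ is represented by the first Morse–Bott level of nonconstant geodesics, which has critical value $1$, whereas $y^2a^d$ has critical value $2$. The leading part is then the level-$1$ component, which is nonzero in $\Gr$.
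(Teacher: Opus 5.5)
Your proposal has a gap in Step~1 for $\C P^d$, at exactly the point you flagged. You resolve it by asserting that $y$ is represented by the first Morse--Bott level, so that $\Cr(y)=1$. The $y$ of the statement, however, is the reduction of the integral generator from Theorem~\ref{thm:loop_BV_CROSS_over_Z}. Remark~\ref{rmk:geometric-generators} records that the geometric class is only known to satisfy $\Theta=\pm(y+\beta y^2a^d)$, with $\beta\in\Z/(d+1)\Z$ undetermined.

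You also cannot simply rename $\Theta$ as $y$. For $y'=y+sy^2a^d$ one has $y'^ku=y^ku+ks\,y^{k+1}ua^d$, hence $\Delta(y'^ku)=-ks\,y'^{k+1}a^{d-1}$. So when $s\neq0$ in $\K$, the stated formula $\Delta(y^ku)=0$ fails for $y'$: the statement pins down $y$.

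Now suppose $\beta\neq0$ in $\K$. Then $\Cr(y)=2$, since $\Cr(\Theta)=1$ and $\Cr(y^2a^d)=2$. The symbol of $y$ in $\Gr$ is then a multiple of $\Theta^2H^d$, which squares to zero. So multiplication by $y$ is not an isomorphism on $\Gr$, and your invertibility argument collapses. An element has no well-defined ``level-$1$ component'' below its critical level; its symbol lives at its critical level. Your claim that $y^{-1}u$ has leading term $\alpha$ has the same defect. The generator $u$ was normalized by subtracting a multiple of $yua^d$, and for $\C P^d$ shifted degree $1$ contains one class at level $1$ and one at level $2$.

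The fix, which is what the paper does, avoids critical values altogether.
\begin{itemize}
\item $\Theta$ is a unit and $(y^2a^d)^2=0$, so $\Theta=\beta y+\gamma y^2a^d$ with $\beta\neq0$.
\item Hence $y=\beta^{-1}\Theta(1-\gamma\Theta^{-1}y^2a^d)$ with $(\Theta^{-1}y^2a^d)^2=0$, so $y$ is invertible. Normalizing $\beta=1$ gives explicitly $y^{-1}=\Theta^{-1}+\gamma a^d$.
\item Injectivity of $\K[a,u,y^{\pm1}]/\la a^{d+1},u^2\ra\to\wh\H_*\Lambda$ follows by multiplying a putative relation by a large power of $y$ and using $\H_*\Lambda=\ol\H_*\Lambda\hookrightarrow\wh\H_*\Lambda$.
\item Surjectivity follows from a degreewise dimension count against $\K[\Theta^{\pm1}]\otimes H^{-*}(S^*M)$ (Theorem~\ref{thm:level_algebra}), using $|u|-|y|=|\alpha|$.
\end{itemize}
For $\H P^d$ and $\Ca P^2$ your Step~1 is fine as it stands, since degree $|y|$ is one-dimensional.

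Your Step~2 is correct, and it differs from the paper's decreasing induction on $k$ via the $7$-term relation. You show $\{y,\cdot\}\equiv0$: it vanishes on $a,u,y$, it is a derivation, and the identity $y\cdot y^{-1}=1$ handles $y^{-1}$. This gives $\Delta(yB)=y\Delta B$ on all of $\wh\H_*\Lambda$, hence $\Delta(y^kC)=y^k\Delta C$ for every $k\in\Z$. That is a cleaner argument than the paper's.
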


\begin{proof}
The multiplicative structure is a consequence of Uebele's theorem~\cite[Theorem~1.2]{Uebele-products}, which can be applied because $\dim M\ge 3$. In \emph{loc. cit.} the author proves that the algebra $\wh{\H}_*(\Lambda;\K)$ is a free module over $\K[\Theta,\Theta^{-1}]$, where $\Theta$ is the \emph{homological Uebele class} and lives in degree $|\Theta|=|y|$. We claim that $\wh{\H}_*(\Lambda;\K)$ is also a free module over $\K[y,y^{-1}]$. 
\begin{itemize}
\item In the case of $\H P^d$ and $\Ca P^2$, the homology is $1$-dimensional in that degree and therefore $\Theta=y$ up to multiplication by a nonzero scalar, the class $y$ is invertible in $\wh{\H}_*(\Lambda;\K)$, and the claim follows. 
\item In the case of $\C P^d$ the homology is 2-dimensional in that degree, generated by $y$ and $y^2a^d$. The class $\Theta$ cannot be a multiple of $y^2a^d$ because $(y^2a^d)^2=0$, therefore $\Theta =\beta y + \gamma y^2a^d$ with $\beta,\gamma\in\K$ and $\beta\neq 0$. Up to multiplying $\Theta$ with a nonzero scalar we can assume without loss of generality that $\Theta=y+\gamma y^2a^d$ for some $\gamma\in\K$. Then $\Theta^2=y^2+2\gamma y^3a^d$, hence $\Theta^2 a^d=y^2a^d$, and further $\Theta a^d=\Theta^{-1}y^2a^d$. We then claim that $y$ is invertible with inverse $y^{-1}=\Theta^{-1}+\gamma a^d$: indeed $(\Theta^{-1}+\gamma a^d)y=(\Theta^{-1}+\gamma a^d)(\Theta-\gamma y^2a^d)=1+\gamma\Theta a^d-\gamma\Theta^{-1}y^2a^d=1$. We also have $y=\Theta-\gamma\Theta^2a^d$, and this implies that $\wh{\H}_*(\Lambda;\K)$ is also a free module over $\K[y,y^{-1}]$. 
\end{itemize}

The formulas for $k\in\Z$ are  proved by decreasing induction on $k$ starting from the same formulas that are valid for $k\ge 0$, by applying the 7-term relation to $ya^\ell=(y^ka^\ell)y^{-k}y$, $yu=(y^ku)y^{-k}y$, and $yua^\ell=(y^kua^\ell)y^{-k}y$. For example, for $k<0$: 
\begin{align*}
\Delta(yua^\ell) & =\Delta((y^kua^\ell)y^{-k}y) \\ 
& = \Delta(ua^\ell)y-y^kua^\ell\cancel{\Delta(y^{-k}y)}+y^{-k}\Delta(y^{k+1}ua^\ell) \\
& \quad - \Delta(y^kua^\ell)y^{-k+1}+ y^kua^\ell\cancel{\Delta(y^{-k})}y + y^kua^\ell y^{-k}\cancel{\Delta(y)}, 
\end{align*}
so that $\Delta(y^kua^\ell)=(\Delta(ua^\ell)y+y^{-k}\Delta(y^{k+1}ua^\ell)-\Delta(yua^\ell))y^{k-1}$, and this relation allows to conclude by decreasing induction. 
\end{proof}

In a very similar way one computes the Rabinowitz loop homology BV algebra in the case where the characteristic of the ground field does not divide $d+1$. 

\begin{theorem} 
\label{thm:Rabinowitz_BV_projective_spaces_char_does_not_divide}
Let $M$ be one of the CROSS $\C P^d$ with $d\ge 2$, $\H P^d$ with $d\ge 1$, or $\Ca P^2$ (we set $d=2$ in this last case), and let $\K$ be a field whose characteristic does not divide $d+1$. 
We have an isomorphism of BV algebras 
$$
\wh{\H}_*(\Lambda;\K)\cong \K[a,x,y,y^{-1}]/\la a^d,x^2\ra,
$$
where $|a|=-i$, $|x|=-1$, $|y|=i(d+1)-2$, and the BV operator acts by
$$
\Delta(y^k a^\ell)=0,\qquad \Delta(y^k x a^\ell) = (-\ell + k + (k+1)d)y^k a^\ell,
$$
for $k\in\Z$ and $0\le \ell\le d$. \qed
\end{theorem}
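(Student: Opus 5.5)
The plan is to follow the proof of Theorem~\ref{thm:Rabinowitz_BV_projective_spaces_char_divides} step by step, with Corollary~\ref{cor:loop_BV_CROSS_over_K_char_does_not_divide_d+1} now playing the role of the loop homology input. Since $\dim M\ge 3$, Uebele's theorem (Theorem~\ref{thm:level_algebra}) applies. It gives that $\wh{\H}_*(\Lambda;\K)$ is a free $\K[\Theta,\Theta^{-1}]$-module on any basis of the zero-level block. By Proposition~\ref{prop:zero-level-block} and \S\ref{sec:unit_sphere_bundle}, this block is $H^{-*}(S^*M)$. When $\mathrm{char}\,\K\nmid d+1$, the Euler class is invertible, so the Gysin sequence kills the top class of the base and the bottom class of the fibre. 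We obtain $H^{-*}(S^*M)$ with basis $1,a,\dots,a^{d-1}$ together with degree-shifted classes $x,xa,\dots,xa^{d-1}$ in degrees $-1-i\ell$ (up to identification). This matches the quotient $\K[a,x]/\la a^d,x^2\ra$ in each block.

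Next I identify generators. The map $\H_*\Lambda\to\wh{\H}_*\Lambda$ factors through reduced loop homology $\ol{\H}_*\Lambda$, which is a BV subalgebra, and there $\chi(M)[\mathrm{pt}]=(d+1)a^d$ is quotiented out. Since $d+1$ is invertible, $a^d=0$, which explains the relation $a^d$ in place of $a^{d+1}$ and $ya^d$. I take $a,x,y$ to be the images of the generators from Corollary~\ref{cor:loop_BV_CROSS_over_K_char_does_not_divide_d+1}. In every relevant degree the group is one-dimensional, so $\Theta$ equals $y$ up to a nonzero scalar. This holds also for $\C P^d$, because the torsion class $y^2a^d$ disappears. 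Hence $y$ is invertible, and $\wh{\H}_*\Lambda$ is free over $\K[y,y^{-1}]$ with basis $a^\ell, xa^\ell$ for $0\le\ell\le d-1$. I will also check that the loop homology classes $y^ka^\ell, y^kxa^\ell$ with $k\ge 0$ map injectively, using the splitting $\wh{\H}_*=\ol{\H}_*\oplus\ol{\H}^{1-2n-*}$. Then the multiplicative presentation follows.

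For the BV operator, the formulas for $k\ge 0$ transfer directly from Corollary~\ref{cor:loop_BV_CROSS_over_K_char_does_not_divide_d+1} via the BV subalgebra $\ol{\H}_*\Lambda$. For $\ell=d$ they read $0=0$, consistent with $a^d=0$. For $k<0$ I use the decreasing induction from the previous proof. I apply the 7-term relation to $y^{k+1}xa^\ell=(y^kxa^\ell)\,y^{-k}\,y$ and $y^{k+1}a^\ell=(y^ka^\ell)\,y^{-k}\,y$, and use $\Delta(y^j)=0$. The induction starts from $\Delta(y^{-1})$: applying the 7-term relation to $y^{-1}\cdot y\cdot 1$, or equivalently the derivation property of the bracket, gives $\Delta(y^{-1})=-y^{-2}\Delta(y)\cdot(\text{sign})=0$.

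The main obstacle is the bookkeeping check that the coefficient $-\ell+k+(k+1)d$ propagates linearly in $k$ under this recursion. Solving the relation gives $\Delta(y^kxa^\ell)=\bigl(\Delta(xa^\ell)y+y^{-k}\Delta(y^{k+1}xa^\ell)-\Delta(yxa^\ell)\bigr)y^{k-1}$. Substituting the inductive values yields the coefficient $(-\ell+d)+(-\ell+k+1+(k+2)d)-(-\ell+1+2d)=-\ell+k+(k+1)d$, as required. Signs come out trivially here because $|y|$ is even.
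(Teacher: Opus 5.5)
Your proposal is correct and takes the same route the paper indicates, namely ``in a very similar way'' to Theorem~\ref{thm:Rabinowitz_BV_projective_spaces_char_divides}: the one-dimensionality of $\wh{\H}_{|y|}\Lambda$ makes $y$ a nonzero multiple of the Uebele class, hence invertible, and the ring structure then follows from Uebele's theorem together with the injective BV map $\ol{\H}_*\Lambda\hookrightarrow\wh{\H}_*\Lambda$ (where $a^d=0$); the BV formulas for $k<0$ come from decreasing induction via the 7-term relation. Two small slips, neither affecting the argument: the triple product $(y^kxa^\ell)\,y^{-k}\,y$ equals $yxa^\ell$, not $y^{k+1}xa^\ell$, and your solved recursion correctly uses the former; also, no separate base case $\Delta(y^{-1})=0$ is needed, since the powers $y^{-k},y^{1-k}$ that occur for $k<0$ are positive (and the 7-term relation with a unit factor is vacuous anyway).
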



\begin{center}
TABLE FOR $\wh{\H}_*(\Lambda;\K)$, $\mathrm{char}(\K)\mid d+1$ IN THE CASE OF $\H P^d$ AND $\Ca P^2$ 
\end{center}

{\tiny
$$
\xymatrix
@C=3pt
@R=.5pt
{
\vdots                &&                          & &           &                            & &                  &                   & \cdots \\
i(d+1)-2             &&                          & &           &                             & &                  &   y              &          \\
                         &&                          & &           &                             & &                  &                   & \ar@{.>}[ul] \\
                         &&                          & &           &                             & &                  &                   & \\
                         &&                          & &           &                             & &                  & ya              & \\
                         &&                          & &           &                             & &                  &                    &  \ar@{.>}[ul] \\
                         &&                          & &           &                             & &                  & \vdots          & \\
2i-2                   &&                          & &           &                             & &                  & y a^{d-1}     &  \\
                         &&                          & &           &                             & &                  &                     &      \ar@{.>}[ul]   \\
i-1                     &&                          & &           &                             & &  u             &                      &        \\
i-2                     &&                          & &           &                             & &                  & y a^d           & \\
0                         &&                          & &           & 1                          & &                  &                      &        \\
-1                        &&                          & &           &                             & &  ua \ar@{.>}[ull] &    \\
                         &&                          & &           &                             & &  &    \\
-i                       &&                          & &           &  a                        & &  & \\
                         &&                          & &           &                             & &  ua^2 \ar@{.>}[ull] & \\
                         &&                          & &           & \vdots                   & & \vdots &  \\
-i(d-1)               &&                          & &           &  a^{d-1}                & &  & \\
-i(d-1)-1            &&                          & &           &                               & &  ua^d\ar@{.>}[ull] & \\
-id+1                         &&                          & & y^{-1}u&                                & & & \\
-id                     &&                          & &            &  a^d                      & & & \\
-i(d+1)+2           &&      y^{-1}           & &            &                              & & & \\
                        &&                          & &  y^{-1}ua \ar@{.>}[ull]   & & & & \\
                         &&                          & &                            & & & & \\
                         &&      y^{-1}a         & &                              & & & & \\
                         &&                          & &  y^{-1}ua^2 \ar@{.>}[ull]    & & & & \\
                         &&  \vdots               & &  \vdots                   & & & & \\
                         &&  y^{-1}a^{d-1}    & &  &                             & & & \\
                         &&                           & & y^{-1}ua^d \ar@{.>}[ull] & & & & \\
                         &&                          & & &                          & & & & \\
-i(2d+1)+2        &&  y^{-1}a^d          & & &                          & & & & \\
\vdots               &\dots&
}
$$
}


\begin{center}
TABLE FOR $\wh{\H}_*(\Lambda;\K)$, $\mathrm{char}(\K)\mid d+1$ IN THE CASE OF $\C P^d$, $d\ge 2$ 
\end{center}

{\tiny
$$
\xymatrix
@C=3pt
@R=.5pt
{
\vdots                &&                          & & &                            		& &                  & \vdots \\
 2d                    &&                          & & &                             		& &                  &   y \\
                         &&                          & & &                             		& &                  &    \\
 2d-2                 &&                          & & &                             		& &                  & ya   \\
                         &&                          & & &                             		& &                  & \vdots   \\
 2                      &&                          & & &                             		& &                  & ya^{d-1}   \\
1                       &&                          & & &                             		& & u	       &                              \\
0                       &&                          & & & 1                          		& &                  & ya^d        \\
                         &&                          & & &                             		& & ua \ar@{.>}[ull] \ar@{.>}[ur] &    \\
                         &&                          & & &  a                         		& &  & \\
                         &&                          & & &  \vdots                      		& & \vdots & \\                         
                         &&                          & & & 		                  		& & u a^{d-1} \ar@{.>}[ull] &  \\
-2d+2               &&                          & & & a^{d-1}               		& &  & \\
-2d+1                 &&                          & & y^{-1}u  &                 			& & u a^d\ar@{.>}[ull] & \\
-2d                    &&      y^{-1}            & &  & a^d                   		& & & \\
-2d-1                 &&                           & &  y^{-1}ua \ar@{.>}[ull] \ar@{.>}[ur] &  & & & \\
-2d-2                &&      y^{-1}a          & &  &          				& & & \\
                        &&       \vdots           & &  \vdots      &          			& & & \\
                         &&                          & &  y^{-1}u a^{d-1} \ar@{.>}[ull]&          & & & \\
                        &&  y^{-1}a^{d-1}   & & &                          			& & & \\
                         &&                           & &  y^{-1}ua^d\ar@{.>}[ull] & 	& & & \\
-4d                    &&  y^{-1}a^d          & & &                          			& & & \\
\vdots               &\dots&
}
$$
}

\subsubsection{BV structure of Rabinowitz loop homology for $\C P^1$} 

\qquad


The case $M=\C P^1=S^2$ is special because we cannot directly apply Uebele's theorem in order to determine the multiplicative structure of $\wh\H_*\Lambda$ from that of $\H_*\Lambda$. We first discuss coefficients in a field of characteristic different from $2=\chi(S^2)$, and then coefficients in a field of characteristic $2$.

The structure claimed by the next theorem is the same as the one obtained by formally extending the formulas from Theorem~\ref{thm:Rabinowitz_BV_projective_spaces_char_does_not_divide} to the case of $\C P^1$, but it is of course convenient to state it separately. 
 
\begin{theorem} 
\label{thm:Rabinowitz_BV_S2_char_does_not_divide}
Let $M=\C P^1$, and let $\K$ be a field of characteristic different from $2$. 
We have an isomorphism of BV algebras 
$$
\wh{\H}_*(\Lambda;\K)\cong \K[x,y,y^{-1}]/\la x^2\ra,
$$
where $|x|=-1$, $|y|=2$, and the BV operator acts by
\begin{equation} \label{eq:BV_S2_char_does_not_divide}
\Delta(y^k)=0, \qquad \Delta(y^k x) = (2k + 1)y^k,
\end{equation}
for $k\in\Z$. 
\end{theorem}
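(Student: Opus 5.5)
Since Uebele's theorem is not available in dimension $2$, I will obtain the multiplicative structure from the splitting $\wh{\H}_*\Lambda=\ol{\H}_*\Lambda\oplus\ol{\H}^{3-*}\Lambda$ (here $1-2n=-3$). By the remark following the theorem on BV subalgebras, this splitting exists and is compatible with the Goresky--Hingston product on $\H^{*}(\Lambda,\Lambda_0)$ also when $n=2$.

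\emph{Homology summand.} By Corollary~\ref{cor:loop_BV_CROSS_over_K_char_does_not_divide_d+1} with $d=1$ we have $\H_*\Lambda=\K[a,x,y]/\la a^2,x^2,xa,ya\ra$. Since $2$ is invertible in $\K$ and $\chi(S^2)=2$, the reduction kills the class $[\mathrm{pt}]=a$. Hence $\ol{\H}_*\Lambda$ is spanned by $y^k$ and $y^kx$ for $k\ge0$, with the BV operator $\Delta(y^k)=0$ and $\Delta(y^kx)=(2k+1)y^k$ inherited from the corollary.

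\emph{Cohomology summand.} I will use Goresky--Hingston's computation for spheres: the cohomology product on $H^*(\Lambda,\Lambda_0)$ of $S^2$ is, up to the relevant torsion, a free $\K[\theta]$-type structure. Regraded and placed in the splitting, it should give classes of degrees $|y^{-k}|=-2k$ and $|y^{-k}x|=-2k-1$ for $k\ge1$. Degree reasons and the vector space identification $\wh{\H}_*\Lambda\cong\Gr\wh{\H}_*\Lambda$ give one-dimensional groups in these degrees. To identify the products I will use the Poincar\'e duality isomorphism of Theorem~\ref{thm:PD}, which is a BV algebra isomorphism exchanging the summands (since $H_1S^2=0$ there is no secondary correction), together with the level structure. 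The zero-level block is $H^{-*}(S^*S^2)$, and since $\mathrm{char}\,\K\ne2$ the Euler class kills the ring: $S^*S^2=\R P^3$ has $\K$-cohomology of $S^3$. So this block is spanned by $1$ and a class of degree $-3$, which I will call $y^{-1}x$. The key step is to show that $y$ is invertible in $\wh{\H}_*\Lambda$. I will try to show that the product of $y\in\ol{\H}_*$ with the generator of $\ol{\H}^{3-*}$ in degree $-2$ equals $1$. To do this, I will use that the Frobenius/Kronecker pairing (second appendix) is nondegenerate and realized by the coproduct-counit, so that mixed products between the summands are computed by the pairing. This is the main obstacle: mixed products between the homology and cohomology summands are not given by either factor alone. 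I would first try to deduce them from the cone description of Rabinowitz homology, where the product of a homology class with a cohomology class is cap product followed by continuation. Once $y$ is invertible, multiplication by $y$ is injective and a degree count shows that $\wh{\H}_*\Lambda$ is free over $\K[y,y^{-1}]$ on $1$ and $x$, with $x^2=0$ for degree reasons ($\H_{-2}$ reduced is zero).

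\emph{BV operator.} For $k\ge0$ the formulas come from the homology summand. For $k<0$ I will run the same decreasing induction via the 7-term relation as in the proof of Theorem~\ref{thm:Rabinowitz_BV_projective_spaces_char_divides}, using $y^{-k}\cdot y^k=1$ and $\Delta(y)=0$. This yields $\Delta(y^k)=0$ and $\Delta(y^kx)=(2k+1)y^k$ for all $k\in\Z$. As a consistency check, Proposition~\ref{prop:BV level-0} forces $\Delta(y^{-1}x)=-y^{-1}$ to be compatible with the level structure, since this class lies at a nonzero level relative to $1$.
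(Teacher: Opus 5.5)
Your outline has the right frame (additive splitting, the reduced homology summand as a BV subalgebra, and decreasing induction on $k$ via the 7-term relation for $\Delta$), and that part matches the paper. The gap is the step that actually carries the theorem, which you leave open. You never prove that $y$ is invertible in $\wh{\H}_*\Lambda$, i.e.\ that the mixed product $\tau y$ is a nonzero multiple of $[M]$, where $\tau\in H^1\Lambda$ is the class of shifted degree $-2$ in the cohomology summand. You call this ``the main obstacle'' and offer only an unexecuted route via the cone, cap products and continuation. The tools you do invoke cannot close it:
\begin{itemize}
\item Theorem~\ref{thm:PD} maps $\wh{\H}_*\Lambda$ to $\wh{\H}^{1-2n-*}\Lambda$ and sends mixed products to mixed products, so it computes nothing new.
\item Nondegeneracy of the Frobenius pairing alone does not determine $\tau y$.
\item The level-structure input you lean on comes from Uebele's theorem, which is unavailable for $n=2$. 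The paper derives the dimension-$2$ case of Theorem~\ref{thm:level_algebra} \emph{from} this theorem.
\end{itemize}
Without invertibility of $y$, neither the ring isomorphism nor your induction for $k<0$ (which uses $y^{-k}$) is justified.

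The missing idea is an associativity trick with the counit. Start from the fact that $\wh{\H}_*\Lambda$ has rank one in every degree, which follows from the splitting and the additive structure of $\H_*\Lambda$. The counit $\boldeps$ has degree $2n-1=3$, so it is supported on $\wh{\H}_{-3}\Lambda=H^2\Lambda=\la\zeta\ra$, with $\zeta$ dual to $[M]$. Proposition~\ref{prop:Kronecker-Frobenius} says $\boldp=-\boldeps\boldmu$ agrees with the Kronecker pairing on $\ol{H}^*\Lambda\otimes\ol{H}_*\Lambda$, which gives $\boldeps(\zeta)=-1$. Then $\boldeps(\tau x)=-\la\tau,x\ra=-1$ forces $\tau x=\zeta$, and
\[
\la\zeta,\tau y\ra=-\boldeps(\tau x\cdot\tau y)=-\boldeps(\tau^2\cdot yx)=\la\tau^2,yx\ra .
\]
The right-hand side involves only the Goresky--Hingston square $\tau^2$ and the loop product $yx$. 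It is nonzero because $\tau^2\neq0$ by Goresky--Hingston (equivalently, by the Hingston--Rademacher duality $\la\alpha\omega^m,Ay^m\ra=\la\alpha,A\ra$), and $H^3\Lambda$ and $H_3\Lambda$ are one-dimensional. Since $\wh{\H}_0\Lambda=\la[M]\ra$, this yields $\tau y=\pm[M]$, hence $y^{-1}=\pm\tau$.

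A smaller slip: $x^2=0$ is not a degree accident, since $\wh{\H}_{-2}\Lambda=\la\tau\ra\neq0$. It holds because $x^2=0$ already in $\H_*\Lambda$, or by graded commutativity ($|x|$ odd, $\mathrm{char}\,\K\neq2$).
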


\begin{samepage}
\begin{center}
TABLE FOR $\wh{\H}_*(\Lambda;\K)$, $\mathrm{char}(\K)\neq 2$ IN THE CASE OF $\C P^1$ 
\end{center}
{\tiny
$$
\xymatrix
@C=8pt
@R=8pt
{
\vdots                && &                          &           			          &      & 		     &   &                 	  &	      & \vdots \\
                         && &                          &           			          &      & 		     &   &                  	  &	      &   	    \\
4                       && &                          &           			          &      & 		     &   &                   	  &   y^2 & 	    \\
3                       && &                          &           			          &      & 		     &   &                   	  &	      & y^2x \ar@{.>}[ul] \\
2                       && &                          &           			          &      & 		     & y&                  	  &	      &  	    \\
1                       && &                          &           			          &      & 		     &        & yx \ar@{.>}[ul]   &	      &      	    \\
0                       && &                          &           			          &  E  & 		     &        &                           &	      &       	    \\
-1                      && & 		         &           			          &      & x \ar@{.>}[ul] &                     && \\
-2                      && & \tau=y^{-1}     &          				  &       & 		     &                   &                     &&        \\
-3                      && &                          &\zeta=y^{-1}x \ar@{.>}[ul] &       & 		     &  & \\
-4  & \tau^2=y^{-2} & &                          &           &      & 		     &  & \\
-5  &			        &\tau\zeta=y^{-2}x \ar@{.>}[ul] &                          &           &      & 		     &  & \\
                         && &                          &           &      & 		     &  & \\
\vdots & \vdots  &&                          &           &      & 		     &  &  \\
}
$$
}
\end{samepage}

\begin{proof} We refer to the table for the notation used in the proof of the theorem. 

By the Splitting Theorem~\cite[Theorem~1.5]{CHO-PD}, which holds at an additive level regardless of dimension restrictions, we have a splitting 
$$
\wh H_*\Lambda= \ol H_*\Lambda \oplus \ol H^{1-*}\Lambda,
$$ 
with $\ol H_*\Lambda= H_*\Lambda/\langle [\mathrm{pt}]\rangle$ the reduced loop homology group, and $\ol H^*\Lambda = H^*\Lambda/\langle 1 \rangle$ the reduced loop cohomology group. This splitting is canonical because $H_1(\C P^1)=0$. 

The following maps intertwine the ring structures and the BV operators: 
\begin{itemize}
\item The inclusion $\ol H_*\Lambda \hookrightarrow \wh H_*\Lambda$. The Chas-Sullivan BV algebra structure on $\ol H_*\Lambda$ is induced from the one of $H_*\Lambda$ via the quotient map $H_*\Lambda\to \ol H_*\Lambda$. The unit is the fundamental class $[M]$, and we sometimes denote it $E$. 
\item The inclusion $H^{1-*}(\Lambda,\Lambda_0)\hookrightarrow \wh H_*\Lambda$, which can also be seen as the composition of the two inclusions $H^{1-*}(\Lambda,\Lambda_0)\hookrightarrow \ol H^{1-*}\Lambda\hookrightarrow \wh H_*\Lambda$. The product on $H^{1-*}(\Lambda,\Lambda_0)$ is the Goresky-Hingston product, and the cohomological BV operator $\Delta^*$ is the dual of the homological BV operator $\Delta$.\footnote{That $\Delta^*$ is dual to $\Delta$ means  
$\langle \Delta^*\alpha,A\rangle = (-1)^{|\alpha|}\langle \alpha,\Delta A\rangle$ or, equivalently, $\langle A,\Delta^*\alpha\rangle = (-1)^{|A|}\langle \Delta A,\alpha\rangle$, for $\alpha\in H^*(\Lambda,\Lambda_0)$ and $A\in H_*(\Lambda,\Lambda_0)$, with $|A|=\deg(A)-n$ and $|\alpha|=\deg(\alpha)+n-1$. 
}
\end{itemize}

The free loop space energy functional for the round metric on $\C P^1=S^2$ is perfect for both homology and cohomology. As a consequence, $H_*\Lambda$, $H^*\Lambda$ and $\wh H_*\Lambda$ have rank $1$ in every degree. This follows from the knowledge of the indices of the critical manifolds $SM$ and the fact that, in characteristic different from $2$, every such critical manifold contributes to the (co)homology in only two degrees, equal to its index and to its $\mathrm{index}+3$.  

We consider the following homology and cohomology classes. 
\begin{itemize}
\item Let $x\in H_1\Lambda$ and $y\in H_4\Lambda$ be generators as provided by Corollary~\ref{cor:loop_BV_CROSS_over_K_char_does_not_divide_d+1}. 
The generator $y$ is uniquely determined up to a constant, and we take $y$ to coincide with the geometric generator $\Theta$ of Goresky and Hingston (see Remark~\ref{rmk:geometric-generators}). The generator $x$ is uniquely determined by the relation $\Delta(x)=E$,
and it coincides up to sign with $X$, the geometric generator of Goresky and Hingston described in Remark~\ref{rmk:geometric-generators}, since both are defined over $\Z$. Let $\epsilon\in\{\pm1\}$ be this sign, so that $x=\epsilon X$.
\item Let $\tau\in H^1\Lambda$ be the algebraic dual of $x$, and let $\mu\in H^4\Lambda$ be the algebraic dual of $y$. Since $x$ coincides up to sign with the geometric generator $X$ of Goresky and Hingston, $\tau$ coincides up to sign with the geometric generator $\omega\in H^1\Lambda$ of Goresky and Hingston from~\cite[Theorem~14.2]{Goresky-Hingston}, i.e., $\tau=\epsilon \omega$.
\end{itemize} 

As a consequence of Corollary~\ref{cor:loop_BV_CROSS_over_K_char_does_not_divide_d+1}, we have an isomorphism of BV algebras 
$$
\ol H_*\Lambda = \K[x,y]/\langle x^2\rangle,
$$ 
with BV operator given by the formulas~\eqref{eq:BV_S2_char_does_not_divide}. 

As a consequence of~\cite[Theorem~14.2]{Goresky-Hingston}, we have an isomorphism of rings\footnote{Here $\K[\mu,\tau]_{\ge 1}$ means that we only consider polynomials of degree $\ge 1$. The relation $\mu^2=0$ also follows from graded commutativity of the Goresky-Hingston product.} 
$$
H^*(\Lambda,\Lambda_0)=\K[\mu,\tau]_{\ge 1}/\langle \mu^2\rangle.
$$
Indeed, the product with $\tau$ is injective on $H^*(\Lambda,\Lambda_0)$, and we conclude because $H^*(\Lambda,\Lambda_0)$ has rank $1$ in every degree. 

Multiplication by $\omega=\epsilon \tau$ and by $y$ are dual, see~\cite[Lemma~5.4]{Hingston-Rademacher}, in the sense that 
$$
\langle \alpha\omega^m,Ay^m\rangle = \langle \alpha,A\rangle, \quad \mbox{hence} \quad \langle \alpha\tau^m,Ay^m\rangle = \epsilon^m \langle \alpha,A\rangle,
$$
for the Kronecker pairing $\langle\cdot,\cdot\rangle$ and any (co)homology classes $\alpha$ and $A$. The following two particular cases are relevant for us: the relation $\langle \tau^m,xy^{m-1}\rangle =\epsilon^{m-1}\langle \tau,x\rangle=\epsilon^{m-1}$ implies that $\tau^m$ is algebraically dual to $\epsilon^{m-1}xy^{m-1}$, and the relation $\langle \mu\tau^m,y^{m+1}\rangle=\epsilon^m\langle\mu,y\rangle =\epsilon^m$ implies that $\mu\tau^m$ is algebraically dual to $\epsilon^m y^{m+1}$. 

\begin{remark} {\it These considerations allow us to directly compute the BV operator $\Delta^*:H^*(\Lambda,\Lambda_0)\to H^{*-1}(\Lambda,\Lambda_0)$ from the duality relation $\langle \Delta^*\alpha,A\rangle=(-1)^{|\alpha|}\langle \alpha,\Delta A\rangle$. (The expression of $\Delta^*$ is also a consequence of the full BV structure of $\wh \H_*\Lambda$ determined below.) 

We find 
\begin{align*}
\langle \Delta^*(\mu\tau^m),xy^{m+1}\rangle& =-\langle \mu\tau^m,\Delta(xy^{m+1})\rangle\\ 
& = -\langle \mu\tau^m,(2m+3)y^{m+1}\rangle = (-2m-3)\epsilon^m,
\end{align*} 
and $\langle \Delta^*(\tau^m),y^{m-2}\rangle =\langle \tau^m,\Delta(y^{m-2})\rangle = 0$. Since $\langle \tau^{m+2},xy^{m+1}\rangle =\epsilon^{m+1}$, we obtain 
$$
\Delta^*(\mu\tau^m)= \epsilon (-2m-3)\tau^{m+2},\qquad \Delta^*(\tau^m)=0.
$$
}
\end{remark}

To describe $\wh\H_*\Lambda$ we need one more cohomology class. 
\begin{itemize}
\item Let $\zeta\in H^2\Lambda_0=\ol H^2\Lambda=H^2\Lambda$ be the algebraic dual of the fundamental class $[M]$. 
\end{itemize} 

We then have additively\footnote{We also already know the BV operator on $\ol \H_*\Lambda = \langle [M]\rangle \oplus \K[y,x]_{\ge 1}/\langle x^2\rangle$, and on $H^{1-*}(\Lambda,\Lambda_0) = \K[\mu,\tau]_{\ge 1}/\langle \mu^2\rangle$. To calculate the BV operator on $\wh \H_*\Lambda$ we will only use knowledge of the BV operator on $\ol \H_*\Lambda$.}
\begin{align*}
\wh H_*\Lambda & = \ol H^{1-*}\Lambda \oplus \ol H_*\Lambda \\ 
& = \K[\mu,\tau]_{\ge 1}/\langle \mu^2\rangle \oplus \langle \zeta,[M]\rangle \oplus \K[y,x]_{\ge 1}/\langle x^2\rangle.
\end{align*}

We now compute the multiplicative structure on $\wh \H_*\Lambda$. We use the fact that $\wh \H_*\Lambda$ is a topological Frobenius algebra~\cite{CHO-PD}. The counit $\boldeps$ has degree $2n-1=3$ and therefore can only act nontrivially on $\wh \H_{-3}\Lambda = \wh H_{-1}\Lambda = H^2\Lambda =\langle \zeta\rangle$. 

Let $\boldmu$ be the product on $\wh \H_*\Lambda$, and $\boldp=-\boldeps\boldmu$ be the pairing on $\wh \H_*\Lambda$. The pairing $\boldp$ restricted to $\ol H^*\Lambda\oplus \ol H_*\Lambda$ coincides with the Kronecker product (Proposition~\ref{prop:Kronecker-Frobenius}). 
As a consequence, we find that 
$$
\boldeps(\zeta)=-1,
$$
because $\boldeps(\zeta)=\boldeps\boldmu(\zeta,[M])=-\boldp(\zeta,[M])=-\langle\zeta,[M]\rangle=-1$. 
This implies 
$$
\tau x=\zeta.
$$ 
Indeed, $\tau x$ lives in the degree of $\zeta$, where $\wh \H_*\Lambda$ has rank $1$, and $\boldeps(\tau x)=\boldeps\boldmu(\tau,x)=-\boldp(\tau,x)=-\langle\tau,x\rangle=-1$. 
This implies 
$$
\tau y=\epsilon [M].
$$
Indeed, $\tau y$ lives in the degree of $[M]$, where $\wh \H_*\Lambda$ has rank $1$, and $\langle \zeta,\tau y\rangle = -\boldeps\boldmu(\tau x,\tau y) = -\boldeps\boldmu(\tau^2,yx)=\boldp(\tau^2,yx)=\langle\tau^2,yx\rangle =\epsilon$. Here the second equality follows from associativity and graded commutativity of $\boldmu$. 

Similarly we find 
$$
\tau\zeta=\epsilon \mu.
$$ 
Indeed, $\tau\zeta$ lives in the degree of $\mu$, where $\wh \H_*\Lambda$ has rank $1$, and $\langle \tau\zeta,y\rangle =\langle \zeta,\tau y\rangle =\epsilon$. 

These calculations show that $y$ is invertible in $\wh \H_*\Lambda$ with 
$$
y^{-1}=\epsilon \tau=\omega, 
$$
and we finally obtain the isomorphism of rings
$$
\wh \H_*\Lambda =\K[x,y^{\pm 1}]/\langle x^2\rangle.
$$ 

To conclude, one still needs to establish the formulas~\eqref{eq:BV_S2_char_does_not_divide} for the BV operator on $\wh \H_*\Lambda$. This is done as in the proof of Theorem~\ref{thm:Rabinowitz_BV_projective_spaces_char_divides}, by decreasing induction on $k$, using the 7-term relation and the fact that these formulas are known for $k\ge 0$. (Note that the formulas~\eqref{eq:BV_S2_char_does_not_divide} are also compatible with our previous computation of $\Delta^*$, in view of $\epsilon\tau=y^{-1}$ and $\epsilon\mu=\tau\zeta=\tau^2x=y^{-2}x$.) 
\end{proof}

The next theorem describes the BV algebra structure on $\wh\H_*\Lambda$ with coefficients in a field of characteristic $2$. It should be compared to Theorem~\ref{thm:Menichi_CP1}. 

\begin{theorem} 
\label{thm:Rabinowitz_BV_S2_char_2}
Let $M=\C P^1=S^2$ and let $\K$ be a field of characteristic $2$. We have an isomorphism of BV algebras 
$$
\wh\H_*(\Lambda;\K)=\K[a,u^{\pm 1}]/\langle a^2\rangle,
$$
where $|a|=-2$, $|u|=1$, and the BV operator acts, for $k\in\Z$, by 
$$
\Delta (u^k)=0
$$
and
$$
\Delta (au^k)=\left\{\begin{array}{rl} 0, & k \mbox{ even}, \\ u^{k-1}+au^{k+1}, & k \mbox{ odd}. \end{array} \right.
$$
\end{theorem}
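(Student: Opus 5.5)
We follow the scheme of the proof of Theorem~\ref{thm:Rabinowitz_BV_S2_char_does_not_divide}, replacing Corollary~\ref{cor:loop_BV_CROSS_over_K_char_does_not_divide_d+1} by Menichi's Theorem~\ref{thm:Menichi_CP1}. Additively, we use the splitting $\wh H_*\Lambda=\ol H_*\Lambda\oplus\ol H^{1-*}\Lambda$, which holds in dimension $2$ by~\cite[Corollary~7.7]{CHO-reducedSH}. It is canonical since $H_1(S^2)=0$, and both summands are BV subalgebras: the products come from Chas--Sullivan and Goresky--Hingston, the BV operators from $\Delta$ and its dual. In characteristic $2$ each critical manifold $SM\cong\R P^3$ contributes $\K$ in four consecutive degrees, and the energy functional is perfect. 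Hence $H_*\Lambda$ has rank $1$ in each degree $\ge 0$, with $H_0$ and $H_2$ matching $\K[a,u]/\la a^2\ra$ in shifted degrees $-2,0$. From this we read off that $\wh\H_*\Lambda$ has rank $1$ in every degree, with $\ol\H_*\Lambda=\K[a,u]/\la a^2\ra$ modulo $a$ in nonnegative shifted degree, together with $a$ itself. To be careful: $\ol\H_*=\H_*/\chi[\mathrm{pt}]$, and $\chi=0$ in $\K$, so $\ol\H_*\Lambda=\H_*\Lambda$ in this case. The reduced cohomology then fills the degrees $\le -3$.

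Next comes the multiplicative structure. Let $\tau_k$ denote the algebraic dual of $u^k$, or of $au^{k+1}$, in the appropriate degrees, and let $\zeta$ be the dual of $[M]=1$. Using the Frobenius structure and Proposition~\ref{prop:Kronecker-Frobenius}, we get $\boldeps(\zeta)=1$, since signs are irrelevant in characteristic $2$. The plan is to show that $u$ is invertible, by exhibiting $u^{-1}$ as the cohomology class $v$ in degree $-1$ paired with $u$ in top rank. We compute $vu$ via $\boldeps(vu\cdot\zeta')$ pairings, exactly as we computed $\tau x=\zeta$ and $\tau y=\epsilon[M]$ before. Concretely, degree $-1$ in $\wh\H_*$ would be spanned by $a u$ (since $|au|=-1$), not by cohomology. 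Degree $-3$ is the first cohomology degree: there $\zeta\in\ol H^{2}\Lambda$ and $\boldeps(\zeta)=1$. So we will show that $u^{-1}$ is an element in degree $-1$ (namely $au+$ correction), or rather that the element $w$ spanning degree $-3$ satisfies $w\cdot u^3=\zeta u^3\neq 0$ via pairing arguments. We then propagate using the duality between multiplication by $u$ on homology and by a degree-$1$ generator on Goresky--Hingston cohomology, the analogue of~\cite[Lemma~5.4]{Hingston-Rademacher}. Here multiplication by $u^2$, whose degree is $2=|\Theta|$, plays the role of the Uebele class $\Theta$. The key claim is that the pairing $\boldp(\alpha,A)=\langle\alpha,A\rangle$ is nondegenerate and that $\langle \alpha\cdot\tau,A\cdot u^2\rangle=\langle\alpha,A\rangle$. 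This yields that $u^2$ is invertible in $\wh\H_*\Lambda$, hence $u$ is invertible, and rank counting gives $\wh\H_*\Lambda=\K[a,u^{\pm1}]/\la a^2\ra$.

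Finally, for the BV operator, the formulas hold for $k\ge0$ by Theorem~\ref{thm:Menichi_CP1}. We extend them to $k<0$ by decreasing induction using the 7-term relation applied to $u^k\cdot u^{-k}\cdot u^{j}$, as in the proof of Theorem~\ref{thm:Rabinowitz_BV_projective_spaces_char_divides}. We must induct in steps of $2$ in order to preserve parity, since $\Delta(u^2)=0$ but $\Delta(au)\neq0$. As a consistency check, $\Delta$ must vanish on the zero-level block (Proposition~\ref{prop:zero-level-block}), which is spanned by $1,a,u^{-1},au^{-1}$ up to the level identification. The main obstacle is the multiplicative step. There is no Uebele theorem in dimension $2$, and the relevant Hingston--Rademacher duality lemma is stated for $S^n$ over fields in which the geometric class $\Theta$ is defined. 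We must verify that in characteristic $2$ the class $u^2$ (or $u^2+\beta a u^4$) coincides with Goresky--Hingston's $\Theta$, and that the dual class $\omega$ still satisfies the cap-product duality. We would try first to identify $u^2$ with $\Theta$ by degree and rank-one arguments, adjusting by the automorphism $u\mapsto u+\beta au^3$ if necessary.
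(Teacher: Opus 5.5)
\textbf{There is a genuine gap in the multiplicative step.} It has two causes: your additive count is wrong, and your key duality identity is false.

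\emph{The ranks are wrong.} In characteristic $2$ the critical manifolds $STS^2\cong\R P^3$ have indices $2m-1$. So the $m$-th one contributes in degrees $2m-1,\dots,2m+2$, and consecutive ones overlap. Hence $H_*\Lambda$ has rank $2$ in every degree $\ge 2$; Menichi's ring shows this directly, with $u^k$ and $au^{k+2}$ in shifted degree $k\ge 0$. Likewise $\wh\H_*\Lambda$ has rank $2$ in every degree, not $1$.

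Since $\chi(S^2)=0$ in $\K$, the cohomology unit $1\in H^0\Lambda$ is not discarded. It sits in shifted degree $-1$ next to $au$. Your first guess, that $u^{-1}$ is a cohomology class there, was right: in fact $u^{-1}=1$. Your retraction to ``$au$ plus correction'' is wrong, since $au$ is nilpotent.

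Because the rank is $2$, the trick from the characteristic $\neq 2$ case (fixing a product by a single pairing against $\zeta$) no longer applies. A product landing in shifted degree $0$, which is spanned by $E$ and $u^2a$, has two unknown coefficients. You therefore need a dual basis of $H^2\Lambda$. The paper builds it from $\zeta=\ev^*\zeta_0$ and the positive-level geometric class $\nu$. It proves $\boldeps(\nu)=0$ via the critical-level duality (Lemma~\ref{lma: duality}), and $\la\zeta,u^2a\ra=0$ because $\ev_*(u^2a)=0$. Your proposal has nothing playing this role.

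\emph{The key identity fails.} Your identity $\la\alpha\tau,Au^2\ra=\la\alpha,A\ra$ is false. The Hingston--Rademacher duality concerns the geometric class $\Theta$. In characteristic $2$ one has $u^2=\Theta+\Theta^2a\neq\Theta$ (established right after the theorem), hence $\tau u^2=E+u^2a$. For instance, $\la\nu\tau,u^2\ra=\la\nu,E+u^2a\ra=1$, while $\la\nu,E\ra=0$.

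Your proposed repair cannot succeed. Degree $4$ is spanned by $u^2$ and $au^4$, so no rank-one identification is available. Moreover, the substitution $u\mapsto u+\beta au^3$ leaves $u^2$ unchanged in characteristic $2$.

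\emph{What works.} Keep $\Theta$ and $u^2$ distinct. Work over $\F_2$. Both classes are non-nilpotent, so $\Theta=u^2+\gamma au^4$ for some unknown $\gamma\in\F_2$. Next, establish $\tau x=\zeta$ using the dual basis. The Frobenius computations then give $\tau\Theta=E$: pair against $\zeta$ and $\nu$, using duality only for $\Theta$ and geometric classes. Equivalently, $\tau u^2=E+\gamma u^2a$; this is the paper's relation $\tau y=E+\eps'\tilde u^2a$, with $\eps'$ left undetermined. Then $u^2=\Theta(1+\gamma\Theta a)$ is invertible, hence so is $u$, without ever determining $\gamma$.

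With the correct rank $2$, the rank count and your extension of the BV formulas by decreasing induction via the 7-term relation then go through.
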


\begin{samepage}
\begin{center}
TABLE FOR $\wh{\H}_*(\Lambda;\K)$, $\mathrm{char}(\K)= 2$ IN THE CASE OF $\C P^1$ 
\end{center}
{\tiny
$$
\xymatrix
@C=6pt
@R=6pt
{
\vdots                && &                          &           			          &      & 		     &   		&                 	  	  &\vdots    & &  \\
                         && &                          &           			          &      & 		     &   		&                  	  	  &	          & & 	    \\
4                       && &                          &           			          &      & 		     &   		&              		     	  &y^2=\tilde u^4 & &	    \\
3                       && &                          &           			          &      & 		     &   		&         \tilde u^3.         &	      &  \ar@{.>}[ul] \\
2                       && &                          &           			          &      & 		     & y=\tilde u^2&                  	  	  & \tilde u^4a     &  	    \\
1                       && \tilde u^{-1}+\eps'\tilde u a \ar@{=}[ddrr]& 	&           			          &      & 	\tilde u   &        		& \tilde u^3a \ar@{.>}[ul] \ar@{.>}[ur]  &	      &      	    \\
0                       &\tilde u^{-2}+\eps' a \ar@{=}[ddrr] & &                          &           			          &  E  & 		     &  \tilde u^2a &                           &	      &       	    \\
-1                      && & 		         &           	1			  &       & \tilde u a \ar@{.>}[ul] \ar@{.>}[ur] &                     && \\
-2                      && & \tau	        &          				  &  a   & 		     &                   &                     &&        \\
-3                      &&\nu=\tilde u^{-3} &                          &\zeta=\tilde u^{-1}a \ar@{.>}[ul] \ar@{.>}[ur] &       & 		     &  & \\
-4  & \tau^2=\tilde u^{-4} & &     \tilde u^{-2}a    &             &      & 		     &  & \\
-5  &			        &\tilde u^{-3}a \ar@{.>}[ul]\ar@{.>}[ur] &                          &           &      & 		     &  & \\
                         && &                          &           &      & 		     &  & \\
\vdots & \vdots  &&                          &           &      & 		     &  &  \\
}
$$
}
\end{samepage}

\begin{proof}
It is enough to give the proof for coefficients in $\F_2$, since we have $\wh H_*(\Lambda;\K)=\wh H_*(\Lambda;\F_2)\otimes_{\F_2}\K$ when $\K$ has characteristic $2$. We refer to the table for the notation used in the proof.

In the proof we will use in an essential way the homological geometric generators of Goresky-Hingston for $H_*\Lambda$ described in Remarks~\ref{rmk:geometric-generators} and~\ref{rmk:geometric-generators-U}, given by $A=[\mathrm{pt}]\in H_0\Lambda$, $U\in H_3\Lambda$, $X=UA\in H_1\Lambda$ and $\Theta\in H_4\Lambda$. We will also use the Goresky-Hingston cohomological non-nilpotent class $\omega\in H^1\Lambda$ from~\cite[\S14]{Goresky-Hingston}, which is dual to $X$. 

Denote $a=A$ and $\tilde u=U$. 
Our first claim is that there exists $\eps\in\F_2$ such that 
$$
\tilde u=u+\eps u^3a. 
$$
To prove this, it is enough to show that $\tilde u$ is non-nilpotent: we have $\tilde u\in H_3\Lambda$, which is generated by $u$ and $u^3a$ with $u$ non-nilpotent and $u^3a$ nilpotent (Theorem~\ref{thm:Menichi_CP1}). Let $p\in S^2$ be a generic point and consider the inclusion $i:\Omega_p S^2\hookrightarrow \Lambda S^2$ and the induced map $i_!:H_*(\Lambda S^2)\to H_{*-2}(\Omega_p S^2)$. The key fact is that $i_!$ intertwines the loop product on $H_*(\Lambda S^2)$ with the Pontryagin product on $H_*(\Omega_p S^2)$ (\cite[\S9.3]{Goresky-Hingston}, \cite[Proposition~3.4]{CS}). Therefore, in order to prove that $\tilde u$ is non-nilpotent for the loop product, it is enough to prove that $i_!(\tilde u)$ is non-nilpotent for the Pontryagin product. By the definition of $\tilde u$ and $i_!$, the class $i_!(\tilde u)\in H_1\Omega_p S^2$ is represented by the descending manifold of one closed geodesic loop at $p$. Since the energy functional on $\Omega_p S^2$ for the round metric is perfect (see for example~\cite[\S6.1]{Oancea-Morse} for a proof in the case of the free loop space, which adapts directly to the case of the based loop space), this is a generator of $H_1(\Omega_p S^2)$. On the other hand, it is a classical fact that $H_*(\Omega_p S^2)$ is a polynomial algebra with one generator in degree $1$ (\cite[III.1.B]{Bott-Samelson-1953}), so that $i_!(\tilde u)$ is in non-nilpotent for the Pontryagin product. 

In particular, we also have 
$$
\H_*\Lambda = \K[a,\tilde u]/(a^2).
$$

Let $x=\tilde u a = UA=X\in H_1\Lambda$. 
Let $\tau\in H^1\Lambda$ be the algebraic dual of $x$, so that $\tau=\omega$. We thus have 
$$
\langle \tau, \tilde u a\rangle=1.
$$
Let $E=[M]\in H_2\Lambda_0 \hookrightarrow H_2\Lambda \hookrightarrow \wh H_2\Lambda = \wh \H_0\Lambda$ be the unit, represented by the fundamental class of $[M]$. 

Let $y=\tilde u^2\in H_4\Lambda$. Then $y$ is non-nilpotent and $H_4\Lambda$ is generated by $y$ and $y^2a$, which is nilpotent. Since the geometric class $\Theta\in\H_4\Lambda$ is also non-nilpotent~\cite{Goresky-Hingston}, there exists $\eps'\in\F_2$ such that 
$$
\Theta=y+\eps' y^2a.
$$
 Let $\nu\in H^2(\Lambda,\Lambda_0)\hookrightarrow H^2\Lambda$ be the cohomological geometric generator that corresponds to the unique degree $1$ class in $H^*(SM)$. Since $\tilde u^2a=ya=\Theta A\in H_2(\Lambda)$ is the homological geometric generator that corresponds to the unique degree $1$ class in $H_*(SM)$, we find that 
$$
\langle \nu,\tilde u^2a\rangle =1.
$$
Let $\zeta_0\in H^2 \Lambda_0$ be the volume form on $S^2$, and let $\zeta=\ev^*\zeta_0$ be its image in $H^2\Lambda$ through the injective map $\ev^*:H^2\Lambda_0\hookrightarrow H^2\Lambda$. Then 
$$
\langle \zeta,E\rangle =1.
$$

We use the Frobenius algebra structure on $\wh\H_*\Lambda$, with counit $\boldeps$, product $\boldmu$, and pairing $\boldp=-\boldeps\boldmu=\boldeps\boldmu$ (characteristic $2$). The counit $\boldeps$ has degree $2n-1=3$, and it acts nontrivially only on $\wh\H_{-3}\Lambda=\wh H_{-1}\Lambda=H^2\Lambda$. 

We have $\boldeps(\zeta)=\boldeps\boldmu(\zeta,E)=\boldp(\zeta,E)=\langle \zeta,E\rangle =1$, and $\boldeps(\nu)=\boldeps\boldmu(\nu,E)=\boldp(\nu,E)=\langle \nu,E\rangle=0$. The last equality holds by Lemma~\ref{lma: duality} because $\Cr (E)=0<2\pi = \ccr(\nu)$ (here we use the fact that $\nu$ is a geometric generator). Summarizing, 
$$
\boldeps(\zeta)=1,\qquad \boldeps(\nu)=0.
$$

We now show that $(\zeta,\nu)$ is the dual basis in $H^2\Lambda=(H_2\Lambda)^\vee$ for the basis $(E,\tilde u^2a=u^2a)$ of $H_2\Lambda$. We have seen already that $\langle \zeta,E\rangle=1$ and $\langle \nu,E\rangle=0$. We also have $\langle \zeta,\tilde u^2a\rangle = \langle \ev^*\zeta_0,\tilde u^2a\rangle = \langle \zeta_0,\ev_*(\tilde u^2a)\rangle=0$. The last equality holds because $\ev_*(\tilde u^2a)=0$, since this is a homology class of degree $2$ that is supported on a point. (We use again the fact that $\tilde u^2a=ya=\Theta A$ is geometric, described by the descending manifold of a fiber of $STS^2$.) Finally, we have already shown that $\langle \nu,\tilde u^2a\rangle=1$. 

We now prove that 
$$
\tau x =\zeta.
$$
We have $\boldeps(\tau x)=\boldeps\boldmu(\tau,x)=\boldp(\tau,x)=\langle \tau,x\rangle=1$. Therefore 
$\tau x=\zeta+\epsilon_1\nu$ for some $\epsilon_1\in\F_2$. The coefficient $\epsilon_1$ can be determined by evaluating $\tau x$ on $\tilde u^2a$: we find $\epsilon_1=\langle \tau x,\tilde u^2a\rangle =\boldeps\boldmu(\tau \tilde u a,\tilde u^2a) = \boldeps\boldmu(\tau \tilde u, \tilde u^2a^2)=0$. Here the third equality holds because $\boldmu$ is associative and commutative, and the last equality holds because $a^2=0$. 

We now prove that
$$
\tau y=E+\eps' \tilde u^2 a.
$$   
(We recall that $\Theta=y+\eps' y^2a$, hence also $y=\Theta+\eps'\Theta^2a$.)
Let us write $\tau y=\epsilon_1 E + \epsilon_2 \tilde u^2 a$ with $\epsilon_1,\epsilon_2\in\F_2$. We then find $\epsilon_1= \langle \zeta, \tau y\rangle = \langle \tau x,\tau y\rangle = \boldeps\boldmu(\tau x,\tau y)= \boldeps\boldmu(\tau^2,yx)=\boldp(\tau^2,yx)=\langle \tau^2,y\tilde u a\rangle = \break \langle \tau^2,\Theta \tilde u a \rangle = \langle \tau,\tilde u a \rangle =\langle \tau,x\rangle=1$. Here the fourth equality $\boldeps\boldmu(\tau x,\tau y)= \boldeps\boldmu(\tau^2,yx)$ holds by associativity and commutativity of $\boldmu$, and the eighth equality $\langle \tau^2,\Theta \tilde u a \rangle = \langle \tau,\tilde u a \rangle$ holds because multiplication by $\tau$ preserves the geometric generators in cohomology, and multiplication by $\Theta$ preserves the geometric generators in homology, hence both Kronecker products can be understood as the Kronecker product of the \emph{same} cohomology and homology classes on $SM$ (see also~\cite[\S13-14]{Goresky-Hingston} and~\cite[Lemma~5.4]{Hingston-Rademacher}). We also find $\epsilon_2=\langle \nu,\tau y\rangle = \boldeps\boldmu(\nu,\tau y)=\boldeps\boldmu (\tau\nu,y)=\boldp(\tau\nu,y)=\langle \tau\nu,\Theta+\eps'\Theta^2 a\rangle =\langle \nu,E\rangle + \eps'\langle \nu,\Theta a\rangle = 0+ \eps' \langle \nu, ya\rangle =\eps'\langle \nu, \tilde u ^2 a\rangle = \eps'$. Here the third equality $\boldeps\boldmu(\nu,\tau y)=\boldeps\boldmu (\tau\nu,y)$ holds by associativity and commutativity of $\boldmu$, and the sixth equality involving $\langle \tau\nu,\Theta\rangle =\langle \nu,E\rangle$ and $\langle \tau\nu,\Theta^2a\rangle=\langle \nu,\Theta a \rangle$ holds for the same reason as before: multiplication by $\tau$ preserves the geometric generators in cohomology, and multiplication by $\Theta$ preserves the geometric generators in homology. 

The equality $\tau y=E+\eps' \tilde u^2 a$ implies that
\begin{equation}\label{eq:thetamym_dual}
\langle \alpha\tau^m,B y^m\rangle =\left\{
\begin{array}{ll} 
\langle \alpha,B\rangle, & m \mbox{ even},\\
\langle \alpha,B\rangle + \eps'\langle \alpha,B\tilde u^2a\rangle, & m \mbox{ odd} 
\end{array}\right.
\end{equation}
for all $\alpha\in H^*\Lambda$, $B\in H_*\Lambda$. Indeed, we have $\langle \alpha\tau^m,By^m\rangle=\boldp(\alpha \tau^m,B y^m)\break =\boldeps\boldmu(\alpha\tau^m,B y^m)=\boldeps\boldmu(\alpha\tau^my^m,B)=\boldeps\boldmu(\alpha(E+\eps'\tilde u^2 a)^m,B)$. If $m$ is even, the last term is equal to $\boldeps\boldmu(\alpha,B)=\langle \alpha,B\rangle$ because $(E+\eps'\tilde u^2a)^m=E$. If $m$ is odd, the last term is equal to $\boldeps\boldmu(\alpha(E+\eps'\tilde u^2a),B)=\boldeps\boldmu(\alpha,B) + \eps'\boldeps\boldmu(\alpha,B\tilde u^2a) = \langle\alpha,B\rangle + \eps' \langle \alpha,B\tilde u^2a\rangle$.

We prove that 
$$
\tau \cdot 1 = \nu.
$$
We write $\tau\cdot 1 = \epsilon_1\nu + \epsilon_2\zeta$ with $\epsilon_1,\epsilon_2\in\F_2$. We find $\epsilon_1=\langle \tau\cdot 1,\tilde u^2 a\rangle = \langle \tau \cdot 1,ya\rangle= \langle \tau\cdot 1,\Theta a\rangle = \langle 1,a\rangle =1$, where the last equality follows from~\eqref{eq:thetamym_dual}. Also $\epsilon_2=\langle\tau \cdot 1,E\rangle = 0$ because $\Cr(\tau\cdot 1)\le \Cr(\tau)+\Cr(1)=-2\pi$ (here we consider $\Cr$ in $\wh{H}_*\Lambda$), hence $\ccr(\tau\cdot 1)\ge 2\pi>0=\Cr(E)$ (here we consider $\ccr$ and $\Cr$ in $H^*\Lambda$ and $H_*\Lambda$, and we use that, for cohomology classes, $\ccr$ is equal to minus $\Cr$ taken in $\wh{H}_*\Lambda$, cf.~\cite[\S2.2]{CHO-PD}).  

We now prove that 
$$
1\cdot a=\zeta.
$$
We write $1\cdot a=\epsilon_1 \nu+\epsilon_2\zeta$ with $\epsilon_1,\epsilon_2\in\F_2$. We find $\epsilon_1=\langle 1\cdot a,ya\rangle = \boldeps\boldmu(1\cdot a,ya)=\boldeps\boldmu(1,ya^2)=0$, because $a^2=0$. Also $\epsilon_2=\langle 1\cdot a,E\rangle = \boldeps\boldmu(1\cdot a,E)=\boldeps\boldmu(1,a\cdot E)=\boldp(1,a)=\langle 1,a\rangle =1$. 

We now prove that 
$$
\tau \tilde u=1.
$$
We write $\tau \tilde u=\epsilon_1 \cdot 1 + \epsilon_2 \tilde u a$ with $\epsilon_1,\epsilon_2\in\F_2$. We have $Cr(\tau u)\le \Cr(\tau)+\Cr(u) = -2\pi+2\pi=0$. Since $\Cr(1)=0$ and $\Cr(\tilde u a)=2\pi$, this implies $\epsilon_2=0$ (otherwise $\tilde u a=\tau\tilde u + \epsilon_1 \cdot 1$ would have critical level $\le 0$). Also $\epsilon_1=\langle \tau \tilde u,a\rangle = \boldeps\boldmu(\tau \tilde u,a)=\boldeps\boldmu(\tau, \tilde ua)=\boldp(\tau,x)=\langle \tau,x\rangle=1$. 

As consequences of these relations we find 
\begin{itemize}
\item $1\cdot \tilde u=\tau \tilde u^2=\tau y = E+\eps'\tilde u^2 a$, therefore $(1+\eps'\tilde u a)\tilde u = 1\cdot \tilde u + \eps'\tilde u^2 a = E$, hence $\tilde u$ is invertible and 
$$
\tilde u^{-1}=1+\eps'\tilde u a.
$$
This implies $1=\tilde u^{-1}+\eps'\tilde u a$. We also have $(\tau + \eps'a)\tilde u^2 = \tau\tilde u^2 + \eps'\tilde u^2 a =E$, hence $\tilde u^2$ is invertible and 
$$
\tilde u^{-2}= \tau +\eps' a = (1+\eps'\tilde u a)^2 = 1\cdot 1.
$$
\item 
$$
\zeta = 1\cdot a = \tilde u^{-1}a.
$$
\item 
$$
\nu = \tau\cdot 1 = (\tilde u^{-2}+\eps' a)(\tilde u^{-1}+\eps'\tilde u a) = \tilde u^{-3}.
$$
\end{itemize} 

We now infer invertibility of $u$. We have $u=\tilde u+\eps\tilde u^3 a$, and therefore $(1+\eps'\tilde u a+\eps\tilde u a)\cdot u = (\tilde u^{-1}+\eps\tilde u a)(\tilde u + \eps\tilde u^3a)= E$. Thus $u$ is invertible in $\H_*\Lambda$ and 
$$
u^{-1}=1+\eps'\tilde u a + \eps\tilde u a = 1 +\eps' u a + \eps u a.  
$$
Also $u^2=\tilde u^2$, and therefore $u^{-2}=\tilde u^{-2}=\tau + \eps' a$. 

The previous computations imply the multiplication table 
$$
\wh\H_*\Lambda = \K[a,u^{\pm 1}]/(a^2)=\K[a,\tilde u^{\pm 1}]/(a^2). 
$$ 
Indeed, $\wh{H}_{-3}\Lambda$ is generated by $1=\tilde u^{-1}+\eps'\tilde u a$ and $\tilde u a$, hence also by $\tilde u^{-1}$ and $\tilde u a$. (Similarly $\wh{H}_{-4}\Lambda$ is generated by $\tau=\tilde u^{-2}+\eps' a$ and $a$, hence also by $\tilde u^{-2}$ and $a$.) Since $\tilde u$ is invertible, multiplication by $\tilde u^{-1}$ is an isomorphism in each degree, and we conclude by taking into account that $\wh{H}_*\Lambda$ has rank $2$ in each degree. 
 
%
%

To establish the formulas for the BV operator from the statement of the theorem, one proceeds as in the proof of Theorem~\ref{thm:Rabinowitz_BV_projective_spaces_char_divides} by decreasing induction on $k$, using the 7-term relation and the fact that these formulas are known for $k\ge 0$. 
\end{proof}

The previous calculations allow us to determine precisely the relationship between the generators $u$, $\tilde u$ and $\Theta$. We will refer to $u$ as the \emph{Menichi class}, or \emph{Menichi generator}.  

\begin{proposition} Let $u$ be the Menichi generator from Theorem~\ref{thm:Rabinowitz_BV_S2_char_2}, and $\tilde u$, $\Theta$ the geometric generators used in the proof of Theorem~\ref{thm:Rabinowitz_BV_S2_char_2}.\footnote{I.e., $\tilde u$ is represented by the descending manifold of a singular section of $STS^2$, and $\Theta$ is represented by the descending manifold of $STS^2$.}  
We have 
\begin{equation} \label{eq:tildeu-u}
\tilde u=u+u^3a,
\end{equation}
\begin{equation} \label{eq:tildeu2-Theta}
\tilde u^2=\Theta+\Theta^2a,
\end{equation}
\begin{equation} \label{eq:Deltatildeu}
\Delta \tilde u =\Theta.
\end{equation}
\end{proposition}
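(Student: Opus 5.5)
The idea is to reduce all three identities to determining the two unknown constants $\eps,\eps'\in\F_2$ from the proof of Theorem~\ref{thm:Rabinowitz_BV_S2_char_2}, where $\tilde u=u+\eps u^3a$ and $\Theta=y+\eps' y^2a$ with $y=\tilde u^2$. Since we are in characteristic $2$ and $a^2=0$, we get $\tilde u^2=u^2+2\eps u^4a=u^2$. Hence $y=u^2$ and $y^2a=\Theta^2a$, so $\tilde u^2=\Theta+\eps'\Theta^2a$. Thus \eqref{eq:tildeu-u} is the statement $\eps=1$, and \eqref{eq:tildeu2-Theta} is the statement $\eps'=1$.

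First I compute $\Delta\tilde u$ algebraically from Theorem~\ref{thm:Menichi_CP1}. Since $\Delta(u)=0$ and $\Delta(au^3)=u^2+au^4$, we get
$$
\Delta\tilde u=\Delta u+\eps\,\Delta(u^3a)=\eps\,(u^2+u^4a)=\eps\,(y+y^2a).
$$
On the other hand, I claim the geometric identity $\Delta\tilde u=\Theta$, i.e.\ \eqref{eq:Deltatildeu}. Granting it, we have $\Theta=\eps(y+y^2a)$, and also $\Theta=y+\eps'y^2a$. The class $\Theta$ is non-nilpotent, hence nonzero, so comparing coefficients in the basis $(y,y^2a)$ of $H_4\Lambda$ gives $\eps=1$ and then $\eps'=1$. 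This yields all three formulas at once.

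The main obstacle is the geometric claim $\Delta\tilde u=\Theta$. For the round metric the energy functional is perfect and Morse--Bott, with first nonconstant critical manifold $STS^2$. The class $\tilde u$ is the descending manifold of (the cycle given by) a singular section $\sigma$ of $STS^2$, and $\Theta$ is the descending manifold of all of $STS^2$. The reparametrization action $r$ preserves each critical manifold and the descending manifolds, and on $STS^2$ it acts by the geodesic flow. Therefore $\Delta\tilde u=r_*([S^1]\times\tilde u)$ is represented by the descending manifold of the cycle $\Phi:S^1\times S^2\to STS^2$, $\Phi(t,p)=\varphi_t(\sigma(p))$. This holds modulo classes of lower critical level, and in degree $4$ the only lower level is that of the constant loops, which carry no homology in degree $4$. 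Hence $\Delta\tilde u=\deg_2(\Phi)\,\Theta$.

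To compute $\deg_2\Phi$, I take $\sigma$ to be the unit field tangent to the latitude circles, which is singular at the poles, and fix a generic oriented unit vector $v$ tangent to a great circle $C$. The preimages of $v$ are the points $p\in C$ where $\sigma(p)$ is tangent to $C$ with the orientation of $C$. These occur only at the highest and lowest points of $C$, where $C$ is tangent to a latitude. There $C$ runs eastward at one of the two points and westward at the other, while $\sigma$ points eastward at both, so exactly one preimage survives and $\deg_2\Phi=1$. If making the chain-level statement about $r$ and descending manifolds rigorous proves delicate, a fallback is to pair $\Delta\tilde u$ with the cohomological geometric generator dual to $\Theta$, using the duality between multiplication by $\Theta$ and by $\tau$ as in~\eqref{eq:thetamym_dual}, to detect $\Delta\tilde u\neq 0$ and to identify its component along $y^2a$.
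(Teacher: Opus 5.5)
Your reduction to the constants $\eps,\eps'$ is sound. So is the formula $\Delta\tilde u=\eps(y+y^2a)$ from Theorem~\ref{thm:Menichi_CP1}, and so is the identification $\Delta\tilde u=\deg_2(\Phi)\,\Theta$ via local Morse--Bott homology at level $2\pi$ (using $H_4(S^2)=0$ and perfectness).

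\textbf{The gap.} The decisive count $\deg_2\Phi=1$ is wrong as written, for two independent reasons.
\begin{enumerate}
\item The latitude field is not a valid representative. It has two zeros, each of index $1$, so the closure of its image in $STS^2$ is a surface with boundary $F_N+F_S$, the fibers over the poles. This boundary is nonzero as a mod $2$ chain, so the latitude section is not a cycle and does not represent the local class of $\tilde u$. A singular section gives a mod $2$ cycle only if $\sum_j k_jF_{p_j}$ vanishes mod $2$ (for instance a single zero of index $2$), or after adding $\pi^{-1}(\gamma)$ for an arc $\gamma$ from $N$ to $S$.
\item The claim that $C$ runs eastward at one extremal point and westward at the other is false. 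For a great circle $c$ one has $\langle c',e_z\times c\rangle=\langle e_z,c\times c'\rangle$, which is constant. So $C$ runs in the same east/west direction at its highest and lowest points.
\end{enumerate}
Hence the latitude part contributes $0$ or $2$ preimages of $v$. Correcting only the second error would give $\deg_2\Phi=0$, i.e.\ $\Delta\tilde u=0$, $\eps=0$, and no information on $\eps'$. Your answer of $1$ comes from the two mistakes cancelling.

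\textbf{How to repair it.} The conclusion $\deg_2\Phi=1$ does hold for a genuine cycle. With $Z=\overline{\sigma}+\pi^{-1}(\gamma)$, the second piece meets the orbit of $v$ exactly once, since a generic $C$ meets the half-meridian $\gamma$ once; the total is therefore odd. A representative-free version: $\deg_2\Phi$ is the mod $2$ intersection number of $[Z]\in H_2(STS^2;\F_2)$ with a geodesic-flow orbit $O$. Under $STS^2\cong SO(3)$, both $O$ and a fiber $F_p$ of $STS^2\to S^2$ are rotations by $2\pi$ about a fixed axis. So $[O]=[F_p]$ is the nonzero class in $H_1(STS^2;\F_2)\cong\F_2$, and $[Z]\cdot[O]=[Z]\cdot[F_p]=1$ because $Z$ is a section cycle.

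\textbf{Comparison with the paper.} With this repair your proof is valid and takes a genuinely different route. The paper never computes $\Delta\tilde u$ geometrically:
\begin{itemize}
\item it gets $\eps=1$ from the BV Frobenius relation $\boldp(\Delta\otimes1)=\boldp(1\otimes\Delta)$, tested on $u^{-2k-1}a$ and using $\boldeps(\zeta)=1$, $\boldeps(\nu)=0$;
\item it gets $\eps'=1$ by contradiction from $\Cr(\Delta\tilde u)\le 2\pi<4\pi=\Cr(\Theta^2a)$;
\item it deduces \eqref{eq:Deltatildeu} last.
\end{itemize}
Your route proves \eqref{eq:Deltatildeu} first, from the circle action on the critical manifold, and avoids the Rabinowitz Frobenius structure. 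It is the kind of geometric explanation the paper's subsequent remark asks for.

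Your fallback is not a proof as stated. Pairing $\Delta\tilde u$ against cohomology requires knowing $\Delta^*$ on the relevant class, and \eqref{eq:thetamym_dual} itself involves $\eps'$. Some extra input, such as the BV Frobenius relation the paper uses, would be needed.
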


\begin{proof}
As before, we can use without loss of generality coefficients in $\F_2$ rather than in a general field of characteristic $2$. 

We prove~\eqref{eq:tildeu-u}. We use the fact that $\wh{\H}_*\Lambda$ is a BV Frobenius algebra~\cite[Theorem~5.2]{Latschev-Oancea}, which means, besides the Frobenius relations, that $\boldp(\Delta\otimes 1)=\boldp(1\otimes\Delta)$. This relation is called the \emph{BV Frobenius relation}. Let $X=\alpha u ^{2k-1} + \beta u^{2k+1} a$ for some $k\ge 1$. Using notation from the proof of the previous theorem and the BV Frobenius relation, we compute   
\begin{align*}
\boldp (u^{-2k-2}+u^{-2k}a,X) & = \boldp (\Delta(u^{-2k-1}a),X) 
 = \boldp (u^{-2k-1}a,\Delta X)\\
& = \beta \boldp (u^{-2k-1}a,u^{2k}+u^{2k+2}a)\\
& = \beta\boldeps(u^{-1}a)
 = \beta \boldeps(\tilde u^{-1}a)
 = \beta \boldeps(\zeta) 
 = \beta.
\end{align*}
On the other hand, without appealing to the BV operator we can compute directly 
\begin{align*}
\boldp(u^{-2k-2}+u^{-2k}a,X) & = \boldp (u^{-2k-2}+u^{-2k}a,\alpha u^{2k-1} + \beta u^{2k+1}a) \\
& = \alpha\boldeps(u^{-3}+u^{-1}a) + \beta \boldeps(u^{-1}a)\\
& = \alpha \boldeps(u^{-3}) + \alpha+\beta.
\end{align*}
We claim that this equals $\alpha\eps+\alpha+\beta$, where $\tilde u=u+\eps u^3a$ for some $\eps\in\F_2$. Indeed, we then have $u=\tilde u + \eps\tilde u^3a$, $u^{-1}=\tilde u^{-1}+\eps \tilde u a$, and $u^{-3}=\tilde u^{-3}+\eps\tilde u^{-1}a=\nu+\eps\tilde u^{-1}a$, so that $\boldeps(u^{-3})=\boldeps(\nu)+\eps\boldeps(\tilde u^{-1}a) = \eps$. 

The relation 
$$
\beta=\alpha\eps+\alpha+\beta
$$
must be valid for all $\alpha,\beta\in\F_2$. This implies $\eps=1$ and proves~\eqref{eq:tildeu-u}.

We prove~\eqref{eq:tildeu2-Theta}. We already know that $\tilde u^2=\Theta+\eps'\Theta^2 a$ for some $\eps'\in\F_2$, and we need to exclude the case $\tilde u^2=\Theta$. Assuming this to be true, we calculate $\Delta (\tilde u)$ in two ways in order to obtain a contradiction. On the one hand, for degree reasons we must have $\Delta(\tilde u)=\beta\Theta+\beta'\Theta^2a$. But $\Cr \Delta(\tilde u)\le \Cr \tilde u=2\pi$ (for the round metric), whereas $\Cr \Theta=2\pi$ and $\Cr \Theta^2a=4\pi$. This implies $\beta'=0$, i.e., $\Delta (\tilde u) = \beta\Theta=\beta\tilde u^2$, where the last equality holds by the assumption $\tilde u^2=\Theta$. Secondly, using $\tilde u=u+u^3a$ we find $\Delta (\tilde u) = u^2+u^4a= \tilde u^2+ \tilde u^4a$. Therefore, $\beta \tilde u^2 = \tilde u^2+\tilde u^4a$ for some $\beta\in\F_2$, which is impossible because $\tilde u^2$ and $\tilde u^4a$ are linearly independent. This is the desired contradiction, which proves~\eqref{eq:tildeu2-Theta}. 

We prove~\eqref{eq:Deltatildeu}. We have computed in the previous paragraph $\Delta (\tilde u) = \tilde u^2 +\tilde u^4 a$, and the relation $\tilde u^2=\Theta+\Theta^2a$ implies $\Delta(\tilde u)=\Theta+\Theta^2a+ (\Theta+\Theta^2 a)^2a= \Theta$.  
\end{proof}

\begin{remark}
It is an interesting question to describe explicit homologies in $\Lambda S^2$ in order to prove the relations~(\ref{eq:tildeu-u}--\ref{eq:Deltatildeu}). 
\end{remark}

\begin{remark}
When expressed in terms of the geometric generator $\tilde u=u+u^3a$, the formulas for the BV operator change as follows: $\Delta(\tilde u^k)=\Delta(\tilde u^k a)=0$ if $k$ is even, $\Delta(\tilde u^k a)=\tilde u^{k-1}+\tilde u^{k+1}a$ if $k$ is odd, and $\Delta(\tilde u^k)=\tilde u^{k+1} + \tilde u^{k+3}a$ if $k$ is odd (in particular $\Delta (\tilde u)=\tilde u^2+\tilde u^4a\neq 0$). 
\end{remark}

The following corollary removes the indeterminacies involving $\eps$ and $\eps'$ in the proof of Theorem~\ref{thm:Rabinowitz_BV_S2_char_2}, since we now know that $\eps=\eps'=1$.
\begin{corollary}
With the notation from the proof of Theorem~\ref{thm:Rabinowitz_BV_S2_char_2}, the following relations hold:
\begin{align*}
\tau y & = E+\tilde u^2a = E+u^2a, \\
\tilde u^{-1} & = 1+\tilde ua, \\
u^{-1} & = 1,\\
\tilde u^{-2} =u^{-2} & = \tau + a.
\end{align*}
\end{corollary}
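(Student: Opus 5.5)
The plan is to substitute the two constants $\eps,\eps'\in\F_2$ fixed by the preceding Proposition into the relations obtained in the proof of Theorem~\ref{thm:Rabinowitz_BV_S2_char_2}. As there, I work over $\F_2$, which is enough since $\wh H_*(\Lambda;\K)=\wh H_*(\Lambda;\F_2)\otimes_{\F_2}\K$.

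\emph{Step 1: determine $\eps$ and $\eps'$.} In the proof of the theorem, $\eps$ is defined by $\tilde u=u+\eps u^3a$. Comparing this with \eqref{eq:tildeu-u} and using that $u^3a$ is nonzero and linearly independent from $u$ in $\H_3\Lambda$, we get $\eps=1$.

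For $\eps'$, recall that $y=\tilde u^2$ and $\Theta=y+\eps'y^2a$. Since $a^2=0$, inverting this gives $y=\Theta+\eps'\Theta^2a$, as already noted in the proof. Equation \eqref{eq:tildeu2-Theta} says $\tilde u^2=\Theta+\Theta^2a$. The classes $\Theta$ and $\Theta^2a$ are linearly independent: one is non-nilpotent, the other nilpotent, and together they span $H_4\Lambda$. Comparing coefficients therefore gives $\eps'=1$.

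\emph{Step 2: substitute.} Plugging $\eps=\eps'=1$ into the formulas from the proof of Theorem~\ref{thm:Rabinowitz_BV_S2_char_2} gives:
\begin{itemize}
\item $\tau y=E+\eps'\tilde u^2a$ becomes $\tau y=E+\tilde u^2a$;
\item $\tilde u^{-1}=1+\eps'\tilde ua$ becomes $\tilde u^{-1}=1+\tilde ua$;
\item $u^{-1}=1+\eps'ua+\eps ua$ becomes $u^{-1}=1+2ua=1$ in characteristic $2$;
\item $\tilde u^{-2}=u^{-2}=\tau+\eps'a$ becomes $\tau+a$.
\end{itemize}

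\emph{Step 3: identify $\tilde u^2a$ with $u^2a$.} This equality is needed for the first relation. In characteristic $2$ we have
\[
\tilde u^2=(u+u^3a)^2=u^2+u^6a^2=u^2,
\]
because the cross term vanishes and $a^2=0$. This squaring identity was also already recorded in the proof.

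There is no real obstacle in this argument. The only point needing care is in Step 1, where we must use linear independence of the relevant pairs, $\{u,u^3a\}$ and $\{\Theta,\Theta^2a\}$, to read off the coefficients $\eps$ and $\eps'$ unambiguously.
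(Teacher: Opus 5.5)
Your proposal is correct and follows the paper's own argument. The paper justifies the corollary only by noting that the preceding Proposition forces $\eps=\eps'=1$; you make that substitution in the relations $\tau y=E+\eps'\tilde u^2a$, $\tilde u^{-1}=1+\eps'\tilde ua$, $u^{-1}=1+\eps'ua+\eps ua$ and $\tilde u^{-2}=u^{-2}=\tau+\eps'a$, together with $\tilde u^2=u^2$, all taken from the proof of Theorem~\ref{thm:Rabinowitz_BV_S2_char_2}.

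One small slip: $u$ and $u^3a$ lie in $H_3\Lambda=\H_1\Lambda$, not in $\H_3\Lambda$. This is harmless, since reading off $\eps$ and $\eps'$ only requires $u^3a\neq 0$ and $\Theta^2a\neq 0$.
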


\subsection{Loop cohomology BV algebra for simply connected CROSS that are projective spaces}
\qquad 

In this section we infer the loop cohomology BV algebra structure from the one on $\wh\H_*\Lambda$, using the decomposition $\wh{\H}_*\Lambda\simeq \ol \H_*\Lambda\oplus \ol \H^{1-2n-*}\Lambda$.  

We first state the result for all simply connected CROSS that are projective spaces, except $\C P^1$. 

\begin{theorem} 
\label{thm:BV_projective_spaces_cohomology_char_does_not_divide}
Let $M$ be one of the CROSS $\C P^d$ with $d\ge 2$, $\H P^d$ with $d\ge 1$, or $\Ca P^2$ (we set $d=2$ in this last case), and let $\K$ be a field whose characteristic does not divide $d+1$. 
We have an isomorphism of BV algebras 
$$
\ol\H^{1-2n-*}(\Lambda;\K)\cong y^{-1}\K[a,x,y^{-1}]/\la a^{d+1},x^2,xa^d, y^{-1}a^d\ra,
$$
where $|a|=-i$, $|x|=-1$, $|y^{-1}|=-i(d+1)+2$, and the BV operator acts by
$$
\Delta(y^k a^\ell)=0,\qquad \Delta(y^k x a^\ell) = (-\ell + k + (k+1)d)y^k a^\ell,
$$
for $k<0$ and $0\le \ell\le d$. \qed
\end{theorem}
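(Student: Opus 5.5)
The plan is to read off $\ol\H^{1-2n-*}(\Lambda;\K)$ as the complementary summand of $\ol\H_*(\Lambda;\K)$ inside $\wh\H_*(\Lambda;\K)$, using the splitting $\wh\H_*\Lambda=\ol\H_*\Lambda\oplus\ol\H^{1-2n-*}\Lambda$ into BV subalgebras recalled in \S\ref{sec:BV}, together with the explicit BV algebra structure of Theorem~\ref{thm:Rabinowitz_BV_projective_spaces_char_does_not_divide}. Thus all relations and BV formulas will be restrictions of those in $\K[a,x,y^{\pm1}]/\la a^{d+1},x^2,xa^d\ra$ (the relation $xa^d=0$ being inherited from Corollary~\ref{cor:loop_BV_CROSS_over_K_char_does_not_divide_d+1}), and the only real task is to identify the summand.

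First I identify $\ol\H_*\Lambda$ inside $\wh\H_*\Lambda$. By Corollary~\ref{cor:loop_BV_CROSS_over_K_char_does_not_divide_d+1}, $\H_*\Lambda$ is spanned by $y^ka^\ell$, $y^kxa^\ell$ with $k\ge0$, modulo $ya^d=0$ and $xa^d=0$. Reduced homology kills $\chi(M)[\mathrm{pt}]=(d+1)a^d$, which is a nonzero multiple of $a^d$ since $\mathrm{char}\,\K\nmid d+1$. Under the inclusion into $\wh\H_*\Lambda$ this is compatible with the observation that in $\wh\H_*\Lambda$ the elements $y^ka^\ell$ for $k\ge0$ survive (including $ya^d$), so the image of $\ol\H_*\Lambda$ should be the span of $\{y^ka^\ell,\,y^kxa^\ell : k\ge0\}$ except that the $a^d$ direction in level $0$ is traded: $\ol\H_*\Lambda$ contains $ya^d$-type classes coming from cohomology. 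Rather than track this by hand, I will use a degree/level count: by Uebele's Theorem~\ref{thm:level_algebra}, $\wh\H_*\Lambda$ has basis $y^k\cdot(\text{basis of } H^{-*}(S^*M))$, and in the case $\mathrm{char}\nmid d+1$ the zero-level block is $\{a^\ell\ (0\le\ell\le d-1),\ xa^\ell\ (0\le\ell\le d-1)\}$ up to the Gysin shift; comparing Poincaré series of $\ol\H_*\Lambda$ and $\ol\H^{1-2n-*}\Lambda$ (the latter computed from Ziller's perfectness in cohomology) determines which monomials lie in which summand.

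Second, I show the cohomological summand is exactly the span of monomials $y^ka^\ell$, $y^kxa^\ell$ with $k<0$, with $y^{-1}a^d$ and $xa^d$-type terms vanishing as dictated. The cleanest route is via critical levels: classes in $\ol\H^{1-2n-*}\Lambda$ have negative critical value and those in $\ol\H_*\Lambda$ nonnegative, and by Theorem~\ref{thm:level_algebra} the monomial $y^k\alpha$ sits at level $k$. Hence $\ol\H^{1-2n-*}\Lambda=y^{-1}\K[a,x,y^{-1}]$ modulo the inherited relations, and the extra relation $y^{-1}a^d=0$ must be read as the statement that the corresponding class (level $-1$, degree of $a^d$ shifted) lies in the level-$0$ part identified with $\ol\H_*\Lambda$ — this is the one delicate bookkeeping point, paralleling $ya^d=0$ in homology via Poincaré duality (Theorem~\ref{thm:PD}), which exchanges the summands and maps $y^k\mapsto y^{-k-1}$-type monomials.

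Finally, since both summands are BV subalgebras, the product and $\Delta$ are the restrictions of those in Theorem~\ref{thm:Rabinowitz_BV_projective_spaces_char_does_not_divide}, giving $\Delta(y^ka^\ell)=0$ and $\Delta(y^kxa^\ell)=(-\ell+k+(k+1)d)y^ka^\ell$ for $k<0$. The main obstacle I expect is the precise matching of the summand near level $0$/$-1$ (the $a^d$ and $y^{-1}a^d$ classes), i.e.\ justifying the relation $y^{-1}a^d=0$ in the quotient; I would first try to settle it by Poincaré duality applied to the homological relation $ya^d=0$, and fall back on a dimension count degree by degree.
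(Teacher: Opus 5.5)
Your overall route is how the paper obtains this statement: restrict the BV structure of Theorem~\ref{thm:Rabinowitz_BV_projective_spaces_char_does_not_divide} to the summand $\ol\H^{1-2n-*}\Lambda$, which is a BV subalgebra of $\wh\H_*\Lambda$. But the one nontrivial step, identifying that summand, rests on false claims.

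\emph{The ring presentation is wrong.} The Rabinowitz ring is $\K[a,x,y^{\pm1}]/\la a^d,x^2\ra$, not $\la a^{d+1},x^2,xa^d\ra$. The point class $a^d$ is already zero in $\ol\H_*\Lambda=\H_*\Lambda/\chi(M)[\mathrm{pt}]$, because $d+1$ is a unit in $\K$. Since $\ol\H_*\Lambda\hookrightarrow\wh\H_*\Lambda$ is multiplicative, $a^d=0$ in $\wh\H_*\Lambda$ as well. Likewise $ya^d=0$ already holds in $\H_*\Lambda$, so nothing of that type ``survives''. With your presentation, $y^{-1}a^d$ would be a nonzero class of degree $2-2n-i$, necessarily lying in the cohomological summand, and the relation $y^{-1}a^d=0$ you want to prove would be false. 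Reading it as ``membership in the other summand'' makes no sense for a nonzero vector. With the correct presentation, $y^{-1}a^d=y^{-1}\cdot a^d=0$ is automatic, and no Poincar\'e duality is needed.

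\emph{The critical-level criterion is wrong.} By Theorem~\ref{thm:level_algebra} and a degree count, the level-$0$ block is spanned by $a^\ell$ and $y^{-1}xa^\ell$ for $0\le\ell\le d-1$. The classes $y^{-1}xa^\ell$ have degrees $1-n-i,\dots,1-2n$ and correspond to $\ev^*H^{>0}(M)$, i.e.\ to constant loops. The classes $xa^\ell$ sit at level $1$, not level $0$. So $\ol\H^{1-2n-*}\Lambda$ contains level-$0$ classes and is not the negative-level part. Your argument lands on ``$k<0$'' only because the misidentified level-$0$ block cancels the false criterion.

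\emph{The missing idea is that the splitting is forced by degrees.} This also makes your ``Poincar\'e series'' fallback rigorous, which otherwise cannot pin down a complement. Since $M$ is simply connected, $\Lambda$ is connected, so $\ol H_*\Lambda$ and $\ol H^*\Lambda$ are concentrated in degrees $\ge 1$. In shifted grading, $\ol\H_*\Lambda$ therefore lives in degrees $\ge 1-n$. The summand $\ol\H^{1-2n-*}\Lambda$, whose elements in shifted degree $s$ have cohomological degree $1-n-s$, lives in degrees $\le -n$. Hence $\ol\H^{1-2n-*}\Lambda=\bigoplus_{s\le -n}\wh\H_s\Lambda$, regardless of how the splitting map is constructed.

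In the basis $y^ka^\ell$, $y^kxa^\ell$ ($k\in\Z$, $0\le\ell\le d-1$), the two ranges separate cleanly:
\begin{itemize}
\item monomials with $k\ge0$ have degree $\ge |xa^{d-1}|=i-1-n\ge 1-n$;
\item monomials with $k<0$ have degree $\le |y^{-1}|=2-n-i\le -n$.
\end{itemize}
So the cohomological summand is exactly the span of the monomials with $k<0$, which is $y^{-1}\K[a,x,y^{-1}]/\la a^{d+1},x^2,xa^d,y^{-1}a^d\ra$. The product and $\Delta$, including $\Delta(y^kxa^\ell)=(-\ell+k+(k+1)d)\,y^ka^\ell$ for $k<0$, are then the restrictions of the Rabinowitz formulas.
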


\begin{theorem} \label{thm:BV_projective_spaces_cohomology_char_divides} 
Let $M$ be one of the CROSS $\C P^d$ with $d\ge 2$, $\H P^d$ with $d\ge 1$, or $\Ca P^2$ (we set $d=2$ in this last case), and let $\K$ be a field whose characteristic divides $d+1$. 
We have an isomorphism of BV algebras 
$$
\H^{1-2n-*}(\Lambda;\K)\cong y^{-1}\K[a,u,y^{-1}]/\la a^{d+1},u^2\ra,
$$
where $|a|=-i$, $|u|=i-1$, $|y^{-1}|=-i(d+1)+2$, and the BV operator acts by
$$
\Delta(y^k a^\ell)=0, \qquad \Delta(y^ku)=0,
$$
for $k<0$ and $0\le \ell\le d$, 
\begin{equation}
\Delta(y^k u a^\ell) = \ell y^k a^{\ell-1}
\end{equation}
for $k<0$ and $1\le \ell\le d$ in the case of $\H P^d$ and $\Ca P^2$, respectively for $k<0$ and $2\le \ell\le d$ in the case of $\C P^d$, and
\begin{equation}
  \Delta(y^k ua) = y^k + \frac{(d+1)d}{2} y^{k+1}a^d =
  \left\{\begin{array}{cl} y^k+\frac{d+1}2y^{k+1}a^d, & d \mbox{ odd,}\\
  y^k,& d \mbox{ even}
  \end{array}\right.
\end{equation}
for $\C P^d$ and $k<-1$,
and 
$$
\Delta(y^{-1}ua)=y^{-1}.
$$ \qed
\end{theorem}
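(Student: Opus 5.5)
The plan is to read off the statement from Theorem~\ref{thm:Rabinowitz_BV_projective_spaces_char_divides} via the splitting $\wh{\H}_*\Lambda=\ol\H_*\Lambda\oplus\ol\H^{1-2n-*}\Lambda$. By the theorem in \S\ref{sec:BV}, both summands are BV subalgebras. The summand $\ol\H_*\Lambda$ is identified through Theorem~\ref{thm:loop_BV_CROSS_over_K_char_divides_d+1}. When $\mathrm{char}(\K)\mid d+1$ we have $\chi(M)[\mathrm{pt}]=0$, so $\ol\H_*\Lambda=\H_*\Lambda$, which is the span of the monomials $y^ka^\ell$ and $y^kua^\ell$ with $k\ge 0$ inside $\K[a,u,y^{\pm1}]/\la a^{d+1},u^2\ra$. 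Its complement is the span of the monomials with $k<0$, which is exactly $y^{-1}\K[a,u,y^{-1}]/\la a^{d+1},u^2\ra$.

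The main point is to show that this complement is the summand $\ol\H^{1-2n-*}\Lambda$, which here equals $\H^{1-2n-*}\Lambda$ since the reduction only removes a multiple of $\chi(M)$. I would use action levels. By Uebele's theorem (Theorem~\ref{thm:level_algebra}), $y^k\wh\H^{=0}_*\Lambda$ sits at level $k$. The loop homology summand is the image of the nonnegative-action part, and the cohomology summand is the negative-action part $\wh\H^{(-\infty,0)}$, up to the level-zero block. With the normalization $L=1$, classes $y^k\alpha$ with $k<0$ have $\Cr<0$. They therefore lie in the image of $\wh\H^{(-\infty,-1/2)}_*\Lambda$, which injects and is identified with loop cohomology of $(\Lambda,\Lambda_0)$, whereas the level-zero block belongs to $\ol\H_*\Lambda$.

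A dimension count in each degree confirms this: the monomials with $k<0$ match the ranks of $H^*(\Lambda,\Lambda_0)$ given by Ziller perfectness. Here $H^*\Lambda_0$ is absorbed into the homology side after reduction. This identification is the step I expect to need the most care.

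With the complement identified, the BV operator and product on the complement are the restrictions of those in Theorem~\ref{thm:Rabinowitz_BV_projective_spaces_char_divides}, so the formulas follow for $k<0$. The product closes on the complement because $k+k'<0$, and $\Delta$ preserves $k$ in the formulas. The one exception is $\C P^d$ with $k=-1$, where the term $\frac{(d+1)d}{2}y^{0}a^d$ lies outside the summand. In the BV subalgebra it must vanish, or equivalently it is killed by projection. Indeed $y^0a^d=a^d=[\mathrm{pt}]$ is the class that the splitting discards when passing between the summands. This gives $\Delta(y^{-1}ua)=y^{-1}$, and I would check this consistency explicitly using the splitting's compatibility with $\Delta$ from Proposition~\ref{prop:BV level-0}.
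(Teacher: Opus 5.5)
Your overall route, restricting Theorem~\ref{thm:Rabinowitz_BV_projective_spaces_char_divides} to the cohomology summand of the splitting, is the one the paper has in mind; the paper gives no further argument. However, your identification of that summand with the span of the monomials $y^ku^\epsilon a^\ell$, $k<0$, rests on false claims.

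Since $\chi(M)=d+1=0$ in $\K$, nothing is reduced. The summand is all of $\H^{1-2n-*}\Lambda$, so it contains the $d+1$ constant-loop classes $H^*(\Lambda_0)\cong H^*(M)$, and these lie at action level $0$. The zero-level block $H^{-*}(S^*M)\cong\H_*M\otimes H^{-*}(S^{n-1})$ is $2(d+1)$-dimensional and is shared by the two summands. Its cohomological half sits in the degrees $1-n-i\ell$ of the monomials $y^{-1}ua^\ell$; for example, the unit of $H^0\Lambda$ lives in the degree of $y^{-1}u$. Consequently:
\begin{itemize}
\item the exponent of $y$ in a monomial is not its action level;
\item the zero-level block is not contained in $\ol\H_*\Lambda$;
\item $a^d=[\mathrm{pt}]$ is a nonzero class of $\H_*\Lambda$ that nothing discards;
\item the $k<0$ monomials have the ranks of $H^*\Lambda$, not of $H^*(\Lambda,\Lambda_0)$.
\end{itemize}
The identification should instead be made by degrees. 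Inside $\wh\H_*\Lambda$, $\H_*\Lambda$ is the span of the $k\ge0$ monomials, which live in degrees $\ge -n$, while the $k<0$ monomials live in degrees $\le 1-n$. For $\H P^d$ and $\Ca P^2$ no degree contains both kinds, so the complement is forced.

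For $\C P^d$ the degrees $-n$ and $1-n$ contain $\{y^{-1},a^d\}$ and $\{y^{-1}u,ua^d\}$. So a priori the summand contains $y^{-1}+\phi_1a^d$ and $y^{-1}u+\phi_2ua^d$, and you must use its $\Delta$-stability to fix $\phi_1,\phi_2$. First, $\Delta(y^{-1}u+\phi_2ua^d)=\phi_2\,d\,a^{d-1}$ with $d=-1\neq0$ lies in degree $2-n$, where the summand is zero, so $\phi_2=0$. Second, $y^{-1}ua$ spans its degree, and $\Delta(y^{-1}ua)=y^{-1}+c\,a^d$ with $c=\frac{(d+1)d}{2}$, which forces $\phi_1=c$.

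This is where your $k=-1$ step breaks. A BV subalgebra cannot make a term of $\Delta(y^{-1}ua)$ vanish, and projecting along $\H_*\Lambda$ replaces $\Delta$ by a different operator.
\begin{itemize}
\item If $c=0$ in $\K$ (every odd $p$, and $p=2$ with $4\mid d+1$), then $\phi_1=\phi_2=0$. The summand is exactly the span of the $k<0$ monomials, all stated formulas are literal restrictions, and your proof goes through once the identification is argued as above.
\item If $c\neq0$, i.e.\ $p=2$ and $d\equiv1\pmod 4$ (e.g.\ $\C P^5$ over $\F_2$), the span of the $k<0$ monomials is not $\Delta$-stable. Moreover, the truncated rule $\Delta(y^{-1}ua)=y^{-1}$ is incompatible with the naive product. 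In characteristic $2$, using $\Delta(y^{-1})=\Delta(y^{-2})=0$, the 7-term relation with $A=B=y^{-1}$, $C=y^{-1}ua$ forces $\Delta(y^{-3}ua)=y^{-2}\Delta(y^{-1}ua)=y^{-3}$. The stated formula for $k=-3$ instead gives $y^{-3}+y^{-2}a^d$.
\end{itemize}
So in that subcase the displayed structure is not a BV algebra and cannot be reached by restriction or by projection. The actual summand contains $y^{-1}+a^d$ instead of $y^{-1}$, and in that basis the product is twisted, e.g.\ $(y^{-1}+a^d)\cdot y^{-1}u=y^{-2}u+y^{-1}ua^d$.
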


We now state the corresponding results for $\C P^1$.

\begin{theorem} 
\label{thm:BV_S2_cohomology_char_does_not_divide}
Let $M=\C P^1$, and let $\K$ be a field of characteristic different from $2$. 
We have an isomorphism of BV algebras 
$$
\ol\H^{-3-*}(\Lambda;\K)\cong y^{-1}\K[x,y^{-1}]/\la x^2\ra,
$$
where $|x|=-1$, $|y|=2$, and the BV operator acts by
\begin{equation*} 
\Delta(y^k)=0, \qquad \Delta(y^k x) = (2k + 1)y^k,
\end{equation*}
for $k<0$. \qed
\end{theorem}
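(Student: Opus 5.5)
The plan is to read the statement off from Theorem~\ref{thm:Rabinowitz_BV_S2_char_does_not_divide} using the additive splitting $\wh\H_*\Lambda=\ol\H_*\Lambda\oplus\ol\H^{-3-*}\Lambda$. By the remark following the proof of the subalgebra theorem in \S\ref{sec:BV}, this splitting holds for $S^2$ and is compatible with the Goresky--Hingston product on $H^{1-*}(\Lambda,\Lambda_0)$. Since $H_1(\C P^1)=0$, it is canonical, so the summand $\ol\H^{-3-*}\Lambda$ is a subspace of $\wh\H_*(\Lambda;\K)\cong\K[x,y^{\pm1}]/\langle x^2\rangle$. Its product is the restriction of $\boldmu$, and its BV operator is the restriction of $\Delta$; for the latter, use the argument of the subalgebra theorem, namely the existence of $\Delta$ on loop cohomology together with Proposition~\ref{prop:BV level-0}.

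The first step is to identify this summand inside $\K[x,y^{\pm1}]/\langle x^2\rangle$ as the span of $\{y^k,\,y^kx : k<0\}$, i.e.\ the ideal $y^{-1}\K[x,y^{-1}]/\langle x^2\rangle$.

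\begin{itemize}
\item The summand $\ol\H_*\Lambda$ is spanned by $E=y^0$, $x$, and $y^k,y^kx$ for $k\ge1$, which is exactly the span of $\{y^k,\,y^kx : k\ge0\}$.
\item For the complement, the proof of Theorem~\ref{thm:Rabinowitz_BV_S2_char_does_not_divide} gives $\tau=\epsilon y^{-1}$ and $\zeta=\tau x=\epsilon y^{-1}x$, and then $\mu=\epsilon\tau\zeta=y^{-2}x$ up to sign.
\item Since $\tau$ acts as $\epsilon y^{-1}$, the products $\tau^m=\epsilon^m y^{-m}$ and $\mu\tau^m=\pm y^{-m-2}x$ together with $\zeta$ give, up to sign, every $y^{k}$ and $y^kx$ with $k<0$.
\item These classes span $\ol H^{1-*}\Lambda=\langle\zeta\rangle\oplus\K[\mu,\tau]_{\ge1}/\langle\mu^2\rangle$.
\end{itemize}

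A degree count confirms the identification: rank one in each degree matches on both sides.

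Next, the product. The span of $\{y^k,y^kx:k<0\}$ is closed under multiplication in $\K[x,y^{\pm1}]/\langle x^2\rangle$, since exponents $k,k'<0$ add to $k+k'<0$. So the ring structure is that of $y^{-1}\K[x,y^{-1}]/\langle x^2\rangle$, with no relations beyond $x^2=0$.

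Finally, the BV operator. Its formulas are the restrictions of \eqref{eq:BV_S2_char_does_not_divide} to $k<0$, namely $\Delta(y^k)=0$ and $\Delta(y^kx)=(2k+1)y^k$. These values again lie in the summand, which is consistent with it being stable under $\Delta$. As a cross-check, they agree with the direct computation of $\Delta^*$ in the remark inside the proof of Theorem~\ref{thm:Rabinowitz_BV_S2_char_does_not_divide}, using $\epsilon\tau=y^{-1}$ and $\epsilon\mu=y^{-2}x$.

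The main obstacle is justifying that the cohomological summand is closed under the product and $\Delta$ in dimension $2$, since the subalgebra theorem was stated for $n\ge3$. I would handle this in two steps:

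\begin{enumerate}
\item Closure under the product follows from the explicit computation above.
\item For $\Delta$, compatibility follows from the formulas, which are valid for all $k\in\Z$ in $\wh\H_*\Lambda$. What still needs checking is that the restricted product on the summand is the extended Goresky--Hingston product. I would take this from the remark in \S\ref{sec:BV} on $H^{1-*}(\Lambda,\Lambda_0)$ together with the identity $\zeta=\tau x$.
\end{enumerate}
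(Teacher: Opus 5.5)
Your proposal is correct and is essentially the paper's argument. The paper states this theorem without separate proof and reads it off from Theorem~\ref{thm:Rabinowitz_BV_S2_char_does_not_divide} via the splitting $\wh\H_*\Lambda=\ol\H_*\Lambda\oplus\ol\H^{-3-*}\Lambda$: the relations $\tau=\epsilon y^{-1}$, $\zeta=\tau x$, $\epsilon\mu=y^{-2}x$ from that proof identify the cohomological summand with the span of $y^k, y^kx$ for $k<0$, and the product and $\Delta$ are simply restricted. Your closing worry about matching an intrinsically defined extended Goresky--Hingston product in dimension $2$ goes beyond what the paper does; it uses the structure induced from $\wh\H_*\Lambda$, which agrees with the Goresky--Hingston product on $H^{1-*}(\Lambda,\Lambda_0)$ by the remark in \S\ref{sec:BV}.
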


\begin{theorem} 
\label{thm:BV_S2_cohomology_char_2}
Let $M=\C P^1$ and let $\K$ be a field of characteristic $2$. We have an isomorphism of BV algebras 
$$
\H^{-3-*}(\Lambda;\K)=u^{-1}\K[a,u^{-1}]/\langle a^2\rangle,
$$
where $|a|=-2$, $|u|=1$, and the BV operator acts, for $k<0$, by 
$$
\Delta (u^k)=0
$$
and 
$$
\Delta (au^k)=\left\{\begin{array}{rl} 0, & k \mbox{ even}, \\ u^{k-1}+au^{k+1}, & k \mbox{ odd}. \end{array} \right.
$$
\qed
\end{theorem}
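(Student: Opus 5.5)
The plan is to read off this theorem from Theorem~\ref{thm:Rabinowitz_BV_S2_char_2}. We use the additive splitting $\wh\H_*\Lambda=\ol\H_*\Lambda\oplus\ol\H^{-3-*}\Lambda$ for $S^2$, which holds in every dimension by the remark following the splitting theorem. We also use the fact that the summand $H^{-3-*}(\Lambda,\Lambda_0)$, resp.\ the cohomology summand, is a BV subalgebra of $\wh\H_*\Lambda$: products are stable by~\cite{CHO-reducedSH}, and BV-stability follows from Proposition~\ref{prop:BV level-0} together with the existence of the cohomological BV operator. Granting this, the whole task is to identify which elements of $\K[a,u^{\pm1}]/\langle a^2\rangle$ make up the cohomological summand. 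The BV formulas for $k<0$ are then simply the restriction of those in Theorem~\ref{thm:Rabinowitz_BV_S2_char_2}.

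The identification uses the action filtration. For the round metric normalized as in the proof of Theorem~\ref{thm:Rabinowitz_BV_S2_char_2}, the homology summand $\H_*\Lambda=\K[a,u]/\langle a^2\rangle$ consists of classes of critical level $\ge0$. The cohomology summand consists of classes of negative level, with $\ccr=-\Cr$. By Uebele-type periodicity, or directly from the proof of Theorem~\ref{thm:Rabinowitz_BV_S2_char_2}, one gets two facts. First, $\tau=\tilde u^{-2}+a$, $\nu=\tilde u^{-3}$, $\zeta=\tilde u^{-1}a$, $1=\tilde u^{-1}+\tilde u a$ (using $\eps=\eps'=1$) lie in the cohomology summand. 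Second, multiplication by $\tau$ maps that summand into itself. In each degree $\le -3$ the rank of $\wh\H_*\Lambda$ is $2$, and all of it lies in the cohomology summand, because $\H_*\Lambda$ vanishes in degrees $<-2$. Hence the cohomology summand contains all of $\wh\H_{\le -3}\Lambda$, which is spanned by $u^k,u^ka$ with $k\le -3$ and $k\le -1$ respectively. It remains to treat the degrees $-2,-1,0,1,2,\dots$ where the two summands overlap in degree.

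For those degrees I would determine the cohomology summand as follows. The summand $H^{-3-*}(\Lambda)$ has rank $2$ in degrees $-3-j$ for $j\ge 2$ (giving degrees $\le -5$, already covered). It has rank $1$ in degree $-3$ ($\zeta$, $\nu$; rank $2$ in $H^2$) and in degrees $-2$ ($H^1=\langle\tau\rangle$) and $-3-0$. So I would check that $u^{-1}\K[a,u^{-1}]/\langle a^2\rangle$ matches these ranks: it contains $u^{-1}$ in degree $-1$? That degree would require $H^{-2}$, which is zero, so a correction is needed. Using $u^{-1}=1$ (the Corollary), $u^{-1}$ is the cohomology class $1\in H^0\Lambda$ in degree $-3$, consistent with $|u^{-1}|=-1$ shifted to $\wh H_{-3}$ indexing $1-2n-*$.

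The main obstacle is exactly this bookkeeping of gradings and the low-degree overlap. The key identities are the Corollary relations $u^{-1}=1$ and $u^{-2}=\tau+a$, and $au^{-1}=\zeta$, which show that the generators $u^{k}$, $au^k$ ($k<0$) are precisely $1$, $\zeta$, and products of these with powers of $\tau$ and $\nu$. In particular they lie in the cohomology summand and span it by a rank count. Since the summand is a subalgebra, the ring structure is the restriction, giving $u^{-1}\K[a,u^{-1}]/\langle a^2\rangle$. The BV operator formulas follow by restricting those of Theorem~\ref{thm:Rabinowitz_BV_S2_char_2}; one should check they do not leave the summand, for instance that $\Delta(au^{-1})=u^{-2}+a$ lies in it. This holds since $u^{-2}+a=\tau$.
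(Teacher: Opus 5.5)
Your route is the one the paper intends: it gives no separate argument for this theorem and simply reads it off Theorem~\ref{thm:Rabinowitz_BV_S2_char_2} through the splitting. The step everything rests on, however, fails in degree $-2$. That step is the claim that the cohomology summand $\H^{-3-*}\Lambda\subset\wh\H_*\Lambda$ is a BV subalgebra containing all $u^k$, $au^k$ with $k<0$, and your own formulas show that it breaks down.

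\begin{itemize}
\item \emph{The summand is not closed under products.} You record $u^{-2}=\tau+a$. Here $a=[\mathrm{pt}]$ spans $\H_{-2}\Lambda$, which lies in the \emph{homology} summand. It is not reduced away, because $\chi(S^2)=0$ in $\K$, so $\ol\H_*\Lambda=\H_*\Lambda$. Hence $u^{-2}=1\cdot 1\notin \H^{-3-*}\Lambda$.
\item \emph{Why this is specific to $n=2$.} The subalgebra statement of \S2 is made only for $n\ge 3$, and for good reason. Cohomology classes sit in shifted degrees $\le 1-n$, so their products sit in degrees $\le 2-2n$. This range meets the range $\ge -n$ of the homology summand only when $n\le 2$. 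For $n=2$ the paper only asserts compatibility on $H^*(\Lambda,\Lambda_0)$, which you mention but do not actually use.
\item \emph{The span is not closed under $\Delta$.} Your check $\Delta(au^{-1})=u^{-2}+a$ shows that $u^{-1}\K[a,u^{-1}]/\langle a^2\rangle$ is not $\Delta$-stable inside $\wh\H_*\Lambda$, since $a=au^0$ does not belong to it.
\end{itemize}

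So the two complements of $\H_*\Lambda$ that you identify by a rank count differ precisely in degree $-2$: one is $\langle \tau+a\rangle$ there, the other $\langle\tau\rangle$. The span of the $u^k,au^k$ is closed only under the product, and the cohomology summand only under $\Delta$. Restricting the structure of $\wh\H_*\Lambda$ therefore yields a BV subalgebra from neither.

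Your degree bookkeeping in the middle paragraph also needs repair: $H^j\Lambda$ sits in $\wh\H_{-1-j}\Lambda$. Thus $H^0\Lambda=\langle 1\rangle$ is in degree $-1$ and matches $u^{-1}=1$ directly, with no correction needed.

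What your argument does establish:
\begin{itemize}
\item Agreement in all degrees $\neq -2$. In degrees $\le -3$ everything in $\wh\H_*\Lambda$ is cohomological, and in degree $-1$ one has $u^{-1}=1$.
\item That the relative part $\H^{-3-*}(\Lambda,\Lambda_0)$ is a genuine BV subalgebra. It is spanned by $\tau=u^{-2}+a$, by $\nu=u^{-3}+u^{-1}a$, and by $u^k,u^{k+2}a$ for $k\le -4$, with $\Delta\nu=\tau$.
\end{itemize}

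Note that the stated formula for $k=-1$ already produces $a\notin u^{-1}\K[a,u^{-1}]/\langle a^2\rangle$. The paper's reading-off does not address this. The degree $-2$ part of the statement therefore has to be reformulated, for instance for $\H^{-3-*}(\Lambda,\Lambda_0)$, rather than proved as written.
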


\subsection{The homological and cohomological Uebele class}\label{ss:coh-Uebele}

If the characteristic of $\K$ does not divide $d+1$, let $y\in\wh{\H}_*\Lambda$ be the generator from 
Theorems~\ref{thm:Rabinowitz_BV_projective_spaces_char_does_not_divide} and~\ref{thm:Rabinowitz_BV_S2_char_does_not_divide}. If the characteristic of $\K$ divides $d+1$, let 
$$
y\in\wh{\H}_{n+i-2}\Lambda
$$ 
be the generator from Theorem~\ref{thm:Rabinowitz_BV_projective_spaces_char_divides}, or $y=\tilde u^2=u^2$ 
the generator from the proof of Theorem~\ref{thm:Rabinowitz_BV_S2_char_2}. (The degree of $y$ is indeed $|y|=i(d+1)-2=n+i-2$.) By abuse of language we will henceforth call $y$ the \emph{homological Uebele class}, although in the case of $\C P^d$, $d\ge 2$ it may differ by a multiple of $y^2a^d$ from the Uebele class $\Theta$ used in the proof of Theorem~\ref{thm:Rabinowitz_BV_projective_spaces_char_divides}, and in the case of $\C P^1$ it may differ by $y^2a$ from the non-nilpotent geometric class $\Theta$ mentioned in the proof of Theorem~\ref{thm:Rabinowitz_BV_S2_char_2}. 

Recall that $y$ is invertible and $|y^{-1}|=-n-i+2$. Let $PD:\wh\H_*\Lambda\to\wh\H^{1-2n-*}\Lambda$ be the Poincar\'e
duality isomorphism. We define
$$
   \theta :=
   PD(y^{-1})\in \wh{\H}^{i-1-n}\Lambda=\wh{H}^{i-1}\Lambda=H^{i-1}\Lambda\oplus H_{2-i}\Lambda. 
$$
We call $\theta$ the \emph{cohomological Uebele class}. Equivalently, and because the Poincaré duality isomorphism simply exchanges factors in the simply connected case, we can identify $\theta$ with $y^{-1}\in \wh{H}_{-i+2}\Lambda$. The key property of the cohomological Uebele class is that multiplication by $\theta$ is an isomorphism in each degree. 


\begin{remark} In the case of $\H P^d$, $d\ge 1$ or $\Ca P^2$, we have $i=4$ or $i=8$, so that $\wh{\H}^{i-1-n}\Lambda=H^{i-1}\Lambda$ and $\theta$ is a class in $H^{i-1}\Lambda$.  
  In the case of $\C P^d$, $d\ge 1$ we have $i=2$ and therefore $\theta$ may differ from its projection to $H^{i-1}\Lambda$ by a multiple of the point class
$a^d\in H_0\Lambda$. Nevertheless, all higher powers $\theta^k$, $k\ge 2$ are classes in $H^*\Lambda$. 
In the case of $\C P^1$, the class denoted $\tau$ in the proofs of Theorems~\ref{thm:Rabinowitz_BV_S2_char_does_not_divide} and~\ref{thm:Rabinowitz_BV_S2_char_2} is the projection onto $H^1\Lambda$ of the cohomological Uebele class, and differs from it by $a$.
\end{remark}

Recall that the degrees of an arbitrary homology class $X\in\wh{H}_*\Lambda$ and of an arbitrary cohomology class $\xi\in \wh{H}^*\Lambda$ are related to their shifted degrees in $\wh\H_*\Lambda$ and $\wh\H^{1-2n-*}\Lambda$ by 
\begin{equation}\label{eq:deg}
  \deg(X) = |X|+n,\qquad \deg(\xi) = -|\xi|-n+1.  
\end{equation}
In particular, given the class $\theta\in \wh{H}^{i-1}\Lambda$, we view $\theta^k$, $k\ge 2$ as an element of $H^*(\Lambda,\Lambda_0)$ of degree   
\begin{equation}\label{eq:deg-theta}
   \deg(\theta^{k}) = k(n+i-2)-n+1.
\end{equation}

The following corollary summarizes the structure of loop homology and cohomology with respect to the Uebele classes. 

\begin{corollary}\label{cor:CROSS-structure}
Let $M$ be a CROSS that is a projective space and $\K$ any coefficient field. 

(a) Each $X\in\H_*(\Lambda;\K)$ has the form $X=Zy^k$, with $k\geq 0$ and $Z$ in a fixed finite dimensional space $F$ of shifted degree $-n\leq |Z|\leq i-1$. 

(b) Each $\xi\in\H^{1-2n-*}(\Lambda;\K)$ has the form $\xi=\zeta\theta^\ell$, with $\ell\geq 1$ and $\zeta$ in a fixed finite dimensional space $F$ of shifted degree $-n\leq |\zeta|\leq i-1$. 

(c) If $X=Zy^k$ and $\xi=\zeta\theta^\ell$ as in (a) and (b) satisfy $\deg(X)=\deg(\xi)$, then $k\leq\ell\leq k+2$. 
\end{corollary}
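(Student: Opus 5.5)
We derive all three parts by reading off the explicit descriptions of $\H_*(\Lambda;\K)$, $\wh\H_*(\Lambda;\K)$ and $\ol\H^{1-2n-*}(\Lambda;\K)$ from the preceding theorems, and then doing degree bookkeeping.

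\emph{Part (a).} We go through the cases of Corollary~\ref{cor:loop_BV_CROSS_over_K_char_does_not_divide_d+1}, Theorem~\ref{thm:loop_BV_CROSS_over_K_char_divides_d+1}, Theorem~\ref{thm:Menichi_CP1} and Theorem~\ref{thm:Rabinowitz_BV_S2_char_does_not_divide}. In each case $\H_*(\Lambda;\K)$ is spanned by monomials $y^k m$, where $k\ge0$ and $m$ lies in a finite list. The list is $a^\ell$ and $xa^\ell$ when the characteristic does not divide $d+1$. It is $a^\ell$ and $ua^\ell$ when it does. For $\C P^1$ in characteristic $2$ it is $a^\epsilon u^j$ with $j\in\{0,1\}$, writing $u^{2k+j}=y^ku^j$ with $y=u^2$.

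We take $F$ to be the span of this finite list. The shifted degrees of its elements range from $|a^d|=-n$ up to $|u|=i-1$, or up to $|1|=0$ in the case without $u$, so $-n\le|Z|\le i-1$. Every homogeneous $X$ is then a sum $\sum_k Z_k y^k$. Homogeneity forces the $y$-power to be unique: the whole range of $|F|$ has width $n+i-1$, which exceeds $|y|=n+i-2$. To handle this we enlarge the statement slightly. We allow $X=\sum Z_ky^k$ with at most two values of $k$, or equivalently we shrink $F$ to a window of width less than $|y|$. The intended reading is most likely that $X\in F\cdot y^k$ for the appropriate $k$, so we choose $F$ to be the span of the monomials in a fundamental domain of shifted degree for the $y$-action.

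\emph{Part (b).} We use the splitting $\wh\H_*\Lambda=\ol\H_*\Lambda\oplus\ol\H^{1-2n-*}\Lambda$ together with Theorems~\ref{thm:BV_projective_spaces_cohomology_char_does_not_divide}--\ref{thm:BV_S2_cohomology_char_2}. These identify the cohomology factor with the span of the monomials $y^{-\ell}m$ with $\ell\ge1$ and $m$ in the same list. Under the identification $\theta=y^{-1}$ this gives $\xi=\zeta\theta^\ell$ with $\zeta\in F$. For $\C P^d$ we must account for the discrepancy between $\theta$ and its projection, and for the unreduced point class. Both are handled by the remark that $\theta^k\in H^*\Lambda$ for $k\ge2$, and by absorbing the extra terms into $F$.

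\emph{Part (c).} This is a degree computation using \eqref{eq:deg} and \eqref{eq:deg-theta}. The homology side gives $\deg X=|Z|+k(n+i-2)+n$. The cohomology side gives $\deg\xi=-|\xi|-n+1$ with $|\xi|=|\zeta|-\ell(n+i-2)$. Setting the two degrees equal yields
\[
(\ell-k)(n+i-2)=|Z|+|\zeta|+2n-1 .
\]
The bounds $-n\le|Z|,|\zeta|\le i-1$ place the right-hand side in $[-1,\,2n+2i-3]$. Since $2n+2i-3<3(n+i-2)$ as soon as $n+i>3$, it follows that $-1<\ell-k<3$, so $k\le\ell\le k+2$.

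The main obstacle is bookkeeping rather than ideas. We must fix $F$ so that both (a) and (b) hold with the stated degree bounds despite the width issue. We must also treat the torsion subtleties for $\C P^d$ (the $y^2a^d$ ambiguity) and the reduced versus unreduced distinction in (b). Finally, the borderline case $\ell-k=-1$ requires the right-hand side to equal $-1$ exactly, which we rule out by checking that the extreme degrees cannot both occur.
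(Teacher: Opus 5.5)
Your approach is the paper's: (a) and (b) are read off from the computed loop (co)homology algebras, and (c) is degree bookkeeping. Your identity $(\ell-k)\nu=|Z|+|\zeta|+2n-1\in[-1,2\nu+1]$, with $\nu=n+i-2\ge 2$, is equivalent to the paper's check that the intervals $[k\nu,(k+1)\nu+1]$ and $[(\ell-1)\nu,\ell\nu+1]$ overlap. Your sign $\deg\xi=\ell\nu-n+1-|\zeta|$ is the one consistent with those intervals.

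Two corrections to your handling of (a) and of the endpoint case.

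You are right that the naive choice of $F$ does not literally work. The paper takes $F$ to be the span of the $y$-free monomials, e.g. $F=\K[a,u]/\la a^{d+1},u^2\ra$. For $\C P^d$ with $\mathrm{char}\,\K\mid d+1$, the classes $y^ku$ and $y^{k+1}ua^d$ have the same degree. So $y^ku+y^{k+1}ua^d$ is not of the form $Zy^j$ with $Z$ in that $F$; the paper's proof passes over this. The same happens for $\C P^1$ in characteristic $2$, e.g. with $y+y^2a$.

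However, your remedy goes the wrong way. $F$ cannot be shrunk to a window of width less than $|y|$. With $k\ge 0$ required, the classes $X=a^d$ and $X=u$ (shifted degrees $-n$ and $i-1$) force $a^d\in F$ and $u\in F$. Allowing sums with two powers of $y$ is not equivalent either, since it would change (c).

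The fix is to enlarge $F$ to the span of all monomials $y^jm$ whose shifted degree lies in $[-n,i-1]$. Concretely, add $yua^d$ for $\C P^d$, or $ya$ and $yua$ for $\C P^1$ in characteristic $2$. Then every homogeneous class is $Zy^k$ with a single $k$, and the bounds $-n\le |Z|\le i-1$ are unchanged. The same enlarged $F$ works in (b), e.g. $\alpha y^{-\ell}u+\beta y^{-\ell+1}ua^d=(\alpha u+\beta yua^d)\theta^\ell$. Part (c) then goes through verbatim.

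The ``borderline case $\ell-k=-1$'' needs no separate check. It would force the right-hand side to equal $-\nu\le -2$, which your bound $\ge -1$ already excludes.
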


\begin{proof}
For a CROSS that is not $\C P^1$, part (a) follows from Corollary~\ref{cor:loop_BV_CROSS_over_K_char_does_not_divide_d+1} with the vector space $F=\K[a,x]/\la a^{d+1},x^2,xa^d\ra$ if ${\rm char}\,\K$ does not divide $d+1$, and from Theorem~\ref{thm:loop_BV_CROSS_over_K_char_divides_d+1} with $F=\K[a,u]/\la a^{d+1},u^2\ra$ if ${\rm char}\,\K$ divides $d+1$.  
Part (b) follows from Theorem~\ref{thm:BV_projective_spaces_cohomology_char_does_not_divide} with $F=\K[a,x]/\la a^{d+1},x^2,xa^d\ra$ if ${\rm char}\,\K$ does not divide $d+1$, and from Theorem~\ref{thm:BV_projective_spaces_cohomology_char_divides} with $F=\K[a,u]/\la a^{d+1},u^2\ra$ if ${\rm char}\,\K$ divides $d+1$. For $\C P^1$, both parts follows in a similar way from Corollary~\ref{cor:loop_BV_CROSS_over_K_char_does_not_divide_d+1} and Theorems~\ref{thm:Menichi_CP1}, \ref{thm:BV_S2_cohomology_char_does_not_divide}, and~\ref{thm:BV_S2_cohomology_char_2}.

For (c), we abbreviate $\nu:=n+i-2$ and combine equations~\eqref{eq:deg} and~\eqref{eq:deg-theta} with parts (a) and (b) to compute the degrees:
\begin{align*}
  \deg(X) &= |X|+n = k\nu+n+|Z| \in [k\nu,(k+1)\nu+1], \cr
  \deg(\xi) &= -|\xi|-n+1 = \ell\nu-n+1+|\zeta| \in [(\ell-1)\nu,\ell\nu+1].
\end{align*}
Comparison of the degrees yields $k\leq\ell$ and $\ell-1\leq k+1$.
\end{proof}

The following lemma describes more precisely the homology classes that pair nontrivially with certain cohomology classes. It will be used in the discussion of resonances in~\S\ref{sec: res}.

\begin{lemma}\label{lem:coh-Uebele}
In the setting of Theorem~\ref{thm:BV_projective_spaces_cohomology_char_divides}, i.e., for a CROSS that is a simply connected projective space, 
and for a field $\K$ whose characteristic divides $d+1$,  
consider an element $X\in H_*\Lambda$ with $\deg(X) = \deg(\theta^{k+1})$, $k\geq 1$. Then $X$ is of the form  
\begin{equation} \label{eq:Xalphabeta}
   X=\alpha y^{k-1} u + \beta y^k a^d u,\qquad \alpha,\beta\in\K,
\end{equation}
where the first term is only present in the case $i=2$.
Moreover, the coefficient $\beta$ is nonzero in each of the following cases:

(i) $M$ is $\H P^d$ or $\Ca P^2$ and $\brat{\theta^{k+1},X} \neq 0$;

(ii) $M=\C P^d$ with $d\ge 2$ and 
$\langle\theta^{k+1} + \frac{(d+1)d}{2} \theta^k a^d,X\rangle\neq 0$. 

(iii) $M=\C P^1$ and $\langle \theta^{k+1}+\theta^k a,X\rangle \neq 0$.
\end{lemma}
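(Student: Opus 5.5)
The plan has two parts. First a degree count determines the form \eqref{eq:Xalphabeta}. Then the Kronecker pairing is computed through the Frobenius structure on $\wh\H_*\Lambda$, i.e.\ via the pairing $\boldp=-\boldeps\boldmu$, which by Proposition~\ref{prop:Kronecker-Frobenius} restricts to the Kronecker pairing on $\ol\H^{1-2n-*}\Lambda\oplus\ol\H_*\Lambda$.

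\emph{Degree count.} By \eqref{eq:deg} and \eqref{eq:deg-theta}, $\deg(X)=\deg(\theta^{k+1})$ means $|X|=(k+1)(n+i-2)-2n+1$. By Theorem~\ref{thm:loop_BV_CROSS_over_K_char_divides_d+1} (and Theorem~\ref{thm:Menichi_CP1} for $\C P^1$, where $u^{2}=y$), $\H_*\Lambda$ has the monomial basis $y^ma^\ell$, $y^mua^\ell$ with $m\ge0$ and $0\le\ell\le d$. Write $\nu=n+i-2$.
\begin{itemize}
\item The monomial $y^ka^du$ has $|y^ka^du|=k\nu-n+i-1$, which is exactly the required degree.
\item The monomial $y^{k-1}u$ has degree $(k-1)\nu+i-1$. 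This equals the required degree iff $2n=2\nu-... $, i.e.\ iff $n+i-2=n$, i.e.\ iff $i=2$.
\item Every other monomial differs in degree by a nonzero amount smaller than $\nu$. I will check this case by case on the tables, noting that the $u$-free monomials have the wrong parity of degree modulo $i$.
\end{itemize}
This gives \eqref{eq:Xalphabeta}.

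\emph{Pairing.} The pairing is $\langle\theta^{k+1},X\rangle=\pm\boldeps(y^{-k-1}X)$, using the identification $\theta=y^{-1}$ in $\wh\H_*\Lambda$ and associativity and commutativity of $\boldmu$. Substituting \eqref{eq:Xalphabeta} gives
$$
y^{-k-1}X=\alpha\,y^{-2}u+\beta\,y^{-1}a^du .
$$
Similarly $\theta^ka^d X=\alpha\,y^{-1}a^du$, because $a^{2d}=0$. Both $y^{-2}u$ (when $i=2$) and $y^{-1}a^du$ lie in shifted degree $1-2n$, which is the only degree on which the counit $\boldeps$ acts.

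The class $y^{-1}a^du$ is, up to a nonzero scalar, the class dual to $[M]$ (the class $\zeta$ in the $\C P^1$ computations). Hence $\boldeps(y^{-1}a^du)\neq0$.

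\begin{itemize}
\item \emph{Case (i).} Here there is no $\alpha$-term, so $\langle\theta^{k+1},X\rangle=\pm\beta\,\boldeps(y^{-1}a^du)$. If the pairing is nonzero, then $\beta\neq0$.
\item \emph{Cases (ii) and (iii).} I must compute $c:=\boldeps(y^{-2}u)$ and show that $c+\frac{(d+1)d}{2}\,\boldeps(y^{-1}a^du)=0$, respectively $c+\boldeps(y^{-1}au)=0$ for $\C P^1$. Then the $\alpha$-contributions to the stated combination cancel, the combination equals $\pm\beta\,\boldeps(y^{-1}a^du)$, and $\beta\neq0$ follows.
\end{itemize}

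\emph{Computing $c$ for $\C P^1$.} The corollary following Theorem~\ref{thm:Rabinowitz_BV_S2_char_2} gives $\eps=\eps'=1$. Then $u^{-3}=\tilde u^{-3}+\tilde u^{-1}a=\nu+\zeta$, so $\boldeps(u^{-3})=\boldeps(\nu)+\boldeps(\zeta)=0+1=1$. This is exactly the needed cancellation in characteristic $2$, since $y^{-2}u=u^{-3}$.

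\emph{Computing $c$ for $\C P^d$, $d\ge2$.} I will imitate the argument proving \eqref{eq:tildeu-u}, using the BV Frobenius relation $\boldp(\Delta A,B)=\boldp(A,\Delta B)$. First, for a class $W$ of shifted degree $1-2n$ with $\Delta$ of suitable form, evaluate $\boldp(\Delta(y^{-1}ua),W)$ in two ways:
\begin{itemize}
\item using $\Delta(y^{-1}ua)=y^{-1}+\frac{(d+1)d}{2}a^d$ from Theorem~\ref{thm:Rabinowitz_BV_projective_spaces_char_divides};
\item using $\boldp(y^{-1}ua,\Delta W)$.
\end{itemize}
A natural choice is to pair against $W=yu$ or $W=y^{2}a^{d-1}u$, where $\Delta$ is known. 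The resulting linear relation should force $c=-\frac{(d+1)d}{2}\,\boldeps(y^{-1}a^du)$.

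A fallback is the level filtration of Theorem~\ref{thm:level_algebra}. The counit is supported at level $0$, and $y$ differs from the Uebele class $\Theta$ by $\gamma y^2a^d$. So one can compute $\boldeps(\Theta^{-2}u)=0$ on level grounds and rewrite $y^{-2}u$ in terms of $\Theta$.

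\emph{Main obstacle.} I expect the determination of $\boldeps(y^{-2}u)$ for $\C P^d$, $d\ge2$, to be the main obstacle. Which route succeeds hinges on pinning down the discrepancy between $y$ and the geometric Uebele class $\Theta$, and this discrepancy is precisely the source of the coefficient $\frac{(d+1)d}{2}$.
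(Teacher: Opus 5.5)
Your degree count is fine (parity mod $2$ already forces a factor $u$; the phrase ``differs by less than $\nu$'' is not what is needed, but the explicit check is elementary). Your cases (i) and (iii) are also fine, by a route different from the paper's: you evaluate the counit directly. In (i) you use $\boldeps(y^{-1}a^du)\neq 0$ by nondegeneracy. In (iii) you use $\eps=\eps'=1$, giving $\boldeps(u^{-3})=1=\boldeps(u^{-1}a)$, so the $\alpha$-terms cancel in characteristic $2$.

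The genuine gap is case (ii). You reduce it to the identity $\boldeps(y^{-2}u)=-\frac{(d+1)d}{2}\boldeps(y^{-1}a^du)$ but do not prove it, and the attack you sketch fails as written. Since $|\Delta(y^{-1}ua)|=-n$, the pairing $\boldp(\Delta(y^{-1}ua),W)$ can only be nonzero for $|W|=1-n$, not $1-2n$. Your test classes $yu$ and $y^2a^{d-1}u$ have shifted degrees $n+1$ and $n+3$, so they only yield $0=0$. Your fallback is misdirected too: the coefficient $\frac{(d+1)d}{2}$ comes from the BV formula for $\Delta(y^kua)$, not from the discrepancy between $y$ and $\Theta$. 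The paper cannot determine that discrepancy and never needs it.

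The missing idea is that the cohomology classes in (i)--(iii) are chosen precisely because they lie in the image of the BV operator. Under $\theta\leftrightarrow y^{-1}$:
\begin{itemize}
\item for $\H P^d$ and $\Ca P^2$, $\theta^{k+1}=\Delta(y^{-k-1}ua)$;
\item for $\C P^d$, $\theta^{k+1}+\frac{(d+1)d}{2}\theta^ka^d=\Delta(y^{-k-1}ua)$;
\item for $\C P^1$, $\theta^{k+1}+\theta^ka=\Delta(au^{-2k-1})$.
\end{itemize}
The paper writes the class as $\Delta^*b$ and moves $\Delta$ onto $X$: $\langle\Delta^*b,X\rangle=\pm\langle b,\Delta X\rangle$. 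Here $\Delta X=\alpha\Delta(y^{k-1}u)+\beta\Delta(y^ka^du)=\beta d\,y^ka^{d-1}$, with $d=-1\neq0$ in $\K$. So a nonzero pairing forces $\beta\neq0$, uniformly in all three cases and without evaluating any counit.

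Inside your framework the same trick closes the gap. Take $W=y^{-1}u$, which has shifted degree $1-n$ and satisfies $\Delta W=0$ by Theorem~\ref{thm:Rabinowitz_BV_projective_spaces_char_divides}. The BV Frobenius relation then gives
\[
0=\pm\boldp\bigl(y^{-1}ua,\Delta W\bigr)=\boldp\bigl(\Delta(y^{-1}ua),W\bigr)=-\boldeps(y^{-2}u)-\tfrac{(d+1)d}{2}\,\boldeps(y^{-1}a^du),
\]
which is exactly the identity you were missing.
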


\begin{proof}
Since $\deg(\theta^{k+1})$ is odd, $X$ must be a linear combination of elements of the form $X=y^r a^\ell u$ with $r\geq 0$ and $0\leq\ell\leq d$. A short computation shows that $\deg(X) = \deg(\theta^{k+1})$ spells out as
$$
   (r-k+1)n+(r-k-\ell)(i-2)-2\ell=0.
$$
For $r\leq k-1$ the left hand side is $\leq (-1-\ell)(i-2)-2\ell$, which is $<0$ unless $i=2$ and $\ell=0$, so in this case the only solution is $X=\alpha y^{k-1} u$ if $i=2$, with $\alpha\in\K$.
For $r\geq k$ the left hand side is $\geq n-\ell(i-2)-2\ell = i(d-\ell)$, which is $>0$ unless $d=\ell$, so in this case the only solution is $X=\beta y^k a^d u$ with $\beta \in\K$. 
This proves the first assertion. 
 

Consider now Case (i).
By Theorem~\ref{thm:BV_projective_spaces_cohomology_char_divides}
(where $\theta^{k+1}$ and the dual BV operator $\Delta^*$ correspond to $y^{-(k+1)}$ and $\Delta$), $\theta^{k+1}$ is in the image of the dual BV operator $\Delta^*$, so $\theta^{k+1} = \Delta^*b$ for some $b\in H^*\Lambda$.  
The assumption $\brat{\theta^{k+1},X} \neq 0$ implies 
$0\neq \brat{\theta^{k+1},X} = \brat{\Delta^*b,X} = \brat{b,\Delta X}$ and thus
$$
   0\neq \Delta X = \alpha\Delta(y^{k-1}u)+\beta\Delta(y^k a^d u) = \beta d y^k a^{d-1}.
$$
This implies $\beta\neq 0$.
Case (ii) is similar: 
Since $\theta^{k+1} + \frac{(d+1)d}{2} \theta^k a^d$ is in the image of the dual BV operator $\Delta^*$, it follows as in Case (i) that $\Delta X\neq 0$, and therefore $\beta\neq 0$. Case (iii) follows in the same way: $\theta^{k+1} + \theta^k a$ is in the image of the dual BV operator $\Delta^*$, and we obtain $\Delta X\neq 0$, hence $\beta\neq 0$.
\end{proof}

\section{String point invertibility of $\C P^d$, $\H P^d$, $\Ca P^2$}  \label{sec:string_point_invertibility}

One application of the previous computations is to \emph{string point invertibility}, a notion introduced by the third named author
in~\cite{Shelukhin19}. Denote $\{\cdot,\cdot\}$ the loop bracket on $H_*\Lambda$ defined by Chas and Sulivan~\cite{CS}, which is expressed in terms of the loop product and the BV operator by equation~\eqref{eq:Gerstenhaber-bracket-intro}. 
For a class $c\in H_*\Lambda$, consider the operator $P_c:H_*(M)\to H_{*+\deg(a) -n+1}(M)$ defined by 
$$
P_c=\ev_* \circ \{\cdot,c\}\circ i_*,
$$
with $i:M\hookrightarrow \Lambda$ the inclusion of constant loops and $\ev:\Lambda\to M$ the evaluation at the basepoint of a loop. 

\begin{definition}[{\cite{Shelukhin19}}] The manifold $M$ is called \emph{string point invertible}
over the coefficient field $\K$ if it is $\K$-orientable and there exists a collection of classes $c_1,\dots,c_N\in H_*\Lambda$ 
with $\K$-coefficients such that 
$$
[M]=P_{c_N}\circ\dots\circ P_{c_1}([\mathrm{pt}]).
$$
\end{definition}

 The terminology originates in the notion of point invertible symplectic manifolds \cite{BC-uniruling} whose Lagrangian submanifolds satisfy special properties; this condition has also appeared in the symplectic dynamics literature, see \cite[Theorem 1.4(ii)]{GG-generic}.
The interest of this definition comes from the following resolution of a conjecture of Viterbo for string point invertible manifolds.

\begin{theorem}[{\cite{Shelukhin19}}]\label{thm:Viterbo-conj}
Let $M$ be a closed manifold that is string point invertible over $\K$, and choose a Riemannian metric on $M$. Then the spectral norm over $\K$ of the pair consisting of the zero-section inside $T^*M$ and a closed exact Lagrangian contained in the unit disc bundle is uniformly bounded.
\end{theorem}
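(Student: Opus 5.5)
The strategy I would follow is the one of~\cite{Shelukhin19}: express the spectral norm as a difference of two Lagrangian spectral invariants, and then use the string point invertibility relation to move from the point class to the fundamental class with an action increase that does not depend on $L$. Let $L\subset D^*M$ be a closed exact Lagrangian. By the nearby Lagrangian results (Abouzaid--Kragh), $L$ is homotopy equivalent to the zero section $0_M$. Hence $HF(L,0_M;\K)\cong H^*(M;\K)$ once $\K$-orientability is used. The action filtration defines spectral invariants $\ell(\cdot;L,0_M)$ on $H_*(M;\K)$ after Poincar\'e duality. The spectral norm satisfies
$$
\gamma(L,0_M)=\ell([M];L,0_M)-\ell([\mathrm{pt}];L,0_M)\ge 0.
$$
The task is therefore to bound $\ell([M])-\ell([\mathrm{pt}])$ from above by a constant depending only on the metric and on $c_1,\dots,c_N$.

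The first step is to let $SH_*(D^*M)\cong H_*\Lambda$ (Viterbo, Abbondandolo--Schwarz, Abouzaid) act on $HF(L,0_M)$ through the closed--open module structure. This gives an action-filtered version: for $c\in H_*\Lambda$ whose critical value with respect to the chosen metric is $\Cr(c)$, and $x\in HF(L,0_M)$, one has
$$
\ell(c\cdot x)\le \ell(x)+\Cr(c)+\kappa .
$$
The constant $\kappa$ depends only on the size of the disc bundle. This is uniform in $L$ because $L\subset D^*M$: the action of a Hamiltonian chord between $L$ and $0_M$ differs from the geodesic length of the corresponding loop by a bounded amount. The second step is to realise the operator $P_c=\ev_*\circ\{\cdot,c\}\circ i_*$ in the same way. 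The bracket is the failure of $\Delta$ to be a derivation, see~\eqref{eq:Gerstenhaber-bracket-intro}. So I would construct the one-parameter ($S^1$-family) version of the module map. The family is parametrised by the moduli space of discs with one interior puncture, whose asymptotic marker rotates, and two boundary punctures. Its boundary degenerations express $P_c$ acting on $HF(L,0_M)\cong H_*(M)$ as a combination of the module action of $c$ and of $\Delta c$ and their compositions with the unit. Each of these terms obeys the filtered estimate above, because $\Cr(\Delta c)\le\Cr(c)$. Consequently
$$
\ell(P_c x)\le \ell(x)+\Cr(c)+\kappa' .
$$

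The third step is to iterate this estimate along the chain $[M]=P_{c_N}\circ\dots\circ P_{c_1}([\mathrm{pt}])$. This gives
$$
\ell([M])\le \ell([\mathrm{pt}])+\sum_j\bigl(\Cr(c_j)+\kappa'\bigr),
$$
which is the required uniform bound.

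I expect the main obstacle to be the second step: identifying the Lagrangian-side $S^1$-family operation with $P_c$ under $HF(L,0_M)\cong H_*(M)$, in a way that is independent of $L$. The required input is that the open-string side carries no nontrivial circle action. Concretely, the terms in the boundary degeneration that would involve a ``BV operator on $HF(L,0_M)$'' must vanish, or must reduce to $i_*$ and $\ev_*$ on constant loops. I would first try to obtain this from the compatibility of the closed--open map with the Viterbo transfer $SH(T^*M)\to SH(T^*L)$, combined with the homotopy equivalence $L\simeq M$. This would reduce the identification to the model case $L=0_M$, where it is the classical statement of Chas--Sullivan string topology.
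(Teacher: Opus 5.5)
The paper does not prove this statement; it is quoted from \cite{Shelukhin19}. So I judge your outline on its own terms. The overall frame is sensible: write the spectral norm as $\ell([M])-\ell([\mathrm{pt}])$, use a filtered action of $H_*\Lambda$ with an $L$-independent action shift, and iterate along $[M]=P_{c_N}\circ\dots\circ P_{c_1}([\mathrm{pt}])$. Step 2, however, fails as stated.

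\textbf{Why step 2 fails.} On homology, the module action of $c$ on $HF(L,0_M)$ factors through the closed--open map $H_*\Lambda\to HF(L,L)\cong H_*(M)$, which is $\ev_*$ in the model $L=0_M$. It is therefore a product with a class of shifted degree $\le 0$, so it never raises degree; the same holds for $\Delta c$. Hence any combination of module actions of $c$, $\Delta c$ and the unit sends $[\mathrm{pt}]$ into $\K[\mathrm{pt}]$. If $P_c$ were such a combination, no positive-dimensional manifold could be string point invertible. Yet $P_{u+u^3a}([\mathrm{pt}])=[M]$ for $S^2$ over $\F_2$ (see the proof of Theorem~\ref{thm:string-point}).

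Concretely, since $\Delta\circ i_*=0$, formula \eqref{eq:Gerstenhaber-bracket-intro} gives
$$
P_c(x)=(-1)^{|x|}\,\ev_*\Delta(i_*x\cdot c)-x\cdot\ev_*(\Delta c).
$$
The second term is a module action, and it vanishes when $|c|\ge 0$. The first term, the sweep of the loops of $c$ based on $x$, is the one that produces the degree jump $|c|+1$, and it is not a module action.

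Your rotating-marker family does not capture this term. With the interior puncture fixed, it is a closed circle and computes only $(\Delta c)\cdot x$. If the puncture also moves, the boundary of the resulting two-parameter family merely relates the left and right actions of $\Delta c$. Your expectation that the ``rotation'' contributions must vanish is backwards: the rotation is exactly what sweeps $[\mathrm{pt}]$ out to $[M]$.

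\textbf{What is missing.} You need a chain-level, action-controlled realization of $P_c$ as a \emph{secondary} operation on $CF(L,0_M)$. The natural candidate is the one-parameter family of discs with interior input $c$ and two boundary punctures, in which the interior puncture travels from the $L$-arc to the $0_M$-arc. This is the first-order, Hochschild-type term of the closed--open map, through which the loop bracket becomes visible on the Lagrangian side.

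Its boundary is the \emph{difference} of the left and right module actions of $c$, the reverse of what you wrote. So it is a chain map only after adding correction terms, for instance primitives of the chain-level closed--open images of $c$ in $CF(L,L)$ and $CF(0_M,0_M)$, which exist for degree reasons when $|c|>0$.

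The inequality $\ell(P_cx)\le\ell(x)+\Cr(c)+\kappa'$ must then be proved for this corrected operation. That needs energy estimates along the whole family together with action bounds on the corrections. Its identification with $P_c$ must also be checked in the model $L=0_M$. None of this follows from the module estimate of your step 1.
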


The paper~\cite{Shelukhin19} contains a discussion of examples of manifolds that are string point invertible.  Those examples most notably include all spheres with $\Z/2$-coefficients
and all compact connected Lie groups with characteristic zero coefficients,
from which one can construct further examples by taking products of manifolds that are string point invertible over the same field of coefficients.  
Our next result addresses string point invertibility for the other CROSS.

\begin{theorem}\label{thm:string-point} 
Let $M$ be one of the CROSS $\C P^d$, $\H P^d$, or $\Ca P^2$ (set $d=2$ in this last case). Then $M$ is string point invertible over a field of characteristic $p$ if and only $p$ equals the Euler characteristic $\chi(M)=d+1$ (in particular, $d+1$ must be prime). 
\end{theorem}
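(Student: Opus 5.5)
The plan is to compute the operators $P_c$ explicitly from the BV algebra structures of \S\ref{sec:BV-CHCa}. In the shifted grading, $i_*:\H_*M\to\H_*\Lambda$ is the inclusion of the subalgebra $\K[a]/\la a^{d+1}\ra$, with $a^\ell$ corresponding to $H^\ell$, $[\mathrm{pt}]=a^d$ and $[M]=1$. The map $\ev_*$ kills every monomial containing $x$, $u$ or a positive power of $y$, and it is the identity on $\K[a]/\la a^{d+1}\ra$; this follows from the geometric generators in Remark~\ref{rmk:geometric-generators}, since classes of nonconstant geodesics push forward to zero in $M$ for degree and support reasons. So we need $\{a^\ell,c\}$ modulo the ideal generated by $u$, $x$ and $y$. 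Since $\Delta(a^\ell)=0$, formula~\eqref{eq:Gerstenhaber-bracket-intro} gives $\{a^\ell,c\}=\Delta(a^\ell c)-a^\ell\Delta c$ up to signs. By bilinearity it suffices to take $c$ a monomial.

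\emph{Case $\mathrm{char}\,\K\nmid d+1$.} Use Corollary~\ref{cor:loop_BV_CROSS_over_K_char_does_not_divide_d+1}. If $c=y^ka^m$ then the bracket vanishes. If $c=y^kxa^m$ then $\{a^\ell,c\}$ is a combination of $y^ka^{\ell+m}$ terms (with coefficient $-\ell$ from the BV formula), which survives $\ev_*$ only if $k=0$. In that case $P_c$ sends $a^\ell$ to a multiple of $a^{\ell+m}$. Hence every $P_c$ maps $\mathrm{span}\{a^\ell:\ell\ge j\}$ into itself, i.e., it never lowers the power of $a$. Starting from $a^d$ one can never reach $1=a^0$ (when $d\ge1$), so $M$ is not string point invertible. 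The same filtration argument should apply directly to $\C P^1$ in characteristic $\neq2$ via Corollary~\ref{cor:loop_BV_CROSS_over_K_char_does_not_divide_d+1}. The case $\C P^1$, $\mathrm{char}=2=d+1$, is covered by the divisible case below, using Theorem~\ref{thm:Menichi_CP1}.

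\emph{Case $\mathrm{char}\,\K=p\mid d+1$.} Use Theorem~\ref{thm:loop_BV_CROSS_over_K_char_divides_d+1} and take $c=u$. Then $\{a^\ell,u\}=\pm\Delta(ua^\ell)\mp a^\ell\Delta u=\pm\ell a^{\ell-1}$ for $\HP^d$ and $\Ca P^2$. For $\C P^d$, the $\ell=1$ term carries the extra summand $\tfrac{(d+1)d}{2}ya^d$, which $\ev_*$ kills. Thus $P_u(a^\ell)=\pm\ell a^{\ell-1}$. Iterating $d$ times from $a^d$ gives $\pm d!\,[M]$. This is nonzero in $\K$ iff $p\nmid d!$, and since $p\mid d+1$, that holds iff $p=d+1$.

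\emph{Main obstacle.} The key step is showing that no other $c$ helps when $p\mid d+1$ but $p<d+1$. For a general monomial $c=y^kua^m$ or $y^kx\ldots$, again only $k=0$ survives $\ev_*$. The surviving operators are $P_{ua^m}$, which sends $a^\ell$ to a multiple of $a^{\ell+m-1}$ with coefficient $\pm\ell$ (the BV formula coefficient for $ua^{\ell+m}$ is $\ell+m$, minus the $a^\ell\Delta(ua^m)$ term with coefficient $m$), and $P_{a^m}=0$. So any lowering step goes $a^\ell\mapsto a^{\ell-1}$ at best, with coefficient $\ell$, while composites that jump up do not help. To reach $a^0$ from $a^d$ one must pass from $a^p$ to $a^{p-1}$ through a factor $\ell=p\equiv0$. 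Equivalently, $\mathrm{span}\{a^\ell:\ell\ge p\}$ is invariant under all $P_c$. I expect the delicate points to be the sign bookkeeping and the verification that $\ev_*$ kills all $y^k$ terms with $k\ge1$ (including the torsion-type classes $y a^d$ in low degree for $\C P^d$). I would settle the latter by degree and support arguments using the geometric generators.
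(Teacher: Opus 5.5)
Your proof is correct. For $p\mid d+1$ it is the paper's argument: $P_u(a^\ell)=\pm\ell a^{\ell-1}$ from Theorem~\ref{thm:loop_BV_CROSS_over_K_char_divides_d+1}, hence $P_u^d([\mathrm{pt}])=\pm d!\,[M]$. The differences lie elsewhere.

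\emph{The case $p\nmid d+1$.} The paper does no computation here. It invokes Proposition~\ref{prop:string-point}: by Tamanoi, $\K\chi(M)[\mathrm{pt}]$ is a loop-product ideal, and together with $\Delta[\mathrm{pt}]=0$ this makes $\K[\mathrm{pt}]$ a bracket ideal once $\chi(M)$ is invertible. That argument works for every closed orientable $M$ with $\chi(M)\neq0$. You read the same invariance off the explicit formula $\{a^\ell,y^kxa^m\}=-\ell y^ka^{\ell+m}$, which is specific to these spaces but self-contained.

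\emph{The converse when $p<d+1$.} The paper only asserts, via $y$-bilinearity of the bracket, that ``the only way to reach $[M]$ is $P_u^d$''. Your statement that $\mathrm{span}\{a^\ell:\ell\ge p\}$ is invariant under every $P_c$ is the precise form of that claim. It also handles compositions that first raise the power of $a$ via some $P_{ua^m}$ and then lower it.

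\emph{Use of $\ev_*$.} You apply $\ev_*$ explicitly throughout. For $\C P^d$ the paper instead keeps the term $\frac{(d+1)d}{2}ya^d$ and kills it using $p\mid\frac{(d+1)d}{2}$ when $p=d+1$ is odd. For $\C P^1$ in characteristic $2$ it uses $c=u+u^3a$, so that the bracket is exactly $[M]$. Your choice $c=u$ works there too, since $\{a,u\}=1+au^2$ and $\ev_*$ kills $au^2$.

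\emph{The step you flagged as delicate.} The cleanest justification is that $\ev_*$ is a ring map from the loop product to the intersection product. Moreover $\ev_*(x)$, $\ev_*(u)$ and $\ev_*(y)$ vanish for degree reasons: $\deg x=n-1$ with $H_{n-1}M=0$, while $\deg u>n$ and $\deg y>n$. Hence $\ev_*(ya^d)=\ev_*(y)\,\ev_*(a)^d=0$, even though $ya^d$ lies in the degree of $[M]$ for $\C P^d$.
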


  Combined with Theorem~\ref{thm:Viterbo-conj}, this implies Viterbo's conjecture on the uniform bound of the spectral norm with coefficients in a field of characteristic $p=d+1$ for $\C P^d$ or $\H P^d$ with $d+1$ prime, and for $\Ca P^2$ with coefficients in a field of characteristic $p=3$. 
   
\begin{remark}
It was already observed in~\cite{Shelukhin19} that the CROSS from Theorem~\ref{thm:string-point} are not string point invertible in characteristic 0, and also not string point invertible with $\Z/2$-coefficients except for $\C P^1=S^2$ and $\H P^1=S^4$.  Using different methods, Viterbo's conjecture is proved in~\cite{Shelukhin22} for all CROSS except $\Ca P^2$ with coefficients in $\Z/2$, and in \cite{Viterbo2022-inverse, Guillermou2022} for all homogenenous spaces $X=G/H$ with arbitrary coefficients, at least if $H$ is connected.
\end{remark}

Before turning to the proof of Theorem~\ref{thm:string-point}, we note the following general fact about string point invertibility. 

\begin{proposition}\label{prop:string-point}
A closed orientable manifold $M$ with $\chi(M)\neq 0$ is not string point invertible over a coefficient field whose characteristic does not divide $\chi(M)$. 
\end{proposition}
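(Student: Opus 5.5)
Throughout, $[\mathrm{pt}]$ and $[M]$ denote the point class and fundamental class in $H_*(M;\K)$. The plan is to find a quantity attached to $H_*(M;\K)$ that vanishes on the point class, does not vanish on $[M]$, and is preserved by every operator $P_c$. The natural candidate is the Euler characteristic pairing. Consider the map
\[
\varphi:H_*(M)\to\K,\qquad \varphi(A)=\text{intersection number of } A \text{ with the Euler class } e(TM)\in H^n(M),
\]
which is nonzero only on $H_0$; write $\varphi([\mathrm{pt}])$ for its value on the point class. Then $\varphi([M])=0$ trivially. This functional points the wrong way, so I will work with the dual picture instead. The key input is the known relation between string topology of constant loops and the Euler class: $\ev_*\circ i_*=\mathrm{id}$, and the loop product of $i_*(A)$ with itself involves $\chi(M)$ via the self-intersection of the diagonal.

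Concretely, I would exploit the reduced loop homology $\ol\H_*\Lambda=\H_*\Lambda/\chi(M)[\mathrm{pt}]$ from~\cite{CHO-reducedSH} and the splitting of Rabinowitz loop homology. The structural fact I plan to establish is
\begin{equation*}
\ev_*\bigl(\{i_*A,c\}\bigr)\in \chi(M)\cdot H_*(M)\quad\text{whenever } \deg\text{ lands in } H_n(M),
\end{equation*}
or more invariantly, that $P_c$ maps the span of $[\mathrm{pt}]$ and of classes of positive codimension into a subspace not containing $[M]$ unless $\chi(M)=0$ in $\K$.

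A cleaner route, which I would try first, goes as follows. In the expansion $\{i_*A,c\}=\pm\Delta(i_*A\cdot c)\mp(\Delta i_*A)c\mp i_*A\,\Delta c$, the class $\Delta(i_*A)=0$, since constant loops are fixed by rotation. The term $\ev_*(i_*A\cdot\Delta c)=A\cdot\ev_*(\Delta c)$, an intersection product in $H_*(M)$. Moreover $\ev_*\circ\Delta$ factors through $[S^1]\times(\cdot)$ followed by $\ev\circ r$, which is homotopic to the map $S^1\times\Lambda\to M$, $(t,\gamma)\mapsto\gamma(t)$. Pushing forward the fundamental class of $S^1$ this way gives the "sweep" class. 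The remaining task is to show that the resulting composition applied to $[\mathrm{pt}]$ lands in degrees whose intersection with the top class is forced to factor through $\chi(M)$. The mechanism I expect to use is the Gysin/transfer identity $\ev_*\circ\Delta = \pm\,\chi$-type correction on the image of constant loops: the BV operator restricted near $M\subset\Lambda$ produces, by the Morse–Bott description at level $0$ (Proposition~\ref{prop:BV level-0}), only contributions through the unit sphere bundle $S^*M$. Pushing forward from $S^*M$ to $M$ and pairing with the top class introduces the Euler class.

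The main obstacle is pinning down exactly where $\chi(M)$ enters, so that the composite $P_{c_N}\cdots P_{c_1}([\mathrm{pt}])$ is a multiple of $\chi(M)$ in top degree. I would argue inductively that each $P_c$ preserves the subspace $V=\chi(M)\K[M]\oplus\bigoplus_{j<n}H_j(M)$, which contains $[\mathrm{pt}]$ when $n>0$, using the filtration by critical level: a class reaching $[M]$ must come from the level-$0$ part of Rabinowitz loop homology, which is $H^{-*}(S^*M)$. In $H^{-*}(S^*M)$, the image of the fundamental class under the Gysin sequence is killed only modulo $\chi(M)$. If $\chi(M)$ is invertible in $\K$, then $[M]\notin V$, whereas every $P_c$-image of $V$ stays in $V$, which gives the result.
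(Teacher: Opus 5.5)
There is a genuine gap, and the concluding step runs backwards. In the situation of the proposition $\chi(M)$ is a unit in $\K$, so $\chi(M)\K[M]=\K[M]$. Your subspace $V=\chi(M)\K[M]\oplus\bigoplus_{j<n}H_j(M)$ is therefore all of $H_*(M)$. The assertion $[M]\notin V$ is false, and the invariance of $V$ under the $P_c$ says nothing. Over a field, a condition like ``the top-degree component is a multiple of $\chi(M)$'' is empty once $\chi(M)$ is invertible; invertibility has to be used to \emph{divide} by $\chi(M)$, not as an obstruction. Conversely, when $\chi(M)=0$ in $\K$, your $V$ is the space of positive-codimension classes and its claimed invariance is false: for $\H P^d$ in characteristic dividing $d+1$ one has $P_u(a)=-[M]$ (see the proof of Theorem~\ref{thm:string-point}).

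The level-zero heuristics do not repair this. In your bracket expansion, the terms $(\Delta i_*[\mathrm{pt}])c=0$ and $\ev_*(i_*[\mathrm{pt}]\cdot\Delta c)=[\mathrm{pt}]\cdot\ev_*(\Delta c)\in\K[\mathrm{pt}]$ are harmless, so everything hinges on $\ev_*\Delta(i_*[\mathrm{pt}]\cdot c)$. Proposition~\ref{prop:BV level-0} controls this only if $i_*[\mathrm{pt}]\cdot c$ is a level-zero class, and that is not automatic. For $\H P^d$ in characteristic dividing $d+1$, $[\mathrm{pt}]\cdot u=ua^d$ and $\Delta(ua^d)$ is a nonzero multiple of $a^{d-1}$. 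This is precisely where $\chi(M)$ must enter, and none of your remaining ingredients supplies it:
\begin{itemize}
\item The Gysin sequence of $S^*M$ says nothing about $\Delta$ of classes that are not at level zero.
\item The claim that classes reaching $[M]$ come from the level-zero block is unjustified. Brackets only satisfy $\Cr(\{X,Y\})\le\Cr(X)+\Cr(Y)$, and in the paper's examples $[M]$ is reached by bracketing with $u$, which has positive critical level.
\item The self-intersection remark is wrong: $i_*$ is a ring map from the intersection ring, so $i_*A\cdot i_*A=i_*(A\cdot A)$ and no Euler characteristic appears.
\end{itemize}

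The missing input is Tamanoi's theorem~\cite{Tamanoi} that $\K\chi(M)[\mathrm{pt}]$ is an ideal for the loop product. If $\chi(M)\in\K^\times$, it gives $i_*[\mathrm{pt}]\cdot c\in\K[\mathrm{pt}]$ for all $c$. Hence $\Delta(i_*[\mathrm{pt}]\cdot c)=0$, because $\Delta[\mathrm{pt}]=0$. Together with the two harmless terms, this gives $\{i_*[\mathrm{pt}],c\}\in\K[\mathrm{pt}]$. Since $\ev_*\circ i_*=\mathrm{id}$, the \emph{line} $\K[\mathrm{pt}]$, rather than your $V$, is invariant under every $P_c$, and it does not contain $[M]$. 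This is the paper's proof.
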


\begin{proof}
Let $\K$ be a field of coefficients. Recall from~\cite{Tamanoi} (see also~\cite{CO}) that $\K\chi(M)[\mathrm{pt}]$ is an ideal in the ring $H_*\Lambda$. If the characteristic of $\K$ does not divide $\chi(M)$, then $\chi(M)$ is invertible in $\K$, and thus $\K[\mathrm{pt}]$ is an ideal for the loop product. Since $\Delta[\mathrm{pt}]=0$, it follows that $\K[\mathrm{pt}]$ is also an ideal for the loop bracket, i.e., $\{[\mathrm{pt}],c\}\in\K[\mathrm{pt}]$ for all $c\in H_*\Lambda$. This means that $P_c([\mathrm{pt}])\in\K[\mathrm{pt}]$ for all $c\in H_*\Lambda$, so $M$ is not string point invertible over $\K$. 
\end{proof}

\begin{proof}[Proof of Theorem~\ref{thm:string-point}] 
Let $\K$ be a ground field of characteristic $p$. By Proposition~\ref{prop:string-point}, $M$ is not string point invertible over $\K$ if $p$ does not divide $d+1$. So it remains to consider the case where $p$ divides $d+1$. 

We first discuss the case of $\H P^d$ with $d\ge 1$, and $\Ca P^2$. Using the definition of the Gerstenhaber bracket~\eqref{eq:Gerstenhaber-bracket-intro} and Theorem~\ref{thm:loop_BV_CROSS_over_K_char_divides_d+1} we compute
\begin{equation}\label{eq:iterated-bracket}
  \{ua^k,a^\ell\} = -\ell a^{k+\ell-1} 
\end{equation}
for all $k\ge 0$ and $\ell\ge 0$. 
This implies $P_u(a^\ell)=\{u,a^\ell\} = -\ell a^{\ell-1}$ for all $\ell\ge 0$, hence 
$$
P_u^d([\mathrm{pt}])=P_u^d(a^d)=(-1)^d d! a^0 = (-1)^d d! [M]. 
$$
If $p=d+1$ then $1,\dots, d$ are invertible in $\K$, so $M$ is string point invertible over $\K$. 
Assume now that $p\le d$. Theorem~\ref{thm:loop_BV_CROSS_over_K_char_divides_d+1} shows that the BV operator is $y$-linear (i.e., it commutes with left and right multiplication by $y$), and thus the loop bracket is $y$-bilinear. From this and~\eqref{eq:iterated-bracket} we see that the only way to reach $[M]=a^0$ starting from $[\mathrm{pt}]=a^d$ is by applying $P_u^d$. However, $p\le d$ and $P_u^d([\mathrm{pt}])=(-1)^dd![M]$ imply $P_U^d([\mathrm{pt}])=0$, so $M$ is not string point invertible over $\K$. 

We now discuss the case of $\C P^d$ with $d\ge 2$. The equality $\{ua^k,a^\ell\} = -\ell a^{k+\ell-1}$ holds now for all $k\ge 0$, $\ell\ge 0$, except $k=0$, $\ell=1$ in which case we have 
$$
\{u,a\}=-\Delta(ua)=-1-\frac{(d+1)d}{2}ya^d. 
$$
Then $P_u(a^\ell)=\{u,a^\ell\} = -\ell a^{\ell-1}$ for all $\ell\ge 0$, except $\ell=1$ in which case we have $P_u(a)=\{u,a\}=-1-\frac{(d+1)d}2ya^d$. Therefore 
$$
P_u^d([\mathrm{pt}])=P_u^d(a^d)=(-1)^d d! (1+\frac{(d+1)d}{2}ya^d). 
$$
If the prime $p$ equals $d+1$, and since $d\ge 2$, we find that $p=d+1$ is odd, and therefore $p$ divides $\frac{(d+1)d}{2}$, so that $P_u^d([\mathrm{pt}])=(-1)^dd![M]$ and $M$ is string point invertible over $\K$. If $p\le d$ we find that $P_u^d([\mathrm{pt}])=0$. On the other hand, the same argument as above based on $y$-linearity of the BV operator shows that the only way to reach $[M]$ from $[\mathrm{pt}]$ would be to apply $P_u^d$. We conclude that $M$ is not string point invertible if $p$ divides strictly $d+1$. 

We finally discuss the case of $\C P^1=S^2$ with coefficients in a field of characteristic $2$. Using Theorem~\ref{thm:Menichi_CP1} we directly find $P_{u+u^3a}([\mathrm{pt}])=P_{u+u^3a}(a)=\Delta(ua)+\Delta(u+u^3a)a = 1+au^2+(u^2+u^3a)a=1=[M]$, so that $\C P^1$ is string point invertible. 
\end{proof}

\section{Resonances for $\C P^d$, $\H P^d$, $\Ca P^2$}\label{sec: res}

In this section, $M$ denotes a closed manifold of dimension $n$ and $\Lambda$ its free loop space. All homology groups are taken with coefficients in a field $\bK$. Given a Finsler metric $g$ on $M$, we denote by $\Lambda_\alpha\subset\Lambda$ the subset of loops of length $\leq\alpha$. Recall from~\cite{CHO-PD} the definition of the critical levels of nonzero classes $X\in H_*\Lambda$ and $x\in H^*(\Lambda,\Lambda_0)$,
\begin{align*}
\Cr(X) &= \inf\{\alpha\in\R\mid X\in\im(H_*\Lambda_\alpha\to H_*\Lambda)\}, \\
\ccr(x) &= \sup\bigl\{\alpha\in\R\mid x\in\im\bigl(H^*(\Lambda,\Lambda_\alpha)\to H^*(\Lambda,\Lambda_0)\bigr)\bigr\}.
\end{align*}
Gromov~\cite[Theorem 7.3]{gromov-metric} proved that for every closed simply connected Finsler manifold $(M,g)$ and every coefficient field $\K$ there exist constants $\lambda^*, \lambda_*, C$ such that for all $X \in H_*(\Lambda;\bK)\setminus\{0\}$\footnote{When speaking of $\deg(X)$ we always assume that $X$ has homogeneous degree.}
\[\lambda^* \deg(X)-C \leq \Cr(X) \leq \lambda_* \deg(X)+C.\]
We provide a new proof of this result for CROSS in Proposition~\ref{prop: mini Gromov} below. Sharpening Gromov's result in a special case, Hingston and Rademacher proved in~\cite{Hingston-Rademacher} that, for every Finsler metric $g$ on the sphere $S^n$, $n>2$ and every field $\K$, there exist positive constants $\lambda,C$ depending only on $g$ and $\K$ such that for all $X \in H_*(\Lambda;\bK)\setminus\{0\}$ and all $x\in H^*(\Lambda;\K)\setminus\{0\}$,
\[|\lambda \deg(X) - \Cr(X)| \leq C\]
and 
\[|\lambda \deg(x) - \ccr(x)| \leq C.\]
The following theorem extends this result to other CROSS with coefficients in a field over which they are string point invertible.
This makes partial progress on a question of Hingston.

\begin{theorem}\label{thm: resonance}
Let $M$ be one of the CROSS $\C P^d$ with $d\geq 2$, $\H P^d$ with $d\geq 1$, or $\Ca P^2$ (where we set $d=2$), 
such that $d+1$ is prime, and let $\K$ be a coefficient field of characteristic ${\rm char}(\K) = d+1$.
Then, for every Finsler metric $g$
on $M$, there exist positive constants $\lambda,C$ depending only on $g$ and $\K$ 
such that for all homogeneous elements $X \in H_*(\Lambda;\K) \setminus \{0\}$ and $x\in H^*(\Lambda;\K)\setminus \{0\}$ we have
\[ |\lambda \cdot \deg(X) - \Cr(X)| \leq C\]
and 
\[ |\lambda \cdot \deg(x) - \ccr(x)| \leq C.\]
\end{theorem}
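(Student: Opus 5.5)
The plan is to run the Hingston--Rademacher argument with the Uebele classes replacing the sphere generators. String point invertibility, in the form of Lemma~\ref{lem:coh-Uebele} together with the bracket computation in the proof of Theorem~\ref{thm:string-point}, supplies the link between homology and cohomology that is automatic for spheres.

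\emph{Step 1: two growth rates.} Set $a_k=\Cr(y^k)$ for $y$ the homological Uebele class, and $b_k=\ccr(\theta^k)$ for $\theta$ the cohomological Uebele class. Since $\Cr(XY)\le\Cr(X)+\Cr(Y)$, the sequence $(a_k)$ is subadditive. Dually, the Goresky--Hingston product satisfies $\ccr(\xi\eta)\ge\ccr(\xi)+\ccr(\eta)$, since $\ccr$ is minus $\Cr$ in $\wh\H_*\Lambda$; so $(b_k)$ is superadditive. By Fekete, $\lambda_h:=\lim a_k/k=\inf a_k/k$ and $\lambda_c:=\lim b_k/k=\sup b_k/k$ exist, with $a_k\ge k\lambda_h$ and $b_k\le k\lambda_c$ for all $k$. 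Mini-Gromov type bounds (the promised Proposition~\ref{prop: mini Gromov}) show both rates are finite and positive. I would then set $\lambda:=\lambda_h/(n+i-2)$.

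\emph{Step 2: comparison, the main step.} I will use the duality lemma from the appendix in two forms:
\begin{itemize}
\item if $\langle\xi,X\rangle\neq 0$ then $\ccr(\xi)\le\Cr(X)$;
\item conversely, $\ccr(\xi)$ is attained up to $\eps$ by some $X$ with $\langle\xi,X\rangle\ne0$.
\end{itemize}
The easy direction is $b_{k+1}\le a_{k}+C$: $\theta^{k+1}$ pairs nontrivially with some class of the form $Zy^{k}$ with $Z$ in the fixed finite-dimensional space $F$, by Corollary~\ref{cor:CROSS-structure}, and $\Cr(Zy^k)\le \max_F\Cr+a_k$.

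The hard direction is $a_k\le b_{k+1}+C$. Take $X$ with $\langle\theta^{k+1}+c\,\theta^ka^d,X\rangle\ne0$ and $\Cr(X)\approx\ccr(\theta^{k+1})$. Here $c$ is the correction term from Lemma~\ref{lem:coh-Uebele}: $c=0$ for $\H P^d,\Ca P^2$, and for $\C P^d$ it vanishes anyway since $p=d+1$ is odd. By Lemma~\ref{lem:coh-Uebele}, $X=\alpha y^{k-1}u+\beta y^ka^du$ with $\beta\ne0$. Applying $\Delta$ gives $\Delta X=\beta d\,y^ka^{d-1}$, and then $d-1$ brackets with $u$ give a nonzero multiple $\beta\,d!\,y^k$ of $y^k$; this uses $y$-bilinearity and \eqref{eq:iterated-bracket}. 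The multiple is nonzero precisely because $\mathrm{char}\,\K=d+1$. Since $\Cr(\Delta Z)\le\Cr(Z)$ and $\Cr\{Z,W\}\le\Cr(Z)+\Cr(W)$ (from \eqref{eq:Gerstenhaber-bracket-intro}), this yields
\[
a_k\le\Cr(X)+(d-1)\Cr(u)\le b_{k+1}+C.
\]
This is where the hypothesis enters, and I expect it to be the main obstacle. The delicate points are the precise form of the duality lemma, and the handling of the $\theta^ka^d$ correction and the $\alpha$-term for $\C P^d$ and $\C P^1$.

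\emph{Step 3: uniform bounds.} Combining the two inequalities gives $\lambda_h=\lambda_c$ and
\[
k\lambda_h\le a_k\le b_{k+1}+C\le(k+1)\lambda_h+C,
\]
so $|a_k-k\lambda_h|$ and $|b_k-k\lambda_h|$ are bounded uniformly in $k$. For general $X$ I write $X=Zy^k$ with $Z\in F$, by Corollary~\ref{cor:CROSS-structure}(a). The upper bound is $\Cr(X)\le\Cr(Z)+a_k$, with $\Cr(Z)$ bounded on $F$. For the lower bound, pick by universal coefficients a class $\xi=\zeta\theta^\ell$ with $\langle\xi,X\rangle\ne0$ and $k\le\ell\le k+2$ (Corollary~\ref{cor:CROSS-structure}(c)); then $\Cr(X)\ge\ccr(\xi)\ge\ccr(\zeta)+b_\ell$. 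Since $|\deg X-k(n+i-2)|$ is bounded, this gives $|\lambda\deg X-\Cr(X)|\le C$. The cohomological statement is symmetric: write $x=\zeta\theta^\ell$, use superadditivity for the lower bound on $\ccr(x)$, and pair with a homology class $Zy^k$ for the upper bound.
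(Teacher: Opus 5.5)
Your proposal is correct and follows essentially the paper's route: Fekete limits for $\Cr(y^k)$ and $\ccr(\theta^k)$ as in Proposition~\ref{prop: mini Gromov}, then the Duality Lemma together with Lemma~\ref{lem:coh-Uebele} to force $\beta\neq0$, then the BV-plus-bracket chain of Lemma~\ref{lma: spi} to obtain Corollary~\ref{cor: theta and Theta}, and finally the extension to all classes via Corollary~\ref{cor:CROSS-structure}. Your observation that the $\frac{(d+1)d}{2}$ correction for $\C P^d$ vanishes in characteristic $d+1$ ($d$ is even) legitimately shortcuts the paper's extra non-Archimedean step; however, your ``easy direction'' claim that $\theta^{k+1}$ pairs nontrivially with $y^k u a^d$ does not follow from Corollary~\ref{cor:CROSS-structure} alone for $\C P^d$. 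It follows from Lemma~\ref{lem:coh-Uebele}, or you can replace it by the bound $\Cr(y^{k-1})+\const$ obtained from subadditivity, which still gives $\lambda_c\le\lambda_h$.
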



The next result concerns $\C P^1=S^2$, which is the only sphere that is not covered in Hingston-Rademacher~\cite{Hingston-Rademacher}. 
 
\begin{theorem} \label{thm: resonance CP1}
Let $M=\C P^1$, and $\K$ a coefficient field of characteristic $\neq 2$. Then, for every Finsler metric $g$ on $M$, there exist positive constants $\lambda, C$ depending only on $g$ and $\K$ such that for all homogeneous elements $X \in H_*(\Lambda;\K) \setminus \{0\}$ and $x\in H^*(\Lambda;\K)\setminus \{0\}$ we have
\[ |\lambda \cdot \deg(X) - \Cr(X)| \leq C\]
and 
\[ |\lambda \cdot \deg(x) - \ccr(x)| \leq C.\]
\end{theorem}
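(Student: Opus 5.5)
Following the strategy of Hingston--Rademacher, we first establish resonance for the powers of the Uebele classes and then transfer it to all classes by means of the structural decomposition in Corollary~\ref{cor:CROSS-structure}. Here $\K$ has characteristic $\neq 2$. By Theorem~\ref{thm:Rabinowitz_BV_S2_char_does_not_divide}, $\wh{\H}_*\Lambda=\K[x,y^{\pm1}]/\la x^2\ra$. The cohomological Uebele class $\theta=y^{-1}$ is dual to $y$ in the sense of the pairing relations derived in that proof: $\tau^m$ is dual to $\pm x y^{m-1}$ and $\mu\tau^m$ is dual to $\pm y^{m+1}$. We then set
$$
\lambda:=\lim_{k\to\infty}\frac{\Cr(y^k)}{\deg(y^k)}.
$$
The limit exists because $\Cr$ is subadditive under the loop product, so $k\mapsto\Cr(y^k)$ is subadditive and Fekete's lemma applies. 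Symmetrically, $\ccr(\theta^k)$ is superadditive: multiplication in $\wh\H_*\Lambda$ satisfies $\Cr(XY)\le\Cr(X)+\Cr(Y)$, and $\ccr=-\Cr$ on the cohomological part. Positivity of $\lambda$ will follow from Gromov's lower bound, or from Proposition~\ref{prop: mini Gromov}.

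The key step is to show $\Cr(y^k)\ge \ccr(\theta^{k})-C$, with the two growth rates agreeing. For this we use the Kronecker pairing. If $\langle\xi,X\rangle\ne0$ for $X\in H_*\Lambda$ and $\xi\in H^*(\Lambda,\Lambda_0)$, then $\ccr(\xi)\le\Cr(X)$ by the characterization of critical levels in Lemma~\ref{lma: duality}. Since $\langle\theta^{k+1},xy^k\rangle=\pm1$, this gives $\ccr(\theta^{k+1})\le\Cr(xy^k)\le \Cr(x)+k\,\Cr(y)$. Conversely, the relation $\theta y=E$ in $\wh\H_*\Lambda$ combined with subadditivity gives $0=\Cr(E)\le\Cr(y^k)+\Cr(\theta^k)=\Cr(y^k)-\ccr(\theta^k)$. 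The lower bound $\Cr(y^k)\ge\ccr(\theta^k)$, together with the pairing upper bound, pins both sequences to within a bounded distance of $\lambda\deg$. The point is that $\Cr(y^k)$ is subadditive and $\ccr(\theta^k)$ is superadditive, so $\Cr(y^k)\ge k\lambda\deg y$ up to a constant and $\ccr(\theta^k)\le k\lambda\deg y$ up to a constant. The remaining sandwich inequalities then give two-sided bounds $|\Cr(y^k)-\lambda\deg(y^k)|\le C$ and $|\ccr(\theta^k)-\lambda\deg\theta^k|\le C$.

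For a general homogeneous $X=Zy^k$ (Corollary~\ref{cor:CROSS-structure}(a)), the upper bound $\Cr(X)\le \Cr(Z)+\Cr(y^k)$ follows immediately from subadditivity, since $Z$ ranges over a finite-dimensional space. The lower bound is where the resonance argument really enters, and it is the step I expect to be the main obstacle. We pick $\xi=\zeta\theta^\ell$ pairing nontrivially with $X$; such a $\xi$ exists because in characteristic $\ne2$ there is rank one in each degree and the pairing is perfect. Then $\Cr(X)\ge\ccr(\xi)\ge\ccr(\theta^\ell)-C'$, where the last inequality uses superadditivity of $\ccr$ together with Corollary~\ref{cor:CROSS-structure}(c), which gives $|\ell-k|\le2$. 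The delicate point is the bounded-error step $\ccr(\zeta\theta^\ell)\ge\ccr(\theta^\ell)+\ccr(\zeta)$ when $\zeta$ is a homology-type element of the Rabinowitz algebra, for instance $x$ or $E$. To handle it, we write $\xi$ as $\theta^{\ell'}$ or $x\theta^{\ell'}$ directly and use $\Cr(x)$ finite. The cohomology statement follows in the same way by exchanging the roles of $X$ and $x$.
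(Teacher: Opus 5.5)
There is a genuine gap. The estimate that actually produces resonance is never proved, and the step you call the key step is the easy direction. Both of your comparisons point the same way. The pairing $\langle\theta^{k+1},xy^k\rangle\neq0$ gives $\ccr(\theta^{k+1})\le\Cr(xy^k)$, and the unit relation $\theta y=E$ gives $\ccr(\theta^k)\le\Cr(y^k)$. Combined with Fekete's lemma, these yield only
\[
\ccr(\theta^k)\le k\mu^*\le k\mu_*\le\Cr(y^k),\qquad \mu_*=\lim_{k}\tfrac1k\Cr(y^k),\quad \mu^*=\lim_{k}\tfrac1k\ccr(\theta^k).
\]
Nothing here bounds $\Cr(y^k)-k\mu_*$ from above or $\ccr(\theta^k)-k\mu^*$ from below. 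A subadditive sequence such as $k\mu+\sqrt k$ exceeds its linear rate by an unbounded amount. Nothing rules out $\mu^*<\mu_*$ either. So the ``remaining sandwich inequalities'' you invoke do not exist; what you have is the two-slope information of Proposition~\ref{prop: mini Gromov}. The passage to general classes inherits this defect. Both your upper bound $\Cr(Zy^k)\le\Cr(Z)+\Cr(y^k)$ and your lower bound through $\ccr(\theta^\ell)$ can be compared with $\lambda\deg$ up to a bounded error only once $|\ccr(\theta^k)-\Cr(y^k)|\le C$ is known. The bookkeeping with $\zeta$ that you flag as the main obstacle is routine by comparison.

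The missing inequality is $\ccr(\theta^k)\ge\Cr(y^k)-C$, and the paper obtains it from the BV operator, which your argument never uses.
\begin{itemize}
\item Since $H_*\Lambda$ has rank one in each degree, the Duality Lemma~\ref{lma: duality} gives $\ccr(\theta^k)=\Cr(y^{k-1}x)$.
\item BV monotonicity $\Cr(\Delta X)\le\Cr(X)$ together with $\Delta(y^{k-1}x)=(2k-1)y^{k-1}$ gives $\ccr(\theta^k)\ge\Cr((2k-1)y^{k-1})$.
\item If $2k-1\neq0$ in $\K$, this equals $\Cr(y^{k-1})\ge\Cr(y^k)-\Cr(y)$.
\item If $2k-1=0$ in $\K$, then $2k-3\neq0$ because $\mathrm{char}\,\K\neq2$. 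Superadditivity then gives $\ccr(\theta^k)\ge\ccr(\theta^{k-1})=\Cr(y^{k-2}x)\ge\Cr((2k-3)y^{k-2})\ge\Cr(y^k)-2\Cr(y)$.
\end{itemize}
Combined with the easy direction, this yields $|\ccr(\theta^k)-\Cr(y^k)|\le C$. Hence $\mu^*=\mu_*=:\mu$, and $\ccr(\theta^k)\le k\mu\le\Cr(y^k)\le\ccr(\theta^k)+C$. Only then does the extension to all $X=Zy^k$ and $\xi=\zeta\theta^\ell$ go through, as in the proof of Theorem~\ref{thm: resonance-Liouville}.

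A useful sanity check: your argument uses neither the BV structure nor the characteristic hypothesis in any essential way. It would therefore equally ``prove'' resonance for $\C P^d$ in characteristics not dividing $d+1$, a case the paper does not claim.
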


\begin{remark}\label{rmk: actual condition}
In the case of $\H P^d$ and $\Ca P^2$, the algebraic phenomenon underlying resonance is that a nonzero multiple of $y^k$ can be obtained from $y^k a^{d-1}$
by repeated application of the string bracket $P_u=\{-,u\}$. In the case of $\C P^d$, $d\ge 2$, the same is true for a nonzero class living in the same degree as $y^k$. This phenomenon is straightforward in the case of spheres of dimension $>2$ with arbitrary coefficients, and for $M$ as in the theorem it is essentially equivalent to string point invertibility. This explains why the arguments of~\cite{Hingston-Rademacher} work with arbitrary coefficients for spheres of dimension $>2$, while the arguments in this paper work only with coefficients of a well-chosen finite characteristic for other CROSS.
\end{remark}

We will deduce Theorem~\ref{thm: resonance} from the more general Theorem~\ref{thm: resonance-Liouville} below, and Theorem~\ref{thm: resonance CP1} from the more general Theorem~\ref{thm: resonance CP1-Liouville}, both about Reeb flows on the unit cotangent bundles of $\C P^d$, $\H P^d$ or $\Ca P^2$.
The proof generally follows the arguments of~\cite{Hingston-Rademacher}, with two new ingredients that we will list separately.

{\bf Critical values for Liouville domains. }
We begin with the general setup of a $2n$-dimensional Liouville domain $V$. It is understood, whenever we consider degrees of homology or cohomology classes, that the canonical bundle of $V$ is trivializable and trivialized, so that the symplectic homology and the symplectic cohomology of $V$ are $\Z$-graded. We denote by $SH_*(V)$ the symplectic homology with coefficients in $\K$.  We denote by $SH^{\leq\alpha}_*(V)$ the filtered symplectic homology in action $\leq\alpha$ and by $SH_{>\alpha}^*(V)$ the filtered symplectic cohomology in action $>\alpha$, and recall from~\cite{CHO-PD} the definition of the critical levels of nonzero classes $X\in SH_*(V)$ and $x\in SH^*_{>0}(V)$,
\begin{align*}
\Cr(X) &= \inf\{\alpha\in\R\mid X\in\im\bigl(SH_*^{\leq\alpha}(V)\to SH_*(V)\bigr)\}, \\
\ccr(x) &= \sup\bigl\{\alpha\in\R\mid x\in\im\bigl(SH^*_{>\alpha}(V)\to SH^*_{>0}(V)\bigr)\bigr\}.
\end{align*}
The homological critical levels satisfy {\em subadditivity} with respect to 
the pair-of-pants product (see~\cite{CHO-PD}),
$$
\Cr(XY) \leq \Cr(X) + \Cr(Y),
$$
and the {\em non-Archimedean property}
$$
\Cr(X+Y) \leq \max\{\Cr(X), \Cr(Y)\}.
$$
Since the BV operator $\Delta:SH_*(V)\to SH_{*+1}(V)$ does not increase the action, we also have {\em BV monotonicity}
$$
\Cr(\Delta X)\leq \Cr(X). 
$$
In view of subadditivity and the non-Archimedean property, this implies that the associated bracket satisfies
\begin{equation}\label{eq: bracket subadd}
\Cr(\{X,Y\}) \leq \Cr(X) + \Cr(Y).
\end{equation}
The cohomological critical levels satisfy {\em superadditivity} with respect to 
the secondary pair-of-pants product (see~\cite{CHO-PD}),
$$
\ccr(xy)\ge \ccr(x)+\ccr(y),
$$
and the {\em non-Archimedean property} 
$$
\ccr(x+y)\ge \min\{\ccr(x),\ccr(y)\}.
$$
The BV operator $\Delta^*:SH^*_{>0}(V)\to SH^{*-1}_{>0}(V)$ does not decrease the action and we obtain {\em cohomological BV monotonicity} in the form 
$$
\ccr(\Delta^*x)\ge \ccr(x).
$$
As a consequence, the associated cohomological bracket satisfies
\begin{equation}\label{eq: bracket superadd}
\ccr(\{x,y\}) \geq \ccr(x) + \ccr(y).
\end{equation}

The first new ingredient in the proof of Theorem~\ref{thm: resonance} is the following duality between homological and cohomological critical levels.
Given a $\K$-vector space $V$, we denote $V^\vee=\Hom_\K(V,\K)$ its linear dual. 

\begin{lemma}[Duality for critical levels]\label{lma: duality}
Let $k>n+1$. We then have $SH_k(V)^{\vee} \cong SH^k_{>0}(V)$,
and for all nonzero classes $A \in SH_k(V)$ and $a \in SH^k_{>0}(V)$ we have
\begin{align*}
  \Cr(A) &= \sup \{ \ccr(b) \mid \deg(b)=k,\, \left< A,b\right> \neq 0 \},\\
  \ccr(a) &= \inf \{\Cr(B) \mid \deg(B)=k,\, \left<B,a\right> \neq 0 \}. 
\end{align*}
\end{lemma}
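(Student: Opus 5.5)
The plan is to work at the level of filtered chain complexes and reduce the lemma to linear algebra over the field $\K$ for a persistence module and its dual. First, the duality $SH_k(V)^\vee\cong SH^k_{>0}(V)$ for $k>n+1$. Symplectic cohomology in action $>0$ is the cohomology of the quotient of the dual cochain complex by the constant-orbit part. The homology of the constant part, $H^{n-*}(V)\cong H_{n+*}(V,\partial V)$, is concentrated in degrees $0\le \deg\le n$, so the long exact sequence for $SH^{\le 0}\to SH\to SH^{>0}$ shows that in degrees $k>n+1$ (one extra degree for the connecting map) we have $SH_k(V)\cong SH^{>0}_k(V)$. Dually, $SH^k_{>0}(V)$ is the cohomology of the dual complex. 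Over a field, and working with a fixed Hamiltonian $H$ of slope $\tau$ in the direct limit, the universal coefficient theorem gives $SH^k_{>0}(H)=SH_k^{>0}(H)^\vee$. Since only finitely many Reeb orbits contribute in a fixed degree range after a nondegenerate perturbation, the direct and inverse limits can be taken compatibly: $SH_k$ is a direct limit and $SH^k_{>0}$ is an inverse limit of the duals. This yields the pairing isomorphism.

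Second, the filtration statement. For each $\alpha>0$ the truncated complexes fit into short exact sequences $0\to C^{\le\alpha}\to C\to C^{>\alpha}\to 0$, and dually $0\to C_{>\alpha}^\vee\to C^\vee\to (C^{\le\alpha})^\vee\to0$. Over a field, a class $A$ lies in the image of $SH_k^{\le\alpha}\to SH_k$ iff $A$ pairs to zero with the kernel of $SH^k\to SH^k(C^{\le\alpha})$. That kernel equals the image of $SH^k_{>\alpha}\to SH^k_{>0}$ in the relevant degrees, by the dual long exact sequence. Hence $A\in\im(SH^{\le\alpha}_k)$ iff $\langle A,b\rangle=0$ for every $b$ with $\ccr(b)>\alpha$ (up to the usual care with strict versus nonstrict inequalities). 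Taking the infimum over $\alpha$ gives
\[
\Cr(A)=\sup\{\ccr(b)\mid \langle A,b\rangle\neq0\}.
\]
The formula for $\ccr(a)$ follows symmetrically: $a\in\im(SH^k_{>\alpha})$ iff $a$ annihilates $\im(SH_k^{\le\alpha})$.

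The main obstacle is making this annihilator statement rigorous. It is clean for finite-dimensional persistence modules, so I would first restrict to an action window $(0,T)$ with $T$ large. In a fixed degree $k$, only finitely many bars matter, by the usual index-growth argument for the relevant Reeb flows; a finite-type hypothesis might be needed here. I would then use the normal form (barcode) of filtered complexes over $\K$: choose a filtration-adapted basis $\{e_j\}$ with dual basis $\{e_j^\vee\}$, so that critical levels become bar endpoints. In this normal form both formulas can be checked directly, with the sup and inf attained. Some care is also needed at endpoints where the action is exactly a critical value, which is handled by the semicontinuity of the definitions.
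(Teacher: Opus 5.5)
Your second paragraph is essentially the paper's proof. The paper uses the same ingredients: $r_\alpha=I_\alpha^\vee$, $i_\alpha=R_\alpha^\vee$, exactness $\ker r_\alpha=\im i_\alpha$, and field coefficients. It makes your ``strict versus nonstrict'' caveat precise by proving two inequalities separately:
\begin{itemize}
\item If $\langle A,b\rangle\neq0$ then $\ccr(b)\le\Cr(A)$. Otherwise choose $\ccr(b)>\alpha>\Cr(A)$ and write $A=I_\alpha A_\alpha$; then $\langle A,b\rangle=\langle A_\alpha,r_\alpha b\rangle=0$.
\item Conversely, for every $\alpha<\Cr(A)$ we have $R_\alpha A\neq 0$. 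So some $b_\alpha\in SH^k_{>\alpha}(V)\cong SH_k^{>\alpha}(V)^\vee$ satisfies $\langle A,i_\alpha b_\alpha\rangle=\langle R_\alpha A,b_\alpha\rangle\neq0$, and $b=i_\alpha b_\alpha$ has $\ccr(b)\ge\alpha$.
\end{itemize}
The formula for $\ccr(a)$ is proved dually.

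The ``main obstacle'' in your last paragraph does not exist, and the remedy you propose would cost generality. The identity $\im I_\alpha=(\ker I_\alpha^\vee)^\perp$ holds for arbitrary, possibly infinite-dimensional, vector spaces over a field, because a linear functional on a subspace always extends to the whole space. So no truncation to an action window, no barcodes, and no finite-type hypothesis are needed.

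This matters because the lemma is stated for an arbitrary Liouville domain, where your finiteness claims are false in general. Take $V=D^*(S^1\times S^3)$. By Viterbo's isomorphism $SH_k(V)\cong H_k(\Lambda(S^1\times S^3))$, and each of the infinitely many components of the loop space contributes nonzero homology in every degree $k\ge1$. In particular $SH_k(V)$ is infinite-dimensional for $k>n+1=5$, so neither ``only finitely many Reeb orbits contribute in a fixed degree range'' nor ``only finitely many bars matter'' is available.

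The same applies to your first paragraph: $SH_k(V)^\vee\cong SH^k(V)$ needs no finiteness, since $\Hom_\K(\varinjlim_\tau A_\tau,\K)\cong\varprojlim_\tau\Hom_\K(A_\tau,\K)$ for any directed system.

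Two minor slips:
\begin{itemize}
\item For a general $2n$-dimensional Liouville domain the constant part $H^{n-*}(V)$ lives in degrees $1-n,\dots,n$, not $0,\dots,n$. Only the upper bound $n$ matters for $k>n+1$, so this is harmless.
\item $SH^*_{>0}(V)$ is computed by the subcomplex of the dual complex consisting of cochains annihilating the constant part, not by a quotient.
\end{itemize}
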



\begin{proof}
The proof follows an argument of Leclercq-Zapolsky \cite[Theorem 35, Duality]{Leclercq-Zapolsky}. For $\alpha \geq  0$ consider the long exact sequences
\begin{gather*}
   \cdots SH_k^{\leq\alpha}(V) \stackrel{I_\alpha}\longrightarrow SH_k(V) \stackrel{R_\alpha}\longrightarrow SH_k^{>\alpha}(V)\cdots, \\
   \cdots SH^k_{>\alpha}(V) \stackrel{i_\alpha}\longrightarrow SH^k(V) \stackrel{r_\alpha}\longrightarrow SH^k_{\leq\alpha}(V)\cdots
\end{gather*}
Note that $r_\alpha$ and $I_\alpha$ are adjoints for the usual pairing between cohomology and homology, and similarly for $R_\alpha$ and $i_\alpha$.
Since
$SH^*_{\leq 0}(V)\cong H_{n-*}(V)$ is supported in degrees $1-n,\dots,n$, for $k>n+1$ we have a canonical isomorphism $SH^k_{>0}(V)\cong SH^k(V)$ and we can rewrite the critical levels as	
\begin{align*}
\Cr(A) &= \inf\{\alpha\in\R\mid A\in\im I_\alpha = \ker R_\alpha\}, \\
\ccr(a) &= \sup\{\alpha\in\R\mid a\in\im i_\alpha = \ker r_\alpha\}.
\end{align*}
Let us now prove $\Cr(A) = \sup \{ \ccr(b) \mid \deg(b)=k,\, \left< A,b\right> \neq 0 \}$. First of all, if $\brat{A,b} \neq 0,$ then
\[\ccr(b) \leq \Cr(A).\]
Indeed, otherwise for $\ccr(b) > \alpha > \Cr(A)$ we have $r_\alpha(b) = 0$, while there exists $A_{\al} \in SH_k^{\leq\alpha}(V)$ with $A = I_\alpha(A_{\al})$. Then $\brat{A,b} = \brat{I_\alpha(A_{\al}),b} =  \brat{A_{\al}, r_\alpha(b)} = 0$, a contradiction. This yields
\[\Cr(A) \geq \sup \{ \ccr(b) \mid \deg(b)=k,\, \left< A,b\right> \neq 0 \}.\]
For the reverse inequality we use the fact that the coefficients are in a field, so that $SH^k_{>\alpha}(V)\cong SH_k^{>\alpha}(V)^\vee$ for all $\alpha$. Consider $\alpha < \Cr(A).$ Then $R_\alpha(A) \neq 0,$ and as the coefficients are in a field, there exists $b_{\alpha}$ with $\brat{A, i_\alpha(b_{\alpha})} = \brat{R_\alpha(A), b_{\alpha}} \neq 0$. Hence $b = i_\alpha(b_{\alpha})$ is a cohomology class with $\brat{A,b} \neq 0$, and $\ccr(b) \geq \alpha$ because $r_\alpha(b) = r_{\al} \circ i_{\al}(b_{\al}) = 0$. Since $\alpha < \Cr(A)$ was arbitrary, we get
\[\Cr(A) \leq \sup \{\ccr(b) \mid \deg(b)=k,\, \left< A,b\right> \neq 0 \}.\]
The second equality $\ccr(a) = \inf \{\Cr(B) \mid \deg (B)=k,\, \left<B,a\right> \neq 0 \}$ is dual to the first one. As above, $\brat{B,a} \neq 0$ implies $\ccr(a) \leq \Cr(B),$ yielding
\[\ccr(a) \leq \inf \{\Cr(B) \mid \deg(B)=k,\, \left<B,a\right> \neq 0 \}.\]
In the reverse direction, if $\alpha > \ccr(a),$ then $r_{\alpha}(a) \neq 0$ and therefore, since the coefficients are in a field, there exists $B_{\alpha}$ with $\brat{I_{\alpha}(B_{\alpha}), a} = \brat{B_{\alpha},r_{\al}(a)} \neq 0$. Then $B = I_{\alpha}(B_{\alpha})$ is the desired homology class with $\brat{B,a} \neq 0$ and $\Cr(B) \leq \alpha$. This yields
\[\ccr(a) \geq \inf \{\Cr(B) \mid \deg(B)=k,\, \left<B,a\right> \neq 0 \},\]
finishing the proof.  
\end{proof}

{\bf Critical values associated to CROSS.}
Consider now the cotangent bundle $T^*M$ of a closed manifold $M$ with its canonical Liouville form $\lambda_{\rm st}=p\,dq$. 
Let $\pi:T^*M\to M$ be the projection. We assume that the manifold $M$ is orientable.\footnote{In this case the canonical bundle is isomorphic to the (dual of) the complexification of $\det(TM)$, and as such admits a canonical trivalization up to homotopy, induced by any of the two trivializations up to homotopy of $\det(TM)$. This determines a canonical grading on symplectic homology and cohomology. In the nonorientable case, although only the square of the canonical bundle may be trivializable, a canonical grading still exists by a construction of Abouzaid~\cite[\S9.4.5]{Abouzaid-cotangent}}

Any fiberwise starshaped domain (compact with smooth boundary) $V\subset T^*M$ becomes a Liouville domain with the restriction of $\lambda_{\rm st}$, so we can speak of the critical values on its symplectic (co)homology. The characteristics on the boundaries of such $V$ correspond to the Reeb orbits of arbitrary contact forms defining the standard contact structure on the unit sphere cotangent bundle $S^*M$.
Recall Viterbo's theorem (see~\cite{Abouzaid-cotangent})
$$
  SH_*(V;\eta)\cong H_*\Lambda,\qquad SH^*(V;\eta)\cong H^*\Lambda. 
$$
Here $\eta$ is the local system on $\Lambda T^*M$ induced from $\Lambda M$ by transgressing the second Stiefel Whitney class $w_2(M)$: the fiber of $\eta$ over a coefficient field $\K$ is a vector space of dimension 1, and the holonomy along a loop $\gamma:S^1\to \Lambda T^*M$ is equal to $(-1)^{\langle w_2(M),T_\gamma\rangle}$, where 
$T_\gamma:S^1\times S^1\to M$, $T_\gamma(\theta,t)=\pi(\gamma(\theta)(t))$ 
is the 2-torus in $M$ determined by $\gamma$ seen as a loop of loops. The discussion of critical values from the previous paragraph applies verbatim to symplectic homology with coefficients in $\eta$ because, in the terminology of~\cite[Appendix~A]{CHO-MorseFloerGH}, the local system $\eta$ is compatible with products.  

We will repeatedly use the identification $SH^*(V;\eta)\simeq SH_*(V;\eta)^\vee$, which induces an evaluation map 
$$
SH^*(V;\eta)\otimes SH_*(V;\eta)\to\K  
$$
that coincides with the Kronecker product $H^*\Lambda\otimes H_*\Lambda\to\K$ through the Viterbo isomorphism.\footnote{In general, given a local system $\cL$ on $\Lambda T^*M$ we have a canonical identification $SH^*(V;\cL^\vee)\simeq SH_*(V;\cL)^\vee$, where $\cL^\vee$ is the \emph{dual local system} with fiber the dual vector space of $\cL$ and holonomy defined by adjunction from the holonomy of $\cL$. In our case, because the holonomy of $\eta$ has image in $\pm 1$, the dual local system $\eta^\vee$ is isomorphic to $\eta$.}     

Note that $V$ is fiberwise convex iff it is the unit disk cotangent bundle of a Finsler metric, in which case Viterbo's isomorphism respects the filtrations
$$
  SH_*^{\leq\alpha}(V;\eta)\cong H_*\Lambda_\alpha,\qquad SH^*_{>\alpha}(V;\eta)\cong H^*(\Lambda,\Lambda_\alpha). 
$$
Thus the following proposition is more special than Gromov's result~\cite[Theorem 7.3]{gromov-metric} in that it applies only to very special manifolds $M$, but more general in that it extends it from Finsler geodesic flows  to Reeb flows. 

\begin{proposition}\label{prop: mini Gromov}
Let $M$ be a simply connected CROSS and $V\subset T^*M$ a fiberwise starshaped domain. For every coefficient field $\K$ there exist positive constants $\lambda^*, \lambda_*, C$ (depending only on $V$ and $\K$) such that, for every $X \in SH_*(V;\eta)\setminus\{0\}$ and every $\xi\in SH^*(V;\eta)\setminus \{0\}$, we have
\[\lambda^* \deg(X)-C \leq \Cr(X) \leq \lambda_* \deg(X)+C\]
and
\[\lambda^* \deg(\xi)-C \leq \ccr(\xi) \leq \lambda_* \deg(\xi)+C.\]
\end{proposition}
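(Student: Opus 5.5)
The plan is to compare $V$ with the unit disk cotangent bundle $V_0$ of the homogeneous (SC) metric, where everything is explicit. First choose constants $0<c_1<c_2$ with $c_1V_0\subset V\subset c_2V_0$; this is possible since $V$ is fiberwise starshaped and compact. Scaling a Liouville domain by $c$ multiplies all action values, and hence all critical levels, by $c$. The Viterbo transfer/continuation maps for the inclusions $c_1V_0\subset V\subset c_2V_0$ are isomorphisms on the full $SH_*(\cdot;\eta)$ and respect the action filtrations. Consequently, for every $X\neq0$,
\begin{equation*}
c_1\Cr_{V_0}(X)\le \Cr_V(X)\le c_2\Cr_{V_0}(X),
\end{equation*}
and similarly for $\ccr$. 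The same comparison holds after normalizing the critical levels so that both ends are nonnegative, up to bounded additive errors coming from the classes of action near $0$. It therefore suffices to prove the two-sided linear bounds for $V_0$, i.e.\ for the homogeneous metric with period normalized to $L=1$.

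For $V_0$ I would use the level structure of \S\ref{sec:RFH-level}. The energy functional is perfect and the nonconstant critical manifolds occur exactly at integer lengths $k\ge1$, each a copy of $STM$ with known index $\lambda+(k-1)(n+i-2)\cdot$(appropriately). So $H_*\Lambda=\bigoplus_k H_*^{=k}\Lambda$ additively, and the $k$-th block is supported in degrees in a window $[k\nu-c,\,k\nu+c']$ with $\nu=n+i-2$. This is consistent with Corollary~\ref{cor:CROSS-structure}(a), where $X=Zy^k$ with $|Z|$ bounded. The critical level of any nonzero homogeneous $X$ is the largest $k$ for which the component of $X$ in the filtration quotient is nonzero. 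Degree bounds the blocks that can appear: if $\deg X=m$, every contributing $k$ satisfies $|k\nu-m|\le C'$. Hence $\Cr(X)\in[m/\nu-C'',\,m/\nu+C'']$. For cohomology, the same block-window argument applies to $H^*(\Lambda,\Lambda_0)$ via Corollary~\ref{cor:CROSS-structure}(b),(c). Alternatively, for degrees $k>n+1$ I would invoke Lemma~\ref{lma: duality}, which expresses $\ccr(\xi)$ as an infimum of $\Cr(B)$ over $B$ of the same degree. By the homological bound, all such $B$ have $\Cr(B)$ within $C''$ of $\deg/\nu$, and finitely many low degrees are absorbed into $C$. The result for $V_0$ is $\lambda^*=\lambda_*=1/\nu$; the sandwich then gives $\lambda^*=c_1/\nu$ and $\lambda_*=c_2/\nu$ for $V$.

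The main obstacle I expect is making the filtered comparison rigorous for a merely starshaped, not necessarily convex, $V$, since there is no loop-space filtration available. I would handle this entirely on the symplectic side. The filtered continuation maps $SH^{\le\alpha}_*(c_1V_0)\to SH^{\le\alpha}_*(V)\to SH^{\le\alpha}_*(c_2V_0)$ commute with the maps to the full homology, which are isomorphisms compatible with the Viterbo identification. Monotonicity of $\Cr$ under these maps then follows directly from the definitions, and dually for $\ccr$ with the maps on $SH^*_{>\alpha}$ going in the opposite direction. A secondary point is the dependence on the local system $\eta$. This only affects signs and is trivial here because $M$ is simply connected, so degree windows and perfectness are unaffected.
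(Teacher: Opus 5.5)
Your argument is correct, and it takes a genuinely different route from the paper.

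\textbf{How the two proofs differ.} The paper never leaves $V$. It uses the Uebele classes $y$ and $\theta$ and extracts growth rates $\mu_*=\lim_k\Cr(y^k)/k$ and $\mu^*=\lim_k\ccr(\theta^k)/k$ from sub- and superadditivity via Fekete's lemma. It then writes every class as $Zy^k$ or $\zeta\theta^\ell$ with $Z,\zeta$ in a fixed finite-dimensional space (Corollary~\ref{cor:CROSS-structure}). The upper bound on $\Cr(X)$ comes from subadditivity, and the lower bound from the Duality Lemma~\ref{lma: duality} applied to a dual class $\zeta_0\theta^k+\zeta_1\theta^{k+1}+\zeta_2\theta^{k+2}$. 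You instead sandwich $V$ between rescaled homogeneous codisk bundles. You then read off critical levels from the Morse--Bott index $\lambda+(k-1)(n+i-2)$ of the $k$-th critical manifold.

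\textbf{What your route buys.} It uses no product structure, no Uebele theorem and no BV computation, so it applies verbatim to any simply connected SC-manifold. It does not even need perfectness: a class with $\Cr(X)=k$ lifts to a nonzero class of the same degree in $H_*(\Lambda_{k+\eps},\Lambda_{k-\eps})$, which vanishes outside the index window. Moreover, the sandwich step combined with Gromov's theorem for a single Finsler metric already gives the homological estimate for fiberwise starshaped domains over any simply connected $M$.

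\textbf{What it gives up.} Your slopes $c_1/\nu$ and $c_2/\nu$ depend on the chosen sandwich and have no intrinsic meaning. The paper's $\lambda^*,\lambda_*$ can be taken arbitrarily close to $\mu^*/|y|$ and $\mu_*/|y|$. That is exactly the setup reused in Theorem~\ref{thm: resonance-Liouville}, where Corollary~\ref{cor: theta and Theta} forces $\mu^*=\mu_*$ and upgrades the proposition to resonance.

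\textbf{Two details need fixing.}

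First, the direction of the filtered transfer maps is reversed. In homology they go from the larger domain to the smaller one: for Finsler metrics, $V\subset W$ means $F_V\le F_W$, hence $\Lambda^{F_W}_\alpha\subset\Lambda^{F_V}_\alpha$. So the correct chain is $SH^{\le\alpha}_*(c_2V_0)\to SH^{\le\alpha}_*(V)\to SH^{\le\alpha}_*(c_1V_0)$, and on $SH^*_{>\alpha}$ the maps go the other way. With the arrows as you wrote them, the definitions would give the reversed, impossible inequality. The inequality you actually state is the correct one.

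Second, $\eta$ is not trivial just because $M$ is simply connected. One has $\pi_1(\Lambda M)\cong\pi_2(M)$ and $w_2(\C P^{2m})\neq 0$, so $\eta$ is nontrivial for $\C P^{2m}$ when $\mathrm{char}\,\K\neq 2$. This does not harm your argument. For $V_0$ you can work on the loop space side through the filtered Viterbo isomorphism, where the coefficients are untwisted. In any case, index windows of local homology do not see rank-one local systems.
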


In the sequel, $\mrm{const}$ denotes a constant depending only on
$V$ and $\K$, but not on the (co)homology classes involved in the estimates.

\begin{proof}
Recall from~\S\ref{ss:coh-Uebele} the Uebele classes $y\in SH_{2n+i-2}(V;\eta)\cong \break H_{2n+i-2}\Lambda$ (homological) and $\theta\in SH^{i-1}_{>0}(V;\eta)\cong H^{i-1}(\Lambda,\Lambda_0)$ (cohomological). 
By the subadditivity property of homological critical levels, the sequence $a_k=\Cr(y^k)$ is subadditive, i.e.~$a_{k+\ell}\leq a_k+a_\ell$ for all $k,\ell\geq 1$. 
By Fekete's lemma~\cite[Proposition~6.2.1]{Coornaert-DTSD-SMF}, 
for a subadditive sequence $(a_k)$ the limit $\lim_{k\to\infty}\frac{a_k}{k}$ exists and is equal to $\inf_k\frac{a_k}{k}$. Applying this to the sequence $\Cr(y^k)$, and its dual version to the superadditive sequence $\ccr(\theta^k)$, we get limits
$$
\mu_* = \lim_{k \to \infty} \frac{1}{k} \Cr(y^k) = \inf_k\frac{1}{k} \Cr(y^k),\quad
\mu^* = \lim_{k \to \infty} \frac{1}{k} \ccr(\theta^k) = \sup_k\frac{1}{k} \ccr(\theta^k).
$$
Fix $\varepsilon > 0$ and let $k_0 \in \mathbb{N}$ be such that
\begin{equation}\label{eq:mustar-eps}
  \frac{\Cr(y^k)}{k} \leq \mu_*+\varepsilon \quad\text{and}\quad \frac{\ccr(\theta^k)}{k} \geq \mu^*-\varepsilon \quad\text{for all }k \geq k_0.
\end{equation}
Recall from Corollary~\ref{cor:CROSS-structure} that all $X \in SH_*(V;\eta)$ and $\xi \in SH^*_{>0}(V;\eta)$
can be written as $X = Z y^k$ and $\xi = \zeta \theta^\ell$, respectively, with $Z,\zeta$ in a fixed finite dimensional space $F$.
Suppose now that $X=Zy^k$ is nonzero. 
By Lemma~\ref{lma: duality}, $\Cr(X) \geq \ccr(\xi)$ for some (in fact any) cohomology class $\xi$ with $\la X,\xi\ra\neq 0$. Since $\deg(X)=\deg(\xi)$, Corollary~\ref{cor:CROSS-structure}(c) implies that $\xi$ must have the form $\xi = \zeta_0\theta^k + \zeta_1\theta^{k+1} + \zeta_2\theta^{k+2}$ with $\zeta_i\in F$ not all zero. For $k\geq k_0$ we conclude
\begin{equation}\label{eq:Cr-lower-est}
\Cr(X) \geq \ccr(\xi) \geq \ccr(\theta^{k}) - \const \geq (\mu^*-\varepsilon) k - \const.
\end{equation}
Here the second inequality follows from the reverse non-Archimedean property
and superadditivity of the cohomological critical value,
and the third one from~\eqref{eq:mustar-eps}. 
On the other hand, for $k\geq k_0$ we get from subadditivity 
\[
\Cr(X) \leq \Cr(y^k) + \const \leq (\mu_*+\varepsilon) k + \const.
\]
By making the constant $\const$ sufficiently large we can ensure that the inequalities hold also for $k < k_0$. 
Since $|\deg(X)-k|y|\,|\leq\const$, setting $\lambda_* = (\mu_*+\varepsilon)/|y|$ and $\lambda^* = (\mu^*-\varepsilon)/|y|$
we get
\[
\lambda^* \deg(X) - \const \leq  \Cr(X) \leq \lambda_* \deg(X) + \const
\]
for all $X$. This finishes the proof in the homological case. 

Suppose now $\xi=\zeta\theta^\ell$ is nonzero. Then, again by Lemma~\ref{lma: duality}, $\ccr(\xi)\leq \Cr(X)$ for some (in fact any) homology class $X$ with $\la \xi,X\ra\neq 0$. Since $\deg(X)=\deg(\xi)$, Corollary~\ref{cor:CROSS-structure}(c) implies that $X$ must have the form $X = Z_{-2}y^{\ell-2} + Z_{-1}y^{\ell-1} + Z_0y^\ell$ with $Z_i\in F$ not all zero. We find as above 
\begin{equation} \label{eq:ccr-upper-est}
\ccr(\theta^\ell)-\const \le \ccr(\xi)\le \Cr(X)\le \Cr(y^{\ell-2})+\const,
\end{equation}
which implies 
$$
(\mu^*-\eps)\ell -\const \le \ccr(\xi)\le (\mu_*+\eps)\ell +\const 
$$
for $\ell\ge k_0+2$. By making the constant $\const$ sufficiently large we can ensure that the inequalities hold also for $\ell<k_0+2$. To conclude we compute $\deg(\xi)=\deg(\zeta)+\ell(\deg(\theta)+n-1)$ and use the fact that 
$$
\deg(\theta)+n-1 = \deg(y)-n=|y|.
$$
We find $|\deg(\xi)-\ell |y||\le \const$, and by setting $\lambda_*=(\mu_*+\eps)/|y|$ and $\lambda^*=(\mu^*-\eps)/|y|$ as above, we get 
\[
\lambda^* \deg(\xi) - \const \leq  \ccr(\xi) \leq \lambda_* \deg(\xi) + \const
\]
for all $\xi$. This concludes the proof in the cohomological case. 
\end{proof}

\begin{remark}
As seen in the proof, the constants $\lambda^*$, $\lambda_*$ can be chosen arbitrarily close to $\mu^*=\lim_k\frac{1}{k}\ccr(\theta^k)$, respectively $\mu_*=\lim_k \frac{1}{k} \Cr(y^k)$.
\end{remark}

{\bf Resonance theorem.} 
We will prove the following generalizations of Theorems~\ref{thm: resonance} and~\ref{thm: resonance CP1} from Finsler metrics to Reeb flows. 

\begin{theorem}\label{thm: resonance-Liouville}
Let $M$ be one of the CROSS $\C P^d$ with $d\geq 2$, $\H P^d$ with $d\geq 1$, or $\Ca P^2$ (where we set $d=2$). Assume that $d+1$ is prime and let $\K$ be a coefficient field of characteristic ${\rm char}(\K) = d+1$.
Then, for every fiberwise starshaped domain $V\subset T^*M$,
there exist positive constants $\lambda,C$ depending only on $V$ and $\K$ such that for all homogeneous elements $X \in SH_*(V;\eta) \setminus \{0\}$ and $\xi\in SH^*(V;\eta)\setminus \{0\}$ we have
\[ |\lambda \cdot \deg(X) - \Cr(X)| \leq C\]
and
\[ |\lambda \cdot \deg(\xi)- \ccr(\xi)|\le C.\]
\end{theorem}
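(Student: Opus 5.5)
Given Proposition~\ref{prop: mini Gromov}, the statement reduces to showing $\mu_*=\mu^*$, where $\mu_*=\lim_k\frac1k\Cr(y^k)$ and $\mu^*=\lim_k\frac1k\ccr(\theta^k)$. Once this is known, I set $\lambda=\mu_*/|y|=\mu^*/|y|$. I then rerun the estimates of that proof: the lower bound \eqref{eq:Cr-lower-est}, the upper bound via subadditivity, and the analogous bounds \eqref{eq:ccr-upper-est} for cohomology. In each of these the $\eps$ can now be removed, because Fekete's lemma gives exact one-sided bounds: $\Cr(y^k)\ge k\mu_*$ fails in general, but $\Cr(y^k)\le k\cdot\frac{\Cr(y^{k_0})}{k_0}+\const$ does hold. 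Concretely, I use $\mu_*=\inf_k \Cr(y^k)/k$ to get $\Cr(y^k)\ge k\mu_*$? No—the correct facts are $\Cr(y^k)\ge \ccr(\theta^{k'})-\const\ge k\mu^*-\const$ from duality, and $\Cr(y^k)\le \Cr(y)k$. Because $\mu^*=\sup_k\ccr(\theta^k)/k$ gives $\ccr(\theta^k)\le k\mu^*$ exactly, and the two-sided bounds trap everything between $k\mu^*-C$ and $k\mu_*+C$, I will sharpen these using exactness: $\ccr(\theta^k)\le k\mu^*$ and $\Cr(y^k)\ge k\mu_*$ (the latter via duality after proving equality). So the whole task is the inequality $\mu_*\le\mu^*$. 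The reverse inequality $\mu^*\le\mu_*$ already follows from Lemma~\ref{lma: duality}, since $\ccr\le\Cr$ for dual-paired classes of comparable degree.

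The key step, and the main obstacle, is an \emph{upper} bound on $\Cr(y^k)$ in terms of $\ccr(\theta^{k+c})$, and here string point invertibility enters, in the spirit of Remark~\ref{rmk: actual condition}. The plan is as follows.

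First, by Lemma~\ref{lma: duality} there is a homology class $X$ with $\deg X=\deg\theta^{k+1}$, $\langle\theta^{k+1},X\rangle\neq0$ and $\Cr(X)\le\ccr(\theta^{k+1})+\delta$. For $\C P^d$ I instead pair with $\theta^{k+1}+\frac{(d+1)d}2\theta^ka^d$, whose $\ccr$ differs from $\ccr(\theta^{k+1})$ by at most a constant, using superadditivity and the fact that $\ccr(\theta^k a^d)\ge \ccr(\theta^k)-\const$ is larger.

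Second, by Lemma~\ref{lem:coh-Uebele}, $X=\alpha y^{k-1}u+\beta y^ka^du$ with $\beta\ne0$.

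Third, I apply brackets with classes of bounded critical value to turn $X$ into a nonzero multiple of $y^k$. Using Theorem~\ref{thm:loop_BV_CROSS_over_K_char_divides_d+1}, $\Delta X=\beta d\,y^ka^{d-1}$, up to the $\alpha$-term, which $\Delta$ kills since $\Delta(y^{k-1}u)=0$. Then I apply $P_u=\{\cdot,u\}$ repeatedly, $d-1$ more times, exploiting $y$-linearity of the bracket exactly as in the proof of Theorem~\ref{thm:string-point}. This yields $(d-1)!\,d\,\beta\, y^k$ up to sign, which is nonzero because $\operatorname{char}\K=d+1$. For $\C P^d$ I must check that the extra terms $\frac{(d+1)d}2 y^{k+1}a^d$ vanish; they do, since $d+1$ is odd and prime.

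Then BV monotonicity and \eqref{eq: bracket subadd} give
\[
\Cr(y^k)\le \Cr(X)+(d-1)\Cr(u)\le \ccr(\theta^{k+1})+\const.
\]
Dividing by $k$ and letting $k\to\infty$ yields $\mu_*\le\mu^*$. Hence $\mu_*=\mu^*=:\mu$.

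Finally, I redo the proof of Proposition~\ref{prop: mini Gromov} with exact bounds. For $X=Zy^k\ne0$, the subadditivity estimate and the last display give
\[
\Cr(X)\le \Cr(y^k)+\const\le \ccr(\theta^{k+1})+\const\le (k+1)\mu+\const.
\]
Together with the lower bound $\Cr(X)\ge\ccr(\theta^k)-\const$ from \eqref{eq:Cr-lower-est}, it remains to bound $\ccr(\theta^k)$ from below by $k\mu-\const$. Fekete only gives $\ccr(\theta^k)\le k\mu$ directly. For the lower bound I combine the previous display with subadditivity: $\Cr(y^k)\ge k\mu_*$, and hence $\ccr(\theta^{k+1})\ge \Cr(y^k)-\const\ge k\mu-\const$. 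So both $\Cr(y^k)$ and $\ccr(\theta^k)$ equal $k\mu+O(1)$, and the four estimates follow with $\lambda=\mu/|y|$, using $|\deg X-k|y||\le\const$. Positivity of $\lambda$ comes from $\mu^*>0$, which holds because the Reeb period spectrum is bounded below and $\ccr(\theta)>0$.
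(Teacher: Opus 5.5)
Your proposal is correct and follows essentially the paper's route. Your key estimate $\Cr(y^k)\le\ccr(\theta^{k+1})+\const$ is exactly what the paper gets by combining Lemma~\ref{lem:coh-Uebele}, the Duality Lemma~\ref{lma: duality} and the string point invertibility Lemma~\ref{lma: spi} (one half of Corollary~\ref{cor: theta and Theta}); the other half comes from the estimates in the proof of Proposition~\ref{prop: mini Gromov}, and Fekete's lemma then gives $\ccr(\theta^k)\le k\mu\le\Cr(y^k)$ and the final sandwich. Two small corrections: $\Cr(y^k)\ge k\mu_*$ does hold for every $k$, since it is just $\mu_*=\inf_k\Cr(y^k)/k$ (contrary to your first paragraph); and for $\C P^d$ the coefficient $\frac{(d+1)d}{2}$ vanishes in $\K$ because $d+1$ is an odd prime, so you can pair directly with $\theta^{k+1}$ and drop your not-quite-justified comparison of $\ccr\bigl(\theta^{k+1}+\frac{(d+1)d}{2}\theta^ka^d\bigr)$ with $\ccr(\theta^{k+1})$.
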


\begin{theorem} \label{thm: resonance CP1-Liouville}
Let $M=\C P^1$, and $\K$ a coefficient field of characteristic $\neq 2$. Then, for every fiberwise starshaped domain $V\subset T^*M$,
there exist positive constants $\lambda,C$ depending only on $V$ and $\K$ such that for all homogeneous elements $X \in SH_*(V;\eta) \setminus \{0\}$ and $\xi\in SH^*(V;\eta)\setminus \{0\}$ we have
\[ |\lambda \cdot \deg(X) - \Cr(X)| \leq C\]
and
\[ |\lambda \cdot \deg(\xi)- \ccr(\xi)|\le C.\]
\end{theorem}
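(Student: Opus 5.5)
The plan is to treat Theorems~\ref{thm: resonance-Liouville} and~\ref{thm: resonance CP1-Liouville} together: show that the two limits $\mu_*=\lim_k\frac1k\Cr(y^k)$ and $\mu^*=\lim_k\frac1k\ccr(\theta^k)$ from the proof of Proposition~\ref{prop: mini Gromov} coincide, and that the deviations $\Cr(y^k)-k\mu$ and $\ccr(\theta^k)-k\mu$ are bounded. Once this holds, we set $\lambda=\mu/|y|$. The argument of Proposition~\ref{prop: mini Gromov} then goes through verbatim with $\eps=0$, since the estimates \eqref{eq:Cr-lower-est} and \eqref{eq:ccr-upper-est} now hold with uniform additive constants, and this gives both the homological and cohomological statements.

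The upper bound $\Cr(y^k)\le k\mu_*+\const$ does not follow from Fekete alone, so I would bound $\Cr(y^k)$ from above by $\ccr$ of cohomology classes and use superadditivity on the cohomological side. This is the \emph{resonance} step, and it is where string point invertibility enters, in the form described in Remark~\ref{rmk: actual condition}.

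\textbf{Case $\H P^d$, $\Ca P^2$.} By Theorem~\ref{thm:loop_BV_CROSS_over_K_char_divides_d+1}, iterating $P_u=\{\cdot,u\}$ from $y^ka^{d}$ (or from $y^k a^{d-1}$) gives $(-1)^d d!\,y^k$, and $d!$ is invertible since $\mathrm{char}\,\K=d+1$. By \eqref{eq: bracket subadd},
\[
\Cr(y^k)\le \Cr(y^ka^d)+d\,\Cr(u).
\]
Next I bound $\Cr(y^ka^d)$ using Lemma~\ref{lem:coh-Uebele}. The class $X=y^ka^du$ has the degree of $\theta^{k+1}$ and pairs nontrivially with $\theta^{k+1}$, possibly after adding the $\theta^ka^d$ correction for $\C P^d$. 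The lemma forces $\beta\ne0$, so every class pairing with $\theta^{k+1}$ involves $y^ka^du$. Lemma~\ref{lma: duality} gives
\[
\ccr(\theta^{k+1})=\inf\{\Cr(B)\mid \langle B,\theta^{k+1}\rangle\neq 0\},
\]
and I want to combine this with $\Delta(y^ka^du)=d\,y^ka^{d-1}$ and the bracket bound above. Conversely, I need $\Cr(y^ka^du)\le \ccr(\theta^{k+1})+\const$, i.e.\ an upper bound on $\Cr$. So I would instead run the argument dually. Take $B$ realizing $\ccr(\theta^{k+1})$ up to $\eps$. By the lemma, $B=\alpha y^{k-1}u+\beta y^ka^du$ with $\beta\ne0$. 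Since $\Delta$ kills $y^{k-1}u$ and does not increase $\Cr$, we get $\Cr(\beta d\,y^ka^{d-1})\le\Cr(B)$. Applying $P_u$ a further $d-1$ times yields a nonzero multiple of $y^k$. Hence
\[
\Cr(y^k)\le \ccr(\theta^{k+1})+\const.
\]
Combined with $\ccr(\theta^{k})\le\Cr(y^{k-2})+\const$ from \eqref{eq:ccr-upper-est}, this sandwiches the sequences: $\Cr(y^k)\le\ccr(\theta^{k+1})+c\le \Cr(y^{k-1})+c'$. Together with the sub- and superadditivity (Fekete inequalities $\Cr(y^k)\ge k\mu_*$ and $\ccr(\theta^k)\le k\mu^*$), this yields $\mu_*\le\mu^*\le\mu_*$ and bounded deviations.

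\textbf{Case $\C P^d$.} The same chain works after replacing the target $y^k$ by the nonzero class $y^k(1+\frac{(d+1)d}2ya^d)$, using Lemma~\ref{lem:coh-Uebele}(ii). Note $\frac{(d+1)d}{2}=0$ in $\K$ since $d+1$ is an odd prime.

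\textbf{Case $\C P^1$, $\mathrm{char}\neq2$.} By Theorem~\ref{thm:Rabinowitz_BV_S2_char_does_not_divide}, $\Delta(y^kx)=(2k+1)y^k$. The obstruction is that $2k+1$ may vanish in $\K$. I would handle this by working in the pairing with $\theta^{k+1}$ directly, through the duality relation $\Delta^*(\mu\tau^m)=\pm(2m+3)\tau^{m+2}$. Where $2k+1\equiv0$, I would fall back on the degree-shifted neighbouring $k$ and subadditivity, costing only a constant.

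The main obstacle is making the upper bound $\Cr(y^k)\le\ccr(\theta^{k+1})+\const$ rigorous: choosing the class $B$ near the infimum and checking that the bracket iterations stay nonzero uniformly in $k$. Uniformity holds by $y$-linearity of $\Delta$.
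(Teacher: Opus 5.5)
Your proposal is correct and follows the paper's proof for $\C P^1$. The paper likewise identifies $\ccr(\theta^{k+1})=\Cr(y^kx)$ by the duality lemma, using that homology has rank one in each degree. It then applies BV monotonicity with $\Delta(y^kx)=(2k+1)y^k$, and when $2k+1=0$ in $\K$ it drops to the neighbouring index, where $2k-1\neq 0$ because $\mathrm{char}\,\K\neq 2$, paying only a constant via superadditivity of $\ccr$ and subadditivity of $\Cr$. It concludes with the same Fekete sandwich you describe; your discussion of $\H P^d$, $\Ca P^2$ and $\C P^d$ is not needed for this statement.
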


\begin{remark}
We find it interesting that the assumptions for Theorem~\ref{thm: resonance-Liouville} are the same as for string point invertibility in Theorem~\ref{thm:string-point}, although there is no direct implication between the two theorems. The necessity of these assumptions rather comes from the method of proof. 
\end{remark}

Consider the setup of Theorem~\ref{thm: resonance-Liouville}. 
Recall from Theorem~\ref{thm:loop_BV_CROSS_over_K_char_divides_d+1} the generators $y,a,u$ of $SH_*(V;\eta)\cong H_*\Lambda$ and their shifted degrees
$$
|y|=i(d+1)-2=n+i-2,\qquad |a|=-i,\qquad |u|=i-1,
$$
where $i=2,4,8$ for $\C P^d$, $\H P^d$, $\Ca P^2$, respectively, and
$n=\dim M=id$. 
The second new ingredient in the proof of Theorems~\ref{thm: resonance-Liouville} and~\ref{thm: resonance} is the following

\begin{lemma}[Effect of string point invertibility]\label{lma: spi}
In the setup of Theorem~\ref{thm: resonance-Liouville}, 
for all $k\ge 1$ and $\alpha, \beta \in \K$ with $\beta \neq 0$ we have
\[\Cr(\alpha y^{k-1} u + \beta y^k u a^d) \geq \Cr(y^{k}) - \mrm{{const}}.\]
\end{lemma}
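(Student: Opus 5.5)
Write $X=\alpha y^{k-1}u+\beta y^k u a^d$ with $\beta\neq 0$. The idea is to reach (a nonzero multiple of) $y^k$, or a class in its degree with the same leading behaviour, from $X$ by a bounded number of operations: one BV operator and $d-1$ brackets with $u$. Each operation raises the critical level by at most a constant. This gives $\Cr(y^k)\le \Cr(X)+\const$.

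\textbf{First step: apply $\Delta$.} By Theorem~\ref{thm:loop_BV_CROSS_over_K_char_divides_d+1} we have $\Delta(y^{k-1}u)=0$ and $\Delta(y^kua^d)=d\,y^ka^{d-1}=-y^ka^{d-1}$, since $d=-1$ in $\K$. Hence
\[
\Delta X=-\beta\, y^k a^{d-1}.
\]
For $\C P^d$ with $d\ge 2$ the term $\ell=d\ge 2$ lies in the range where the formula has no correction. By BV monotonicity, $\Cr(y^ka^{d-1})=\Cr(\beta^{-1}\Delta X)\le \Cr(X)$.

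\textbf{Second step: iterate $P_u$.} Because $\Delta$ is $y$-linear, the bracket is $y$-bilinear. Combining this with \eqref{eq:iterated-bracket}, I expect
\[
\{y^ka^\ell,u\}=\pm\ell\, y^k a^{\ell-1}
\]
for $\H P^d$ and $\Ca P^2$. I would verify the sign from \eqref{eq:Gerstenhaber-bracket-intro}, but only nonvanishing matters. For $\C P^d$ the same holds except at $\ell=1$, where the bracket gives $\pm(y^k+\frac{(d+1)d}{2}y^{k+1}a^d)$, and $\frac{(d+1)d}{2}=0$ in $\K$ since $p=d+1$ is odd. Applying the bracket with $u$ a total of $d-1$ times to $y^ka^{d-1}$ therefore yields $\pm(d-1)!\,y^k$. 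The scalar $(d-1)!$ is invertible because $\mathrm{char}\,\K=d+1>d-1$. By \eqref{eq: bracket subadd}, each application raises $\Cr$ by at most $\Cr(u)$, a finite constant. Hence
\[
\Cr(y^k)\le \Cr(y^ka^{d-1})+(d-1)\Cr(u)\le \Cr(X)+\const.
\]
Since $\Cr$ is unchanged under multiplication by nonzero scalars, this proves the lemma. The cases $\H P^1$ ($d=1$, $p=2$), where no brackets are needed, and $\Ca P^2$ ($p=3$) fit the same scheme.

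\textbf{Main obstacle.} The delicate point is bookkeeping, not analysis. One must check that the brackets are computed in $SH_*(V;\eta)$ via the Viterbo isomorphism, so that the BV structure of Theorem~\ref{thm:loop_BV_CROSS_over_K_char_divides_d+1} applies with the same generators. One must also check that the $\C P^d$ correction terms, both in $\Delta(y^kua)$ and in the choice of $u$ with $\Delta(u)=0$, vanish in characteristic $d+1$. If they did not, the result would be $y^k+c\,y^{k+1}a^d$ rather than $y^k$. In that case I would use the non-Archimedean property, together with an estimate of $\Cr(y^{k+1}a^d)$ from below, to still compare with $\Cr(y^k)$ up to a constant.
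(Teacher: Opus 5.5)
Your proposal is correct and follows the paper's proof. You apply $\Delta$ once to get $\Cr(X)\ge\Cr(y^ka^{d-1})$ by BV monotonicity, then descend to $y^k$ by $d-1$ brackets with $u$, using \eqref{eq: bracket subadd} and the invertibility of $\ell\le d-1$ in characteristic $d+1$. The only difference is the final $\C P^d$ step: the paper keeps the term $\frac{(d+1)d}{2}y^{k+1}a^d$ and absorbs it via the non-Archimedean property and $\Cr(y^{k+1}a^d)\le\Cr(y^ka^{d-1})+\Cr(y)+\Cr(a)$, which is exactly your fallback. Your observation that this coefficient vanishes in $\K$ is also correct, since $d$ is even when $d+1$ is an odd prime, so that step is unnecessary.
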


\begin{proof}
Recall from Theorem~\ref{thm:loop_BV_CROSS_over_K_char_divides_d+1} the expression of the BV operator $\Delta$. 

Let $X = \alpha y^{k-1} u + \beta y^k u a^d$ with $\alpha,\beta\in\K$ and $\beta \neq 0$. Then $\Delta X = d \beta y^{k} a^{d-1}$ and, using BV monotonicity and the invertibility of $\beta d$ in $\bK$, we find
$$
\Cr(X) \geq \Cr(\Delta X) = \Cr(\beta d y^{k} a^{d-1}) =\Cr(y^{k} a^{d-1}).
$$
Consider first the case of $\H P^d$ and $\Ca P^2$. The result follows by repeated application of $\{\cdot, u\}$: for $1\leq\ell\leq d-1$ we compute
$$
\{y^{k} a^\ell, u\}
= \Delta(y^k a^\ell u)-\Delta(y^k a^\ell)u - y^k a^\ell\Delta(u) 
=  \ell y^{k} a^{\ell-1}.
$$ 
By~\eqref{eq: bracket subadd} and invertibility of $\ell$ in $\K$ this implies
$$
\Cr(y^k a^\ell) \geq \Cr(\ell y^k a^{\ell-1}) - \Cr(u) = \Cr(y^k a^{\ell-1})-\Cr(u).
$$
Applying this $d-1$ times we conclude
$$
\Cr(X) \geq \Cr(y^k a^{d-1}) \geq \Cr(y^{k}) - (d-1) \Cr(u),
$$
which yields the desired inequality with constant $(d-1) \Cr(u)$.

Consider now the case of $\C P^d$ with $d\ge 2$. The previous argument gives 
$$
\Cr(X) \geq \Cr(y^k a^{d-1}) \geq \Cr(y^{k}a) - (d-2) \Cr(u). 
$$
The final step of the computation is now $\{y^ka,u\}=\Delta(y^kau)=y^k+\frac{(d+1)d}{2} y^{k+1}a^d$. This gives 
\begin{align*}
\Cr(y^k) & \le \max(\Cr\{y^ka,u\},\Cr (y^{k+1}a^d))\\
& \le \max(\Cr (y^k a) +\Cr u,\Cr (y^ka^{d-1}) +\Cr y+\Cr a)\\
& \le \max(\Cr (y^ka^{d-1})+(d-2)\Cr u,\Cr (y^ka^{d-1})+\Cr y+\Cr a)\\
& = \Cr(y^ka^{d-1})+\max((d-2)\Cr u,\Cr y+\Cr a).
\end{align*} 
This yields the desired inequality with a constant equal to $\max((d-2)\Cr u ,\Cr y +\Cr a )$.
\end{proof}


Recall the cohomological Uebele class $\theta$ defined in~\S\ref{ss:coh-Uebele}.
Lemmas \ref{lma: duality} and \ref{lma: spi} now imply

\begin{corollary}\label{cor: theta and Theta}
In the setup of Theorem~\ref{thm: resonance-Liouville}, the homological Uebele class $y$ and the cohomological Uebele class $\theta$ satisfy
$$
\Cr(y^{k}) - \mrm{const} \leq \ccr(\theta^{k}) \leq \Cr(y^{k}) + \mrm{const}\qquad\text{for all }k\geq 0.
$$
\end{corollary}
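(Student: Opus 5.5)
We prove the two inequalities separately, each by combining the duality Lemma~\ref{lma: duality} with the structural description in Corollary~\ref{cor:CROSS-structure} and Lemma~\ref{lem:coh-Uebele}. Only large $k$ needs attention: for finitely many small $k$ the inequality holds once $\mrm{const}$ is large enough, and $\deg(\theta^{k})>n+1$ for $k$ large, so Lemma~\ref{lma: duality} applies in the relevant degrees.

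\emph{Upper bound $\ccr(\theta^k)\le \Cr(y^k)+\mrm{const}$.} By the second formula of Lemma~\ref{lma: duality}, $\ccr(\theta^{k})\le \Cr(B)$ for every $B$ of the same degree with $\langle B,\theta^{k}\rangle\neq 0$. Such a $B$ exists because $SH_k(V)^\vee\cong SH^k_{>0}(V)$ and $\theta^k\neq 0$. The task is to find such a $B$ whose critical level is controlled by $\Cr(y^{k})$, up to a constant. By Corollary~\ref{cor:CROSS-structure}(c), any such $B$ has the form $Z_{-2}y^{k-2}+Z_{-1}y^{k-1}+Z_0y^{k}$ with $Z_j\in F$. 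Subadditivity and the non-Archimedean property give $\Cr(B)\le \max_j \Cr(y^{k-j})+\mrm{const}$. Subadditivity also gives $\Cr(y^{k-j})\le \Cr(y^k)+\mrm{const}$, since $y^{k-j}=y^{-j}\cdot y^k$ in Rabinowitz loop homology with $\Cr(y^{-j})$ fixed. Strictly, one should bound $\Cr$ in $\wh{\H}_*\Lambda$ and compare with $SH$ in high degrees, where the two agree. This direction is routine.

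\emph{Lower bound $\ccr(\theta^k)\ge \Cr(y^k)-\mrm{const}$.} This is the substantive direction, and it is where string point invertibility enters. Apply the first formula of Lemma~\ref{lma: duality} in the degree of $\theta^{k+1}$ rather than $\theta^k$, so that the corollary is proved with index shifted by one; the shift is harmless because $|\ccr(\theta^{k+1})-\ccr(\theta^k)|$ and $|\Cr(y^{k+1})-\Cr(y^k)|$ are bounded by sub- and superadditivity. Take $\xi=\theta^{k+1}$ in the $\H P^d$, $\Ca P^2$ case, and $\xi=\theta^{k+1}+\frac{(d+1)d}{2}\theta^k a^d$ in the $\C P^d$ case. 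Note $\ccr(\xi)\le \ccr(\theta^{k+1})+\mrm{const}$ is not automatic. I would instead use $\ccr(\theta^{k}a^d)\ge \ccr(\theta^k)+\mrm{const}$ together with the non-Archimedean property, rearranged to bound $\ccr(\theta^{k+1})$ from below by $\min(\ccr(\xi),\ccr(\theta^ka^d))$. Since $\xi\neq0$, pick $X$ with $\langle \xi,X\rangle\neq 0$ and $\Cr(X)$ nearly minimal, i.e. $\ccr(\xi)=\inf\Cr(X)$. By Lemma~\ref{lem:coh-Uebele}, every such $X$ has the form $\alpha y^{k-1}u+\beta y^ka^du$ with $\beta\neq0$. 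Lemma~\ref{lma: spi} then gives $\Cr(X)\ge \Cr(y^k)-\mrm{const}$. Taking the infimum yields $\ccr(\xi)\ge \Cr(y^k)-\mrm{const}$.

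The main obstacle is the $\C P^d$ correction term: converting the bound on $\ccr(\xi)$ into one on $\ccr(\theta^{k+1})$ requires the term $\theta^ka^d$ to have critical level at least $\Cr(y^k)-\mrm{const}$. I would obtain this by an induction on $k$ using the upper bound $\ccr(\theta^k)\le\ldots$ run in reverse, or better, by applying the first (easy) half of the argument to $\theta^k a^d$ via superadditivity. I would also check that, when $\frac{(d+1)d}{2}=0$ in $\K$ (which is the case $p=d+1$ odd, $d\ge2$), the correction term vanishes entirely, so this obstacle disappears under the standing hypotheses.
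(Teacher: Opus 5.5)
Your proof is correct, and its core is the paper's. For the lower bound you use the second formula of Lemma~\ref{lma: duality} (not the first, as you write) in the degree of $\theta^{k+1}$, together with three further inputs:
\begin{itemize}
\item the normal form $\alpha y^{k-1}u+\beta y^ka^du$ with $\beta\neq0$ from Lemma~\ref{lem:coh-Uebele};
\item Lemma~\ref{lma: spi};
\item the inequality $\Cr(y^{k+1})\le\Cr(y^k)+\Cr(y)$, to absorb the index shift.
\end{itemize}
On the last point: sub- and superadditivity only give one-sided bounds, not a bound on $|\ccr(\theta^{k+1})-\ccr(\theta^k)|$. However, the side they give is the one you need.

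Two peripheral steps differ from the paper.

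\emph{Upper bound.} The paper stays inside $SH_*$: it uses $\ccr(\theta^{k-1})\le\ccr(\theta^{k+1})=\Cr(y^kua^d)\le\Cr(y^{k-1})+\mrm{const}$, by superadditivity, duality and subadditivity. Your general dual class $Z_{-2}y^{k-2}+Z_{-1}y^{k-1}+Z_0y^k$ also works. But it forces you to bound $\Cr(y^{k-j})$ by $\Cr(y^k)$. That in turn requires $y^{-1}$ in $\wh{\H}_*\Lambda$, and the fact that critical levels in $\wh{\H}_*\Lambda$ and in $SH_*$ agree in high degrees. Shifting first via $\ccr(\theta^k)\le\ccr(\theta^{k+2})$ removes this extra input, since subadditivity then suffices.

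\emph{The case $\C P^d$.} You observe that $d+1$ prime with $d\ge2$ forces $d$ even, hence $\frac{(d+1)d}{2}=0$ in $\K$; this is the same parity fact the paper uses in the proof of Theorem~\ref{thm:string-point}. This settles the matter outright: $\theta^{k+1}$ itself lies in the image of $\Delta^*$, and the argument is verbatim the one for $\H P^d$.

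The paper instead pushes the correction term through a non-Archimedean estimate. As printed, it uses $\theta^{k+2}a^d$, which does not have the degree of $\theta^{k+1}$. With the correct term $\theta^ka^d$ from Lemma~\ref{lem:coh-Uebele}(ii), inequality~\eqref{eq:alex} no longer delivers the needed bound. This is exactly the obstacle you identified, and it is harmless only because the coefficient vanishes.

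So your vanishing argument is the right way to dispose of the correction term. You should drop the alternatives you sketch (induction on $k$, or superadditivity applied to $\theta^ka^d$). They only bound $\ccr(\theta^ka^d)$ from below in terms of $\ccr(\theta^{k-1})$, which is circular and lets the constants accumulate.
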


\begin{proof}
We first treat the case of $\H P^d$ or $\Ca P^2$. By Lemma~\ref{lem:coh-Uebele} and the Duality Lemma~\ref{lma: duality} we get
\begin{equation*}
\ccr(\theta^{k+1}) = \inf_{\beta \in \K^{\ast} } \Cr(\beta y^k u a^d) = \Cr(y^k u a^d).
\end{equation*}
By Lemma~\ref{lma: spi} and subadditivity of $\Cr$ this gives the lower bound
$$
   \ccr(\theta^{k+1})\geq \Cr(y^k)-\const \geq \Cr(y^{k+1})-\const.
$$
Using the subadditivity of $\Cr$ and the superadditivity of $\ccr$ we obtain the upper bound
$$
   \ccr(\theta^{k-1})\leq \ccr(\theta^{k+1}) = \Cr(y^k u a^d) \leq \Cr(y^{k-1})+\const.
$$
We now treat the case of $\C P^d$ with $d\ge 2$. By the non-Archimedean property of $\ccr$ we find 
\begin{align*}
\ccr(\theta^{k+1}) & \ge \min \Bigl\{\ccr\Bigl(\theta^{k+1}+\frac{(d+1)d}{2} \theta^{k+2} a^d\Bigr), \ccr \Bigl(\frac{(d+1)d}{2} \theta^{k+2} a^d\Bigr)\Bigr\}\\
& \geq \min \Bigl\{\ccr\Bigl(\theta^{k+1}+\frac{(d+1)d}{2} \theta^{k+2} a^d\Bigr),\ccr (\theta^{k+2} a^d)\Bigr\}. 
\end{align*}
We now use the fact that $\ccr (\theta a^d)>0$ (since the only homology or cohomology classes supported on level 0 are the constants). Then
\begin{equation}\label{eq:alex}
  \ccr (\theta^{k+2}a^d) \ge \ccr (\theta^{k+1}) + \ccr (\theta a^d) > \ccr (\theta^{k+1}),
\end{equation}
and therefore\footnote{It is necessary to bring into play the class $\theta^{k+1}+\frac{(d+1)d}{2} \theta^{k+2} a^d$ because it lies in the image of the cohomological BV operator. It is this property that makes it the subject of Lemma~\ref{lem:coh-Uebele}.} 
$$
\ccr(\theta^{k+1}) \geq \ccr\Bigl(\theta^{k+1}+\frac{(d+1)d}{2} \theta^{k+2} a^d\Bigr).
$$
By Lemma~\ref{lem:coh-Uebele}, the Duality Lemma~\ref{lma: duality}, Lemma~\ref{lma: spi} and subadditivity of $\Cr$ we find  
\begin{align*}
  \ccr\Bigl(\theta^{k+1}+& \frac{(d+1)d}{2} \theta^{k+2} a^d\Bigr)\\
  & = \inf \{ \Cr(X)\, : \, \langle \theta^{k+1}+\frac{(d+1)d}{2} \theta^{k+2} a^d,X \rangle\neq 0\} \\
  & = \inf_{(\alpha, \beta) \in \K \times \K^{\ast} } \Cr(\alpha y^{k-1} u + \beta y^k u a^d)\\
& \ge \Cr(y^k)-\const \\
& \ge \Cr(y^{k+1})-\const,
\end{align*}
so that 
$$
\ccr(\theta^{k+1}) \ge  \Cr(y^{k+1})-\const.
$$
The reverse inequality follows via 
\begin{align*}
  \ccr(\theta^{k-1})
  & \le \ccr(\theta^{k+1}) \\
  &= \min\{\ccr(\theta^{k+1}),\ccr(\theta^{k+2}a^d)\} \cr
  &\leq \ccr\Bigl(\theta^{k+1}+\frac{(d+1)d}{2} \theta^{k+2} a^d\Bigr) \\
  & = \inf \Bigl\{\Cr(X)\, : \, \langle \theta^{k+1}+\frac{(d+1)d}{2} \theta^{k+2} a^d,X\rangle\neq 0\Bigr\} \\
  &= \inf_{(\alpha, \beta) \in \K \times \K^{\ast} } \Cr(\alpha y^{k-1} u + \beta y^k u a^d) \\
  & \le \max\{\Cr(y^{k-1} u),\Cr(y^k u a^d)\} \\
& \le \Cr(y^{k-1})+\const. 
\end{align*}
Here we have used in successive order
superadditivity of $\ccr$, 
inequality~\eqref{eq:alex},
the non-Archimedean property of $\ccr$, 
the Duality Lemma~\ref{lma: duality},
Lemma~\ref{lem:coh-Uebele},
the non-Archimedean property of $\Cr$, 
and subadditivity of $\Cr$.
\end{proof}

\begin{proof}[Proof of Theorem~\ref{thm: resonance-Liouville}] 
We use the notation and results from the proof of Proposition~\ref{prop: mini Gromov}, in particular the definition of the limits $\mu^*$ and $\mu_*$. 
By Corollary~\ref{cor: theta and Theta} the limits are equal, $\mu^* = \mu_* = : \mu$, and therefore, again by Fekete's lemma~\cite[Proposition~6.2.1]{Coornaert-DTSD-SMF}, 
\begin{equation}\label{eq:mu}
\frac{1}{k} \ccr(\theta^k) \leq \mu \leq \frac{1}{k} \Cr(y^k) \quad\text{for all }k.
\end{equation}
Recall from Corollary~\ref{cor:CROSS-structure} that each nonzero $X \in SH_*(V;\eta)$
can be written as $X = Z y^k$ with $Z$ in a fixed finite dimensional space.
Hence, on the one hand, Corollary~\ref{cor: theta and Theta} and~\eqref{eq:mu} imply
\[
\Cr(X) = \Cr(Zy^k) \leq \Cr(y^k) + \const \leq \ccr(\theta^{k}) + \const \leq k \mu + \const.
\]
On the other hand,
the first two inequalities in~\eqref{eq:Cr-lower-est}, 
Corollary~\ref{cor: theta and Theta} and~\eqref{eq:mu} yield
\[
\Cr(X)
\geq \ccr(\theta^{k}) -\const \geq \Cr(y^{k}) - \const \geq k \mu - \const.
\]
In summary, we obtain that
$$
  |\Cr(X)-k\mu|\leq\const\quad\text{for all }X=Zy^k.
$$
For the degrees, $\deg(y^k)-n=|y^k|=k\nu_*$ with $\nu_*:=|y|$ implies
$$
  |\deg(X)-k\nu_*|\leq\const\quad\text{for all }X=Zy^k.
$$
With $\lambda_*:=\mu/\nu_*$ the last two displayed equations combine to
$$
|\Cr(X)-\lambda_*\deg(X)| \leq |\Cr(X)-k\mu| + \lambda_*|k\nu_*-\deg(X)| \leq \const.
$$

Consider now a nonzero $\xi\in SH^*_{>0}(V;\eta)$. According to Corollary~\ref{cor:CROSS-structure} we can write $\xi=\zeta\theta^\ell$ with $\zeta$ in a fixed finite dimensional space. Thus, on the one hand, using superadditivity of $\ccr$ and equation~\eqref{eq:mu} we obtain 
$$
\ccr(\xi)\ge \ccr(\theta^\ell)+\const \ge \ell\mu+\const, 
$$
and on the other hand, using equation~\eqref{eq:ccr-upper-est} and Corollary~\ref{cor: theta and Theta} we find 
$$
\ccr(\xi)\le \ccr(y^{\ell-2})+\const \le \ccr(\theta^{\ell-2})+\const \le \ell\mu+\const.
$$
In summary 
$$
|\ccr(\xi)-\ell\mu|\le \const \quad \mbox{ for all } \xi=\zeta\theta^\ell.
$$
We now express the degrees $\deg(\xi)=\deg(\zeta)+\ell\deg(\theta)+\ell(n-1) = \deg(\zeta)+\ell\nu^*$, with $\nu^*=\deg(\theta)+n-1$. This implies 
$$
|\deg(\xi)-\ell\nu^*|\le\const \quad \mbox{ for all } \xi=\zeta\theta^\ell.   
$$
Setting $\lambda^*=\mu/\nu^*$ we find 
$$
|\ccr(\xi)-\lambda^*\deg(\xi)| \leq |\ccr(\xi)-\ell\mu|+|\ell\mu-\lambda^*\deg(\xi)|\le \const. 
$$

This proves the resonance inequalities with possibly two different slopes $\lambda_*$ (homological) and $\lambda^*$ (cohomological). But $\lambda_*=\lambda^*$ because 
$$
\nu_* = \deg(y)-n = \deg(\theta)+n-1=\nu^*.
$$ 

This finishes the proof of Theorem~\ref{thm: resonance-Liouville}, and thus of Theorem~\ref{thm: resonance}. 
\end{proof}

\begin{proof}[Proof of Theorem~\ref{thm: resonance CP1-Liouville}] As seen in the proof of Theorem~\ref{thm: resonance-Liouville}, we need to establish an estimate of the form
\begin{equation} \label{eq:thetakyk-CP1}
\Cr(y^k)-\const \le \ccr(\theta^k)\le \Cr(y^k)+\const,
\end{equation}
as in Corollary~\ref{cor: theta and Theta}. We use the fact that, with coefficients in a field $\K$ of characateristic $\neq 2$, loop homology $H_*\Lambda$ has rank $1$ in each degree, and the same holds, of course, for loop cohomology. 

We first find 
$$
\ccr(\theta^k)\le \ccr(\theta^{k+1})=\Cr(y^k x)\le \Cr(y^k)+\const,
$$
by superadditivity, subadditivity, and duality $\langle\theta^{k+1},y^k x\rangle\neq 0$. 

For the converse inequality we use the BV operator 
\begin{align*}
\ccr(\theta^k)=\Cr(y^{k-1}x) \ge \Cr(\Delta(y^{k-1}x)) =\Cr((2k-1)y^{k-1}).
\end{align*}
We distinguish two cases: 
\begin{enumerate}
\item Either $2k-1\neq 0$ in $\K$, in which case $\Cr((2k-1)y^{k-1})=\Cr(y^{k-1})\ge \Cr(y^k)-\const$, by subadditivity.
\item Or $2k-1=0$ in $\K$, in which case $2k-3\neq 0$ in $\K$ because the characteristic of $\K$ is $\neq 2$. In this case we find using superadditivity, duality, BV operator, and subadditivity:
\begin{align*}
\ccr(\theta^k)& \ge \ccr(\theta^{k-1})=\Cr(y^{k-2}x)\\
&\ge \Cr(\Delta(y^{k-2}x)) = \Cr((2k-3)y^{k-2})=\Cr(y^{k-2})\\ 
&\ge \Cr(y^k)-\const. 
\end{align*} 
\end{enumerate}
In both cases we obtain $\ccr(\theta^k)\ge \Cr(y^k)-\const$, and this proves~\eqref{eq:thetakyk-CP1}. 

We conclude as in the proof of Theorem~\ref{thm: resonance-Liouville}.
\end{proof}

\begin{remark}
We were not able to prove resonance for $S^2$ with coefficients in a field $\K$ of characteristic $2$. 
\end{remark}

\section{Density theorem}\label{sec:density}

In this section we outline an application of Theorem~\ref{thm: resonance-Liouville} to a ``density theorem" similar to~\cite[Theorem 1.2]{Hingston-Rademacher}. 
To be consistent with~\cite[Theorem 1.2]{Hingston-Rademacher}, we rewrite the inequalities in Theorem~\ref{thm: resonance-Liouville} in terms of the {\em global mean frequency} $\alpha=\lambda^{-1}$ as\footnote{Under the assumptions of Theorem~\ref{thm: resonance-Liouville} we have $\lambda=\frac1{|y|}\lim_k\frac{\ccr(\theta^k)}{k}\ge \frac{\ccr(\theta)}{|y|}>0$.}
\begin{equation}\label{eq:global-mean-freq}
  |\deg(X) - \alpha\Cr(X)| \leq C,\qquad |\deg(\xi)-\alpha\ccr(\xi)|\leq C.
\end{equation}
For a closed Reeb orbit $\gamma$ of a contact form $\beta$ we will consider the following quantities: its length (or action) $\ell(\gamma)=\int_\gamma\beta$; 
its Bott-Long index $i(\gamma)$, defined as the maximal lower continuous extension of the Conley-Zehnder index to arbitrary paths~\cite[Definition~6.1.10 with $\omega=1$]{Long-book} (see also~\cite[Remark~1.8]{Hein-Hryniewicz-Macarini});
its {\em average index}
\[\hat{i}(\gamma) := \lim_{k \to \infty} i(\gamma^k)/k;\]
and its {\em mean frequency}
\[\alpha(\gamma) := \hat{i}(\gamma)/\ell(\gamma).\]
These definitions specialize to closed geodesics of a Finsler metric.\footnote{
The intuition behind the term ``mean frequency'' is that it measures the number of conjugate points per unit length along a closed geodesic.}
\footnote{The index $i(\gamma)$ and mean index $\hat{i}(\gamma)$ are defined from the Bott-Long index for paths of symplectic matrices by linearizing the Reeb flow along $\gamma$ with a given starting point. However, they do not depend on the choice of starting point.
Since we did not find this fact in the literature, we provide the simple proof in Appendix~\ref{sec:basepoint}. 
}


In the next statement we recall that $i=2,4,8$ for $M=\C P^d$, $\H P^d$, and respectively $\Ca P^2$.

\begin{theorem}[Density theorem]\label{thm: dense}
  Consider a simply connected CROSS $M$ and coefficient field $\bK$ to which Theorems~\ref{thm: resonance-Liouville} and~\ref{thm: resonance CP1-Liouville} apply, and let $\alpha = \alpha_{V,\bK}$ be the global mean frequency in~\eqref{eq:global-mean-freq} it provides for a fiberwise starshaped domain $V\subset T^*M$. For $\eps > 0$ denote by $\cS_\eps$ the set of geometrically distinct simple closed Reeb orbits $\gamma$ with mean frequency
$\alpha(\gamma) \in (\alpha-\eps, \alpha+\eps)$. Then
\[\sum_{\gamma\in\cS_\eps} \frac{1}{\hat{i}(\gamma)} \geq \frac{1}{n+i-2}.\]
\end{theorem}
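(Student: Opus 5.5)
We follow the counting strategy of Hingston--Rademacher~\cite[Theorem~1.2]{Hingston-Rademacher}: we compare how many homology classes lie in a long degree window with how many Reeb orbits can carry them. By the resonance inequality~\eqref{eq:global-mean-freq}, any nonzero class $X\in SH_*(V;\eta)$ with $\deg(X)\in[0,N]$ has critical level in $[\alpha^{-1}(\deg X - C),\alpha^{-1}(\deg X+C)]$. By the standard minimax/local homology argument (the lemma characterizing critical levels proved in the appendix), each such $X$ is carried by some closed Reeb orbit $\gamma$, possibly iterated. For this orbit, $\ell(\gamma^m)=\Cr(X)$ and the local symplectic homology of $\gamma^m$ is nonzero in degree $\deg(X)$. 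That local homology is supported in degrees $[i(\gamma^m),\, i(\gamma^m)+\nu(\gamma^m)]$, up to the shift by $n$ coming from the grading conventions, where $\nu\le 2n-2$ is the nullity. Combined with $|i(\gamma^m)-m\hat i(\gamma)|\le n-1$ (Long's estimate), this gives
$$
\bigl|\deg(X)-m\hat i(\gamma)\bigr|\le \const, \qquad \bigl|\deg(X)-\alpha\, m\,\ell(\gamma)\bigr|\le C.
$$
Dividing by $m$ and letting $N\to\infty$ forces $|\alpha(\gamma)-\alpha|\le \const/m$. Hence, apart from classes carried by boundedly many iterates, only orbits in $\cS_\eps$ contribute.

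\emph{Key steps.} First we lower-bound the number of classes. By Theorem~\ref{thm:loop_BV_CROSS_over_K_char_divides_d+1}, or Corollary~\ref{cor:loop_BV_CROSS_over_K_char_does_not_divide_d+1} for $\C P^1$, the ranks of $H_*\Lambda$ are periodic under multiplication by $y$, which has degree $|y|=n+i-2$. Rather than counting all classes, we pick one distinguished class per period, namely $y^k$ (or, following Hingston--Rademacher, $y^k$ together with a class dual to $\theta^k$). These are $\approx N/(n+i-2)$ classes with degrees in $[0,N]$, and they have pairwise distinct degrees. Next we upper-bound the number of carriers. An orbit $\gamma\in\cS_\eps$ produces iterates $\gamma^m$ with $m\hat i(\gamma)\le N+\const$, so there are at most $N/\hat i(\gamma)+O(1)$ of them. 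By the degree localization above, each iterate can carry only boundedly many of the distinguished classes, and the real task is to show it carries at most one, or at least one asymptotically on average. Comparing the two counts and letting $N\to\infty$ yields
$$
\sum_{\gamma\in\cS_\eps}\frac{1}{\hat i(\gamma)}\ \ge\ \frac1{n+i-2}.
$$
If $\cS_\eps$ were infinite the sum would need separate care, but then there is nothing to prove when $\hat i>0$ is bounded above on $\cS_\eps$; otherwise we truncate the sum.

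\emph{Main obstacle.} The delicate point is the one-carrier-per-iterate claim. The degree window of a single $\gamma^m$ has width up to $2n-1$, which may exceed the period $n+i-2$ of the distinguished classes, so a degenerate iterate might a priori carry two of the $y^k$. To handle this I would argue as Hingston--Rademacher do. Distinct classes carried by the same critical orbit at the same level must differ by classes of strictly lower critical level. The classes $y^k$ have distinct critical levels by resonance once $k$ is large, since the levels grow linearly with slope $\mu>0$. Their levels differ by about $\mu$, while a single $\gamma^m$ has the single level $m\ell(\gamma)$. So each iterate carries at most one $y^k$ with $k\ge k_0$, and a bounded number of exceptions does not affect the limit. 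This step also uses the identification $\mu^*=\mu_*$ from Corollary~\ref{cor: theta and Theta}, or the $\C P^1$ analogue~\eqref{eq:thetakyk-CP1}, to ensure that the levels of the distinguished classes are sharp.
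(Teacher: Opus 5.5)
Your overall architecture matches the paper's Lemma~\ref{lem:density-estimate}. You localize the degree of a class carried by an iterate $\gamma^m$ near $m\hat i(\gamma)$, use resonance to force $\alpha(\gamma)$ into $(\alpha-\eps,\alpha+\eps)$, and compare a lower count of things to be carried with the upper count of roughly $N/\hat i(\gamma)$ iterates per orbit.

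The gap is in the step you yourself flag as the main obstacle. You claim the levels $\Cr(y^k)$ are pairwise distinct ``by resonance once $k$ is large''. But resonance only gives $|\Cr(y^k)-k\mu|\le C$, with a constant $C$ that does not shrink as $k$ grows. This is perfectly compatible with $\Cr(y^k)=\Cr(y^{k+1})$ whenever $\mu\le 2C$. Homological critical levels are only subadditive, which bounds $\Cr(y^{k+1})$ from above, not from below. Neither $\mu^*=\mu_*$ nor taking $k$ large repairs this. Passing to every $K$-th power with $K\mu>2C$ would give distinct levels, but only the weaker bound $1/(K(n+i-2))$. Without distinct levels, your count of classes does not become a count of iterates, since, as you note, the degree window of one degenerate iterate can contain two of the $y^k$.

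The missing idea is to count distinct critical values rather than classes, and to produce them on the cohomological side. A single iterate $\delta$ has a single length $\ell(\delta)$, so it can carry at most one critical value. This removes the ``one carrier per iterate'' issue entirely and gives $\sum_{\gamma\in\cS_\eps}1/\hat i(\gamma)\ge\limsup_m N(m)/m$, where $N(m)$ is the number of distinct critical values in degrees $\le m$.

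For the lower bound on $N(m)$, the paper uses the cohomological Uebele class $\theta$. Superadditivity together with $\ccr(\theta)>0$ gives $\ccr(\theta^{k+1})\ge\ccr(\theta^k)+\ccr(\theta)>\ccr(\theta^k)$, so the levels $\ccr(\theta^k)$ are strictly increasing. Since $\deg(\theta^k)=k(n+i-2)-(n-1)$, this yields $\liminf_m N(m)/m\ge 1/(n+i-2)$. Strict monotonicity here needs no resonance at all; resonance enters only to place the carrying orbits in $\cS_\eps$.

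If you want to stay on the homology side, Lemma~\ref{lem:crx=CrX} shows these values are also critical values of homology classes in the same degrees, so $N_*(m)=N^*(m)$. The homology classes realizing them, however, need not be the $y^k$.
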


The proof is based on the following lemma, which is implicit in~\cite{Hingston-Rademacher} for the Finsler case. For the reader's convenience we recall its proof in our more general setting.

Let $N_*(m)$ be the number of distinct critical values of non-zero homology classes in 
$SH_*(V;\eta)\cong H_*(\Lambda; \K)$ of degree at most $m$, and $N^*(m)$ the number of distinct critical values of non-zero cohomology classes in $SH^*(V;\eta)\cong H^*(\Lambda; \K)$ of degree at most $m$. By Lemma~\ref{lem:crx=CrX} we see that, for any Liouville domain and any field of coefficients $\K$, we have 
\begin{equation} \label{eq:N*m}
N_*(m)=N^*(m).
\end{equation}
We denote this number $N(m)$. Of course, it depends on $V$ and $\K$.  

\begin{lemma}\label{lem:density-estimate}
In the setting above, for each $\eps>0$ we have
\begin{equation}\label{ineq: frequency}
  \sum_{\gamma\in\cS_\eps} \frac{1}{\hat{i}(\gamma)} \geq \limsup_{m\to \infty} \frac{N(m)}{m}.
\end{equation}
\end{lemma}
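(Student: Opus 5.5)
We follow the Hingston--Rademacher counting argument. We count critical values of degree $\le m$ in two ways: from below by $N(m)$, and from above by the closed Reeb orbits that can carry them.

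First we reduce to a nondegenerate-type local statement. Every critical value $c=\Cr(X)$ of a nonzero class $X$ of degree $k$ is the action of some closed Reeb orbit $\gamma^j$, namely an iterate of a simple orbit $\gamma$, whose local Floer homology is nonzero in degree $k$. This is a standard spectrality and local-homology statement. By the support properties of local Floer homology for possibly degenerate orbits (the Bott--Long index bounds, cf.~Long's book and Hein--Hryniewicz--Macarini), the degree satisfies $|k - i(\gamma^j)|\le 2n$, hence also $|k-\hat{i}(\gamma^j)|\le \const$ with $\hat{i}(\gamma^j)=j\,\hat{i}(\gamma)$. Here we use $|i(\gamma^j)-j\hat i(\gamma)|\le n$.

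Next we use resonance to localize in frequency. By~\eqref{eq:global-mean-freq}, $|k-\alpha c|\le C$ with $c=j\ell(\gamma)$. Combining this with the previous step gives $|j\hat{i}(\gamma)-\alpha j\ell(\gamma)|\le C'$, that is, $|\alpha(\gamma)-\alpha|\le C'/(j\ell(\gamma))$. We now split the orbits. For a simple orbit $\gamma\notin\cS_\eps$, only iterates with $j\ell(\gamma)\le C'/\eps$ can contribute, so they contribute only boundedly many critical values in total. This uses finiteness of orbits below a fixed action, which we may assume after a generic perturbation argument or otherwise treat as in~\cite{Hingston-Rademacher}. If instead some orbit has $\hat i(\gamma)=0$ and infinitely many iterates contribute, then their degrees stay bounded while their actions go to infinity, contradicting the resonance estimate. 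So this case also yields only finitely many values. For $\gamma\in\cS_\eps$ with $\hat i(\gamma)>0$, the iterates $\gamma^j$ contributing to degrees $\le m$ satisfy $j\hat{i}(\gamma)\le m+\const$, so there are at most $m/\hat{i}(\gamma)+\const$ of them. Each iterate has a single action and therefore gives a single critical value. If $\cS_\eps$ is infinite, the sum is infinite: every orbit in it has positive mean index, because its mean frequency lies near $\alpha>0$. In that case there is nothing to prove. Otherwise
$$
N(m)\le \sum_{\gamma\in\cS_\eps}\Bigl(\frac{m}{\hat i(\gamma)}+\const\Bigr)+\const,
$$
and dividing by $m$ and taking $\limsup$ gives~\eqref{ineq: frequency}.

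The main obstacle is the first step: attaching a distinct critical value to an orbit iterate with the correct degree window in the degenerate Reeb setting. We also need equality~\eqref{eq:N*m}, so that homology and cohomology counts agree; this was noted from Lemma~\ref{lem:crx=CrX}. For the degree window we would use that filtered symplectic homology jumps at $c$ only if the local Floer homology of some orbit of action $c$ is nonzero in that degree. Then we apply the index bounds for local homology to pin down the degree.
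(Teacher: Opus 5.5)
Your overall strategy is the paper's. You attach each critical value to an iterate $\gamma^j$ carrying nonzero local Floer homology in that degree. You use the index window $|\deg - j\hat i(\gamma)|\le\const$ together with resonance to force the carrying orbit into $\cS_\eps$ once the degree is large. Then you count iterates orbit by orbit. The paper localizes via a degree threshold $m_0(\eps)$ rather than by splitting orbits into $\cS_\eps$ and its complement, but that difference is cosmetic.

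There is, however, a false step. The claim ``if $\cS_\eps$ is infinite, the sum is infinite'' does not follow from $\hat i(\gamma)>0$ for all $\gamma\in\cS_\eps$. Orbits in $\cS_\eps$ satisfy $\hat i(\gamma)\approx\alpha\,\ell(\gamma)$, and infinitely many distinct simple orbits will typically have lengths tending to infinity. So $\sum_{\gamma\in\cS_\eps}1/\hat i(\gamma)$ may well converge. In that case your bound $N(m)\le\sum_{\gamma\in\cS_\eps}(m/\hat i(\gamma)+\const)+\const$ is an infinite sum of constants and gives nothing.

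The repair is to count without an additive term. A contributing iterate has $j\ge 1$ and $j\hat i(\gamma)\le m+c_0$, and there are at most $(m+c_0)/\hat i(\gamma)$ positive integers $j\le (m+c_0)/\hat i(\gamma)$. Hence $N(m)\le K+(m+c_0)\sum_{\gamma\in\cS_\eps}1/\hat i(\gamma)$ for an arbitrary, possibly infinite, $\cS_\eps$, and dividing by $m$ and taking the $\limsup$ gives the claim. This per-orbit count, proportional to $1/\hat i(\gamma)$, is the form the paper uses when summing over $\cS_\eps$.

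Your treatment of orbits outside $\cS_\eps$ also needs fixing. A generic perturbation is not allowed: the statement concerns the fixed domain $V$, and perturbing changes both the Reeb orbits and the critical values. Finiteness of orbits below a fixed action is not available in general either. Neither is needed. If $c=\Cr(X)\le C'/\eps$, resonance gives $\deg(X)\le \alpha C'/\eps + C$. In this bounded range of degrees $H_*\Lambda$ is finite dimensional, and a finite-dimensional filtered space has at most as many critical values as its dimension. So these orbits contribute only a bounded number $K$ of critical values. With these two corrections your argument goes through.
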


This result does not use anything on the multiplicative structure. It holds automatically whenever resonance is established. We will prove the result in a cohomological setting, but the proof could equivalently be phrased homologically. 

\begin{proof}
Suppose that an iteration $\delta = \gamma^k$ of a simple closed Reeb orbit $\gamma$ carries the critical level 
$\ccr(\xi)$ and $m_0 \leq s = \deg(\xi) \leq m,$ where $m_0>n$ is a fixed large constant depending on $\eps,$ to be chosen later. Then \[\ell(\delta) = \ccr(\xi),\]
 \begin{equation}\label{eq: mean index} \hat{i}(\delta) = s + b(\xi)\end{equation} for $|b(\xi)| \leq n$.
The inequality $|b(\xi)| \leq n$ holds because $s\in [i(\delta),i(\delta)+\nu(\delta)]\subset [\hat{i}(\delta)-n,\hat{i}(\delta)+n]$, with $\nu(\delta)$ the nullity of $\delta$.
This is a classical fact, see for example~\cite[(LF5), p.~335]{Ginzburg_Gurel_localFH}, ~\cite[Remark~1.8]{Hein-Hryniewicz-Macarini}, or~\cite[Equation (7)]{AS-torsion}, for which we briefly recall the proof. Let $I=[i(\delta),i(\delta)+\nu(\delta)]$. To prove that $s\in I$ we reason as follows. By~\cite[Theorem~5.4.1 and Corollary~6.1.9]{Long-book}, any path of symplectic matrices that starts at the identity and is sufficiently close to the linearization of the Hamiltonian flow along $\delta$ starting at any point on $\delta$ must have index contained in $I$. As a consequence, given any $C^2$-small Hamiltonian perturbation of the flow such that the $S^1$-family of closed orbits corresponding to $\delta$ splits into nondegenerate orbits, the indices of the latter must be contained in $I$. Since the class $\xi$ induces a nonzero class of the same degree in local Floer cohomology, there must exist at least one such perturbed orbit whose index is equal to $s=\deg(\xi)$, and therefore $s\in I$. The inclusion $I\subset [\hat{i}(\delta)-n,\hat{i}(\delta)+n]$ is a reformulation of the main theorem in~\cite{Liu-Long1998} (see also~\cite{Liu-Long2000} and~\cite[Theorem~10.1.2]{Long-book}). 

By Theorem~\ref{thm: resonance-Liouville}, \[\ccr(\xi) = \lambda s + c(\xi)\] for $|c(\xi)| < \const.$ Hence 
 \[ \alpha(\gamma) = \alpha(\delta) = \frac{s + b(\xi)}{\lambda s + c(\xi)} \in \left(\frac{1}{\lambda + \eps}, \frac{1}{\lambda-\eps}\right)\] for $m_0 = m_0(\eps)$ sufficiently large.\footnote{Here we implicitly use the fact that $SH^*(V;\eta)$ is supported in infinitely many positive degrees.} 
Note also that $m_0>n$ implies that $\hat{i}(\gamma) = \hat{i}(\delta)/k > 0.$ Now, given a 
simple closed Reeb orbit $\gamma$ with $\hat{i}(\gamma) > 0,$ there are at most \[\frac{m-m_0+2n}{\hat{i}(\gamma)}\] iterations $\delta = \gamma^k$ of $\gamma$ satisfying \eqref{eq: mean index} with $m_0 \leq s \leq m$, because $\hat{i}(\delta)=k\hat{i}(\gamma)$. Hence, for a given $\eps < \lambda,$ we get \[\frac{N(m)}{m} \leq \left(1-\frac{m_0-2n}{m}\right) \sum \frac{1}{\hat{i}(\gamma)}\] the sum running over the set of geometrically distinct 
simple closed Reeb orbits 
with mean frequency $\alpha(\gamma) \in (\frac{1}{\lambda + \eps}, \frac{1}{\lambda-\eps}).$ This yields \[ \sum \frac{1}{\hat{i}(\gamma)} \geq \limsup_{m\to \infty} \frac{N(m)}{m}\] as desired.
\end{proof}
  
\begin{proof}[Proof of Theorem~\ref{thm: dense}]
Let $\theta \in SH^{i-1}_{>0}(V;\eta)\simeq H^{i-1}(\Lambda,\Lambda_0; \K)$ be the cohomological Uebele class as in Section \ref{sec: res}. By super-additivity of cohomological critical levels and the fact that $\ccr(\theta)>0,$ we obtain that $c_k = \ccr(\theta^k)$ is a strictly increasing sequence. Since $\deg(\theta^k) = k(n+i-2) - (n-1)\to\infty$ as $k\to\infty$, and $SH^*_{>0}(V;\eta)=SH^*(V;\eta)$ in degrees $>n+1$, we obtain \begin{equation}\label{ineq: lower bound} \liminf_{m\to \infty} \frac{N(m)}{m} \geq \frac{1}{n+i-2}.\end{equation} Combining the estimates \eqref{ineq: frequency} and \eqref{ineq: lower bound} now finishes the proof.
 \end{proof}

As explained in \cite[Section 7]{Hingston-Rademacher}, Theorem \ref{thm: dense} has the following consequence.
Here the {\em reversibility} of a Finsler metric $F:TM\to\R_{\geq 0}$ is $\rho := \max\{F(x,-y) \mid F(x,y)=1\}\geq 1$. 

\begin{corollary}\label{cor:two-or-infinity}
Consider a simply connected CROSS $M$ and coefficient field $\bK$ to which Theorems~\ref{thm: resonance} and~\ref{thm: resonance CP1} apply, and let $\alpha = \alpha_{M, F,\bK}$ be the global mean frequency in~\eqref{eq:global-mean-freq} it provides for  a Finsler metric $F$ on $M.$ 
Suppose that $F$ has reversibility {$\rho \geq 1$} and flag curvature $K$ satisfying the inequality \[ \delta<K \leq 1 \] for
\[ \delta = \left(\frac{n-1}{n+i-2} \frac{\rho+1}{\rho}\right)^{-2}.\]
Then either $F$ possesses at least two geometrically distinct prime closed geodesics $\gamma_1, \gamma_2$ with $\alpha(\gamma_1) = \alpha(\gamma_2) = \alpha$, or $F$ possesses infinitely many geometrically distinct prime closed geodesics. 
\end{corollary}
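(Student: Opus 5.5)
This follows the Hingston--Rademacher argument from \cite[Section~7]{Hingston-Rademacher}, with Theorem~\ref{thm: dense} in place of their density theorem. First, apply Theorem~\ref{thm: dense} to the unit disk cotangent bundle $V$ of $F$. For every $\eps>0$ it gives
\[\sum_{\gamma\in\cS_\eps} 1/\hat{i}(\gamma)\ge 1/(n+i-2).\]
Assume $F$ has only finitely many geometrically distinct prime closed geodesics; otherwise there is nothing to prove. Then the sets $\cS_\eps$ are finite and decrease as $\eps\to 0$, so they stabilize to the set $\cS_0$ of prime closed geodesics with $\alpha(\gamma)=\alpha$ exactly. Passing to the limit gives
\[\sum_{\gamma\in\cS_0} 1/\hat{i}(\gamma)\ge 1/(n+i-2).\]
In particular $\cS_0\neq\emptyset$ and every $\gamma\in\cS_0$ has $\hat{i}(\gamma)>0$. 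It therefore suffices to show that every prime closed geodesic satisfies the strict inequality $\hat{i}(\gamma)>n+i-2$. Then a single element of $\cS_0$ would contribute strictly less than $1/(n+i-2)$, so $\cS_0$ must contain at least two geodesics, which is the claim.

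The curvature pinching enters in the mean index estimate, for which I would use the Finsler Morse--Schoenberg/Rauch comparison as in Rademacher's work. Under $\delta<K\le 1$ with reversibility $\rho$, every prime closed geodesic has length $\ell(\gamma)\ge \pi(1+\rho^{-1})$. This is Rademacher's injectivity radius/length estimate for reversibility-$\rho$ metrics with $K\le1$. The lower bound $K>\delta$ gives conjugate points at spacing less than $\pi/\sqrt{\delta}$ in each of the $n-1$ normal directions. Hence
\[\hat{i}(\gamma)\ge (n-1)\,\ell(\gamma)\sqrt{\delta}/\pi \;>\; (n-1)(1+\rho^{-1})\sqrt{\delta}.\]
The particular $\delta$ in the statement makes the right-hand side equal to $n+i-2$. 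Strictness comes from $K>\delta$ being strict, by compactness of the unit sphere bundle. This gives $\hat{i}(\gamma)>n+i-2$.

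The main obstacle is the comparison estimate: the mean index bound $\hat{i}(\gamma)\ge (n-1)\ell(\gamma)\sqrt{\delta}/\pi$ for Finsler metrics, and the length bound $\ell\ge\pi(1+1/\rho)$. I plan to cite these from \cite[Section~7]{Hingston-Rademacher} and Rademacher's earlier work, since they are purely Finsler-geometric and independent of $M$ being a sphere. The only input specific to our setting is the constant $1/(n+i-2)$ from Theorem~\ref{thm: dense}, which replaces $1/(2n-2)$ in the sphere case.
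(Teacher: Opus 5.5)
Your proposal is correct and is essentially the paper's proof. The paper combines Theorem~\ref{thm: dense} with the mean index estimate $\hat{i}(\gamma)>\sqrt{\delta}\,\frac{\rho+1}{\rho}(n-1)=n+i-2$, quoted directly as \cite[Lemma 2]{Rademacher07}, and then excludes a single geodesic in the stabilized sum exactly as you do; your splitting of that lemma into the length bound $\ell(\gamma)\ge\pi(1+\rho^{-1})$ plus a Morse--Schoenberg comparison is fine, except that the length bound does not follow from $K\le 1$ alone (it also uses simple connectivity of $M$ and positive, resp.\ pinched, curvature), though these hold under the stated hypotheses.
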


\begin{proof}[Proof of Corollary \ref{cor:two-or-infinity}]
By \cite[Lemma 2]{Rademacher07}, the hypothesis implies that the average index of every closed geodesic $\gamma$ of $F$ satisfies \begin{equation}\label{mean-index-bound} \hat{i}(\gamma) > \sqrt{\delta} \frac{\rho+1}{\rho} (n-1) = n+i-2.\end{equation}
If $F$ possesses only finitely many geometrically distinct prime closed geodesics, then the sum in Theorem \ref{thm: dense} is finite for every $\eps>0.$ Therefore, for $\eps>0$ sufficiently small it reduces to the sum over the collection of prime closed geodesics $\gamma$ with $\alpha(\gamma) = \alpha$, and the value $S$ of this sum satisfies \[S \geq \frac{1}{n+i-2}.\] This sum cannot run over a single element $\gamma$, as in this case \eqref{mean-index-bound} would imply that \[S < \frac{1}{n+i-2}.\] 
\end{proof}

\begin{remark}
For a given $i$, recalling that $n=di$, the minimal value 
of $\delta$ is attained when $\frac{\rho+1}{\rho}$ is maximal, which occurs for $\rho=1$. Thus, the minimal value of $\delta$ is $\frac{n+i-2}{2n-2}$, which is $<1$ iff $i<n=di$, i.e., $d\ge 2$. 
This leaves out the case of $\C P^1\simeq S^2$ and $\mathbb{H}P^1 \simeq S^4,$ to which~\cite[Theorem 1.2]{Hingston-Rademacher} does not apply either.

Hingston and Rademacher~\cite{Hingston-Rademacher} proved a density theorem for spheres $S^n$ of dimension $n>2$ with lower bound $1/(2n-2)$ in even dimensions and $1/(n-1)$ in odd dimensions.\footnote{Note that $2n-2=n+i-2$ with $i=n$.} In even dimensions the bound $1/(2n-2)$ gives $\delta =((\rho+1)/2\rho)^{-2}\ge 1$ and is therefore not good enough to infer any conclusion in the spirit of Corollary~\ref{cor:two-or-infinity}. In odd dimensions the bound $1/(n-1)$ provides $\delta=((\rho+1)/\rho)^{-2}<1$, hence any reversibility allows for a suitable range of pinching.  
\end{remark}

\section{Resonance and density for finite quotients of spheres} \label{sec:finite_quotients_of_spheres}

In this section we discuss how the resonance and density theorems pass to finite quotients. As a consequence, we obtain resonance and density theorems for finite quotients of spheres, in particular for real projective spaces.
Throughout this section, all spaces and manifolds are assumed to be path connected and we fix a field $\K$. We refer to local systems of $\K$-vector spaces of dimension $k$ as \emph{$\K$-local systems of rank $k$}. 

{\bf Topological coverings. }
We begin with some topological preparations. Consider a finite cover $p:X\to\bar X$ of (path connected) topological spaces. Recall that the cover is {\em normal} if $p_*\pi_1(X)\subset\pi_1(\bar X)$ is a normal subgroup. This is equivalent to $\bar X=X/G$ for a finite group $G$ acting on $X$; the group of deck transformations $G$ is then isomorphic to $\pi_1(\bar X)/p_*\pi_1(X)$. Every cover with $\pi_1(X)=\{e\}$ is normal.

Given a $k$-to-$1$ cover $p:X\to\bar X$, let 
$\underline{\ell}$ be a $\K$-local system of rank $1$ on $X$ and define the pushforward local system $p_*\underline{\ell}$ to be the $\K$-local system of rank $k$ on $\bar X$ that, viewed as a flat bundle of $\K$-vector spaces, has stalk at $\bar x\in\bar X$ given by 
$H_0(p^{-1}(\bar x);\underline{\ell})=\oplus_{x\in p^{-1}(\bar x)} \underline{\ell}_x$.
The map $p$ induces an isomorphism
\begin{equation}\label{eq:local-coeff-iso-top}
  p_*:H_*(X;\underline{\ell})\stackrel{\simeq}{\longrightarrow} H_*(\bar X;p_*\underline{\ell}).
\end{equation}
This follows from the Leray-Serre spectral sequence for the cover $X\to \bar X$, which degenerates at the second page for degree reasons. 

Consider a normal $k$-to-$1$ cover $p:X\to\bar X=X/G$ and denote by $\Lambda p:\Lambda X\to\Lambda\bar X$ the induced map on loop spaces. Let us call a loop $\bar\gamma\in\Lambda\bar X$ {\em liftable} if some lift of $\bar\gamma$ is closed. Then every lift of $\bar\gamma$ is closed and there are $k$ lifts. Denoting $\Lambda_0\bar X\subset\Lambda\bar X$ the subspace of liftable loops, we thus have a normal $k$-to-$1$ cover
$$
  \Lambda p:\Lambda X\to\Lambda_0\bar X.
$$
Assume now for simplicity that $X$ is simply connected, so that liftability is equivalent to contractibility.  
To discuss non-liftable loops, denote by $C$ the set of conjugacy classes of $G$, which is in canonical bijection with the set of free homotopy classes of loops in $\bar X$. For $c\in C$, let $\Lambda_c\bar X\subset\Lambda\bar X$ be the component consisting of loops in the free homotopy class $c$, so that $\Lambda\bar X=\bigsqcup_{c\in C}\Lambda_c\bar X$. For $g\in G$ define
$$
  \cP_gX := \{\gamma:[0,1]\to X\mid \gamma(1)=g\gamma(0)\}.
$$
Then $p$ induces a map $\cP_gX\to\Lambda_c\bar X$ with $c$ the conjugacy class of $g$. If $\gamma\in\cP_gX$ and $h\in G$, then $h\gamma\in\cP_{hgh^{-1}}X$. Since $G$ acts transitively on $c$ by conjugation with stabilizer at $g$ the centralizer $Z(g)=\{h\in G\mid hgh^{-1}=g\}$, we see that
\begin{equation}\label{eq:G-covering}
  \bigsqcup_{g\in c}\cP_gX \to \Lambda_c\bar X
\end{equation}
is a $k$-to-$1$ map whose restriction to each component $\cP_gX \to \Lambda_c\bar X$ is a normal finite cover with deck transformation group $Z(g)$. 
  
{\bf Coverings of Liouville domains. }
Consider now a Liouville domain $(V,\lambda)$ with the free action of a finite group $G$ preserving the Liouville form $\lambda$.  Then $\lambda$ descends to a Liouville form $\bar\lambda$ on the quotient $\bar V=V/G$. Let $p:V\to\bar V$ be the projection and consider the normal finite cover $\Lambda p:\Lambda V\to \Lambda_0\bar V$. Let $\underline{\ell}$ be a $\K$-local system of rank 1 on $\Lambda V$, let 
\[
\cL=(\Lambda p)_*\underline{\ell},
\] 
and let $SH_*^0(\bar V;\cL)$ be the part of symplectic homology of $\bar V$ with local coefficients $\cL$ corresponding to liftable loops. We also assume that the canonical bundle of $V$ is trivial and is endowed with a trivialization up to homotopy. This determines a $\Z$-grading on $SH_*(V;\underline{\ell})$, and also on $SH_*^0(\bar V;\cL)$: while the canonical bundle of $\bar V$ may be nontrivial, the chain complex underlying $SH_*^0(\bar V;\cL)$ is in canonical bijection with the chain complex underlying $SH_*(V;\underline{\ell})$ for a suitable choice of Floer data, as explained below. 

\begin{theorem}[Resonance and density for quotients] \label{thm: resonance-Liouville-quotient}

(a) Assume that homological resonance holds on $V$ with $\underline{\ell}$-coefficients, i.e., there exist positive constants $\alpha,C$ such that for all homogeneous elements $X \in SH_*(V;\underline{\ell}) \setminus \{0\}$ we have
\[ |\deg(X) - \alpha\Cr(X)| \leq C.\]
Then homological resonance holds on $\bar V$ with $\cL$-coefficients and the same constants: for all homogeneous elements $\bar X \in SH_*^0(\bar V;\cL) \setminus \{0\}$ we have
\[ |\deg(\bar X) - \alpha\Cr(\bar X)| \leq C.\]
A similar statement holds in cohomology.

(b) Assume that for some constant $c>0$ we have
\begin{equation}\label{eq:lower-bound-Nm}
  \limsup_{m\to\infty} \frac{N(m)}{m}\ge c,
\end{equation}
where $N(m)$ is the number of distinct critical values $\Cr(X)$ of homology classes $X\in SH_*(V;\underline{\ell})$ of degree $\le m$. 
Let $\bar N(m)$ be the number of distinct critical values $\Cr(\bar X)$ of homology classes $\bar X\in SH^0_*(\bar V;\cL)$ of degree $\le m$. Then
$$
  \limsup_{m\to\infty} \frac{\bar N(m)}{m}\ge c
$$
with the same constant $c$.\footnote{In view of~\eqref{eq:N*m}, $N(m)$ is also the number of distinct critical values $\ccr(\xi)$ of cohomology classes $\xi\in SH^*(V;\underline{\ell})$ of degree $\le m$. Similarly for $\bar N(m)$.}

(c) If the assumptions of both (a) and (b) hold, then density holds for $V$ and $\bar V$ with the same constants: for each $\eps>0$ we have
$$
  \sum_{\gamma\in \cS_\eps} \frac{1}{\hat{i}(\gamma)} \ge c,
$$
where $\cS_\eps$ is the set of simple closed Reeb orbits $\gamma$ on $\p V$ with mean frequency $\alpha(\gamma)\in (\alpha-\eps,\alpha+\eps)$, and 
$$
  \sum_{\bar\gamma\in \cS_\eps^0} \frac{1}{\hat{i}(\bar\gamma)} \ge c,
$$
where $\cS_\eps^0$ is the set of liftable simple closed Reeb orbits $\bar\gamma$ on $\p\bar V$ with mean frequency $\alpha(\bar\gamma)\in (\alpha-\eps,\alpha+\eps)$.
\end{theorem}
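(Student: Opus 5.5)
The plan is to establish a filtered chain-level isomorphism
$$
  SH_*(V;\underline{\ell}) \cong SH^0_*(\bar V;\cL),
$$
which preserves degrees and action filtrations, hence critical levels. Parts (a), (b), (c) then follow formally.

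\emph{Step 1: Floer data.} Choose $G$-invariant Floer data on $V$: a $G$-invariant admissible Hamiltonian and $G$-invariant almost complex structures. These descend to admissible data on $\bar V$, since $G$ preserves $\lambda$, so descended Hamiltonians are admissible and actions agree. Achieving transversality equivariantly is the expected main obstacle, because isotropy may be present on loops even though the action on $V$ is free. I would try to circumvent it in one of two ways. The first is to perturb on $\bar V$ and pull back; the cover is unbranched, so pulled-back data are regular exactly when the downstairs data are. The second is to use Morse--Bott/autonomous setups with time-dependent perturbations chosen downstairs.

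\emph{Step 2: Orbits and trajectories.} With the data of Step 1, a $1$-periodic orbit $\bar x$ on $\bar V$ that is liftable lifts to exactly $|G|$ periodic orbits on $V$. Floer cylinders downstairs between liftable orbits lift uniquely once a lift of the starting orbit is fixed. Hence the Floer complex of $\bar V$ in the liftable sector, with coefficients in $\cL=(\Lambda p)_*\underline{\ell}$, has generators
$$
  \cL_{\bar x}=\bigoplus_{x\mapsto\bar x}\underline{\ell}_x ,
$$
matching exactly the generators $\bigoplus_x \underline{\ell}_x$ of the complex for $V$. The differentials agree under this identification, in the same way as in the topological isomorphism~\eqref{eq:local-coeff-iso-top}. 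Actions are equal because the Liouville forms correspond. Gradings agree by pulling back the trivialization of the canonical bundle, which is how the paper defines the grading on $SH^0_*(\bar V;\cL)$. Orientation data also pull back.

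\emph{Step 3: Continuation maps and critical levels.} Continuation maps respect the identification by the same lifting argument. Therefore the isomorphism commutes with the maps $SH^{\le a}_*\to SH_*$, so $\Cr(\bar X)=\Cr(X)$ and $\deg(\bar X)=\deg(X)$ for corresponding classes. The same holds dually for $\ccr$ on cohomology, where $\cL^\vee=(\Lambda p)_*\underline{\ell}^\vee$.

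\emph{Conclusions.} Part (a) is immediate from Step 3, with the same constants $\alpha$ and $C$. Part (b) holds because the sets of critical values in each degree coincide, so $\bar N(m)=N(m)$. For part (c), apply Lemma~\ref{lem:density-estimate} to $V$ and verbatim to the liftable sector of $\bar V$. That lemma uses only resonance, the count $N(m)$, and the local index estimates, and the latter are local near orbits and hold equally downstairs. Each liftable closed Reeb orbit $\bar\delta$ on $\p\bar V$ carrying a critical value is an iterate of a liftable-or-not simple orbit. I would restrict the counting to iterates lying in the liftable sector. The bound $(m-m_0+2n)/\hat{i}(\bar\gamma)$ on the number of iterates persists, which gives the sum over $\cS^0_\eps$. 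Here $\cS^0_\eps$ must be interpreted so as to include simple orbits some iterate of which is liftable. Checking that the paper's formulation of $\cS^0_\eps$ accommodates this is a secondary point that I would verify.
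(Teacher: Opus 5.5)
Your proposal follows the paper's proof: pull back regular Floer data $(\bar H,\bar J)$ from $\wh{\bar V}$ to $\wh V$, identify $FC_*(H,J;\underline{\ell})\cong FC^0_*(\bar H,\bar J;\cL)$ by projection (same Floer cylinders, actions and degrees) to get (a) and (b), and apply Lemma~\ref{lem:density-estimate} on both sides for (c). Your caveat about $\cS^0_\eps$ is justified: read literally as ``simple \emph{and} liftable'' the set can be empty (every simple closed geodesic of the round $\R P^n$ is noncontractible), so one should count either all simple $\bar\gamma$, as you do, or, more sharply, the liftable orbits that are not iterates of shorter liftable ones, i.e.\ the projections of simple orbits on $\p V$ as in part (b) of the remarks after Corollary~\ref{cor:density-projective-spaces}; the iterate count in Lemma~\ref{lem:density-estimate} goes through for either.
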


\begin{proof}
Recall the definition of symplectic homology, see e.g.~\cite{Abouzaid-cotangent,CO}. Consider an asymptotically linear $1$-periodic Hamiltonian $\bar H$ and an SFT-like almost complex structure $\bar J$ on the completion $\wh{\bar V}$. Denote by $H,J$ their pullbacks to the completion $\wh{V}$. The Floer chain complex $FC_*^0(\bar H,\bar J;\cL)$ is the direct sum over all liftable $1$-periodic orbits $\bar\gamma$ of the stalk $\oplus_{\gamma\in p^{-1}(\bar \gamma)} \underline{\ell}_{\bar \gamma}$. 
Since the latter is the 
$\K$-vector space generated by the preimages of $\bar\gamma$ with coefficients in $\underline{\ell}$, 
the projection defines a canonical isomorphism of $\K$-vector spaces $FC_*(H,J;\underline{\ell})\cong FC^0_*(\bar H,\bar J;\cL)$. This isomorphism is a chain map because both differentials count Floer cylinders with respect to $(H,J)$ in $\wh{V}$, so it induces an isomorphism on homology $FH_*(H,J;\underline{\ell})\cong FH_*^0(\bar H,\bar J;\cL)$. Passing to the limit over Hamiltonians with slopes going to infinity, we obtain an isomorphism 
\begin{equation}\label{eq:local-coeff-iso-symp}
  p_*:SH_*(V;\underline{\ell})\stackrel{\simeq}{\longrightarrow} SH_*^0(\bar V;\cL).
\end{equation}
Since the projection preserves the Hamiltonian actions, this isomorphism preserves the critical values. Since it also preserves degrees, this proves parts (a) and (b). Part (c) now follows from Lemma~\ref{lem:density-estimate} applied on $V$ (with coefficients $\underline{\ell}$) and on $\bar V$ (with coefficients $\cL$).
\end{proof}

{\bf Coverings of loop spaces. }
Consider a normal finite cover $p:M\to\bar M=M/G$ of closed manifolds. This induces a cover of cotangent bundles $T^*p:T^*M\to T^*\bar M$ that intertwines the canonical Liouville forms. Consider a fiberwise starshaped domain $\bar V\subset T^*\bar M$ and its preimage $V\subset T^*M$. Let $\eta$ be the spin $\K$-local system of rank $1$ on $\Lambda T^*M$, let $\cL=(\Lambda T^*p)_*\eta$ for the resulting normal finite cover $\Lambda T^*p:\Lambda V\to\Lambda \bar V$, and let $SH_*^0(\bar V;\cL)$ be the part of symplectic homology of $\bar V$ with local coefficients $\cL$ corresponding to liftable loops. By Viterbo's theorem $SH_*^0(\bar V;\cL)\simeq H_*(\Lambda_0\bar M;(\Lambda p)_*\K)$ for the normal finite cover $\Lambda p:\Lambda M\to\Lambda_0\bar M$. 

We will now apply this in the case $M=S^n$, using the following resonance and density theorems for spheres. They have been proved by Hingston and Rademacher in~\cite{Hingston-Rademacher} for Finsler metrics, and they directly carry over to fiberwise starshaped domains combining their arguments with the ones in our~\S\ref{sec: res} and~\S\ref{sec:density}. 

\begin{theorem}[{Resonance theorem for spheres~\cite[Theorem~1.1]{Hingston-Rademacher}}] 
\label{thm:resonance-Sn}
Let $V\subset T^*S^n$ be a fiberwise starshaped domain, $n> 2$, and $\bK$ a coefficient field.
There exist positive constants $\alpha,C$ such that for all homogeneous elements $X \in SH_*(T^* S^n;\eta) \setminus \{0\}$ and $\xi\in SH^*(T^*S^n;\eta)\setminus \{0\}$ we have
$$
  |\deg(X) - \alpha \, \Cr(X)|\leq C,\qquad |\deg(\xi) - \alpha \, \ccr(\xi)|\leq C.
$$
\qed
\end{theorem}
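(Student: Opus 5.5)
The plan is to run the same scheme that proved Theorem~\ref{thm: resonance-Liouville}, now using the BV algebra of $S^n$ from Menichi (and Cohen--Jones--Yan), which holds over every field $\K$ when $n>2$. For $n$ odd, $\H_*(\Lambda S^n;\K)\cong \K[a,y]\otimes\Lambda[b]/\langle a^2, ab\rangle$ with $|a|=-n$, $|b|=-1$, $|y|=2n-2$. For $n$ even, $\H_*(\Lambda S^n;\K)\cong\K[a,b,v]/\langle a^2,ab,b^2,2av\rangle$ with $|v|=2n-2$, and in characteristic $2$ one needs the modified presentation as in Theorem~\ref{thm:loop_BV_CROSS_over_K_char_divides_d+1}. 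In every case the structure is: $H_*\Lambda$ is a free module of finite rank over $\K[y]$ generated by a finite-dimensional $F$ (as in Corollary~\ref{cor:CROSS-structure}(a)); the Uebele classes $y$ and $\theta=PD(y^{-1})$ exist by Theorem~\ref{thm:level_algebra}, since $n\ge3$; and cohomology is $\zeta\theta^\ell$ with $\zeta\in F$. The analogue of Corollary~\ref{cor:CROSS-structure}(c) then follows by the same degree count.

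Step one is to establish the two-sided Gromov-type bounds of Proposition~\ref{prop: mini Gromov}. The proof there uses only the module structure, Fekete's lemma and the Duality Lemma~\ref{lma: duality}, so it transfers verbatim. It produces the limits $\mu_*=\lim\Cr(y^k)/k$ and $\mu^*=\lim\ccr(\theta^k)/k$.

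Step two is the key comparison $\Cr(y^k)-\const\le\ccr(\theta^k)\le\Cr(y^k)+\const$, the analogue of Corollary~\ref{cor: theta and Theta}. The upper bound comes from superadditivity, subadditivity and duality, exactly as in the proof of Theorem~\ref{thm: resonance CP1-Liouville}. For the lower bound, Lemma~\ref{lma: duality} shows that $\ccr(\theta^{k+1})$ is the infimum of $\Cr(X)$ over homology classes $X$ of the same degree pairing nontrivially with $\theta^{k+1}$. For spheres this degree is one-dimensional, or two-dimensional in characteristic $2$, and is spanned by a class such as $y^kb$ with $\Delta(y^kb)=c_k\,y^k$. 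Here $c_k$ is an explicit affine function of $k$: it is $(n-1)(2k+1)$-type when $n$ is even and $k(n-1)$-type up to units when $n$ is odd. BV monotonicity then gives $\Cr(y^kb)\ge\Cr(c_ky^k)$. This is where the sphere case is easier than Lemma~\ref{lma: spi}: no iterated brackets are needed, which is the point of Remark~\ref{rmk: actual condition}. When $c_k=0$ in $\K$, I will use the trick from the $\C P^1$ proof: step down to $\theta^{k-1}$ or $\theta^{k-2}$, where the coefficient is nonzero, at the cost of a bounded shift.

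The main obstacle is the set of small characteristics in which $c_k$ vanishes for every $k$, for instance characteristic $2$ for even $n$, or characteristics dividing $n-1$. In those cases the lower bound has to come from a different nonzero $\Delta$-relation, in the way Menichi's $\Delta(au^k)=u^{k-1}+au^{k+1}$ is used for $\C P^1$. I would check case by case, from the Menichi formulas, that some class in a bounded range of degrees below $\deg(\theta^{k+1})$ has $\Delta$-image equal to a unit times $y^{k-j}$ plus terms of higher critical level; this is precisely the content of Hingston--Rademacher's $n>2$ hypothesis. Once the comparison holds, the final step copies the proof of Theorem~\ref{thm: resonance-Liouville}: it gives $\mu^*=\mu_*$, the bound $|\Cr(Zy^k)-k\mu|\le\const$ and its cohomological counterpart, and a common slope $\alpha=|y|/\mu$ for homology and cohomology.
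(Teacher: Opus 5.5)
Your overall route is the one the paper has in mind: it cites Hingston--Rademacher and asserts that their BV-based argument carries over to starshaped domains via the machinery of \S\ref{sec: res}. As written, however, your step two is not established. It rests on misremembered BV coefficients, and the cases you single out as the \emph{main obstacle} are left open with only a case-by-case plan.

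That plan would not work as formulated. Suppose $\Delta Z=c\,y^{k-j}+W$ with $W$ of higher level. BV monotonicity and the non-Archimedean property only give $\Cr(y^{k-j})\le\max\{\Cr(Z),\Cr(W)\}$, which yields nothing unless $\Cr(W)$ is bounded by quantities you already control. Compare how the term $\frac{(d+1)d}{2}y^{k+1}a^d$ is absorbed for $\C P^d$ in Lemma~\ref{lma: spi}. Moreover, \emph{higher critical level} has no intrinsic meaning for an arbitrary starshaped $V$.

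The obstacle is in fact spurious. The correct rings, for $n>2$, are as follows:
\begin{itemize}
\item For $n$ odd, $\H_*\Lambda=\K[a,u]/\langle a^2\rangle$ with $|u|=n-1$ and $y=u^2$. Your presentation drops the classes $u^{2k+1}$; this is harmless here but is the wrong ring.
\item For $n$ even and $\mathrm{char}\,\K\neq 2$, it is $\K[a,b,v]/\langle a^2,ab,b^2,av\rangle$.
\item For $n$ even and $\mathrm{char}\,\K=2$, it is again $\K[a,u]/\langle a^2\rangle$ with $|u|=n-1$.
\end{itemize}
In every case the homology in degree $\deg(\theta^{k+1})$, i.e.\ shifted degree $k(2n-2)-1$, is one-dimensional. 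The two-dimensionality you mention in characteristic $2$ occurs only for $n=2$. This degree is spanned by $X_k=au^{2k+1}$, respectively $X_k=bv^k$, and
$$
\Delta X_k=\pm(2k+1)\,y^k .
$$
This is Menichi's computation. It follows from $\Delta(b)=\Delta(au)=\pm[M]$, since the circle sweep of the descending disk of a great circle covers $S^n$ once, together with the derivation property of the bracket, e.g.\ $\Delta(au^m)=\pm m\,u^{m-1}$. For $n=4$ it is the case $d=1$ of the coefficient $k+(k+1)d$ in Theorem~\ref{thm:loop_BV_CROSS_over_Z}.

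Your coefficients $(n-1)(2k+1)$ and $k(n-1)$ are already wrong at $k=0$, where the coefficient must be $\pm1$. Consequently no characteristic kills the coefficient for all $k$:
\begin{itemize}
\item In characteristic $2$ it equals $1$.
\item In odd characteristic $p$, if $p\mid 2k+1$ then $p\nmid 2k-1$.
\end{itemize}
So the step-down argument from the proof of Theorem~\ref{thm: resonance CP1-Liouville} gives $\ccr(\theta^{k})\ge\Cr(y^{k})-\const$ over every field. The remainder of your scheme (Fekete, Corollary~\ref{cor:CROSS-structure}-type degree bookkeeping, common slope) then goes through as you describe. The hypothesis $n>2$ is used to avoid the $S^2$ phenomena (the extra term $y^{k+1}a$ and the two-dimensional degree in characteristic $2$), and for Uebele's theorem. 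It plays no role in selecting characteristics.
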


\begin{theorem}[{Density theorem for spheres~\cite[Theorem~1.2]{Hingston-Rademacher}}] 
\label{thm:density-Sn}
Let $\alpha=\alpha_{V,\bK}$ be the global mean frequency from Theorem~\ref{thm:resonance-Sn} for a fiberwise starshaped domain $V\subset T^*S^n$, $n>2$, and a coefficient field $\bK$. For any $\eps>0$ we have the estimate 
$$
\sum_{\gamma\in \cS_\eps} \frac{1}{\hat{i}(\gamma)} \ge 
  \limsup_{m\to\infty} \frac{N(m)}{m} \ge
\left\{\begin{array}{ll} \frac{1}{n-1}, & n \mbox{ odd},\\ \frac{1}{2(n-1)}, & n \mbox{ even},
\end{array}\right.
$$
where $N(m)$ is the number of distinct critical values $\Cr(X)$ of homology classes $X\in SH_*(T^* S^n;\eta)$ of degree $\le m$, and 
$\cS_\eps$ is the set of simple closed Reeb orbits $\gamma$ on $\p V$ with mean frequency $\alpha(\gamma)\in(\alpha-\eps,\alpha+\eps)$.  \qed
\end{theorem}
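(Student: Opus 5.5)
The statement is Theorem~\ref{thm:density-Sn}, the density theorem for spheres, and I would prove it in two separate inequalities.

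\textbf{First inequality.} The bound $\sum_{\gamma\in\cS_\eps} 1/\hat i(\gamma)\ge\limsup_m N(m)/m$ is exactly Lemma~\ref{lem:density-estimate}. That lemma uses only resonance and the index/nullity estimate $|\hat i(\delta)-\deg(\xi)|\le n$ for an iterate $\delta$ carrying a critical level. So I would invoke Theorem~\ref{thm:resonance-Sn} to supply resonance with global mean frequency $\alpha$. I would then repeat the counting argument: for $\eps$ fixed and $m_0$ large, every iterate carrying a critical value of degree in $[m_0,m]$ has mean frequency in $(\alpha-\eps,\alpha+\eps)$ and positive mean index. A simple orbit $\gamma$ contributes at most $(m-m_0+2n)/\hat i(\gamma)$ such iterates. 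Dividing by $m$ and letting $m\to\infty$ gives the claim.

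\textbf{Second inequality.} For the lower bound on $\limsup N(m)/m$, I would exhibit a sequence of classes whose critical values are strictly increasing and whose degrees grow linearly. The slope of that degree growth gives the bound.

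\emph{Odd $n$.} Here $H_*(\Lambda S^n;\K)$ contains the classes $\Theta^k$, with $\Theta$ of shifted degree $n-1$, and the loop product is $\K[A,U,\Theta]$-type. Following Hingston--Rademacher, I would work cohomologically with the non-nilpotent Goresky--Hingston class $\omega$ of degree $n-1$. This class satisfies $\ccr(\omega)>0$ and $\omega^k\neq0$. Superadditivity of $\ccr$ then makes $\ccr(\omega^k)$ strictly increasing. Since $\deg(\omega^k)$ grows like $k(n-1)$, we get $N(m)\ge m/(n-1)-\const$, using $N_*=N^*$ from \eqref{eq:N*m}.

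\emph{Even $n$.} The non-nilpotent cohomology class has degree $2(n-1)$, namely the dual of $\Theta$ in the relevant torsion-free sense. This is the analogue of the cohomological Uebele class with $i=n$. The same superadditivity argument gives slope $1/(2(n-1))$.

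\textbf{Main obstacle.} The main difficulty is carrying these facts over from Finsler metrics to an arbitrary fiberwise starshaped $V$. The algebraic facts themselves hold: non-nilpotence of $\omega$ (respectively $\theta$) in $SH^*_{>0}(V;\eta)\cong H^*(\Lambda,\Lambda_0)$ and superadditivity are purely algebraic and Floer-theoretic. What needs care is the positivity $\ccr(\omega)>0$. I would deduce it from the fact that, via $SH^*_{\le0}\cong H_{n-*}(V)$, only constants live at level $0$, exactly as in the proof of Theorem~\ref{thm: dense}. Finally, I would check that $SH^*_{>0}=SH^*$ in degrees $>n+1$, so that these classes are counted in $N(m)$.
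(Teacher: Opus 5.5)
Your first inequality follows the paper: it is Lemma~\ref{lem:density-estimate} fed with the resonance of Theorem~\ref{thm:resonance-Sn}. Your even-dimensional bound also uses the paper's mechanism, namely the argument in the proof of Theorem~\ref{thm: dense} with $i=n$, which gives $1/(n+i-2)=1/(2(n-1))$. However, the class used there has degree $n-1$, not $2(n-1)$, and it is not dual to $\Theta$. It is the Goresky--Hingston class $\omega\in H^{n-1}(\Lambda,\Lambda_0)$, i.e.\ the cohomological Uebele class $\theta$ for $i=n$. The number $2(n-1)$ is the increment $\deg(\theta^{k+1})-\deg(\theta^k)$.

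The genuine gap is the odd case. The Goresky--Hingston product raises degrees by $n-1$: $\deg(\xi\eta)=\deg\xi+\deg\eta+n-1$, or in the paper's normalization $\deg(\theta^k)=k(n+i-2)-n+1$. Hence $\deg(\omega^k)=(2k-1)(n-1)$, not roughly $k(n-1)$. This is the same class and the same count as in your even case, so superadditivity yields only $\liminf_m N(m)/m\ge 1/(2(n-1))$ in every dimension.

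No single cohomology class can do better by this route. Powers growing by $n-1$ per step would need a class of degree $0$ with positive critical value, and $H^0(\Lambda,\Lambda_0)=0$. The class that does have this growth is the homology class $u$ of shifted degree $n-1$ (with $u^2=\pm\Theta$). But subadditivity only bounds $\Cr(u^k)$ from above and gives no strict increase.

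In fact the odd-dimensional middle inequality cannot be proved in this generality. Take the round metric on $S^n$ with $n$ odd. The energy is perfect, and the classes with critical value $2\pi k$ fill exactly the degrees $(2k-1)(n-1)+\{0,\,n-1,\,n,\,2n-1\}$. Consecutive levels interlock, so $N(m)/m\to 1/(2(n-1))<1/(n-1)$. The left-hand sum is still infinite there, since every great circle lies in $\cS_\eps$. So for odd $n$ the bound $1/(n-1)$ can hold only for the sum of $1/\hat{i}(\gamma)$. It has to come from an argument that uses more than distinctness of critical values, which the paper imports from \cite{Hingston-Rademacher} without reproducing, and not from a lower bound on $N(m)$.
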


Theorems~\ref{thm: resonance CP1-Liouville} and~\ref{thm: dense} provide counterparts of the above two theorems for $S^2$, with coefficients in a field $\K$ of characteristic $\neq 2$.

Combining these two theorems with Theorem~\ref{thm: resonance-Liouville-quotient} yields

\begin{corollary}[Resonance and density for finite quotients of spheres]\label{cor:density-quotients-spheres}
Let $\bar M$ be a finite quotient of a sphere $M=S^n$, $n\ge 2$. Let $\bar V\subset T^*\bar M$ be a fiberwise starshaped domain, and $\bK$ a coefficient field, of characteristic $\neq 2$ if $n=2$.

(a) Resonance holds for $\bar V$ with $\cL$-coefficients and with the same constants $\alpha,C$ as for its preimage $V\subset T^*S^n$:
for all homogeneous elements $\bar X \in SH_*^0(\bar V;\cL) \setminus \{0\}$ and $\bar\xi\in SH^*_0(\bar V;\cL)\setminus\{0\}$ we have
\[ 
|\deg(\bar X) - \alpha\Cr(\bar X)| \leq C,\qquad |\deg (\bar\xi) - \alpha\, \ccr(\bar \xi)|\le C.
\]
(b) Density holds for $\bar V$ with the same constants as for $V\subset T^*S^n$: for every $\eps>0$ we have
$$
\sum_{\bar\gamma\in \cS_\eps^0} \frac{1}{\hat{i}(\bar\gamma)} \geq
\left\{\begin{array}{ll} \frac{1}{n-1}, & n \mbox{ odd},\\ \frac{1}{2(n-1)}, & n \mbox{ even},
\end{array}\right.
$$
where $\cS_\eps^0$ is the set of contractible simple closed Reeb orbits $\bar\gamma$ on $\p\bar V$ with $\alpha(\bar\gamma)\in (\alpha-\eps,\alpha+\eps)$. \qed
\end{corollary}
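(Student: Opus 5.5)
The statement is a direct combination of Theorem~\ref{thm: resonance-Liouville-quotient} with the resonance and density theorems for spheres, so the plan is to check that the hypotheses of the quotient theorem are met. Let $p:S^n\to\bar M=S^n/G$ be the normal cover. Since $S^n$ is simply connected, liftable loops are exactly the contractible ones. Let $V=(T^*p)^{-1}(\bar V)$. It is a fiberwise starshaped domain in $T^*S^n$, since fiberwise starshapedness is preserved under pullback by the fiberwise linear isomorphisms $T^*p$. The group $G$ acts freely on $V$ and preserves $p\,dq$, with quotient $\bar V$. Take $\underline{\ell}=\eta$, the spin local system on $\Lambda T^*S^n$, and $\cL=(\Lambda T^*p)_*\eta$. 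The canonical bundle of $T^*S^n$ is trivialized canonically because $S^n$ is orientable, so the $\Z$-grading hypothesis holds.

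For (a), I would apply Theorem~\ref{thm:resonance-Sn} when $n>2$ and Theorem~\ref{thm: resonance CP1-Liouville} when $n=2$ (with ${\rm char}\,\K\neq 2$, and $S^2=\C P^1$). This gives homological and cohomological resonance on $V$ with constants $\alpha,C$. Theorem~\ref{thm: resonance-Liouville-quotient}(a) then transfers the homological statement verbatim via the action- and degree-preserving isomorphism $p_*:SH_*(V;\eta)\cong SH^0_*(\bar V;\cL)$. For the cohomological statement I would use the dual isomorphism on the cochain level. The same identification of Floer cochain complexes $FC^*(H,J;\eta)\cong FC^*_0(\bar H,\bar J;\cL)$ commutes with the action filtration, so it respects $\ccr$ and degrees. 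This is the ``similar statement in cohomology'' of part (a).

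For (b), I need the lower bound \eqref{eq:lower-bound-Nm} on $V$ with $c=1/(n-1)$ for $n$ odd and $c=1/(2(n-1))$ for $n$ even. For $n>2$ this is Theorem~\ref{thm:density-Sn}. For $n=2$ it is the argument in the proof of Theorem~\ref{thm: dense} with $i=n=2$: the sequence $\ccr(\theta^k)$ is strictly increasing in degrees $k(n+i-2)-(n-1)$, which gives $\liminf N(m)/m\ge 1/(n+i-2)=1/2=1/(2(n-1))$. Theorem~\ref{thm: resonance-Liouville-quotient}(b) transfers this bound to $\bar N$. Part (c) of that theorem, i.e.\ Lemma~\ref{lem:density-estimate} applied on $\bar V$ with $\cL$-coefficients together with the resonance from (a), then gives the density estimate over liftable, i.e.\ contractible, simple Reeb orbits on $\p\bar V$.

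The main point I expect to require care is that the mean indices and mean frequencies computed on $\bar V$ for a liftable orbit $\bar\gamma$ agree with those of a lift $\gamma$. The linearized Reeb flows coincide under the local diffeomorphism $T^*p$, and the trivialization of the canonical bundle used for $\bar V$ on liftable orbits is the one pulled back from $V$, so $\hat i(\bar\gamma)=\hat i(\gamma)$ and $\ell(\bar\gamma)=\ell(\gamma)$. A further subtlety is that a simple $\bar\gamma$ may have a lift that is itself simple, while distinct lifts are deck translates. Lemma~\ref{lem:density-estimate} is applied directly on $\bar V$, however, so only the counting of iterates of simple liftable orbits on $\p\bar V$ enters, and this issue does not arise.
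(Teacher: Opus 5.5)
Your route is the paper's own: pull $\bar V$ back to $V\subset T^*S^n$, take $\underline{\ell}=\eta$, and feed Theorems~\ref{thm:resonance-Sn} and~\ref{thm:density-Sn} into Theorem~\ref{thm: resonance-Liouville-quotient}. For $n=2$ you use Theorem~\ref{thm: resonance CP1-Liouville} and the counting in the proof of Theorem~\ref{thm: dense} instead. Part (a), and the transfer of the lower bound on $N(m)$ to $\bar N(m)$, are fine.

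The step that fails is in your last paragraph. There you claim that applying Lemma~\ref{lem:density-estimate} on $\bar V$ only involves iterates of simple \emph{liftable} orbits.

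\begin{itemize}
\item The critical value of a class in $SH^0_*(\bar V;\cL)$ is carried by a contractible orbit $\delta=\gamma^k$ with $\gamma$ simple. However, $\gamma$ itself need not be contractible. If the free homotopy class of $\gamma$ corresponds to the conjugacy class of $g\in G$, then $\gamma^k$ is contractible iff $\mathrm{ord}(g)\mid k$.
\item For the round metric on $\R P^n$, every simple closed geodesic has length $\pi$ and is non-contractible. So there are no contractible simple closed Reeb orbits at all, and (b) read literally would give $0\ge 1/(n-1)$ (resp.\ $0\ge 1/(2(n-1))$).
\end{itemize}

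What the counting argument actually yields is an estimate over the orbits $\gamma^{j_\gamma}$. Here $\gamma$ is simple and $j_\gamma\ge 1$ is minimal such that $\gamma^{j_\gamma}$ is contractible; these are exactly the projections of the simple closed orbits on $\p V$. The resulting bound is $\sum 1/\hat{i}(\gamma^{j_\gamma})\ge c$. Since $\hat{i}(\gamma^{j})=j\,\hat{i}(\gamma)$ and the mean frequency is invariant under iteration, this also gives $\sum 1/\hat{i}(\gamma)\ge c$ over all simple $\gamma$ in the frequency window, contractible or not.

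So in (b), ``simple'' has to be read as ``not an iterate of another contractible orbit''. The paper's one-line proof inherits the same imprecision from Theorem~\ref{thm: resonance-Liouville-quotient}(c). The accompanying remark, that every simple orbit on $\p V$ is a lift of a liftable simple orbit, already fails for $S^n\to\R P^n$. Still, your explicit justification that the issue ``does not arise'' is incorrect.
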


In particular, this corollary applies to real projective spaces. In fact, applying Corollary~\ref{cor:density-quotients-spheres} for $n$ odd, and a theorem of Stegemeyer~\cite[Theorem~7.6]{Stegemeyer} for $n$ even, we get in this case a better lower bound in the density theorem if we also consider noncontractible Reeb orbits:

\begin{corollary}[Density theorem for real projective spaces]\label{cor:density-projective-spaces}
Let $\bar V\subset T^*\R P^n$ be a fiberwise starshaped domain, $n\ge 2$, and $\bK$ a coefficient field, of characteristic $\neq 2$ if $n=2$. Let $\alpha=\alpha_{\bar V,\K}$ be the global mean frequency from Corollary~\ref{cor:density-quotients-spheres}. 
Then for each $\eps>0$ we have 
$$
\sum_{\bar \gamma\in\cS_\eps} \frac{1}{\hat{i}(\bar \gamma)} \geq \frac{1}{n-1},  
$$
where $\cS_\eps$ is the set of {\em all} simple closed Reeb orbits $\bar\gamma$ on $\p\bar V$ with $\alpha(\bar\gamma)\in (\alpha-\eps,\alpha+\eps)$. \qed
\end{corollary}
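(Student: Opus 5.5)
The corollary combines two ingredients: Corollary~\ref{cor:density-quotients-spheres}, which handles the contractible Reeb orbits, and the noncontractible sector of loop homology, which I treat separately. For $\bar M=\R P^n=S^n/\{\pm1\}$ the free loop space splits as $\Lambda\bar M=\Lambda_0\bar M\sqcup\Lambda_1\bar M$. The plan is to produce lower bounds on the number of distinct critical values in each component and then apply Lemma~\ref{lem:density-estimate} to the combined counting function. That lemma does not use the multiplicative structure; it only needs resonance with a common global mean frequency $\alpha$ and an asymptotic count $N(m)/m$. So the whole task reduces to two steps: establish resonance with the same $\alpha$ on both components, and show that the total count satisfies $\limsup N(m)/m\ge 1/(n-1)$.

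\textbf{Odd $n$.} Here the contractible part alone already gives $1/(n-1)$ by Corollary~\ref{cor:density-quotients-spheres}(b). Since $\cS^0_\eps\subset\cS_\eps$, the conclusion follows immediately, with no further work.

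\textbf{Even $n$.} In this case the contractible component only yields $1/(2(n-1))$. I would invoke Stegemeyer's theorem~\cite[Theorem~7.6]{Stegemeyer} for the noncontractible component $\Lambda_1\bar M$. Via the covering~\eqref{eq:G-covering} with $g=-1$, that component is identified with the twisted path space $\cP_{-1}S^n$; this identification transfers to symplectic homology of $\bar V$ restricted to noncontractible loops, exactly as in the proof of Theorem~\ref{thm: resonance-Liouville-quotient}. The noncontractible closed geodesics of the round metric have indices interlacing with those of the contractible ones. Stegemeyer's resonance for $\Lambda_1$ should then give resonance with the same slope $\alpha$, and a count contributing a further $1/(2(n-1))$ to $\limsup N(m)/m$.

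The key step is to make sure the two slopes agree. I would do this by comparing iterates: the square of a noncontractible orbit is a contractible orbit, and lifting gives the relation $\Cr$ versus degree under the Pontryagin-type product $\Lambda_1\times\Lambda_1\to\Lambda_0$. Subadditivity of $\Cr$ and additivity of degrees then force equal asymptotic slopes, by the Fekete argument from the proof of Proposition~\ref{prop: mini Gromov}. Next I would check that the critical values coming from the two components are distinct, since they belong to orbits in different free homotopy classes; hence the counts add, giving $2\cdot\frac{1}{2(n-1)}=\frac1{n-1}$. Lemma~\ref{lem:density-estimate}, run over all simple orbits with the joint count, then concludes the proof. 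For $n=2$ the characteristic restriction comes from Theorem~\ref{thm: resonance CP1-Liouville}.

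The main obstacle is matching Stegemeyer's slope, which he formulates for Finsler metrics, with ours for starshaped domains. The first thing I would try is the product-compatibility argument above.
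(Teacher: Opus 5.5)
Your odd-$n$ case is exactly the paper's argument. For even $n$ the paper does no gluing at all. It cites Stegemeyer~\cite[Theorem~7.6]{Stegemeyer}, whose string topology algebra on $\Lambda S^n\sqcup P_1S^n$ ties the two sectors together and already gives the bound $\frac{1}{n-1}$ over all simple orbits, with $\Lambda_0$ and $\Lambda_1$ each contributing $\frac{1}{2(n-1)}$ (see the remarks after the corollary). The passage from Finsler metrics to starshaped domains is handled as for Hingston--Rademacher: the arguments are taken to carry over, not re-derived by a product argument.

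Using Stegemeyer only for $\Lambda_1$ and gluing the sectors yourself introduces two steps that fail as written.

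\emph{Distinct critical values.} The critical values from $\Lambda_0$ and $\Lambda_1$ need not be distinct numbers. Orbits in different free homotopy classes can have equal actions, so your justification is a non sequitur. What you need is to count pairs (critical value, sector), i.e.\ $N_0(m)+N_1(m)$. The injectivity in the proof of Lemma~\ref{lem:density-estimate} survives, because a carrying orbit determines both its action and its free homotopy class. Every iterate of a simple orbit $\bar\gamma$, in whichever sector, is charged to the same term $\frac{1}{\hat i(\bar\gamma)}$. You also need a $\liminf$ bound in one sector to add the two $\limsup$'s.

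\emph{Equal slopes.} Subadditivity plus Fekete cannot force the slopes to agree, since Fekete only gives existence of the limits. The inequality $\Cr(XY)\le\Cr(X)+\Cr(Y)$ for $\Lambda_1\times\Lambda_1\to\Lambda_0$ bounds $\Cr$ on $\Lambda_1$ only from below, and only for classes with nonzero products. The upper bound needs $H_*\Lambda_1$ to be finitely generated over $H_*\Lambda_0$, or else dual superadditivity with nonvanishing cohomology products together with Lemma~\ref{lma: duality}. These nonvanishing and generation facts are precisely the content of Stegemeyer's computation. The fact that the square of a noncontractible orbit is contractible does not supply them. Once single-slope resonance holds on all of $\Lambda\R P^n$, that slope automatically equals the contractible one, so there is no separate matching problem.

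One small caveat in the odd case, which the paper's one-line deduction shares. For the round metric every simple closed geodesic of $\R P^n$ is noncontractible. So for Corollary~\ref{cor:density-quotients-spheres}(b) to be true, ``simple'' in $\cS^0_\eps$ must mean primitive among contractible orbits, and $\cS^0_\eps$ may contain squares $\bar\gamma^2$ of noncontractible simple orbits. The inclusion $\cS^0_\eps\subset\cS_\eps$ should then be replaced by the injection sending each element of $\cS^0_\eps$ to its underlying simple orbit, so $\bar\gamma^2\mapsto\bar\gamma$. This map preserves the mean frequency and satisfies $\frac{1}{\hat i(\bar\gamma^2)}=\frac{1}{2\hat i(\bar\gamma)}\le\frac{1}{\hat i(\bar\gamma)}$, so the conclusion is unaffected.
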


\begin{remarks}
(a) Stegemeyer's proof for even dimensional projective spaces actually shows that the contractible and non-contractible Reeb orbits each contribute at least $\frac{1}{2(n-1)}$ to the sum in Corollary~\ref{cor:density-projective-spaces}. It uses the decomposition $\Lambda\R P^n = \Lambda_0\R P^n\sqcup \Lambda_1\R P^n$ into contractible and noncontractible loops with its $2$-to-$1$ cover $\Lambda S^n\sqcup  P_1S^n\to \Lambda_0\R P^n\sqcup \Lambda_1\R P^n$ (see the discussion at the beginning of this section), and the definition and calculation of a novel string topology algebra on $\Lambda S^n\sqcup P_1S^n$. It would be interesting to see whether Stegemeyer's proof carries over to the coverings~\eqref{eq:G-covering} for more general quotients of $S^n$ to give better lower bounds in the density theorem when considering all Reeb orbits. 

(b) For a $k$-fold covering $p:V\to\bar V$ of Liouville domains as in Theorem~\ref{thm: resonance-Liouville-quotient}, each liftable simple closed Reeb orbit $\bar\gamma$ on $\p\bar V$ has precisely $k$ lifts to $\p V$, which are all simple and have the same length, average index, and mean frequency as $\bar \gamma$, and conversely every simple closed Reeb orbit on $\p V$ arises in this way. Thus a density theorem for $V$ with lower bound $c$ immediately implies a density theorem for $\bar V$ with the weaker lower bound $c/k$. By contrast, the algebraic argument in the proof of Theorem~\ref{thm: resonance-Liouville-quotient} yields density for $\bar V$ with lower bound $c$.  

(c) Theorem~\ref{thm: resonance-Liouville-quotient} and its corollaries apply in particular to a Finsler metric $\bar F$ on $\bar M$ and its lift $F$ to a finite cover $M\to\bar M$, with symplectic homology replaced by loop space homology and Reeb orbits replaced by geodesics. In this case we also obtain an analogue to Corollary~\ref{cor:two-or-infinity}.

(d) We have already established resonance and lower bounds~\eqref{eq:lower-bound-Nm}, as well as density theorems, for complex, quaternionic and octonionic projective spaces. One can ask whether there are any finite quotients to consider. 
Let $G$ be a finite group acting freely on $\K P^d$ with $\K=\C,\H,\Ca$ (and $d=2$ in this last case). The following are true: 
\begin{itemize}
\item $|G|$ divides $d+1=\chi(\K P^d)$. This follows by comparing Euler characteristics.  
\item Any element in $G$ is $2$-torsion. Indeed, it is a general fact that the square of any diffeomorphism of $\K P^d$ acts by the identity on homology (for rank reasons), so the Lefschetz fixed point theorem ensures that it has at least one fixed point. However, the square of any element in $G$ acts freely unless it is the identity.
\end{itemize}
Thus, if $G$ is nontrivial then $2$ divides $d+1$. In particular, nontrivial finite groups cannot act freely on $\K P^d$ with $d$ even. Since our resonance and density theorems for $\K P^d$ hold when $d+1$ is prime, there are no finite quotients to consider for $d\ge 2$.    

(e) Finite quotients of $S^n$, $n\geq 2$, by {\em linear} actions are spherical space forms, which have been classified in~\cite[\S7.4]{Wolf}. For $n$ even, the only spherical space forms are $S^n$ and $\R P^n$. For $n$ odd, further examples include lens spaces and quotients associated to the symmetry groups of Platonic solids such as the Poincar\'e homology sphere. 
For $n=3$, as a consequence of Perelman's work (see~\cite{Morgan-Tian07}), all finite quotients of $S^3$ are quotients by linear actions; in higher dimensions this is unknown.
\end{remarks}

\begin{remark}[On string point invertibility]
String point invertibility is only defined for orientable manifolds. More generally, one could declare a nonorientable manifold $M$ to be string point invertible (SPI) if its orientation double cover $\tilde M$ is SPI. Note that, in the vein of the present chapter, this is a property that can be read on $H_*(\Lambda_0 M;\cL)$ for a suitable rank 2 local system $\cL$. Accordingly, the spectral norm from Theorem~\ref{thm:Viterbo-conj} can be interpreted as a spectral norm for Lagrangian Floer homology with coefficients in $\cL$. We find it an interesting question to pursue the implications that string topology with coefficients in more general local systems can have for symplectic topology. 
\end{remark}

\appendix

\section{Another proof of the Duality Lemma~\ref{lma: duality}}

We present a proof of a slightly different result, which in turn implies Lemma~\ref{lma: duality}. We work in the setup of a Liouville domain $V$ with filtered symplectic homology groups $SH_*(V)$, cohomology groups $SH^*(V)$, and their filtered versions
\begin{gather*}
   \cdots SH_k^{\leq\alpha}(V) \stackrel{I_\alpha}\longrightarrow SH_k(V) \stackrel{R_\alpha}\longrightarrow SH_k^{>\alpha}(V)\cdots, \\
   \cdots SH^k_{>\alpha}(V) \stackrel{i_\alpha}\longrightarrow SH^k(V) \stackrel{r_\alpha}\longrightarrow SH^k_{\leq\alpha}(V)\cdots
\end{gather*}
We work with coefficients in a field $\K$, so that $SH^*(V)\simeq SH_*(V)^\vee$, $SH^*_{>\alpha}(V)\simeq SH_*^{>\alpha}(V)^\vee$, and $SH^*_{\le \alpha}(V)\simeq SH_*^{\le \alpha}(V)^\vee$. 
Then $i_\alpha=R_\alpha^\vee$ and $r_\alpha=I_\alpha^\vee$ are the dual maps. Recall the critical values of nonzero classes $X\in SH_k(V)$ and $x\in SH^k(V)$, 
\begin{align*}
  Cr(X) &= \inf\{\alpha\mid X\in\im I_\alpha=\ker R_\alpha\}, \cr
  cr(x) &= \sup\{\alpha\mid x\in\im i_\alpha=\ker r_\alpha\}.
\end{align*}


\begin{lemma} \label{lem:crx=CrX} 
(a) Given a nonzero homology class $X\in SH_k(V)$, there exists $x\in SH^k(V)$ such that $\langle x,X\rangle=1$ and $cr(x)=Cr(X)$.

(b) Given a nonzero cohomology class $x\in SH^k(V)$, there exists $X\in SH_k(V)$ such that $\langle x,X\rangle=1$ and $cr(x)=Cr(X)$.
\end{lemma}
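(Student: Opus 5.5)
The plan is to reduce both statements to linear algebra on one increasing filtration of $SH_k(V)$. For $\alpha\in\R$ set $F_\alpha:=\im I_\alpha\subset SH_k(V)$; these subspaces increase with $\alpha$. Since $r_\alpha=I_\alpha^\vee$, we have $\ker r_\alpha=\operatorname{Ann}(F_\alpha)$. The definitions therefore become
$$
Cr(X)=\inf\{\alpha\mid X\in F_\alpha\},\qquad cr(x)=\sup\{\alpha\mid x|_{F_\alpha}=0\}.
$$
The set $\{\alpha\mid x|_{F_\alpha}=0\}$ is downward closed, because the $F_\alpha$ increase. Two pairing observations follow at once. If $\langle x,X\rangle\neq 0$, then $cr(x)\le Cr(X)$: for $\beta>Cr(X)$ we have $X\in F_\beta$, so $x$ cannot vanish on $F_\beta$. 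Likewise, if $x$ kills $F_\alpha$ and $\langle x,X\rangle=1$, then $X\notin F_\alpha$. Hence in both (a) and (b) it suffices to produce a pair with $\langle x,X\rangle=1$ and $cr(x)\ge Cr(X)$.

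For (a), put $c=Cr(X)$ and $F_{<c}:=\bigcup_{\alpha<c}F_\alpha$, which is a subspace since the family is nested. By the definition of the infimum, $X\notin F_\alpha$ for every $\alpha<c$, so $X\notin F_{<c}$. Because $\K$ is a field, I will first define a linear functional on $\K X\oplus F_{<c}$ equal to $1$ on $X$ and $0$ on $F_{<c}$. I will then extend it to all of $SH_k(V)$ by choosing a complement (Zorn's lemma if needed), which gives $x\in SH^k(V)=SH_k(V)^\vee$. This $x$ kills every $F_\alpha$ with $\alpha<c$, so $cr(x)\ge c$. The reverse inequality comes from the pairing observation above. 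No discreteness of the action spectrum is needed here. The degenerate case $c=-\infty$ cannot occur, since $F_\alpha=0$ for $\alpha$ below the minimal action.

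For (b), put $c=cr(x)$. For every $\beta>c$ the functional $x$ is nonzero on $F_\beta$. The aim is a single class $X\in\bigcap_{\beta>c}F_\beta$ with $\langle x,X\rangle=1$. Such an $X$ satisfies $Cr(X)\le c$, and hence $Cr(X)=c$ by the pairing observation. This is the main obstacle: a decreasing intersection of subspaces on each of which $x$ is nonzero need not itself be detected by $x$.

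I will resolve this with a finiteness property of the filtration as $\beta\downarrow c$. At chain level, for a suitable Hamiltonian, there are only finitely many generators of degree $k$ with action in $(c,c+1]$; I will take this from the standard nondegenerate (or Morse--Bott perturbed) setup and assume it holds after perturbation. Then the quotients $F_\beta/F_{c}$ can only jump at finitely many values in $(c,c+1]$. It follows that $F_\beta$ is constant for $\beta\in(c,c+\delta)$ and equals $\bigcap_{\beta>c}F_\beta$, i.e.\ the filtration is right-continuous at $c$. Alternatively, in the cases of interest $SH_k(V)\cong H_{k}\Lambda$ is finite dimensional, and a nested family of subspaces of a finite-dimensional space stabilizes.

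Given right-continuity, choose $\beta\in(c,c+\delta)$. Since $x|_{F_\beta}\neq 0$, pick $X\in F_\beta$ with $\langle x,X\rangle=1$. Then $Cr(X)\le c$, and the pairing observation gives equality.
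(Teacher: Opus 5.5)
Your part (a) is the paper's argument. The paper also takes $x$ vanishing on the image of $SH_k^{<C}(V)$ with $\langle x,X\rangle=1$, and then excludes $C<c$ and $c<C$ by the same two pairing computations that you package as the general inequality $cr(x)\le Cr(X)$.

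For (b) the paper only writes ``analogous''. The literal dual would choose $X$ annihilated by $\bigcup_{\alpha>c}\im i_\alpha$ with $\langle x,X\rangle=1$, i.e.\ $X\in\bigcap_{\alpha>c}F_\alpha$. Its existence is exactly the asymmetry you isolate: extending a functional is always possible, but finding a vector with prescribed pairings against a subspace of the dual is not. So your stabilization step makes explicit what the paper leaves implicit, and your proof is the more careful one.

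Your finite-dimensional justification is sound. It covers the cases where the paper actually applies the lemma: simply connected CROSS, spheres, and their finite quotients.

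Your chain-level justification is shakier. For degenerate boundaries there can be infinitely many orbits with action in $(c,c+1]$. Perturbing moves the action values, so the jumps of the true filtration cannot be read off a single perturbed complex.

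You do not need that route, because the same linear-algebra point works for every Liouville domain. Pick $\alpha_0>c$ outside the (closed, nowhere dense) action spectrum. Then $SH_k^{\le\alpha_0}(V)$ is the Floer homology of one Hamiltonian of slope $\alpha_0$, which has finitely many orbits after a small perturbation, so it is finite-dimensional. Hence all $F_\beta$ with $\beta\le\alpha_0$ are nested subspaces of the finite-dimensional $F_{\alpha_0}$, and $\dim F_\beta$ can jump at most $\dim F_{\alpha_0}$ times. This yields (b) even when $SH_k(V)$ itself is infinite-dimensional, which is the generality the paper invokes for $N_*(m)=N^*(m)$.

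Minor point: what you establish is constancy of $F_\beta$ on $(c,c+\delta)$, not $F_c=\bigcap_{\beta>c}F_\beta$. Only the former is used, so calling it ``right-continuity at $c$'' is a mislabel rather than an error.
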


\begin{proof} 
We will prove part (a), the proof of (b) being analogous. 

Consider $0\neq X\in SH_k(V)$. By definition, $C:=Cr(X)$ is characterized by the conditions 
$$
  X\in\im I_\alpha \text{ for all }\alpha>C,\qquad X\notin\im I_\alpha \text{ for all }\alpha<C.
$$
In particular, $X$ does not belong to the image of $SH_k^{<C}(V)$ inside $SH_k(V)$. Therefore, we find $x\in SH^k(V)$ that vanishes on the image of $SH_k^{<C}(V)$ inside $SH_k(V)$ and satisfies $\la x,X\ra=1$. 
By definition, $c:=cr(x)$ is characterized by the conditions 
$$
  r_\alpha x=0 \text{ for all }\alpha<c,\qquad r_\alpha x\neq 0 \text{ for all }\alpha>c.
$$
Suppose first that $C<c$ and pick $C<\alpha<c$. Then $X\in\im I_\alpha$, so $X=I_\alpha X_\alpha$ for some $X_\alpha\in SH_k^{\leq\alpha}$. Together with $r_\alpha x=0$ this yields the contradiction 
$$
  1 = \la x,X\ra = \la x,I_\alpha X_\alpha\ra = \la r_\alpha x,X_\alpha\ra = 0.
$$
Suppose next that $c<C$ and pick $c<\alpha<C$. Then $r_\alpha x\neq 0$, so there exists $X_\alpha\in SH_k^{\leq\alpha}$ such that $0\neq \la r_\alpha x,X_\alpha\ra = \la x,I_\alpha X_\alpha\ra$. But this contradicts our choice of $x$ to vanish on the image of $SH_k^{<C}(V)$ inside $SH_k(V)$.
Therefore, we must have $c=C$ and the lemma is proved. 
\end{proof}

Let us now see how this implies Lemma~\ref{lma: duality}.

\begin{proof}[Proof of Lemma~\ref{lma: duality}] It follows from the definitions that, given a cohomology class $x$ and a homology class $X$ such that $\langle x,X\rangle \neq 0$, we have $cr(x)\le Cr(X)$. This shows that, given $X$ we have 
\[\sup\{ cr(x) \mid \langle x,X\rangle\neq 0\}\le Cr(X),\]
and given $x$ we have 
\[cr(x)\le \inf \{Cr(X) \mid \langle x,X\rangle \neq 0\}.\]
That the equality is attained in each of the above equalities is a consequence of Lemma~\ref{lem:crx=CrX}.
\end{proof}

\section{The Frobenius pairing determines the Kronecker pairing}

Let $M$ be a closed orientable manifold of dimension $n$, and let $\chi=\chi(M)$ be its Euler characteristic. Let $\K$ be a field of coefficients. Let $[\mathrm{pt}]\in H_0\Lambda_0\hookrightarrow H_*\Lambda$ be the class of a point, and $1\in H^0\Lambda_0\hookrightarrow H^0\Lambda$ be the unit for the cup-product in cohomology. 

We define \emph{reduced loop homology} as 
$$
\ol H_*\Lambda= \left\{\begin{array}{ll} H_*\Lambda/\langle [\mathrm{pt}]\rangle, & \mbox{if } \chi\neq 0 \mbox{ in } \K,\\
H_*\Lambda & \mbox{if } \chi=0 \mbox{ in } \K,
\end{array}\right.
$$
and we define \emph{reduced loop cohomology} as 
$$
\ol H^*\Lambda= \left\{\begin{array}{ll} H^*\Lambda/\langle 1\rangle, & \mbox{if } \chi\neq 0 \mbox{ in } \K,\\
H^*\Lambda & \mbox{if } \chi=0 \mbox{ in } \K.
\end{array}\right.
$$
The Kronecker pairing $\langle\cdot,\cdot\rangle:H^*\Lambda\otimes H_*\Lambda\to\K$ descends to a pairing
$$
\langle\cdot,\cdot \rangle : \ol H^*\Lambda\otimes \ol H_*\Lambda\to \K,
$$ 
which we will also refer to as the \emph{Kronecker pairing (between reduced loop homology and reduced loop cohomology)}. 

We consider the splittings
\begin{equation} \label{eq:splitting_homology}
\wh H_*\Lambda =\ol H_*\Lambda\oplus \ol H^{1-*}\Lambda
\end{equation}
and 
\begin{equation} \label{eq:splitting_cohomology}
\wh H^*\Lambda=\ol H^*\Lambda\oplus \ol H_{1-*}\Lambda.
\end{equation}
These splittings are canonical if $H_1M=0$. 

Recall from~\cite{CHO-PD} that $\wh \H_*\Lambda=\wh H_{*+n}\Lambda$ is a topological Frobenius algebra. We denote $\boldeps$ the counit, $\boldmu$ the product, and $\boldp=-\boldeps\boldmu$ the Frobenius pairing. 

\begin{proposition} \label{prop:Kronecker-Frobenius} Assume $H_1M=0$. We then have the equality 
$$
\langle\cdot,\cdot\rangle = \boldp
$$
on $\ol H^*\Lambda \otimes \ol H_*\Lambda\subset \wh H_*\Lambda\otimes \wh H_*\Lambda$.
\end{proposition}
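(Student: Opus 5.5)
The plan is to compute the Frobenius pairing on a mixed pair $(\xi,X)$ with $\xi\in\ol H^*\Lambda$ and $X\in\ol H_*\Lambda$ by unpacking the definitions from~\cite{CHO-PD}. In the splitting~\eqref{eq:splitting_homology}, $\wh H_*\Lambda$ is the homology of a cone. One summand is a Floer complex computing $H_*\Lambda$ (positive action, the symplectic homology part). The other is its dual, computing $H^{1-*}\Lambda$ (negative action), glued by the continuation map whose cone accounts for the reduction by $\chi(M)[\mathrm{pt}]$.

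The first step is to recall that the counit $\boldeps$ is the map $\wh H_*\Lambda\to \wh H_*^{=0}\Lambda\cong H^{-*}(S^*M)\to\K$ given by evaluation on the top class. At chain level it is realized by counting half-cylinders or planes with one negative puncture, i.e.\ by the canonical pairing of the negative (dual) part against the unit of the positive part. By degree reasons $\boldeps$ only sees the component of the product landing in the degree of the dual of $[M]$, as used in the $S^2$ computations.

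The second step is to evaluate $\boldmu(\xi,X)$ for a chain-level cocycle $\xi$ in the negative part and a cycle $X$ in the positive part. The product on $\wh H_*\Lambda$ is the primary pair-of-pants product on the cone. Restricted to (negative)$\otimes$(positive), it is the module action of $SH_*$ on the dual complex $SH^{-*}$, namely cap product with the pair-of-pants. I would then compose with $\boldeps$ and show, by a gluing/homotopy argument, that $\boldeps\circ\boldmu$ on this component equals the count of pair-of-pants capped off at one end by the unit. That count is the continuation cylinder, hence the identity pairing, i.e.\ the canonical evaluation $SH^*\otimes SH_*\to\K$. Via Viterbo's isomorphism this is the Kronecker pairing; the sign $-1$ in $\boldp=-\boldeps\boldmu$ should match the orientation convention of the counit. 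This is the step I expect to be hardest: identifying the glued moduli space with the trivial cylinder, while keeping track of signs and grading shifts.

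The last step is to check compatibility with the reduction. One verifies that $\boldp$ vanishes on $1\otimes(\cdot)$ and $(\cdot)\otimes[\mathrm{pt}]$ up to the $\chi$-corrections, so that it descends to $\ol H^*\otimes\ol H_*$. The hypothesis $H_1M=0$ enters here: it makes the splittings canonical (no secondary continuation term), so the inclusions $\ol H^*\Lambda,\ol H_*\Lambda\subset\wh H_*\Lambda$ are the chain-level ones used above. I expect a residual check that no cross term from the cone differential contributes to $\boldeps$, which would follow from action-filtration arguments: the cross term lives at action near $0$, where it is governed by $H^{-*}(S^*M)$ and only hits $\chi[\mathrm{pt}]$ or $1$, both of which are killed in the reduced groups.
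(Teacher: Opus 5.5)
Your outline is sound in spirit, but it takes a different and much more hands-on route than the paper. The paper never unpacks $\boldeps$ or $\boldmu$. It writes $\boldp=\ev(\vec{\boldp}\otimes 1)$, where $\ev:\wh H^*\Lambda\otimes\wh H_*\Lambda\to\K$ is the canonical perfect pairing, and recalls that $\vec{\boldp}$ is the Poincar\'e duality isomorphism of \cite{CHO-PD}. It then quotes \cite[Proposition~7.1]{CHO-reducedSH}: for $H_1M=0$ this isomorphism simply exchanges the summands of the two splittings. Since $\ev$ restricted to $\ol H^*\Lambda\otimes\ol H_*\Lambda$ is the Kronecker pairing, the statement follows formally, with the signs absorbed into the normalization of $PD$.

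You instead re-derive the relevant block of that duality by hand, via three claims:
\begin{enumerate}
\item $\boldeps$ is (minus) evaluation of the cohomological component on the unit $[M]$;
\item the cohomological component of a mixed product is the module action adjoint to the loop product;
\item hence $\langle\xi,X\cdot[M]\rangle=\langle\xi,X\rangle$ by the unit axiom.
\end{enumerate}
This explains why the identity holds and could be made self-contained. However, all of its content lies in the Floer-theoretic identifications you defer to ``a gluing argument'', including the sign you only say ``should match''. Note also that your first step is exactly the case $X=[M]$ of the proposition; the paper in fact derives $\boldeps(\zeta)=-\langle\zeta,[M]\rangle$ from it in the $\C P^1$ computations. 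So that step has to be read off from the actual definition of the counit, not assumed.

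Several details need correcting.
\begin{itemize}
\item There is no natural map $\wh H_*\Lambda\to\wh H^{=0}_*\Lambda$, so the counit should be described through the cohomological summand instead.
\item The mixed product is not just the module action. In the proof of Theorem~\ref{thm:Rabinowitz_BV_S2_char_does_not_divide} one has $\tau y=\epsilon[M]$, a cohomology class times a homology class landing in the homology summand. So cross terms do not only hit $\chi[\mathrm{pt}]$ or $1$, and your action-filtration reason for discarding them is wrong.
\item What actually makes those cross terms harmless is degree: $\boldeps$ only sees $\wh{\H}_{1-2n}\Lambda=\ol H_{1-n}\Lambda\oplus\ol H^{n}\Lambda$, and $\ol H_{1-n}\Lambda=0$ since $n\ge 2$. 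So only the cohomological component of $\xi X$ contributes.
\item Nothing has to ``descend'': $\boldp$ is already defined on the summands $\ol H^*\Lambda,\ol H_*\Lambda\subset\wh H_*\Lambda$. As you say, $H_1M=0$ is used only to make these summands canonical.
\end{itemize}
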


\begin{proof}
The Kronecker pairing is naturally identified with the restriction to $\ol H^*\Lambda\otimes\ol H_*\Lambda$ of the canonical pairing 
$$
\ev : \wh H^*\Lambda \otimes \wh H_*\Lambda\to \K. 
$$
This is a perfect topological pairing, which realizes $\wh H^*\Lambda$ as the topological dual of $\wh H_*\Lambda$.
\footnote{In a symplectic interpretation, this is defined by a count of cylinders, with one positive (homological) input and one negative (cohomological) input. This is implicit in the TQFT structure from~\cite{CHO-PD}, and explicit in~\cite{Ritter} in the setup of symplectic homology.} 

Let $\boldp$ be the pairing on $\wh H_*\Lambda$ and define $\vec \boldp:\wh H_*\Lambda\stackrel\simeq\longrightarrow \wh H^{1-*}\Lambda$ by the equality 
$$
\boldp=\ev(\vec \boldp\otimes 1)
$$ 
as in~\cite[Lemma~5.3]{CHO-algebra}. Then $\vec\boldp$ is the Poincaré duality isomorphism~\cite{CHO-PD}, and~\cite[Proposition~7.1]{CHO-reducedSH} shows that, under the assumption that $H_1M=0$, the map $\vec\boldp$ simply exchanges the factors in the decompositions~\eqref{eq:splitting_homology} and~\eqref{eq:splitting_cohomology}. The conclusion follows.
\end{proof}

\begin{remark}
For general $M$, the Kronecker pairing can be expressed in terms of the Frobenius pairing and the secondary continuation map as defined in~\cite[Proposition~7.1]{CHO-reducedSH}. We do not spell out this variant of Proposition~\ref{prop:Kronecker-Frobenius} because we only need it for simply connected CROSS. 
\end{remark}

\section{The index of a periodic orbit of an autonomous Hamiltonian}\label{sec:basepoint}

Let $\varphi_t$ be the flow of an autonomous Hamiltonian on a symplectic manifold $M$ of dimension $2n$, and let $\gamma$ be a nonconstant periodic orbit. Given a homotopy class of symplectic trivializations of $TM$ along $\gamma$, the index $i(\gamma)$ and mean index $\hat{i}(\gamma)$ are defined from the Bott-Long index for paths of symplectic matrices by linearizing the flow along the orbit with a given starting point. We show in this appendix that they do not depend on the choice of starting point. Specifically, we show that different choices of starting point give rise to paths of symplectic matrices whose classes in $\widetilde{\mathrm{Sp}}(2n)$ are conjugate. 

The question that we address is local around $\gamma$, and therefore we can assume without loss of generality that the flow is complete, $M=\R^{2n}$, and the trivialization of $TM$ along $\gamma$ is the canonical one. Let $T>0$ be the period of $\gamma$. 

Let $x\in \gamma$ and $s\in\R$ be fixed. We consider the paths $P:[0,T]\to \Sp(2n)$, $P(t)=D\varphi_t(x)$ and $P_s:[0,T]\to\Sp(2n)$, $P_s(t)=D\varphi_t(\varphi_s(x))$. These paths start at the identity and therefore define classes in $\widetilde{\Sp}(2n)$. 

\begin{lemma}
The classes $[P], [P_s]\in\widetilde{\Sp}(2n)$ are conjugate. 
\end{lemma}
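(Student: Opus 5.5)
We will write $P_s$ explicitly as a conjugate of $P$ up to an error term, and then use a homotopy argument to pass to $\widetilde{\Sp}(2n)$. The chain rule and the group property $\varphi_t\circ\varphi_s=\varphi_s\circ\varphi_t$ of the autonomous flow give
$$
D\varphi_t(\varphi_s(x))\,D\varphi_s(x) = D\varphi_{t+s}(x) = D\varphi_s(\varphi_t(x))\,D\varphi_t(x).
$$
Set $Q(t):=D\varphi_s(\varphi_t(x))$. This is a path in $\Sp(2n)$ with $Q(0)=Q(T)=D\varphi_s(x)=:Q_0$, because $\varphi_T(x)=x$. The identity above then reads
$$
P_s(t)=Q(t)\,P(t)\,Q_0^{-1}.
$$
In particular the endpoint satisfies $P_s(T)=Q_0P(T)Q_0^{-1}$, so the two endpoints are conjugate in $\Sp(2n)$.

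We now lift this to $\widetilde{\Sp}(2n)$, viewed as homotopy classes rel endpoints of paths starting at the identity. For $\sigma\in[0,s]$ we form the family
$$
P_\sigma(t)=D\varphi_\sigma(\varphi_t(x))\,P(t)\,D\varphi_\sigma(x)^{-1}.
$$
Each $P_\sigma$ starts at the identity. The paths are, however, not homotopic rel endpoints, since the endpoint $P_\sigma(T)$ moves with $\sigma$. We therefore compare $P_s$ with the conjugated path $t\mapsto Q_0P(t)Q_0^{-1}$, which is conjugation of $[P]$ by any lift of $Q_0$, because conjugation by a group element is an automorphism of the universal cover $\widetilde{\Sp}(2n)$.

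The two paths $t\mapsto Q(t)P(t)Q_0^{-1}$ and $t\mapsto Q_0P(t)Q_0^{-1}$ have the same endpoints. The first is the pointwise product of $P(t)Q_0^{-1}$ with the loop $Q(t)$ based at $Q_0$. Their classes therefore differ by the class of the loop $Q_0^{-1}Q(t)$ in $\pi_1(\Sp(2n))\cong\Z$, acting centrally. This gives $[P_s]=\tilde Q_0[P]\tilde Q_0^{-1}\cdot[\ell]$ with $\ell(t)=Q_0^{-1}Q(t)$.

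The main step, and the one where the argument could fail, is showing that $[\ell]=0$. For this we deform $\sigma$ from $0$ to $s$. The loop $\ell_\sigma(t)=D\varphi_\sigma(x)^{-1}D\varphi_\sigma(\varphi_t(x))$ depends continuously on $\sigma$, and at $\sigma=0$ it is the constant loop at the identity. The free homotopy class in $\pi_1(\Sp(2n))$ is locally constant, hence constant in $\sigma$, so $[\ell]=[\ell_s]=0$. It follows that $[P_s]$ is conjugate to $[P]$ in $\widetilde{\Sp}(2n)$. Since the Bott–Long index and the mean index are invariant under conjugation (the latter being a homogeneous quasimorphism invariant under conjugation), this gives the claimed independence of the starting point.
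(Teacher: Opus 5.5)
Your proof is correct and follows the paper's argument. The paper also derives $P_s(t)=D\varphi_s(\varphi_t(x))\,P(t)\,D\varphi_s(x)^{-1}$ from commutativity of the flow, and it kills the based loop $t\mapsto D\varphi_s(\varphi_t(x))D\varphi_s(x)^{-1}$ by the same deformation $s\mapsto rs$, $r\in[0,1]$. The only difference is cosmetic: the paper multiplies pointwise by this contracting family to get a direct homotopy rel endpoints from $P_s$ to $Q_0PQ_0^{-1}$, whereas you first write the discrepancy as a central class $[\ell]\in\pi_1(\Sp(2n))$ and then show it vanishes.
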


\begin{proof}
The relation $\varphi_t\varphi_s=\varphi_s\varphi_t$ implies by differentiation at $x$ that 
$$
P_s(t)=D\varphi_s(\varphi_t(x)) P(t) D\varphi_s(x)^{-1}. 
$$
This path is homotopic with fixed endpoints to the path 
$$
Q_s(t) = D\varphi_s(x) P(t) D\varphi_s(x)^{-1},\qquad t\in [0,T].
$$
Indeed, we write 
$$
D\varphi_s(\varphi_t(x)) P(t) D\varphi_s(x)^{-1}= D\varphi_s(\varphi_t(x)) D\varphi_s(x)^{-1} D\varphi_s(x) P(t) D\varphi_s(x)^{-1},
$$ 
and the loop $t\mapsto D\varphi_s(\varphi_t(x)) D\varphi_s(x)^{-1}$ based at the identity is contractible via the homotopy $t\mapsto D\varphi_{rs}(\varphi_t(x)) D\varphi_{rs}(x)^{-1}$, $r\in [0,1]$. 

We find that $[P_s]=[Q_s]$, and $Q_s$ is conjugate to $P$. 
\end{proof}

Since $[P]=[P_s]$, all their invariants coincide, in particular 
$$
i(P)=i(P_s), \qquad \hat{i}(P)=\hat{i}(P_s). 
$$
Here $\hat{i}$ is the mean index, and $i$ is the Bott-Long index, alias $i_1$ in~\cite[Definitions~5.4.2 or 6.1.10]{Long-book}. That $i$ is invariant by conjugation is a consequence of its invariance under homotopies of paths such that the nullity at the endpoint is constant~\cite[Theorem~6.2.3]{Long-book}.

\bibliographystyle{abbrv}
\bibliography{000_Cross}

\end{document}